\documentclass[reqno,a4paper,12pt]{amsart} 

\usepackage{amsmath,amscd,amsfonts,amssymb}
\usepackage{mathrsfs,dsfont}
\usepackage{color}
\usepackage{mathtools}
\usepackage{hyperref}
\usepackage{enumitem}
\usepackage{todonotes}
\usepackage[normalem]{ulem}
\usepackage{subcaption}
\usepackage{csquotes}
\usepackage{geometry}
\usepackage{enumitem}
\usepackage[breakable]{tcolorbox}
\usepackage{thmtools}
\usepackage{thm-restate}

\usepackage{tikz}
\usetikzlibrary{decorations.pathreplacing}

\numberwithin{equation}{section}
\numberwithin{figure}{section}

\def\R{\mathbb{R}}

\def\Q{\mathbb{Q}}
\def\Z{\mathbb{Z}}
\def\N{\mathbb{N}}

\def\T{\mathbb{T}}

\def\1{\mathds{1}}

\newcommand{\Tile}{\operatorname{Tile}}

\newcommand{\interior}{\operatorname{int}}

\renewcommand\le{\leqslant}
\renewcommand\ge{\geqslant}
\renewcommand\leq{\leqslant}
\renewcommand\geq{\geqslant}

\renewcommand\hat{\widehat}

\newcommand{\diam}{\operatorname{diam}}

\newcommand{\quake}[2]{\operatorname{Plates}_{\operatorname{nc}}(#2,#1)}
\newcommand{\quakeall}[2]{\operatorname{Plates}(#2,#1)}
\newcommand{\quakeone}[2]{\operatorname{Plates}_1(#2,#1)}
\newcommand{\orbit}[3]{\mathcal{O}_{#1}(#3;#2)}

\theoremstyle{plain}
\newtheorem{thm}{Theorem}[section]
\newtheorem{theorem}{Theorem}[section]

\newtheorem{lemma}[thm]{Lemma}
\newtheorem{corollary}[thm]{Corollary}

\newtheorem{proposition}[thm]{Proposition}

\newtheorem{claim}[thm]{Claim}
\newtheorem*{claim*}{Claim}

\theoremstyle{definition}
\newtheorem{definition}[thm]{Definition}
\newtheorem*{definition*}{Definition}
\newtheorem*{remark*}{Remark}
\newtheorem{remark}[thm]{Remark}

\newtheorem*{corollary*}{Corollary}

\begin{document}

\title{Translational tilings by axis-parallel polygonal sets}

\author[J. de Dios Pont]{Jaume de Dios Pont}
\address{NYU Center for Data Science, 60 5th Ave, New York, NY 10011}
\email{jdedios@nyu.edu}
     
\author[J. Greb\'ik]{Jan Greb\'ik}
\address{Computer Science Institute, Charles University, Malostranské nám. 25, 118 00 Praha~1, Czechia.}
\email{grebikj@gmail.com}
     
\author[R. Greenfeld]{Rachel Greenfeld}
\address{Department of Mathematics, Northwestern University, Evanston, IL 60208.}
\email{rgreenfeld@northwestern.edu}

\author[J. Madrid]{Jos\'e Madrid}
\address{Department of  Mathematics, Virginia Polytechnic Institute and State University,  225 Stanger Street, Blacksburg, VA 24061-1026, USA.}
\email{josemadrid@vt.edu}

    	\subjclass[03B25, 52C22, 52C23]{03B25, 52C22, 52C23}
    	\date{}
    	
    	\keywords{tiling, periodicity, decidability}

\begin{abstract}
    The periodic tiling conjecture asserts that if a bounded set $\Omega\subset \mathbb R^d$ of finite measure tiles $\mathbb R^d$ by translations, then it admits at least one \emph{fully periodic} tiling.
    This conjecture was proved to hold in $\mathbb R$ by Lagarias and Wang, and was recently disproved by Greenfeld and Tao in sufficiently high dimensions.
    Its discrete analogue in $\mathbb Z^2$ was proved by Bhattacharya, whereas the conjecture in $\mathbb R^2$ remains open.
    In this paper, we analyze the structure of tilings by \emph{axis-parallel polygonal sets}: bounded open sets in $\mathbb R^2$ whose boundary is a finite union of line segments parallel to the axes.
    We prove that every such tiling that is topologically minimal is weakly periodic. 
    Moreover, we show that if an axis-parallel polygonal set $\Omega$ tiles $\R^2$ then it admits a quasi-periodic tiling (i.e., a finite union of translated lattices).
    The proofs combine geometric decompositions of tilings with rational approximation and discrete tiling theory.
    As a consequence, we obtain that the translational tiling problem for axis-parallel polygonal sets with real algebraic vertex coordinates is decidable.
\end{abstract}

\maketitle

\section{Introduction}
We say that a bounded measurable set $\Omega\subset \R^d$ {\bf \emph{tiles $\R^d$ by translations}} if there exists a
countable set $T\subset \R^d$ such that the family $$\Omega+t,\quad t\in T$$
covers almost every point in $\R^d$ exactly once.
We then say that {\bf \emph{$T$ is a tiling of $\R^d$ by $\Omega$}} and write $\Omega\oplus T=\R^d$.
We denote by $\Tile(\Omega;X)$ the set of all tilings of $X$ by $\Omega$, and we say that $\Omega$ is a {\bf \emph{(translational) tile}} if $\Tile(\Omega;\R^d)\not=\emptyset$.
A tiling $T$ is {\bf \emph{fully periodic}}, or simply {\bf \emph{periodic}}, if it is invariant under a full-rank lattice.
It is {\bf \emph{weakly periodic}} if it is a finite disjoint union of sets with nonzero periods, and {\bf \emph{quasi-periodic}} if it is a finite disjoint union of periodic sets.

The {\bf \emph{periodic tiling conjecture}} ({\bf \emph{PTC}}) asserts that every translational tile admits a periodic tiling \cite{S,GS,LW}.
It holds in $\R$~\cite{LW,KolountzakisLagarias} but fails in sufficiently high dimensions~\cite{GreenfeldTaoCounterexample}\footnote{A recent preprint of OpenAI announces a counterexample to PTC in $\Z^3$; see \cite{OpenAITiling3D}.}, even for connected open tiles \cite{GK}. Subsequently, the undecidability of translational tilings was obtained  \cite{GTundecidability}. See also \cite{G} for an exposition of the ideas motivating these constructions, as well as the references therein.
In $\R^2$, PTC remains open, although it is known for topological disks~\cite{GiraultBeauquierNivat,Kenyon,err} and rational polygonal sets~\cite{rational}.
The discrete analogue of PTC in $\Z^2$ was proved by Bhattacharya~\cite{Bhattacharya}.
Subsequently, Greenfeld and Tao~\cite{GreenfeldTaoStructureZd} showed that every tiling of $\Z^2$ by a finite set is
weakly periodic.
The structure theory developed in \cite{GreenfeldTaoStructureZd} and its refinements are key ingredients in our work.

In this paper, we study the structure of tilings of $\R^2$
by {\bf \emph{axis-parallel polygonal sets}}, i.e.,
  a bounded regular open set whose boundary consists of finitely many line segments, each parallel to one of the coordinate axes.
See Figure~\ref{fig:tiling}.

\begin{figure}[ht]
    \centering
    \includegraphics[width=0.8\linewidth]{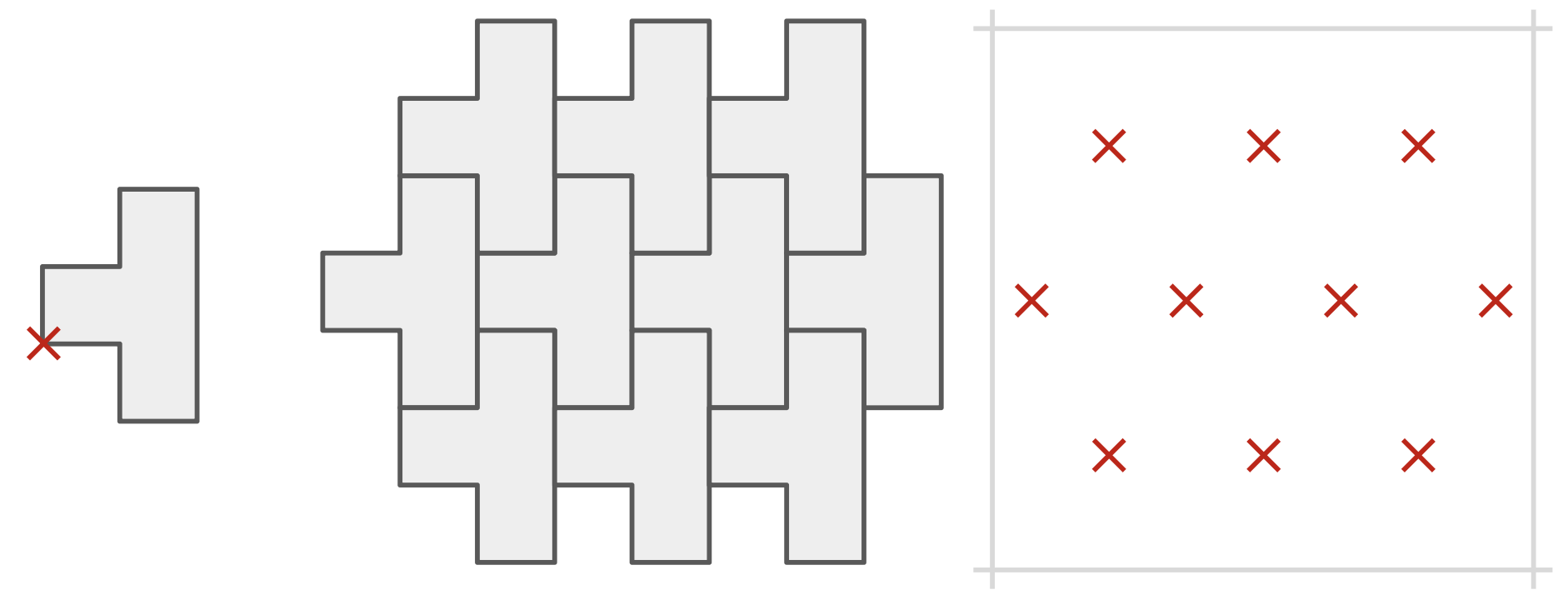}
    \caption{On the left, an axis-parallel tile $\Omega\subseteq \R^2$, on the right, a translation set $T\subseteq \R^2$ and in the middle, the obtained set $\Omega\oplus T$.}
    \label{fig:tiling}
\end{figure}

\subsection{Main results}
A key difference between discrete tilings of $\Z^2$ and their continuous counterparts is that every tiling of $\Z^2$ is locally simple.
This phenomenon is captured in the following definition.

\begin{definition}
    A discrete set $A\subseteq  \mathbb{R}^d$ has {\bf \emph{finite local complexity}} if for every $R>0$ the collection
    $$\{((\mathcal{B}_R+a)\cap A)-a:a\in A\},$$
    where $\mathcal{B}_R$ is the open ball of radius $R$ around the origin, is finite.    
\end{definition}

Observe that every periodic tiling has finite local complexity.
On the other hand, it is straightforward to produce tilings of $\R^2$ with the unit square $(0,1)^2$ that do not have finite local complexity.
A more complicated example of a quasi-periodic tiling without finite local complexity was found by LLMs~\cite[Section~3.2]{Bolzano}.
Our first main result, that is a consequence of more general Theorem~\ref{thm:MainLocallyFin}, and its proof extend the Greenfeld--Tao structure theory \cite{GreenfeldTaoStructureZd} for tilings of $\Z^2$ to tilings of $\R^2$ that have finite local complexity.

\begin{theorem}[Periodicity of tilings of finite local complexity]\label{thm:IntroMainLocallyFin}
Let $\Omega$ be an axis-parallel polygonal set. If  $T\in \Tile(\Omega,\R^2)$ has finite local complexity 
then $T$ is weakly periodic, and there is $S\in \Tile(\Omega;\R^2)$ that is periodic.
\end{theorem}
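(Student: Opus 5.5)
The strategy is to reduce to the discrete setting of $\Z^2$ (or a finitely generated subgroup of $\R^2$) and then import the Greenfeld--Tao structure theory together with Bhattacharya's theorem.

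\emph{Step 1: put the tiling on a lattice.} Let $T$ have finite local complexity. Since $\Omega$ is bounded, every $t\in T$ has only finitely many possible neighbourhood patterns within a ball of radius $R=2\diam\Omega$. So the difference set
$$D=\{t-t':t,t'\in T,\ |t-t'|<R\}$$
is finite. Because $\R^2$ is covered, any two translates are joined by a chain of neighbours, and hence $T-t_0\subseteq \Gamma:=\langle D\rangle$, a finitely generated subgroup of $\R^2$. Adjoin also the vertex coordinates of $\Omega$, giving a finitely generated group $G\cong\Z^n$ that contains $\Gamma$ and all vertices of $\Omega$.

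\emph{Step 2: discretize.} The lines $x=g_1$, $y=g_2$ with $g=(g_1,g_2)\in G$ cut the plane into axis-parallel rectangles. Each such rectangle is either contained in a translate $\Omega+t$ or disjoint from it, because the boundaries of all tiles lie on these lines. I would rather not work with this dense grid directly. Instead I would pick a generic homomorphism $\phi:G\to\Z$ on each coordinate group, that is, project the tiling through the finitely generated groups $\pi_1(G),\pi_2(G)\subseteq\R$. This should turn $T$ into a tiling of $\Z^{n_1}\times\Z^{n_2}$ by a finite set $F$ built from the cells of $\Omega$, restricted along ``generic lines''.

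The cleaner route is slicing. Restrict the tiling to a horizontal line $y=c$ with $c$ generic. Along it, the pattern is a 1D tiling by finitely many interval types, whose endpoints lie in a finitely generated group. The 2D structure is recovered by the finitely many combinatorial cell types. The goal is the statement that $T$ corresponds to a tiling of $\Z^2$ by a finite set $F$ via a linear map $L:\Z^2\to\R^2$ compatible with the cell structure. For this I would use finite local complexity to show that the ``combinatorial rectangle lattice'' of the tiling is a rank-2 grid.

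\emph{Step 3: apply the discrete theory.} Assuming $T=L(A)+t_0$ with $A\oplus F=\Z^2$, the Greenfeld--Tao structure theorem gives that $A$ is weakly periodic, i.e.\ a finite disjoint union of sets with nonzero periods. Pushing this forward by $L$ shows that $T$ is weakly periodic. Bhattacharya's theorem gives a periodic tiling $A'$ of $\Z^2$ by $F$. Then $S=L(A')$ is periodic, and it tiles $\R^2$ by $\Omega$ because the correspondence between discrete and continuous tilings is exact: each discrete cell maps to a rectangle of the cell decomposition.

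\emph{Main obstacle.} I expect Step 2 to be the hardest part: constructing $L$ so that the correspondence holds when the coordinate groups have rank greater than one. Irrational side lengths lead to a higher-rank group $\Z^n$ rather than $\Z^2$. My first attempt would be to work in $\Z^{n}$ with $n$ possibly greater than $2$, along the lines of the paper's general Theorem~\ref{thm:MainLocallyFin}. The point is that the tiling equation is preserved under any homomorphism $G\to\R^2$ that keeps the order of cell boundaries along each axis. I would pick a rational homomorphism close to the inclusion, a rational approximation that respects all the finitely many order relations forced by finite local complexity. This reduces to a rational tiling, hence to $\Z^2$. Weak periodicity is then transferred back by noting that periods in the image lift to periods in $G$.
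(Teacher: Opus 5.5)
You are right to drop the linear map $L:\Z^2\to\R^2$. Already for the unit square, rows in which each row is shifted against the previous one by $\sqrt2$ or by $\sqrt3$ (both occurring) give a tiling of finite local complexity whose differences generate a rank-$3$ group, so $T-t_0\not\subseteq L(\Z^2)$. Your eventual reduction, a $\Q$-linear approximation $\varphi$ close to the identity that preserves the finitely many order relations and turns $T$ into a rational tiling $T^\varphi$ by $\Omega^\varphi$, is exactly the paper's (Theorems~\ref{thm:GoodApproximation} and~\ref{thm:GoodFiniteLocalComplexity}).

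Step~3 then fails for both conclusions. For the periodic tiling, Bhattacharya's theorem applied to the discrete tile $F$ coming from $\Omega^\varphi$ gives a periodic tiling by $\Omega^\varphi$, not by $\Omega$. The correspondence only goes one way: $\varphi$ lives on $K_{(\Omega,T)}$ and transports the single tiling $T$. A tiling $A'$ of $\Z^2$ by $F$ has translations not of the form $t^\varphi$, so there is nothing to pull back, and $\Omega^\varphi$ may admit tilings with no counterpart for $\Omega$. The paper never uses Bhattacharya here. It builds $S$ out of $T$ itself, sliding pieces $T_{i,Q}$ by vectors $\beta_{Q,m}$ measured in $T$ (part (IIa) in Section~\ref{subsec:ProofOfThmLocallyFin}). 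That is why it needs the refined discrete Theorem~\ref{thm:GoodParamDiscrete}, whose property~(2) says the regions covered by the pieces are translates of one another, rather than just weak periodicity. It must also show that the lattices obtained for different plates intersect in a lattice (Claim~\ref{cl:Lattice}) and treat columns separately.

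For weak periodicity, ``periods in the image lift to periods in $G$'' is the main difficulty, not a remark. Suppose a piece $A^\varphi$ of $T^\varphi$ (with $A\subseteq T$) is $h$-periodic. Each $t\in A$ has a unique partner $t'$ with $(t'-t)^\varphi=h$. But $\varphi$ has a nontrivial kernel on $G$ once $G$ has rank greater than $2$, so $t'-t$ may depend on $t$. Finite local complexity makes $\varphi$ faithful on differences only below a scale $M$, and $\varphi$ must be chosen after $M$ (Theorem~\ref{thm:GoodFiniteLocalComplexity}(9)--(13)). Meanwhile the periods Greenfeld--Tao provide depend on $F$, whose size blows up as $\varphi$ is refined, so you cannot arrange $\|h\|<M/2$.

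The paper breaks this circularity by chaining:
\begin{itemize}
\item For a piece without gaps, consecutive points of $T^\varphi_{i,Q}$ along its line are within $90\diam(\Omega)$ (Claim~\ref{cl:Enumeration}). So Theorem~\ref{thm:GoodFiniteLocalComplexity}(13) propagates the period one short step at a time.
\item Pieces with gaps go through uniqueness of the tiling of $\Omega^\varphi\oplus T^\varphi_{i,Q}$ (Proposition~\ref{pr:GapsOne}).
\item Transferring the relation between $T_{i,Q}$ and $T_{i,Q+m\widetilde{\gamma}}$ needs the linking arguments (Claim~\ref{cl:LinkedEquivalence}, Propositions~\ref{pr:NotLinkedOne} and~\ref{pr:NotLinkedThree}).
\item Nontrivial earthquake decompositions are handled via Theorem~\ref{thm:PeriodicityMain}.
\end{itemize}
Without an argument of this kind, neither weak periodicity of $T$ nor a periodic tiling by $\Omega$ follows from your Step~3.
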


To prove Theorem~\ref{thm:IntroMainLocallyFin} and the results below, we develop a method for approximating tilings by axis-parallel polygonal sets with discrete tilings of $\Z^2$, see Section~\ref{sec:RationalApprox}.
This is done by encoding the relevant data in a system of linear equations over $\Q$.
The difficulty lies in the fact that the quantitative bounds in~\cite{GreenfeldTaoStructureZd} depend on the size of the
discrete tile, which grows as the approximations become finer.
We overcome this difficulty by showing that sufficiently faithful local approximations preserve the relevant global
structure of the tiling.
This step relies crucially on finite local complexity and is developed in Sections~\ref{sec:GoodApprox}, \ref{sec:FinerStructure} and~\ref{sec:PeriodicityFLC}.

The main novelty of our work is the treatment of tilings without finite local complexity.
Our starting point is the following observation, originating in the work of Kenyon~\cite{Kenyon}.
If a tiling $T$ of $\R^2$ by an axis-parallel polygonal set $\Omega$ does not have finite local complexity, then some weak limit $S$ of translates of $T$ admits a decomposition $S=S_0\sqcup S_1$ into nonempty sets such that the regions $\Omega\oplus S_0$ and $\Omega\oplus S_1$ are both either vertically or horizontally invariant, see Section~\ref{sec:EarthquakeDecomposition}.
Hence, their common boundary consists of parallel straight lines, and the pieces $S_0$ and $S_1$ can be slid independently in the direction of these lines without changing the regions they cover.

Building on this viewpoint, we introduce the {\bf \emph{earthquake decomposition of a tiling $T$ with respect to $\Omega$}} in Section~\ref{sec:earthquakesperiod}.
For a fixed coordinate direction, this decomposition consists of minimal collections of translations whose covered regions are invariant in that direction.
For clarity, we focus on the vertical direction only in this introduction.
In order to state are results precisely, we need to distinguish special pieces in the decomposition.

\begin{definition}
    We say that $S\subseteq\R^2$ is a {\bf \emph{vertical column for $\Omega$}} if there is
    $a\in\R$ such that
    $$\Omega\oplus S=(p_1(\Omega)+a)\times\R,$$
    where $p_1$ denotes projection onto the first coordinate.
    For $T\in\Tile(\Omega;\R^2)$, we denote by $T_c$ the union of all subsets of $T$ that are vertical columns.
\end{definition}

The pieces of the earthquake decomposition contained in $T\setminus T_c$ are called {\bf \emph{tectonic plates}}, and their collection is denoted by $\quake{T}{\Omega}$.
Much of the paper is devoted to showing that under a mild dynamical assumption on $T$ we have that $\Omega\oplus P$ is periodic for each $P\in \quake{T}{\Omega}$.
This in particular implies that $\Omega\oplus T_c$ is periodic as well.

To make the dynamical assumption precise, recall that one can endow the set of all tilings with the weak topology and dynamics coming from the translations by $\R^2$.
This turns $\Tile(\Omega,\R^2)$ into a compact dynamical system.
A tiling is {\bf \emph{topologically minimal}} if its orbit closure contains no proper nonempty closed invariant subset.
Birkhoff's theorem guarantees that every nonempty orbit closure contains such a tiling.
Our main structural result connects this dynamical minimality with earthquake decompositions and finite local complexity, see Section~\ref{sec:MinimalTiling}.

\begin{restatable}{theorem}{mainEarthquaketheorem}
\label{thm:IntroMainTectonicPlatesFinLocComp}
    Let $\Omega$ be an axis-parallel polygonal set.
    If $T\in \Tile(\Omega,\R^2)$ is topologically minimal, then, after possibly swapping the vertical and horizontal axes, there is a finite decomposition
    $$T=T_c\sqcup T_1\sqcup\dots\sqcup T_m,$$ 
    with $m\ge 0$, such that for every $1\le j\le m$, the set $T_j$ has finite local complexity and $\Omega\oplus T_j$ is periodic and $\R(0,1)$-invariant.
\end{restatable}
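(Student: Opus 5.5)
We argue by cases on whether $T$ itself has finite local complexity, using the earthquake decomposition from Section~\ref{sec:earthquakesperiod}.

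\textbf{Finite local complexity.} If $T$ has finite local complexity, Theorem~\ref{thm:IntroMainLocallyFin} (more precisely its general form, Theorem~\ref{thm:MainLocallyFin}) says that $T$ is weakly periodic. In fact the finer structure statement of Sections~\ref{sec:FinerStructure}--\ref{sec:PeriodicityFLC} should give more. Either $T$ is periodic, in which case we take $m=1$ and $T_1=T$ when $\Omega\oplus T$ is trivially $\R(0,1)$-invariant. Or the Greenfeld--Tao type decomposition splits $T$ into finitely many one-periodic pieces whose covered regions are strips invariant in a common direction. After swapping axes, that direction is vertical. The strips that are columns go into $T_c$, and the remaining pieces become $T_1,\dots,T_m$. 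Each such piece has finite local complexity, being a subset of $T$.

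\textbf{Not finite local complexity.} Here we use the Kenyon-type observation. Some weak limit $S$ of translates of $T$ splits as $S=S_0\sqcup S_1$, with $\Omega\oplus S_i$ invariant in one coordinate direction. By minimality, $T$ lies in the orbit closure of $S$, and conversely. So every local pattern of $T$ appears in $S$, and vice versa.

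The plan is to show that the earthquake decomposition of $T$ in that direction (vertical, after swapping) is finite and nontrivial. Minimality should force the fault lines to occur with bounded gaps. A long region of $T$ without a vertical fault line would pass to a limit with no fault line. That limit is in the orbit closure, and hence is $T$ up to translation. This contradicts the existence of fault lines arbitrarily close to any pattern.

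Granting this, each tectonic plate $P\in\quake{T}{\Omega}$ covers a region between consecutive fault lines, so $\Omega\oplus P$ is $\R(0,1)$-invariant. The key step is then to show three things.

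\begin{enumerate}
\item[(a)] Each plate has finite local complexity. Inside a plate there is no further vertical sliding freedom, by minimality of the plate. Any loss of finite local complexity would, via Kenyon's argument applied inside the plate, produce a horizontal fault. Minimality of $T$ would then transport that horizontal fault into the vertical columns structure, which is incompatible with the vertical faults. Alternatively, it would yield a finer vertical decomposition, contradicting minimality of the plate.
\item[(b)] $\Omega\oplus P$ is periodic. This follows from the results of the paper showing that, under the dynamical assumption, plates have periodic covered region. Concretely, we apply the finite-local-complexity theory (Theorem~\ref{thm:MainLocallyFin}) to $P$ viewed as a tiling of a vertical strip union, and use minimality to get a horizontal period for the family of strips.
\item[(c)] There are only finitely many plates up to the period lattice. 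This uses the bounded-gap property together with recurrence from minimality, which gives a horizontal period of the whole strip pattern.
\end{enumerate}

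Grouping plates into finitely many classes modulo the horizontal period gives $T_1,\dots,T_m$. Each $T_j$ is a periodic union of plates, so it has finite local complexity and $\Omega\oplus T_j$ is periodic and vertically invariant.

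\textbf{Main obstacle.} I expect the hardest part to be step (a) together with the finiteness in (c). One must show that minimality rules out plates without finite local complexity other than columns. Columns can slide independently, so they genuinely lack finite local complexity, which is why they are separated out as $T_c$. I would first try to prove that a non-column plate whose translates accumulate non-trivially yields, in the limit, a decomposition of a plate into two vertically invariant pieces. That would contradict the minimality of the plate in the earthquake decomposition.
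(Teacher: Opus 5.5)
Your proposal has a genuine gap in the case without finite local complexity, and it sits exactly where you invoke minimality. The bounded-gap argument does not work. An element of the orbit closure of a minimal $T$ need not be a translate of $T$; minimality only says its orbit closure is again that of $T$. Moreover, weak limits can create fault lines. So neither ``a limit with no fault line'' nor ``$T$ lies in the orbit closure of the Kenyon limit $S$'' gives a contradiction, and neither carries the split from $S$ back to $T$.

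The missing idea is \emph{finite horizontal complexity}. A priori the Kenyon limit may look as in Lemma~\ref{lem:CharInfComplex}: a single non-column plate whose covered region contains arbitrarily wide vertical strips, together with aperiodically placed columns. Then $\Omega\oplus S_c$ has no horizontal period and fault lines have unbounded gaps. The paper excludes this inside the orbit closure by Theorem~\ref{thm:FinLocalComplexityTrichotomy} (horizontal earthquake decompositions, simultaneous vertical and horizontal columns in rational approximations, product sets). This produces $S$ with a nontrivial vertical decomposition \emph{and} finite horizontal complexity. Only then does Theorem~\ref{thm:PeriodicityMain} give a period $\beta_S$. The structure is transferred to $T$ by taking $S+h_n\to T$, reducing $p_1(h_n)$ modulo $\beta_S$, and applying Lemmas~\ref{lem:EarthquakePreserved} and~\ref{lem:FinHorComplexLimits}.

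Your step (b) is circular. Theorem~\ref{thm:MainLocallyFin} concerns tilings of $X\times\R$ with $X$ \emph{already} periodic, so it cannot produce the period. Periodicity of slabs comes from Theorem~\ref{thm:PeriodicityMain}, i.e.\ rational approximation plus the discrete Theorem~\ref{thm:DiscretePeriodicity}, and that result needs finite horizontal complexity as input.

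Steps (a) and (c) need repair as well. In (c) you need that there are only finitely many non-column plates outright (Corollary~\ref{cor:FinNumberPlates}, from Lemma~\ref{lem:FinNumberPlatesSecondVersion}). Grouping infinitely many plates into classes modulo a period would not preserve finite local complexity, since plates in one class can be slid vertically independently.

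Your closing idea for (a) is the right one, but the split appears only in a limit, so it contradicts nothing about $P_0$ itself. One needs the limit of $P_0$ to be a single plate of the limit tiling. The paper gets this in four moves:
\begin{enumerate}
\item It uses $\Lambda=\langle(0,1),(\beta,0)\rangle\subseteq\Gamma(\Omega\oplus P_0)$, so (b) must come first.
\item It applies the lattice version of Theorem~\ref{thm:MainEarthSplit} to $P_0$ as a tiling of $X_{P_0}\times\R$ with $X_{P_0}\neq\R$. This forces the split to be vertical, so no ``horizontal fault'' alternative arises.
\item It passes to a limit $T+\lambda_n\to S$.
\item It invokes Lemma~\ref{lem:EarthquakePreserved} to identify the limit of $P_0$ with a plate of $S$, which contradicts Lemma~\ref{lem:BasicQuake}~(ii).
\end{enumerate}

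Finally, in the finite-local-complexity case, weak periodicity is not the relevant input, and your ``periodic or common-direction strips'' dichotomy is unsupported. What works is this. If the decomposition is trivial, take $T_1=T$. Otherwise, finite local complexity excludes condition~(4) of Lemma~\ref{lem:CharInfComplex}, so $T$ has finite horizontal complexity, and Theorem~\ref{thm:PeriodicityMain} makes $\Omega\oplus T_c$ and each $\Omega\oplus P$ periodic.
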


To prove Theorem~\ref{thm:IntroMainTectonicPlatesFinLocComp}, we first establish new periodicity results for tectonic plates of discrete tilings of $\Z^2$ in Section~\ref{sec:discrete}, building on the structure theory of Greenfeld and Tao~\cite{GreenfeldTaoStructureZd}.
We then combine these results with our approximation theory for tilings by axis-parallel polygonal sets.
A substantial part of the technical work concerns establishing the periodicity of $\Omega\oplus T_c$; this is addressed in Sections~\ref{sec:earthquakesperiod} and \ref{sec:finhorcomp}.

Combining Theorems~\ref{thm:IntroMainLocallyFin} and~\ref{thm:IntroMainTectonicPlatesFinLocComp} yields our main periodicity result, proved in Section~\ref{sec:app}.

\begin{restatable}{theorem}{mainQuasitheorem}
\label{thm:IntroQuasiPTC}
Let $\Omega$ be an axis-parallel polygonal set.
If $T\in \Tile(\Omega,\R^2)$ is topologically minimal, then $T$ is weakly periodic, and there is a quasi-periodic tiling $S\in\Tile(\Omega;\R^2)$ that is $\langle\vec v\rangle$-periodic for some $\vec v\in\R^2\setminus\{(0,0)\}$.
\end{restatable}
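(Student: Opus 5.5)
The plan is to start from the decomposition of Theorem~\ref{thm:IntroMainTectonicPlatesFinLocComp}. After possibly swapping the axes, write $T=T_c\sqcup T_1\sqcup\dots\sqcup T_m$, where each $T_j$ has finite local complexity and $\Omega\oplus T_j$ is periodic and $\R(0,1)$-invariant. The periodicity of $\Omega\oplus T_c$ (established along the way to Theorem~\ref{thm:IntroMainTectonicPlatesFinLocComp}) will also be used.

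\emph{Weak periodicity.} Treat each piece separately.
\begin{enumerate}
\item \emph{The column part $T_c$.} Since $T_c$ is a union of vertical columns $S$ with $\Omega\oplus S=(p_1(\Omega)+a)\times\R$, each column tiles a vertical strip. Restricted to this strip, $S$ gives a one-dimensional tiling problem in the vertical variable: fibre by fibre, the vertical cross-sections of $\Omega$ tile $\R$.
\begin{itemize}
\item The first attempt is to argue, as in Lagarias--Wang, that each column is vertically periodic.
\item If this fails, I would instead group the columns into finitely many families according to their local pattern, using minimality.
\item A cleaner alternative is to use minimality to pass to a $\langle(0,h)\rangle$-invariant subset. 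Rather than relying on this, I would use the fact, proved in the paper for $T_c$, that each column is periodic in the vertical direction with a period taken from a finite set; then $T_c$ splits into finitely many vertically periodic subsets.
\end{itemize}
\item \emph{The plates $T_j$.} Each $T_j$ has finite local complexity and covers the periodic, vertically invariant region $\Omega\oplus T_j$. I would apply the relative (region) version of Theorem~\ref{thm:MainLocallyFin} to get that $T_j$ is weakly periodic. That theorem is the general form behind Theorem~\ref{thm:IntroMainLocallyFin}, so I expect it to handle tilings of such regions.
\end{enumerate}
A finite union of weakly periodic sets is weakly periodic, so $T$ is weakly periodic.

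\emph{A quasi-periodic tiling.} Build $S$ piece by piece.
\begin{enumerate}
\item On each $T_j$, apply the finite-local-complexity theory to replace $T_j$ by a periodic tiling $S_j$ of the same periodic region $\Omega\oplus T_j$. This uses the "there is a periodic $S$" half of Theorem~\ref{thm:MainLocallyFin}, applied to the region.
\item Replace $T_c$ by a tiling $S_c$ of $\Omega\oplus T_c$ that is periodic. The region $\Omega\oplus T_c$ is periodic and is a union of strips $(p_1(\Omega)+a)\times\R$. Since a single column tiles its strip, I will pick one column tiling $C$ of $p_1(\Omega)\times\R$ that is periodic under some $(0,h)$; one-dimensional PTC along the fibres makes this possible. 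Then set $S_c=\bigcup_a (C+(a,0))$. The set of shifts $a$ is a finite union of translated lattices of $\R$, because $\Omega\oplus T_c$ is periodic with a horizontal period, so $S_c$ is quasi-periodic.
\end{enumerate}
Then $S=S_c\sqcup S_1\sqcup\dots\sqcup S_m$ is a finite union of periodic sets. For the common nonzero period $\vec v$, take a common multiple of the vertical periods: each $S_j$ region is $\R(0,1)$-invariant and periodic, so the periodic tilings can be chosen with commensurate vertical periods after passing to sublattices. This choice uses the rationality provided by the approximation scheme.

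\emph{Main obstacle.} The hardest step is producing a common vector $\vec v$ for all the pieces, that is, ensuring that the vertical periods of the $S_j$ and of $C$ are commensurable. If they are not, I will first try to rechoose each $S_j$ via the rational approximation so that its lattice contains $(0,h)$ for one common $h$, for instance $h$ a period of the tile's vertical structure.
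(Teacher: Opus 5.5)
Your weak-periodicity argument, and your replacement of $T_c$ and of each $T_j$ by a periodic tiling of the same region, follow the paper: Theorem~\ref{thm:MainLocallyFin} is applied to the regions, and Lagarias--Wang handles the columns. One small point: periodicity of the shift set $p_1(T_c)$ does not follow just from periodicity of $\Omega\oplus T_c$. It needs a Newman-type argument for tilings of a periodic subset of $\R$ by $p_1(\Omega)$, as in Proposition~\ref{pr:WLOGcolumns}.

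The genuine gap is the common period $\vec v$. You correctly identify it as the main obstacle, but you do not resolve it.
\begin{itemize}
\item Passing to finite-index sublattices only replaces a vertical period by an integer multiple. It cannot make incommensurable vertical periods commensurable.
\item ``Rechoosing each $S_j$ via the rational approximation'' is not an argument.
\item A ``period of the tile's vertical structure'' only makes sense when $\Omega$ tiles a column.
\end{itemize}

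The missing idea is a density count combined with an approximation acting on the first coordinate only.
\begin{enumerate}
\item Since $\Omega\oplus S_j=X_j\times\R$ with $X_j\neq\R$ periodic, $p_1$ maps the period lattice of $S_j$ into a discrete group. After passing to sublattices, the lattice is $\langle(0,\alpha_j),(\beta,z_j)\rangle$ with a common $\beta$.
\item Counting tiles per fundamental domain gives $\ell_j\,m_2(\Omega)=\alpha_j\,m_1(X_j\cap[0,\beta])$ with $\ell_j\in\N$. However, the ratios of the numbers $m_1(X_j\cap[0,\beta])$ could a priori be irrational.
\item Now apply a good approximation $\varphi$ to first coordinates only. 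By the proof of Theorem~\ref{thm:GoodApproximation} and by Lemma~\ref{lem:ApproxKeepsCosetPieces}, $S^1\in\Tile(\Omega^1;\R^2)$.
\item Each $S^1_j$ is $\langle(0,\alpha_j),(\varphi(\beta),z_j)\rangle$-periodic with the \emph{same} vertical period $\alpha_j$, since only first coordinates move. It covers $X^1_j\times\R$, and $m_1(X^1_j\cap[0,\varphi(\beta)])\in\Q$.
\item The same count gives $\ell_j\,m_2(\Omega^1)=\alpha_j\,m_1(X^1_j\cap[0,\varphi(\beta)])$ for some positive integer $\ell_j$. Since $m_2(\Omega^1)$ does not depend on $j$, this yields $\alpha_j\in\Q\alpha_1$.
\item Hence $\bigcap_j\langle(0,\alpha_j)\rangle\neq\{0\}$, and any nonzero element gives $\vec v$.
\end{enumerate}

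When $T_c\neq\emptyset$, your intuition does work. All generic vertical slices of $\Omega$ then have the same length $m_\Omega$, so counting along a vertical line gives $\alpha_j\in\Z m_\Omega$. This is commensurable with the column period, so the approximation is only needed in the column-free case with at least two plates.
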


Theorem~\ref{thm:IntroQuasiPTC} is our strongest conclusion towards PTC.
By Theorem~\ref{thm:IntroMainLocallyFin}, resolving PTC for axis-parallel polygonal sets is equivalent to showing that every tile in this class admits a tiling with finite local complexity.
This seems to be beyond reach of our techniques as they control the periodicity of the pieces separately.
In fact, the tectonic plates might be periodic but the intersection of the corresponding lattices might not be a lattice as the example in \cite[Section~3.2]{Bolzano} demonstrates.

Nevertheless, our results imply decidability of the translational tiling problem for tiles with real algebraic
vertex coordinates.
The precise formulation is given in Section~\ref{subsec:Decidability}.

\begin{restatable}{theorem}{mainDecidabilitytheorem}
\label{thm:decidability}
	The problem whether an axis-parallel polygonal set $\Omega$ with real algebraic vertex coordinates tiles $\R^2$ is decidable.
\end{restatable}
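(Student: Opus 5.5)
We will decide tileability by running two semi-decision procedures in parallel, one that halts if $\Omega$ tiles and one that halts if it does not.

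\emph{Non-tiling side (compactness).} If $\Omega$ does not tile $\R^2$, then by compactness of the weak topology there is some $R$ such that no finite family of translates of $\Omega$ tiles the square $[-R,R]^2$. Any such covering must also tile every smaller square up to a.e.\ overlaps. The statement ``there exist $t_1,\dots,t_N\in\R^2$ with $\Omega+t_i$ pairwise disjoint and covering $[-R,R]^2$ up to measure zero'' is a first-order sentence over the real closed field with algebraic parameters, because $\Omega$ is a finite union of axis-parallel rectangles with algebraic corners. The number $N$ is bounded by $|[-R,R]^2|/|\Omega|$ plus a boundary term. By Tarski--Seidenberg such sentences are decidable, so we enumerate $R=1,2,\dots$ and halt with ``no'' once a square admits no packing-covering.

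\emph{Tiling side.} If $\Omega$ tiles, pick $T\in\Tile(\Omega;\R^2)$. By Birkhoff's theorem its orbit closure contains a topologically minimal tiling. Theorem~\ref{thm:IntroQuasiPTC} then gives a quasi-periodic tiling $S=\bigsqcup_{i=1}^k (F_i+\Lambda_i)$ with finite sets $F_i$ and full-rank lattices $\Lambda_i$. It is also $\langle \vec v\rangle$-periodic.

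We enumerate candidate certificates consisting of $k$, the sizes $|F_i|$, and finite sets $K_i$ of bounded size. The tiling property of a quasi-periodic set is then verified by a finite first-order condition. The natural choice is that $\Omega\oplus S$ is a.e.\ disjoint and covers $\R^2$, checked on a fundamental domain. This fails in general, because the lattices $\Lambda_i$ may be incommensurable and there need be no common fundamental domain; see the example from \cite{Bolzano}.

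Instead we use the finer structure from Theorem~\ref{thm:IntroMainTectonicPlatesFinLocComp}. After swapping axes, every piece covers a region that is periodic and $\R(0,1)$-invariant. The plates therefore cover vertical strips $I_j\times\R$, and each strip is tiled periodically. So a certificate is a finite list of intervals partitioning a period of the horizontal direction, together with, for each interval, a lattice-periodic tiling of the strip by the translates that lie in it. Disjointness and covering within a single strip reduce to a finite check on one fundamental domain of that strip's own lattice, which is again a sentence in the first-order theory of the reals with algebraic parameters. The existence of such a certificate with given combinatorial sizes is an existential real sentence, so we enumerate sizes and decide each with Tarski's algorithm.

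\emph{Main obstacle.} The delicate step is the horizontal structure: showing that the strips $I_j\times\R$ and the column pieces $T_c$ repeat in a finite describable pattern. Here we would use periodicity of $\Omega\oplus T_c$, which says the union of strips is periodic with some horizontal period, so that finitely many strip types suffice. Then we must check that the existence of a tiling is equivalent to the existence of such a bounded certificate. Soundness is immediate, since a certificate literally defines a tiling. Completeness is exactly what the structure theorems supply.
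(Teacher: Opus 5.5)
Your outline is sound, but it takes a genuinely different route from the paper. The paper does not use quantifier elimination at all. It proves the stronger Theorem~\ref{thm:DecidabilityComparison}, decidability relative to an oracle that only compares integer sums of vertex coordinates, and gets the algebraic case because that oracle is computable. The price is that every translation in a certificate must lie in some $k\star V(\Omega)$, and most of Section~\ref{subsec:Decidability} goes into arranging this. Proposition~\ref{pr:CertificateNonTile} slides the $\simeq$-classes of a tiling near $D_N$ so that $D_N$ is covered by a packing with translations in $(16N)^2\star V(\Omega)$. Proposition~\ref{pr:CertificateTile} uses the common vertical period from Claim~\ref{cl:CommonPeriod}, plus vertical sliding and merging of plates and columns, to produce a structured tiling with $\simeq_U$ a single class. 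Then the generators $(0,\alpha),(\beta,\gamma_i)$ and all translations meeting $[0,\beta]\times[0,\alpha]$ lie in $k\star V(\Omega)$, and the nicely periodic case needs its own search. By quantifying over real translations in the theory of real closed fields you bypass all of this. Your non-tiling side is plain compactness plus one sentence per square, and a single per-piece certificate with arbitrary lattices covers both of the paper's tileability cases. What you give up is the oracle-relative result, which needs only sign tests of integer combinations of the coordinates rather than effective real quantifier elimination. The structural core matches the paper's quasi-periodic certificate: disjointness across pieces with incommensurable lattices is reduced to a one-dimensional partition of a horizontal period.

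Three points need to be made precise.
\begin{enumerate}
\item $\Z$ is not definable over $\R$, so ``a finite check on one fundamental domain'' needs an a priori bound on the lattice indices involved. Impose first-order normalization constraints on the basis, as in \eqref{eq:CertificateBasis} or \eqref{eq:BoundedBox}. These bound the relevant indices by an absolute constant (the paper gets $20$) and are attainable after passing to a finite-index sublattice. Also encode that $\Lambda_j$ preserves $X_j\times\R$ by fixing, as part of the combinatorial data, the integer multiples of $\beta$ that occur as horizontal components of its generators. The covering clause is universal, so these are $\exists\forall$ sentences rather than existential ones, which Tarski's procedure still decides.
\item The certificate must be indexed by pieces, not by intervals. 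A piece covers $X_j\times\R$ with $X_j$ a $\beta$-periodic finite union of intervals. When $\Omega$ is disconnected, a single translate meets several of these intervals, so ``the translates that lie in'' one interval is not meaningful.
\item Completeness is not given by the statement of Theorem~\ref{thm:IntroQuasiPTC}, which says nothing about vertically invariant piece regions. Nor is it given by Theorem~\ref{thm:IntroMainTectonicPlatesFinLocComp}, whose $T_j$ only have finite local complexity. It comes from the construction in the proof of Theorem~\ref{thm:MostPeriodic}. That proof applies Theorem~\ref{thm:MainLocallyFin}(ii) to each $T_j\in\Tile(\Omega;X_j\times\R)$ and replaces $T_c$ by a periodic family of columns.
\end{enumerate}
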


\begin{remark}[Generalizations of our argument]
A natural next step is to consider polygonal sets with arbitrary edge directions.
Our main results, except for the existence of a nonzero period vector $\vec{v}$ in Theorem~\ref{thm:IntroQuasiPTC}, extend to polygonal sets whose edges have {\bf \emph{rational slope}}, meaning that each edge is parallel to a nonzero vector in $\Z^2$.
These extensions require additional technical arguments but no new ideas, so we restrict the exposition to axis-parallel polygonal sets for simplicity.\\
Since our rational approximation does not necessarily preserve the combinatorial structure of the tiling when the slopes are not rational, extending our results to arbitrary polygonal sets and, more generally, to bounded measurable subsets of $\R^2$, remains open. 
\end{remark}

\begin{remark}[Lean formalization]
The results of this paper have been formalized in Lean 4.
The formalization is publicly available \url{https://github.com/jaumededios/Tiling}.
\end{remark}

\section*{Acknowledgments:}
JG was supported by an ERC CZ grant LL2608 funded by the Czech Ministry of Education, Youth and Sports from the ERC CZ Program.
RG was supported by NSF CAREER Award DMS-2441769 and by the Alfred P. Sloan Research Fellowship.

\section*{Declaration of AI usage}
The ideas and proofs are entirely the work of the authors.
An LLM model was used for proofreading of the finished manuscript. A couple of remarks regarding alternative arguments suggested by the model in the proofreading process were added in Sections \ref{subsec:Bands} and \ref{subsec:Decidability}, and the presentation in Section \ref{subsec:Decidability} was improved accordingly.

\section{Definitions and notation}\label{sec:Definitions}
We collect definitions and notation that is used throughout the text.

We have $\N=\{0,1,2,\dots\}$.
We equip $\mathbb{R}^d$, mainly for $d\in \{1,2\}$, with the Euclidean distance that we denote as $\|x-y \|$ for $x,y\in \mathbb{R}^d$.
For distinct points $a,b\in \R^d$, we denote as $\omega_{a,b}$ or $P_{a,b}$ the line segment that connects them.
We denote the length of a line segment $\omega$ as $|\omega|$, that is, if $\omega=\omega_{a,b}$, then we have $|\omega|=\|b-a\|$.
For $r\in \mathbb{R}$, write $\mathcal{B}_r\subseteq \R^d$ for the open ball of radius $r$ around the origin $(0,\dots,0)$.
For a measurable $S\subseteq \R^d$ of dimension $n\leq d$, we denote by $m_n(S)$ the $n$-dimensional Lebesgue measure of $S$.
This is mainly used when $d=2$ , $n=1$ and $S$ is a finite union of line segments.
When it is clear from the context, we use the standard set inclusion (or equality) for inclusion (or equality) up to null sets.
For a set $X\subseteq  \mathbb{R}^d$, we denote as $\overline{X}$ the closure of $X$, as $\interior(X)$ the interior of $X$ and as $\partial (X)$ the boundary of $X$.
That is, $\partial(X)=\overline{X}\setminus \interior(X)$.
An open set $X\subseteq\mathbb{R}^d$ is called regular open if $\interior(\overline{X})=X$.

{\bf Polygonal sets.}
A \emph{\bf \emph{polygonal set}} is a bounded regular open set $\Omega\subseteq  \mathbb{R}^2$, whose boundary is a finite union of segments.
Note that a polygonal set can certainly be disconnected and its connected components are not necessarily simply connected.
We denote the vertices of $\Omega$ by $V(\Omega)$, and the edges of $\Omega$ by $E(\Omega)$.

{\bf axis-parallel polygonal sets.}
We say that a polygonal set $\Omega$ is {\bf \emph{axis-parallel}} if every line segment $\omega$ in $E(\Omega)$ lies in a coset of either the $x$-axis, or the $y$-axis; i.e., there exists $s=s(\omega)\in \R^2$ such that $\omega \subseteq (\R\times \{0\}) \cup (\{0\}\times \R) +s$.

{\bf Periodic sets.}
For $S\subseteq \mathbb{R}^d$, or $S\subseteq \mathbb{Z}^d$, we denote as $\langle S\rangle$ the subgroup generated by $S$.
A subgroup $\Gamma\subseteq \mathbb{R}^d$, or $\Gamma\subseteq \mathbb{Z}^d$, is a {\bf \emph{lattice}} if it is discrete and admits a finite volume fundamental domain.

For a closed set $Z\subseteq  \mathbb{R}^d$, we set $\Gamma(Z)$ to be the group of translational symmetries of $Z\subseteq  \R^d$, that is,
$$\Gamma(Z)=\{g\in \mathbb{R}^d:Z+g=Z\}.$$
Observe that $\Gamma(Z)$ is closed subgroup.
We say that $Z\subseteq  \mathbb{R}^d$ is {\bf \emph{periodic}} if $\Gamma(Z)$ contains a lattice.

{\bf Tilings.}
We say that {\bf \emph{$T$ is a tiling of $X\subseteq  \mathbb{R}^d$ by $\Omega$}} if $X=\Omega\oplus T$ up to a null set.
In this paper, we will work mainly in the case when $\Omega$ is an axis-parallel polygonal set and $X$ is equal to $\mathbb{R}^2$ or $E\times \mathbb{R}$, where $E\subseteq  \mathbb{R}$ is a closed and periodic subset of $\mathbb{R}$.
We denote as $\Tile(\Omega;X)$ the set of all tilings of $X$ by $\Omega$.

{\bf Packing.}
Let $\Omega\subseteq \R^d$ be measurable, and $S\subseteq \R^d$. 
We say that $\Omega+S$ is a {\bf \emph{packing}} if the measure $m_d((\Omega+s)\cap (\Omega+s'))$ is zero for every $s\neq s'\in S$.
We abbreviate the fact that $\Omega+S$ is a packing by using the direct sum notation $\Omega\oplus S$.
In particular, $\Omega\oplus S$ does not necessarily mean that $\Omega\oplus S=\R^d$.

{\bf Periodicity of tilings.}
Let $T$ be a tiling of $\mathbb{R}^d$ by $\Omega$.
We say that $T$ is {\bf \emph{periodic}} if $\Gamma(T)$ is a lattice, {\bf \emph{quasi-periodic}} if there is a decomposition $T=T_1\sqcup \dots \sqcup T_m$ such that $\Gamma(T_i)$ is a lattice for every $1\le i\le m$, and {\bf \emph{weakly periodic}} if there is a decomposition $T=T_1\sqcup \dots \sqcup T_k$ such that $\Gamma(T_i)$ is non-trivial for every $1\le i\le k$.

\begin{remark}
    In the discrete case, that is tilings of $\mathbb{Z}^d$, see Section~\ref{sec:discrete}, the notions of periodic and quasi-periodic tilings coincide.
    It is easy to see that this is not the case even for tilings of $\mathbb{R}^2$ by a unit square.
\end{remark}

Next, we only consider the case of $\mathbb{R}^2$.
For $i\in \{1,2\}$, let $p_i\colon \R^2\to \R$ be the {\bf \emph{orthogonal projection}}, defined by $p_i(u)=u_i$ for every $u=(u_1,u_2)\in \R^2$.
For a set $\omega \subseteq \R^2$, which is contained in a vertical line, i.e.,  $\omega \subseteq \{x\}\times \R$ for some $x\in \R$, we identify the singleton $p_1(\omega)=\{x\}$ with its value $x$. Similarly, when $\omega\subseteq \R^2$ is contained in a horizontal line, i.e., $\omega \subseteq \R\times \{y\}$ for some $y\in \R$, we identify the singleton $p_2(\omega)=\{y\}$ with its value $y$.
For $a\in \mathbb{R}^2$ and $A\subseteq  \mathbb{R}^2$ we write $p_1(a)\le p_1(A)$ as a shorthand for $p_1(a)\le p_1(x)$ for every $x\in A$.
We define $p_1(A)\le p_1(a)$, $p_1(a)<p_1(A)$ and $p_1(A)<p_1(a)$ analogously.

{\bf Edges and proper intersection}
Let $\Omega$ be an axis-parallel polygonal set and $\omega,\omega'\in E(\Omega)$ be horizontal and $t,t'\in \R^2$. We say that $\omega+t$ and $\omega'+t'$ {\bf \emph{properly intersect}} if $p_2(\omega+t)=p_2(\omega'+t')$
and $$m_1\left(p_1(\omega+t)\cap p_1(\omega'+t')\right)$$ is positive (see Figure \ref{fig:propaerint}). Note that if $a,a'$ is the left vertex of $\omega,\omega'$ (resp.) and $b,b'$ is the right vertex of $\omega,\omega'$ (resp.), then $\omega+t$ and $\omega'+t'$ properly intersect if and only if $p_2(a+t)=p_2(a'+t')$, $p_1(a+t)<p_1(b'+t')$ and $p_1(a'+t')<p_1(b+t)$.
Proper intersection for \emph{vertical} edges is defined similarly, by only swapping the roles of $p_1$ and $p_2$ in the above definition. 

\begin{figure}[ht]
    \centering
     \rotatebox{90}{\includegraphics[width=0.4\linewidth]{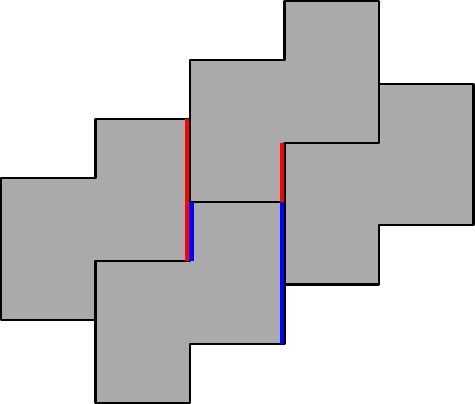}}
    \caption{The blue and red edges on the left properly intersect. The blue and red edges on the right intersect only at a point, and therefore do not properly intersect.}
    \label{fig:propaerint}
\end{figure}

{\bf Proper vertices}
We will use the notion of proper vertex from \cite[Section 2]{rational}: A point $x\in \partial \overline{D}$ is a {\bf \emph{proper vertex of $D\subseteq  \R^2$}} if for every $\epsilon>0$, the intersection  $(\mathcal{B}_\epsilon + x) \cap \partial \overline{D}$ is \emph{not} a line segment. We denote by $W(D)$ the set of proper vertices of $D$.

{\bf Columns}
We say that a bounded measurable set $\Omega$ of positive measure {\bf \emph{can tile a (vertical) column}} if there is $S\subseteq \mathbb{R}(0,1)$ such that $\Omega\oplus S=p_1(\Omega)\times \mathbb{R}$.
Additionally, we say that $S\subseteq \mathbb{R}^2$ is a {\bf \emph{$\mathbb{R}(0,1)$-column for $\Omega$}} if there is $c\in \mathbb{R}$ such tat $\Omega\oplus S=(p_1(\Omega)+c)\times \mathbb{R}$.
Note that $S$ is a $\mathbb{R}(0,1)$-column for $\Omega$ implies that $S$ is a subset of a coset of $\mathbb{R}(0,1)$.
The definition of horizontal column is analogous.

\section{Discrete earthquakes and periodicity}\label{sec:discrete}

\newcommand{\discsim}{\sim^{\text{disc}}_{F,\Uparrow}}
\newcommand{\lito}{\operatorname{Slabs}}

We derive a new periodicity result for discrete tilings of $\Z^2$ that will be used in Section~\ref{sec:earthquakesperiod} as the combinatorial core for establishing a result on the periodicity of planar tilings by axis-parallel polygonal tiles (see Theorem~\ref{thm:PeriodicityMain} bellow).
Our approach\footnote{This approach  was already used in \cite{rational} to show that  tilings of $\R^2$ by polygonal sets with rational vertices are  weakly periodic.} builds on a further development of  the structure theory introduced in \cite{GreenfeldTaoStructureZd}.

\subsection{Tilings of $\Z^2$}
Let $F\subseteq\mathbb{Z}^2$ be finite and $X\subseteq \mathbb{Z}^2$.
We say that $A$ is a tiling of $X$ by $F$, denoted as $F\oplus A=X$, if $F+a$, $a\in A$, partition $X$.
We denote by $\Tile(F;X)$ the set of all tilings of $X$ by $F$.
We are mainly interested in the case when $X=\mathbb{Z}^2$, and $X=E\times \mathbb{Z}$ for some periodic, or finite $E\subseteq \mathbb{Z}$.
See example in Figure \ref{fig:discrete-tiling}.

\begin{definition}[Discrete earthquake decomposition and discrete tectonic plates]\label{def:discplates}
    For $A\in \Tile(F;\Z^2)$, we define the {\bf \emph{(vertical) discrete earthquake decomposition}}\footnote{This notion is similar to  \cite[Definition~4.1]{rational} with $N=1$ and $\vec v=(0,1)$.} of $A$ (with respect to $F$) to be the decomposition
$$A=\bigsqcup_{P\in \quakeall{A}{F}} P$$
of $A$ into {\bf \emph{discrete tectonic plates}}, i.e., sets $P\subseteq A$ such that
\begin{itemize}
    \item [(i)] $F\oplus P$ is $\Z(0,1)$-periodic for every $P\in \quakeall{A}{F}$,
    \item [(ii)] if $P\in \quakeall{A}{F}$ and $P'\subseteq  P$ satisfies (i), then $P'\in \{\emptyset,P\}$.
\end{itemize}
\end{definition}

\begin{remark}
    The discrete earthquake decomposition,
    Definition~\ref{def:discplates}, can be defined in the same
    way for tilings of $E\times\mathbb{Z}$ with arbitrary
    $E\subseteq\mathbb{Z}$.
    When $E$ is periodic, \cite[Lemma~4.2]{rational},
    restated here as Proposition~\ref{pr:EarthquakeStructure},
    Theorem~\ref{thm:DiscretePeriodicity}, and the intermediate
    lemmas extend to this setting as well.
    This follows by applying the structure results of
    \cite{GreenfeldTaoStructureZd} for periodic sets and
    making the corresponding adjustments to the proofs in
    \cite{rational} and in the present section.
\end{remark}

\begin{figure}[ht]
    \centering
    \includegraphics[width=0.8\linewidth]{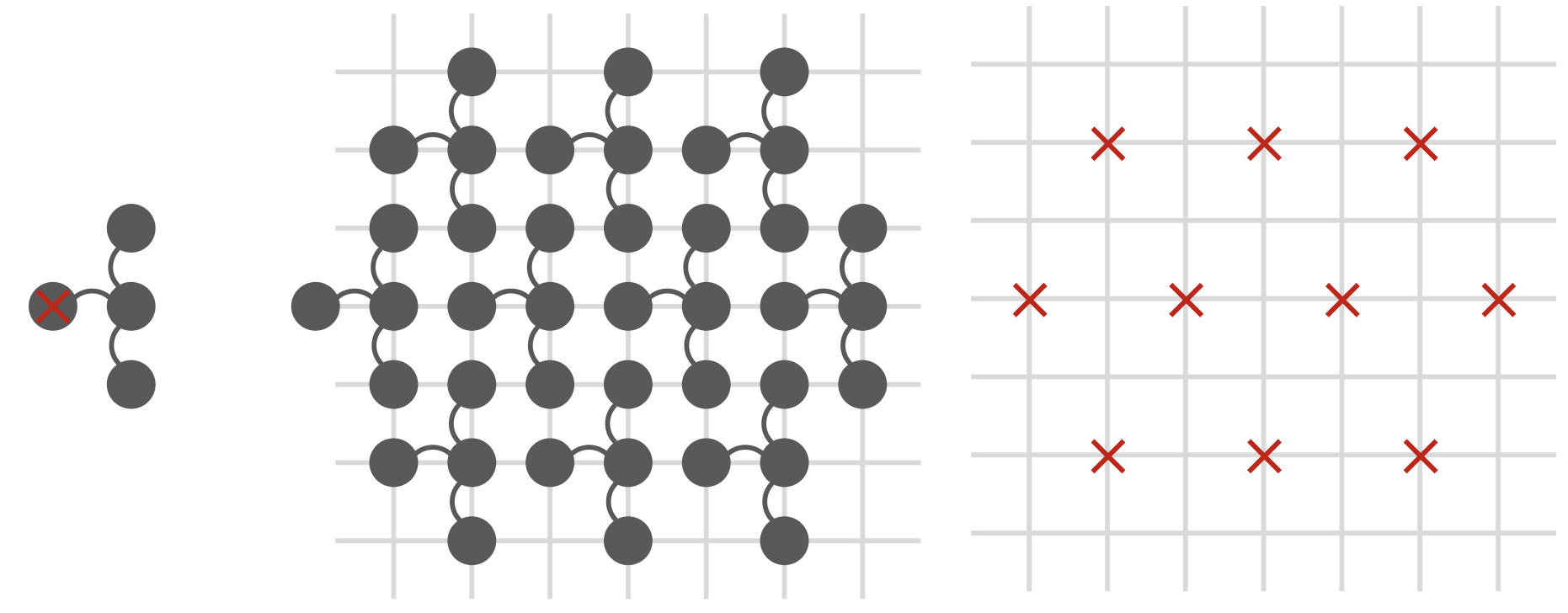}
    \caption{On the left is a discrete tile $F\subseteq \Z^2$, on the right (in red) is part of a tiling $A\in\Tile(F;\Z^2)$, and on the middle is $F\oplus A$.}
    \label{fig:discrete-tiling}
\end{figure}

\begin{figure}[ht]
    \centering
    \includegraphics[width=0.8\linewidth]{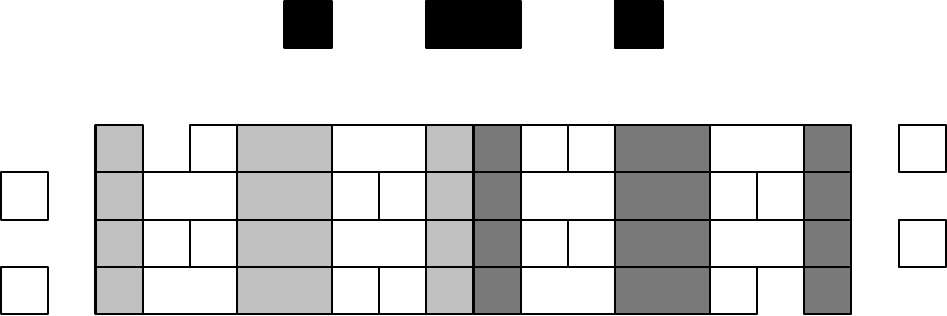}
    \caption{Above, in black, a discrete tile $F\subseteq \Z^2$, consisting of four points (one square represents one point in the lattice $\Z^2$). Below, a tiling $A$ by this tile. Each of the colors (dark gray, light gray, and white) represents a plate in the earthquake decomposition of $A$.}
    \label{fig:discrete-quake}
\end{figure}

In \cite{GreenfeldTaoStructureZd} it is proven that every tiling $A$ of $\Z^2$ by $F$ is weakly periodic.
The following result, which is proven in \cite{rational}, shows that discrete earthquake decomposition is compatible with the weak periodicity structure of $A$.

\begin{proposition}[Lemma~4.2 in \cite{rational}]\label{pr:EarthquakeStructure}
    Let $A$ be a tiling of $\mathbb{Z}^2$ by a finite set $F$.
    Then there are pairwise incommensurable vectors $h_1,\dots,h_m\in \mathbb{Z}^2\setminus \{(0,0)\}$ such that for every $P\in \quakeall{A}{F}$, we can write $P=\bigsqcup_{i=1}^m P^i$, where $P^i$ is $\langle h_i\rangle$-periodic for every $1\le i\le m$.
\end{proposition}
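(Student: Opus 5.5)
The plan is to combine the Greenfeld--Tao structure theorem \cite{GreenfeldTaoStructureZd} with the minimality of tectonic plates. In this form the theorem gives a single global decomposition
$$A=\bigsqcup_{i=1}^m A^i,$$
where $A^i$ is $\langle h_i\rangle$-periodic and the $h_i\in\Z^2\setminus\{(0,0)\}$ are pairwise incommensurable. More precisely, each $A^i$ is a union of finitely many $\langle h_i\rangle$-periodic ``strips''. Outside a bounded neighbourhood of the origin (in a suitable cone picture), the tiling $A$ coincides locally with a one-periodic tiling in direction $h_i$. We fix this data once, independently of $P$. For each plate $P\in\quakeall{A}{F}$ we will set $P^i:=P\cap A^i$ and prove that $P^i$ is $\langle h_i\rangle$-periodic. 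Since $A^i$ is already $\langle h_i\rangle$-periodic, it suffices to show that the partition of $A$ into plates is invariant under translation by $h_i$ when restricted to $A^i$; that is, $a\in P$ and $a\in A^i$ should imply $a+h_i\in P$.

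The first step is to describe plates combinatorially. Define a relation on $A$: $a\sim a'$ if $F+a$ and $F+a'$ are ``vertically adjacent across a non-invariant boundary''. Concretely, if $a\in P$, then every tile that shares a vertical interface with $F+a$ inside a column that $F\oplus P$ does not fully cover must also lie in $P$. One checks that plates are exactly the equivalence classes of the finest decomposition satisfying (i). This rests on two facts: intersections of $\Z(0,1)$-periodic unions are not directly useful, but the minimality in (ii) forces $P$ to be the smallest set closed under the rule ``if $F\oplus Q$ is not $\Z(0,1)$-invariant then add a neighbour''. Plates are also determined locally: whether $a$ and $a'$ are in the same plate is witnessed by a finite chain of tiles. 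The second step is translation equivariance. If $a\in A^i$ lies deep inside a region where $A$ agrees with $A^i$ and is $h_i$-periodic, then any chain witnessing $a\sim a'$ translates by $h_i$ to a chain witnessing $a+h_i\sim a'+h_i$. The translated chain is valid as long as it stays inside the periodic region.

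The main obstacle is that plates and chains may be unbounded. A plate is a union of vertical columns of tiles, and it may travel far and cross from the region of one $A^i$ into another, so a naive translation argument fails. To handle this, I would first deal with the case $h_i\parallel(0,1)$. Here $A^i$ lies in a vertical strip, and vertical translation by $h_i$ preserves $F\oplus Q$ for every $\Z(0,1)$-invariant $Q$. By uniqueness of the minimal decomposition, translation by $h_i$ therefore permutes the plates meeting that strip, and since $h_i$-translation maps the periodic piece to itself, $P^i+h_i=P^i$. For $h_i$ not vertical, I would instead show that a plate meeting $A^i$ far from the origin is contained in a vertical column region of bounded width. In that region the tiling is $h_i$-periodic, so the plate is determined within a bounded horizontal window, and its defining rule is invariant under the $h_i$-shift. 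Near the origin, only finitely many tiles are exceptional; the corresponding plates meet $A^i$ in a set that differs from an $h_i$-periodic set by finitely many points. Those finitely many points must be absorbed by refining the decomposition: I would enlarge the finite set of directions or reassign exceptional tiles to the vertical direction, since a plate that is $\Z(0,1)$-periodic has vertical period. This reconciliation of the bounded exceptional region is the step I expect to need the most care.
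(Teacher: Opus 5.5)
\textbf{There is a genuine gap at the core of the reduction.} You fix one weakly periodic decomposition $A=\bigsqcup_i A^i$ independently of the plates, put $P^i:=P\cap A^i$, and try to show that $a\in P\cap A^i$ implies $a\pm h_i\in P$. For non-vertical $h_i$ this is false.

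Take $F=\{(0,0),(1,0)\}$ and $A=2\Z\times\Z$. Since $A$ is $(2,0)$-periodic, $A^1=A$ with $h_1=(2,0)$ is a decomposition of exactly the type \cite{GreenfeldTaoStructureZd} provides. But the plates are the columns $\{2k\}\times\Z$, and $P\cap A^1=P$ is not $\langle(2,0)\rangle$-periodic. Here $A$ is globally $h_1$-periodic, so your chain-translation step works with no boundary effects at all. It gives exactly what such an argument can give: translation by $h_1$ maps plates to plates ($\{2k\}\times\Z$ to $\{2k+2\}\times\Z$), not each plate to itself.

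Your fallback for non-vertical directions rests on claims that are not true.
\begin{itemize}
\item The Greenfeld--Tao theorem splits $A$ along the cosets of a finite-index lattice $\Lambda$, each piece $A\cap(\Lambda+x)$ being one-periodic; this is how it is used in the proof of Theorem~\ref{thm:GoodParamDiscrete}. It does not describe $A$ as periodic strips with a bounded exceptional region. So near $a\in A^i$ the tiling need not be $h_i$-periodic, and a chain through tiles of other pieces does not translate into a chain in $A$.
\item Non-column plates are not confined to vertical regions of bounded width. For $A=\{(2k+\epsilon_y,y):k,y\in\Z\}$ with $\epsilon_y\in\{0,1\}$ non-constant, the only plate is $A$ itself.
\item A finite nonempty leftover set is not $\langle h\rangle$-periodic for any $h\neq 0$, so no refinement of directions can absorb it.
\end{itemize}

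\textbf{What survives is the vertical case}, but not because translation permutes plates. If $h_i\in\Z(0,1)$ and $a\in P\cap A^i$, then $a+h_i\in A^i\subseteq A$. Also $F+a+h_i\subseteq F\oplus P$ by vertical invariance, so disjointness of the tiles forces $a+h_i\in P$. This is the mechanism of Lemma~\ref{lem:PeriodicNonParallel}.

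\textbf{The missing idea} is a way to adapt the directions and period lengths to the plates, uniformly over all plates, of which there may be infinitely many. The examples show two phenomena any such argument must handle.
\begin{itemize}
\item A non-vertical period can move a plate along an infinite orbit of plates, as for the columns above. There you must switch to the vertical direction.
\item It can move a plate along a finite orbit. Take $F=\{(0,0),(4,0)\}$ and $A=\{(b+y,y):b\in\{0,1,2,3\}+8\Z,\ y\in\Z\}$. Then $(1,1)$ is a period of $A$ that cyclically permutes the four non-column plates $\{(a,y)\in A:a\equiv r \bmod 4\}$, and you must pass to the multiple $(4,4)$.
\end{itemize}
Your proposal contains no argument of this kind, nor any control of uniformity. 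For reference, the paper does not prove this statement itself; it imports it from \cite{rational}.
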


For our analysis, we need to distinguish between two types of tectonic plates.
We say that $P\in \quakeall{A}{F}$ is a {\bf \emph{discrete vertical column of $(F,A)$}} if there exists $k\in \Z$  such that $P \subseteq \{k\}\times \Z$ and we have $F+ P= X\times \Z$, where $X=p_1(F)+k$.
We denote by $A^{\text{disc}}_c$ the union of vertical columns of $A$.
We define $\quake{A}{F}$ to be the collection of those $P\in \quakeall{A}{F}$ that are \emph{not} discrete vertical columns. For example, in Figure \ref{fig:discrete-quake}, both gray components of $\quakeall{A}{F}$ correspond to discrete vertical columns (hence, contained in $A^{\text{disc}}_c$), while the white component is the single plate in $\quake{A}{F}$.

\subsection{Periodicity of discrete tectonic plates}
The goal of this section is showing that $F\oplus P$ is periodic for every $P\in \quake{A}{F}$.

\begin{theorem}[Periodicity of discrete tectonic plates]\label{thm:DiscretePeriodicity}
    Let $A$ be a tiling of $\mathbb{Z}^2$ by a finite set $F$.
    Then there is $\ell\in \mathbb{N}$ such that $\ell>0$ and the sets
    $$F\oplus A^{\operatorname{disc}}_{c} \text{ and } F\oplus P$$
    are $(\ell,0)\mathbb{Z}$-periodic for every $P\in \quake{A}{F}$. 
\end{theorem}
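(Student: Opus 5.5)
We derive the theorem from Proposition~\ref{pr:EarthquakeStructure} together with a horizontal-periodicity argument that uses the vertical invariance of each plate. Let $P\in\quakeall{A}{F}$. By definition, $F\oplus P$ is $\Z(0,1)$-periodic. Hence $F\oplus P=E_P\times\Z$ for some $E_P\subseteq\Z$, and $P$ restricted to a single horizontal strip tiles a column-type region. Apply Proposition~\ref{pr:EarthquakeStructure} to get pairwise incommensurable directions $h_1,\dots,h_m$ such that each plate splits as $P=\bigsqcup_i P^i$ with $P^i$ being $\langle h_i\rangle$-periodic. The target is horizontal periodicity of the union $F\oplus P$. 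So the core step is to show that for a plate which is not a vertical column, every piece $P^i$ with $h_i\notin\Z(0,1)$ gives horizontal periodicity of $E_P$. Then one must rule out, or handle, the case where all pieces are vertically periodic.

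\textbf{Case 1: $P$ has a piece $P^i$ with $h_i=(a,b)$, $a\neq0$.} Then $F\oplus P^i$ is $h_i$-periodic. Since $F\oplus P$ is $(0,1)$-periodic, $F\oplus P$ contains the sets $F\oplus P^i+k(0,1)$ for all $k$. We expect this to force $E_P+a=E_P$. The argument is that $F\oplus P = \bigcup_k (F\oplus P^i + (0,k)) \cup \dots$ is not immediate, so instead we use minimality (ii) of plates. The set $P'=\bigcup_{k}(P+kh_i)\cap P$ is a sub-collection. We aim to show $P+h_i$ is again a plate covering $(E_P+a)\times\Z$, and that it overlaps $P$ in $P^i+h_i=P^i$. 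Since plates partition $A$ and $P^i\subseteq P\cap(P+h_i)$, we conclude that $P+h_i$ and $P$ are the same plate, provided $P+h_i\subseteq A$. This containment is the delicate point. I would try to prove it using the fact that the region $(E_P+a)\times\Z$ is tiled both by $P+h_i$ and by plates of $A$ meeting it, and that the plate containing $P^i$ is unique. The sum of $(0,1)$-periodic tilings forces the complements to agree. If this works, $E_P$ is $a$-periodic. Since $h_i$ ranges over finitely many vectors, $\ell_0=\operatorname{lcm}$ of the relevant $|a|$ works for all such plates.

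\textbf{Case 2: every piece of $P$ is $(0,b)$-periodic.} Then $P$ is a finite union of vertically periodic sets. A single vertical line $\{k\}\times\Z$ of translates then gives a column-like structure. Using minimality, I would show that either $P$ lies in one vertical line, and is therefore a discrete vertical column (so $P\notin\quake{A}{F}$), or $E_P$ is bounded. A bounded $E_P$ should contradict the fact that $A$ tiles $\Z^2$: the complement $(\Z\setminus E_P)\times\Z$ would have to be tiled by the remaining plates, and all vertically periodic pieces then decompose into columns. This is the step I expect to be the main obstacle. The difficulty is showing that a minimal vertically invariant subtiling supported on finitely many vertical lines must be a single column, which needs a Kenyon-type argument that the leftmost vertical line of $P$ already tiles a column by itself.

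Finally, for $A^{\mathrm{disc}}_c$: the union of columns covers $\Z^2\setminus\bigcup_{P\in\quake{A}{F}}(F\oplus P)$. Since there are only finitely many horizontal periods, this complement is $(\ell_0,0)$-periodic, because the union of the periodic plate regions is. We take $\ell=\ell_0$, after checking that the union over possibly infinitely many plates is still periodic, which follows since each individual region is invariant under the same shift.
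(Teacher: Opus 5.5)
Your proposal has genuine gaps in both cases. It also never uses the ingredient that actually produces horizontal periodicity.

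\emph{Case 1.} Everything hinges on $P+h_i\subseteq A$, and nothing supports this. Only the piece $P^i$ is known to be $\langle h_i\rangle$-invariant. The other pieces of $P$, in particular its vertically periodic part $P^1$, need not be carried into $A$ by $h_i$, and you give no mechanism forcing this. If the argument worked it would show that the translation set $P$ itself is $h_i$-periodic. That is stronger than the theorem and than what the paper proves, which is only that the region $F\oplus P$ is periodic. Your suggested justification is also circular. Knowing that $(E_P+a)\times\Z$ is a union of plate regions of $A$ is essentially the horizontal periodicity you want. Even granting it, two tilings of the same region need not coincide, so no containment follows.

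The paper gets horizontal periodicity from the Greenfeld--Tao structure theory:
\begin{enumerate}
\item By their Theorem~5.4, $F\oplus A^i$ is $N\Z^2$-periodic. Vertical invariance of $F\oplus P_0$ and disjointness of plate regions transfer this to each $F\oplus P_0^i$ with $i\ge 2$ (Lemma~\ref{lem:PeriodicNonParallel}).
\item Their slicing lemma, applied uniformly to all vertically slid tilings $A_\psi$, shows that the region covered by the $A^1$-slice over column $x$, translated by $(K,0)$, is the region covered by the slice over $x+K$ (Lemma~\ref{lem:slices}).
\item Then $F\oplus P$ is assembled from $\bigsqcup_{i\ge 2}F\oplus P^i$ by adjoining whole $\ell\Z$-families of $A^1$-slices. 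Each such slice is forced into $P$ by vertical invariance (Lemma~\ref{lem:PeriodicNotParallel}).
\end{enumerate}

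\emph{Case 2.} The dichotomy you aim for is false. Take $F=\{(0,0),(1,0)\}$ and the brick tiling $A=\{(2j+(y\bmod 2),y):j,y\in\Z\}$. A vertically invariant region must be a union of even-aligned dominoes and also of odd-aligned dominoes, so $A$ is the only plate. This plate is not a discrete vertical column, $E_P=\Z$, and Proposition~\ref{pr:EarthquakeStructure} holds with $m=1$ and $h_1=(0,2)$. So a non-column plate all of whose pieces are vertically periodic, with unbounded $E_P$, does occur.

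The true implication runs the other way. An $E_P$ bounded on one side forces a column (cf.\ Lemma~\ref{lem:ColumnOrUnbounded} in the continuous setting), so non-column plates have $E_P$ unbounded in both directions. What must be shown is that such an $E_P$ is horizontally periodic. The paper does this by earthquake sliding:
\begin{enumerate}
\item Choose $(x,y)$ covered and $(x,y')$ not covered by the region of $P_0\cap(\{p_1(a)\}\times\Z)$.
\item Slide only $P_0$ vertically by $y-y'$.
\item Compare the slicing identities for $A^1$ and $A^1_\psi$, which hold with the same period by uniformity in $\psi$. The covered region over column $p_1(a)+d\ell$ changes, so the $A^1$-slice there belongs to $P_0$. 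Hence $p_1(P_0)$ is $\ell\Z$-periodic.
\end{enumerate}

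Finally, your choice of period $\ell_0$, an lcm of horizontal components of the $h_i$, is unjustified. The paper's $\ell$ is built from $M$, $K$ and the vertical period of $h_1$.
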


Using the notation of Proposition~\ref{pr:EarthquakeStructure}, define $$A^i=\bigsqcup_{P\in \quakeall{A}{F}} P^i.$$
Note that $A^i$ is $\langle h_i\rangle$-periodic for every $1\le i\le m$ and $A=\bigsqcup_{i=1}^m A^i$.

\begin{lemma}\label{pr:SetPeriodic}
    There is $N>0$ such that $F\oplus A^i$ is $N\mathbb{Z}^2$-periodic for every $1\le i\le m$.
\end{lemma}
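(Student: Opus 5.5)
The plan is to pass to convolution identities and show that each indicator $f_i:=1_F*1_{A^i}$ is invariant under a full-rank lattice $\Lambda_i\subseteq\Z^2$. Then take $N$ to be a common multiple of the indices $[\Z^2:\Lambda_i]$, $1\le i\le m$; this gives $N\Z^2\subseteq\Lambda_i$ for every $i$.

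First we record the basic facts. Since $F\oplus A=\Z^2$ and $A=\bigsqcup_{i=1}^m A^i$, the sets $F\oplus A^i$ are pairwise disjoint, each $f_i$ is $\{0,1\}$-valued, $f_i=1_{F\oplus A^i}$, and $\sum_{i=1}^m f_i=1$. Since $A^i$ is $\langle h_i\rangle$-periodic, $f_i$ is $h_i$-invariant. For $h\in\Z^2$ write $\Delta_h g(x)=g(x)-g(x-h)$; these operators commute.

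Fix $i$ and apply $\prod_{j\ne i}\Delta_{h_j}$ to $\sum_j f_j=1$. The constant $1$ is annihilated, and each $f_j$ with $j\neq i$ is annihilated by its own factor $\Delta_{h_j}$. Hence
\[
\prod_{j\ne i}\Delta_{h_j} f_i=0, \qquad \Delta_{h_i}f_i=0 .
\]

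Next, reduce to dimension one. Because $f_i$ is $h_i$-invariant, it descends to a function $g$ on $G=\Z^2/\langle h_i\rangle$. This group is isomorphic to $\Z\times C$ with $C$ finite cyclic. Pairwise incommensurability of the $h_j$ means each image $\bar h_j$, $j\ne i$, has nonzero $\Z$-component $a_j$. So $g$ is a $\{0,1\}$-valued function on $\Z\times C$ satisfying $\prod_{j\neq i}\Delta_{\bar h_j}g=0$.

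Replacing $\bar h_j$ by $|C|\bar h_j$ is harmless, since $\Delta_{kh}=\Delta_h(1+\tau_h+\dots+\tau_h^{k-1})$ and this factor commutes with the rest. After this replacement each $\bar h_j=(a_j',0)$ with $a_j'\ne0$. Restricting to each fiber $\Z\times\{c\}$ then gives a bounded integer sequence $u$ annihilated by $\prod_j(1-\tau^{a_j'})$.

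The main step is to show that such a $u$ is periodic. I would argue it in one of two ways.

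\emph{Algebraic route.} The characteristic polynomial has only roots of unity as roots. Hence $u(n)=\sum_\zeta p_\zeta(n)\zeta^n$ with polynomials $p_\zeta$, and boundedness forces every $p_\zeta$ to be constant. Therefore $u$ is periodic with period $L=\operatorname{lcm}_j a_j'$.

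\emph{Combinatorial route.} The recurrence determines $u$ from any window of length $D=\sum_j|a_j'|$. It also determines $u$ backwards, because the leading and trailing coefficients of the recurrence are $\pm1$. There are only finitely many $\{0,1\}$ windows, so some window repeats, and two-sided determinism then makes $u$ globally periodic with period at most $2^D$.

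Either route gives a single period $L>0$ that works for all fibers $c\in C$. Lifting back to $\Z^2$, $f_i$ is invariant under $h_i$ and under a lift $v_i$ of $(L,0)$. These two vectors are linearly independent: $v_i$ maps to a nonzero element of the $\Z$-factor of $\Z^2/\langle h_i\rangle$, so it is not commensurable with $h_i$. Hence $\Lambda_i=\langle h_i,v_i\rangle$ is a full-rank lattice, and we choose $N$ as above.

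The step I expect to be the main obstacle is the bookkeeping in the reduction to one dimension. Two points need care: that incommensurability really gives nonzero $\Z$-components $a_j$, and that the torsion part $C$ is handled correctly by the $|C|$-multiplication trick. The periodicity of bounded solutions is standard once the problem is reduced to $\Z$.
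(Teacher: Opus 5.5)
Your argument is correct, but it takes a different route from the paper. The paper treats $m=1$ as trivial. Otherwise it invokes the last part of \cite[Theorem~5.4]{GreenfeldTaoStructureZd}, which directly supplies $N_i$ with $F\oplus A^i$ $N_i\mathbb{Z}^2$-periodic, and then takes $N=\operatorname{lcm}(N_1,\dots,N_m)$.

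You instead prove this periodicity from scratch, in four steps:
\begin{enumerate}
\item derive $\prod_{j\neq i}\Delta_{h_j}1_{F\oplus A^i}=0$ from $\sum_j 1_{F\oplus A^j}=1$;
\item pass to the quotient by $\langle h_i\rangle$;
\item use the $|C|$-dilation trick to remove the torsion component;
\item show that bounded solutions of a linear recurrence whose characteristic roots are roots of unity are periodic.
\end{enumerate}
Your bookkeeping is sound. For any splitting $G\cong\mathbb{Z}\times C$, the $\mathbb{Z}$-component of $\bar h_j$ vanishes iff $\bar h_j$ is torsion, iff $h_j$ is commensurable with $h_i$. Also, a lift of $(L,0)$ has infinite order in $G$, so it is independent of $h_i$.

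What each route buys: the citation gives brevity. Your route is self-contained and uses only the three facts actually available: the $h_i$-periodicity of $A^i$, pairwise incommensurability, and the tiling identity. It also produces an explicit period, namely $\operatorname{lcm}_j|a_j'|$ on each fibre.

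Two small repairs are needed. First, for $m=1$ the product $\prod_{j\neq i}\Delta_{h_j}$ is empty and does not annihilate the constant $1$. That case must be handled separately; there $F\oplus A^1=\mathbb{Z}^2$. Second, in the combinatorial route the periods of the finitely many fibres may differ, so take their least common multiple to obtain a single $L$.
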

\begin{proof}
    We may assume that $m>1$ as otherwise the claim trivially holds with $N=1$.
    Since $h_1,\dots,h_m$ are incommensurable, 
    by\footnote{We apply here only the last part of \cite[Theorem~5.4]{GreenfeldTaoStructureZd}.} \cite[Theorem~5.4]{GreenfeldTaoStructureZd}, for every $1\leq i\leq m$ we obtain the existence of $N_i>0$ such that $F\oplus A^i$ is $N_i\mathbb{Z}^2$-periodic.
    Then, by setting $N$ to be the least common multiple of $N_i$, $1\le i\le m$, we obtain the claim.
\end{proof}

We make the following convention:
Allowing $A^1=\emptyset$, we assume that $h_1=(0,k)$ for $k\in \mathbb{N}$ such that $k>0$.

\begin{lemma}\label{lem:PeriodicNonParallel}
    There is $M>0$ such that  $F\oplus P^i$ is $M\mathbb{Z}^2$-periodic for every $2\le i\le m$ and $P\in \quakeall{A}{F}$.
    Moreover, $F\oplus A^1$ is $M\mathbb{Z}^2$-periodic.
\end{lemma}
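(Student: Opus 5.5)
The plan is to obtain the vertical period of $F\oplus P^i$ by intersecting two periodic sets, and then to combine it with the period $h_i$ that $P^i$ already carries. Let $N$ be as in Lemma~\ref{pr:SetPeriodic}. Fix $2\le i\le m$ and $P\in\quakeall{A}{F}$.

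\textbf{Step 1: an intersection formula.} I would first show
$$F\oplus P^i=(F\oplus A^i)\cap (F\oplus P).$$
By construction $A^i=\bigsqcup_{Q\in\quakeall{A}{F}}Q^i$, and the plates $Q$ are pairwise disjoint, so $A^i\cap P=P^i$. Since $F\oplus A$ is a partition of $\Z^2$, every point $x\in\Z^2$ lies in exactly one translate $F+a$ with $a\in A$. Hence $x$ lies in both $F\oplus A^i$ and $F\oplus P$ if and only if this unique $a$ belongs to $A^i\cap P=P^i$. This gives the formula.

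\textbf{Step 2: the vertical period $(0,N)$.} By Lemma~\ref{pr:SetPeriodic}, $F\oplus A^i$ is $N\Z^2$-periodic, and in particular $(0,N)$-periodic. By Definition~\ref{def:discplates}(i), $F\oplus P$ is $\Z(0,1)$-periodic. So the intersection from Step 1 is $(0,N)$-periodic.

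\textbf{Step 3: the period $h_i$ and a full lattice.} By Proposition~\ref{pr:EarthquakeStructure}, $P^i$ is $\langle h_i\rangle$-periodic, so $F\oplus P^i$ is $h_i$-periodic as well. Write $h_i=(a_i,b_i)$. By our convention $h_1=(0,k)$, and $h_1,\dots,h_m$ are pairwise incommensurable, so $h_i$ is not vertical for $i\ge 2$, that is, $a_i\neq 0$. Then
$$N h_i-b_i(0,N)=(Na_i,0)$$
is also a period of $F\oplus P^i$. Together with $(0,N)$, this shows that $F\oplus P^i$ is $M_i\Z^2$-periodic, where $M_i=N|a_i|$.

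\textbf{Step 4: uniformity and the set $A^1$.} The number $M_i$ depends only on $N$ and $h_i$, not on $P$. So
$$M=\operatorname{lcm}(M_2,\dots,M_m)$$
(with $M=N$ if $m=1$) works simultaneously for all $2\le i\le m$ and all plates $P$. Since $N$ divides $M$, Lemma~\ref{pr:SetPeriodic} already shows that $F\oplus A^1$ is $M\Z^2$-periodic.

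The only delicate point I expect is Step 1, namely that intersecting the covered regions exactly recovers $F\oplus P^i$. This rests on the uniqueness of the covering translate in the tiling $F\oplus A$ and on the disjointness of the plates, both of which are immediate from the definitions.
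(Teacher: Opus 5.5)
Your proof is correct and follows the paper's argument. Intersecting the $N\Z^2$-periodic set $F\oplus A^i$ with the vertically invariant set $F\oplus P$ gives the period $(0,N)$, which you combine with $h_i$ (non-vertical by incommensurability with $h_1$) and then take an lcm over $i$. The only deviation is for $A^1$: the paper argues via the complement $\Z^2\setminus\bigl(F\oplus\bigsqcup_{P,\,i\ge 2}P^i\bigr)$, whereas you note that Lemma~\ref{pr:SetPeriodic} already covers $i=1$ and $N\mid M$, which is equally valid.
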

\begin{proof}
    If $m=1$, the claim holds with $M=1$.
    Hence we may assume that $m\ge2$.
    First, suppose that $2\le i\le m$ and let $P_0\in \quakeall{A}{F}$ be such that $P_0^i$ is nonempty. 
    By Lemma~\ref{pr:SetPeriodic}, we know that $F\oplus A^i$ is $N\mathbb{Z}^2$-periodic for some $N>0$.
    Thus, if $x\in F\oplus P^i_0$, then
     $y=N(0,1)+x\in F\oplus A^i$.
    As $F\oplus P_0$ is $\mathbb{Z}(0,1)$-periodic, it follows that $y\in F\oplus P_0$.
    So $$y\in (F\oplus A^i)\cap (F\oplus P_0),$$ which implies that  $y\in F\oplus P_0^i$, since  $F\oplus A^i=\bigsqcup_{P\in \quakeall{A}{F}}(F\oplus P^i)$ and
    $$\left( \bigsqcup_{P\in \quakeall{A}{F}\setminus \{P_0\}} F\oplus P^i\right)\cap (F\oplus P_0) =\emptyset.$$
    We obtain that $F\oplus P_0^i$ is $N(0,1)$-periodic. Moreover, by the $\langle h_i\rangle$-periodicity of $P_0^i$,  the set $F\oplus P_0^i$ is $\langle h_i\rangle$-periodic.  Since $h_i\not\in \mathbb{Z}(0,1)$, we conclude that  $F\oplus P_0^i$ is $M_i\mathbb{Z}^2$-periodic for $M_i\Z^2 \subseteq \langle N(0,1), h_i \rangle$.
    Then, setting $M$ to be the least common multiple of $M_i$, $2\le i\le m$, we obtain that $F\oplus P^i$ is $M\Z^2$-periodic for every $2\leq i\leq m$ and $P\in \quakeall{A}{F}$.
    
    Finally, since $$F\oplus A^1=\mathbb{Z}^2\setminus \left(F\oplus\bigsqcup_{\substack{P\in \quakeall{A}{F} \\ 2\le i\le m}} P^i\right),$$  using that the set $$F\oplus\bigsqcup_{\substack{P\in \quakeall{A}{F} \\ 2\le i\le m}} P^i$$  is $M\mathbb{Z}^2$-periodic, we obtain that $F\oplus A^1$ is $M\Z^2$-periodic as well.
\end{proof}

We introduce the notion of {\bf \emph{earthquake sliding}}, which will be useful in analyzing the structure of the tectonic plates of a tiling. 

\begin{definition}[Earthquake sliding]
    For $\psi:\quakeall{A}{F}\to \mathbb{Z}$, we define $P_{\psi}\coloneqq P+(0,\psi(P))$ for every $P\in \quakeall{A}{F}$ and $A_\psi\coloneqq \bigsqcup_{P\in \quakeall{A}{F}} P_\psi$. (See Figure \ref{fig:sliding}.)   
\end{definition}

\begin{figure}[ht]
    \centering
    \includegraphics[width=0.8\linewidth]{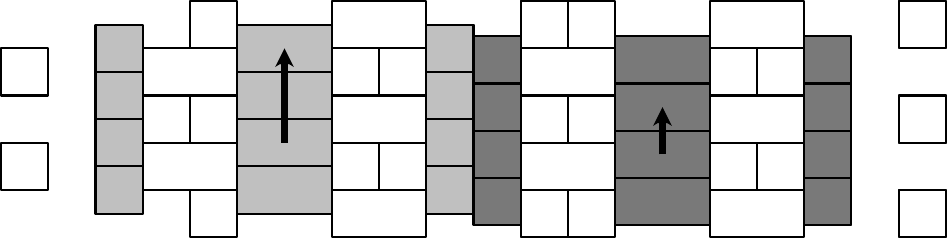}
    \caption{Sliding of the tiling in Figure \ref{fig:discrete-quake}. The function $\psi(P)$ quantifies the amount of \emph{sliding} in each plate.}
    \label{fig:sliding}
\end{figure}

By property (i) in Definition~\ref{def:discplates} of discrete tectonic plates, we have that $F\oplus P=F\oplus P_\psi$ for every $P\in \quakeall{A}{F}$, 
and thus $A_\psi\in \Tile(F;\Z^2)$ whenever $A\in \Tile(F;\Z^2)$, and one can show that 
$$\quakeall{A_\psi}{F}=\{P_\psi:P\in \quakeall{A}{F}\}.$$ In other words, the operation $\psi$ on the tiling $A\in \Tile(F;\Z^2)$ preserves the tiling nature of $A$ as well as its earthquake structure. (See also \cite[Theorem 10.1]{GT2} for a similar sliding feature.)

\begin{lemma}\label{lem:Shifts}
    Let $M>0$  be as in Lemma~\ref{lem:PeriodicNonParallel}, $\psi:\quakeall{A}{F}\to \mathbb{Z}$ and define $P^i_{\psi}=P^i+(0,\psi(P))$ for $1\le i\le m$ and $P\in \quakeall{A}{F}$. 
    Then 
    \begin{itemize}
        \item[(i)] $P_{\psi}=\bigsqcup_{1\le i\le m} P^i_{\psi}$;
        \item[(ii)] $P^i_{\psi}$ is $\langle h_i\rangle$-periodic for every $1\le i\le m$ and $P\in \quakeall{A}{F}$;
        \item[(iii)] $F\oplus P^i_{\psi}$ is $M\mathbb{Z}^2$-periodic for every $2\le i \le m$ and $P\in \quakeall{A}{F}$;
        \item[(iv)] $F\oplus A^1_\psi$ is $M\mathbb{Z}^2$-periodic, where $A^1_\psi=\bigsqcup_{P\in \quakeall{A}{F}} P^1_{\psi}$.
    \end{itemize} 
\end{lemma}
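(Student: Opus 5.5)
The plan is to verify the four items in order. Each one comes from translating the corresponding statement for $\psi\equiv 0$ by the vertical vector $(0,\psi(P))$. The key observation is that this vector lies in $\Z(0,1)$, and in the relevant cases it also commutes with the periodicity lattices involved.

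For (i), translation by a fixed vector is a bijection. Since $P=\bigsqcup_i P^i$ by Proposition~\ref{pr:EarthquakeStructure}, we get $P_\psi = P+(0,\psi(P))=\bigsqcup_i (P^i+(0,\psi(P)))=\bigsqcup_i P^i_\psi$. For (ii), periodicity of a set is invariant under translation: if $P^i+h_i=P^i$, then $P^i_\psi+h_i=P^i_\psi$.

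For (iii), note that $F\oplus P^i_\psi=(F\oplus P^i)+(0,\psi(P))$. By Lemma~\ref{lem:PeriodicNonParallel}, $F\oplus P^i$ is $M\Z^2$-periodic for $2\le i\le m$. A translate of an $M\Z^2$-periodic set is still $M\Z^2$-periodic, so (iii) follows immediately.

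Item (iv) is the only genuine step, because the pieces $P^1$ are shifted by different amounts for different plates, so $A^1_\psi$ is not simply a translate of $A^1$. I would argue through the complement, as in the last paragraph of the proof of Lemma~\ref{lem:PeriodicNonParallel}. First, as noted before the lemma, $A_\psi\in\Tile(F;\Z^2)$. Moreover $A_\psi=\bigsqcup_P\bigsqcup_i P^i_\psi$, and each $P^i_\psi\subseteq P_\psi$ are pieces of a genuine tiling, so these sets are disjoint. Hence
$$F\oplus A^1_\psi=\Z^2\setminus\Bigl(\bigsqcup_{P\in\quakeall{A}{F}}\ \bigsqcup_{2\le i\le m} F\oplus P^i_\psi\Bigr).$$
Each $F\oplus P^i_\psi$ with $i\ge2$ is $M\Z^2$-periodic by (iii). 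A union of $M\Z^2$-periodic sets (possibly infinitely many) is $M\Z^2$-periodic, and the complement of an $M\Z^2$-periodic set is also $M\Z^2$-periodic. This gives (iv).

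The main point to check carefully is the disjointness and covering used in (iv). Covering holds because $A_\psi$ tiles $\Z^2$; disjointness holds because the $P^i_\psi$ partition $A_\psi$, by (i) and $A_\psi=\bigsqcup_P P_\psi$. Once these are in place, everything else is formal.
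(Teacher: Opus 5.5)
Your proposal is correct and follows the paper's proof essentially verbatim. Items (i)--(iii) come from translating the decomposition and periodicity statements by $(0,\psi(P))$, and (iv) comes from writing $F\oplus A^1_\psi$ as the complement in $\Z^2$ of the $M\Z^2$-periodic union of the sets $F\oplus P^i_\psi$, $i\ge 2$, using $A_\psi\in\Tile(F;\Z^2)$; your explicit check of disjointness and covering in (iv) just spells out what the paper leaves implicit.
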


\begin{proof}
    Part (i) follows directly from the definitions as 
    $$P_\psi=P+(0,\psi(P))=\bigsqcup_{i=1}^m P^i+(0,\psi(P))=\bigsqcup_{i=1}^m P^i_\psi.$$
    For (ii), let $1\le i\le m$, $P\in \quakeall{A}{F}$ and $x\in P^i_\psi$.
    Then $$y=x-(0,\psi(P))\in P^i$$ by the definition of $P^i_{\psi}$.
    It follows that $$y+\ell h_i = x +\ell h_i -(0,\psi(P)) \in P^i$$ for every $\ell\in \Z$ as $P^i$ is $\langle h_i\rangle$-periodic.
    Therefore, by the definition of $P^{i}_{\psi}$
    we have  $x +\ell h_i \in P^{i}_{\psi}$.
   This shows that $P^i_{\psi}$ is $\langle h_i\rangle$-periodic.

    Using a similar argument, one can deduce the $M\Z^2$-periodicity of $F\oplus P_\psi^i$ from the $M\Z^2$-periodicity of $F\oplus P^i$, proven in Lemma~\ref{lem:PeriodicNonParallel} for every $2\leq i\leq m$ and $P\in \quakeall{A}{F}$, giving (iii).
    
    Part (iv) follows from (iii) and the fact that
    $$F\oplus A^1_{\psi}=\mathbb{Z}^2\setminus \left(F\oplus\bigsqcup_{\substack{P\in \quakeall{A}{F} \\ 2\le i\le m}} P^i_{\psi}\right)$$
    as $A_\psi\in \Tile(F;\Z^2)$.
\end{proof}

We use the slicing lemma \cite[Lemma~5.1]{GreenfeldTaoStructureZd} to derive the following version of \cite[Corollary~5.3]{GreenfeldTaoStructureZd}.

\begin{lemma}\label{lem:slices}
    There is $K>0$ such that for every $\psi:\quakeall{A}{F}\to \mathbb{Z}$ and $x,d\in \Z$ we have
    $$F\oplus \left(A^{1}_\psi\cap (\{x\}\times \Z)\right)+(dK,0)=F\oplus \left(A^{1}_\psi\cap (\{x+dK\}\times \mathbb{Z})\right).$$
\end{lemma}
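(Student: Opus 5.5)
The plan is to reduce to the setting of \cite[Corollary~5.3]{GreenfeldTaoStructureZd}, using only the data from Lemma~\ref{lem:Shifts}. The point is that all constants there are independent of $\psi$. For any $\psi$, the set $A^1_\psi$ is $\langle h_1\rangle=\langle(0,k)\rangle$-periodic by Lemma~\ref{lem:Shifts}(ii). Its tile set $E_\psi:=F\oplus A^1_\psi$ is $M\Z^2$-periodic by Lemma~\ref{lem:Shifts}(iv). Here $k$ and $M$ depend only on $A$ and $F$, not on $\psi$. So $A^1_\psi$ is a vertically periodic tiling by $F$ of a doubly periodic set with fixed periods, and the slicing lemma \cite[Lemma~5.1]{GreenfeldTaoStructureZd} applies to it.

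First I would set, for $x\in\Z$,
$$G_x:=F\oplus\left(A^1_\psi\cap(\{x\}\times\Z)\right).$$
This is a $(0,k)\Z$-periodic subset of the strip $(p_1(F)+x)\times\Z$, and $E_\psi=\bigsqcup_{x}G_x$. Passing to the quotient $\Z^2/\langle(0,kM)\rangle\cong\Z\times\Z/kM\Z$, each $G_x$ becomes a finite object. The translate $G_x-(x,0)$ ranges over a finite set $\mathcal{C}$ of possible slice configurations, and $|\mathcal{C}|$ depends only on $F,k,M$. Second, I would invoke the slicing lemma in the form used in \cite[Corollary~5.3]{GreenfeldTaoStructureZd}: a slice-tiling $\{G_x\}$ of an $M\Z^2$-periodic set satisfies the shift relation $G_{x+dK}=G_x+(dK,0)$, for some $K$ determined by the horizontal period $M$, the vertical period $k$, and the finite data of $F$. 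Concretely, I expect $K$ to be a multiple of $M$ times a factor depending only on $|\mathcal{C}|$ (for instance a factorial or lcm of the cycle lengths). Third, since $k$, $M$ and $F$ do not depend on $\psi$, the $K$ produced is uniform in $\psi$, and the identity in the lemma follows for all $x,d\in\Z$ (negative $d$ by symmetry).

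The main obstacle is the second step: checking that the slicing lemma really gives the exact equality of slices. Mere eventual periodicity of the sequence $x\mapsto G_x$ would not suffice, since a one-dimensional tiling problem over $\Z\times\Z/kM\Z$ is in general not deterministic. I would first try to follow the proof of \cite[Corollary~5.3]{GreenfeldTaoStructureZd} verbatim with $\Z^2$ replaced by the periodic set $E_\psi$ (as allowed by the Remark after Definition~\ref{def:discplates}). The key input there is the dilation-type identity that makes $F\oplus(A^1_\psi\cap(\{x\}\times\Z))$ determined by $E_\psi$ restricted to a bounded number of columns near $x$. Combined with the $M$-periodicity of $E_\psi$, this yields invariance under $(M',0)$ for a bounded multiple $M'$ of $M$, and I would take $K=M'$. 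Finally I would double-check that no step of that argument uses a quantity depending on the particular plates, i.e.\ on $\psi$, beyond $k$ and $M$.
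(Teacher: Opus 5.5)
Your overall frame matches the paper's: apply the Greenfeld--Tao slicing machinery to $A^1_\psi$, using Lemma~\ref{lem:Shifts} to get hypotheses whose constants $k,M$ do not depend on $\psi$. The gap is in the step that carries all the content. You attribute the shift identity $G_{x+dK}=G_x+(dK,0)$ directly to the slicing lemma. You then propose to justify it by claiming that $G_x$ is determined by $E_\psi$ on a bounded number of columns near $x$, so that $M\Z^2$-periodicity of $E_\psi$ forces the shift.

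That claim is false. Take $F=\{(0,0),(1,0)\}$ and a $(0,k)$-periodic tiling $A$ of $E=\Z^2$ in which row $y$ is tiled with phase $\epsilon_y\in\{0,1\}$. Then $F\oplus(A\cap(\{x\}\times\Z))=\{(x,y),(x+1,y): x\equiv\epsilon_y \pmod 2\}$ depends on the phases, not only on $E$. So nothing extracted from $E_\psi$ alone can yield the identity. As you noted yourself, a pigeonhole over the finitely many slice configurations gives at most eventual periodicity.

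The missing idea is to use the finer output of \cite[Lemma~5.1]{GreenfeldTaoStructureZd}. Applied with $h=(0,1)$, $\ell=M$ and $h_1=(0,k)$, it gives $q,s$ independent of $\psi$ such that, for each vertical slice $F_n:=F\cap(\{n\}\times\Z)$ of the tile, the set $F_n\oplus A^1_\psi$ is $K\Z^2$-periodic with $K=qks$. What determines the column slices is the periodicity of these sliced tile sets, not of $E_\psi=\bigsqcup_n F_n\oplus A^1_\psi$. The reason is that if $u=a+f$ with $f\in F_n$, then $p_1(a)=p_1(u)-n$.

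Concretely, let $u=a+f\in G_x$ with $f=(f_1,f_2)$. Periodicity of $F_{f_1}\oplus A^1_\psi$ gives $u+(dK,0)=a'+f'$ with $p_1(f')=f_1$. Hence $p_1(a')=x+dK$, so $u+(dK,0)\in G_{x+dK}$. The reverse inclusion follows by exchanging $x$ with $x+dK$ and $d$ with $-d$.

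Alternatively, you could quote \cite[Corollary~5.3]{GreenfeldTaoStructureZd} in the form of Lemma~\ref{lm:ShiftingSlices}. That gives a $K\Z^2$-periodic $A'$ with the same column tile sets, from which the shift identity is immediate. With either input stated precisely, your observation that the constants are uniform in $\psi$ finishes the argument.
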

\begin{proof}
We apply \cite[Lemma~5.1]{GreenfeldTaoStructureZd} with $h:=(0,1)$, $\ell:=M$, $k$ such that $(0,k)=h_1$, $A:=A^1_\psi$ and $F$.
By Lemma~\ref{lem:Shifts}, the assumptions of \cite[Lemma~5.1]{GreenfeldTaoStructureZd} are satisfied.
Then there are positive $q,s\in \mathbb{N}$, which do not depend on $\psi$, such that 
$$(F\cap (\{n\}\times \Z))\oplus A^1_\psi$$
is $qks\mathbb{Z}^2$-periodic for every $n\in \mathbb{Z}$.

Set $K:=qks$ and let $x,d\in \mathbb{Z}$.
We first show that 
\begin{equation}\label{eq:oneside}
    F\oplus \left(A^{1}_\psi\cap (\{x\}\times \mathbb{Z})\right)+(dK,0)\subseteq  F\oplus \left(A^{1}_\psi\cap (\{x+dK\}\times \mathbb{Z})\right).
\end{equation}
Let
\begin{equation}\label{eq:uin}
    u\in F\oplus \left(A^{1}_\psi\cap (\{x\}\times \mathbb{Z})\right).
\end{equation}
We need to prove that
\begin{equation}\label{eq:udkin}
    u+(dK,0)\in F\oplus \left(A^{1}_\psi\cap (\{x+dK\}\times \mathbb{Z})\right).
\end{equation}
By \eqref{eq:uin}, there are $a\in A^1_{\psi}\cap \{x\}\times \Z$ and $f=(f_1,f_2)\in F$ such that $a+f=u$.
By the $K\mathbb{Z}^2$-periodicity of $$(F\cap (\{f_1\}\times \Z))\oplus A^1_\psi$$ we have the existence of $a'\in A^1_\psi$ and $f'=(f_1,f_2')\in F\cap (\{f_1\}\times \Z)$ such that
$$u+(dK,0)=a'+f'.$$
Thus, 
\begin{equation*}
    \begin{split}
        p_1(a')= & \ p_1(u+(dK,0)-f')\\
        = & \ p_1(a+f+(dK,0)-f')\\
        = & \ p_1(a)+ f_1+dK-f_1\\
        = & \ x+dK.
    \end{split}
\end{equation*}
This gives that $a'\in A^{1}_\psi\cap (\{x+dK\}\times \mathbb{Z})$, and \eqref{eq:udkin} in turn. Hence, \eqref{eq:oneside} holds.

The inclusion 
$$F\oplus \left(A^{1}_\psi\cap (\{x+dK\}\times \mathbb{Z})\right) \subseteq  F\oplus \left(A^{1}_\psi\cap (\{x\}\times \mathbb{Z})\right)+(dK,0)$$
follows by repeating the same argument for $x+dK$ in place of $x$ and $-d$ in place of $d$.
\end{proof}

\begin{lemma}\label{lem:PeriodicNotParallel}
    There is $\ell>0$ such that for every $P\in \quakeall{A}{F}$ if there exists  $2\le i\le m$ with $P^i\not=\emptyset$ then  
    $F\oplus P$ is $\ell \mathbb{Z}^2$-periodic.
\end{lemma}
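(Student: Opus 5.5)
The plan is to show that whenever $P^i\neq\emptyset$ for some $2\le i\le m$, the set $F\oplus P$ is invariant under $(L,0)$, where $L$ is a common multiple of the constants $M$ from Lemma~\ref{lem:PeriodicNonParallel} and $K$ from Lemma~\ref{lem:slices}. Since $F\oplus P$ is already $\mathbb{Z}(0,1)$-periodic by Definition~\ref{def:discplates}(i), invariance under $\langle (L,0),(0,1)\rangle\supseteq L\mathbb{Z}^2$ then gives the claim with $\ell=L$, which does not depend on $P$.

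\textbf{Step 1: splitting $F\oplus P$.} Write
$$F\oplus P=(F\oplus P^1)\sqcup\bigsqcup_{i\ge 2}(F\oplus P^i).$$
The pieces with $i\ge2$ are $M\mathbb{Z}^2$-periodic by Lemma~\ref{lem:PeriodicNonParallel}, hence $(L,0)$-invariant. It therefore suffices to prove that $F\oplus P^1$ is $(L,0)$-invariant. Since $h_1=(0,k)$, the set $P^1$ is a union of vertical progressions in columns $\{x\}\times\mathbb{Z}$. The slices lemma, Lemma~\ref{lem:slices}, says that $F\oplus(A^1_\psi\cap(\{x\}\times\mathbb{Z}))$ moves rigidly by $(dK,0)$ onto the corresponding slice at column $x+dK$, and this holds \emph{uniformly in $\psi$}.

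\textbf{Step 2: isolating the contribution of $P$ (main obstacle).} The slice of $A^1$ at a column mixes the pieces $Q^1$ coming from all plates $Q$, so Step~1 alone does not say that $P$'s part of a column moves to $P$'s part of the shifted column. To separate them I would exploit the $\psi$-uniformity. Apply Lemma~\ref{lem:slices} twice:
\begin{itemize}
\item once with $\psi\equiv0$;
\item once with $\psi=c\,\mathbf 1_P$ for an arbitrary $c\in\mathbb{Z}$, which slides only $P$ vertically.
\end{itemize}
In the second tiling $A_\psi$, the region $F\oplus P$ is unchanged, and so is the region $F\oplus(A\setminus P)$. The hope is that comparing the two identities for all $c$ will show that the translation by $(L,0)$ carries $F\oplus P^1$ into $F\oplus P$, rather than into regions covered by other plates.

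If this comparison does not directly isolate $P$, I would instead argue by minimality, Definition~\ref{def:discplates}(ii). Set $R:=(F\oplus P)\cap\big((F\oplus P)-(L,0)\big)$, and let $P'\subseteq P$ consist of those $a\in P$ whose tile $F+a$, in the combined decomposition, lands inside $R$ after the slice correspondence of Step~1. I would show that $F\oplus P'$ is $\mathbb{Z}(0,1)$-periodic. Then $P'\in\{\emptyset,P\}$, and $P'\neq\emptyset$ should be forced by the nonempty $\langle h_i\rangle$-periodic part $P^i$ with $i\geq 2$: because $h_i$ has nonzero horizontal component, $P$ genuinely extends across translated copies of itself under $(L,0)$. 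Hence $P'=P$.

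\textbf{Step 3: conclusion.} With $P^1$ handled, $F\oplus P$ is $(L,0)$- and $(0,1)$-invariant, so it is $\ell\mathbb{Z}^2$-periodic with $\ell=L$. This $\ell$ is uniform in $P$, since $M$ and $K$ are.

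I expect Step~2 to be the real difficulty, namely showing that the slice correspondence respects the plate decomposition. This is exactly where the hypothesis $P^i\neq\emptyset$ for some $i\ge2$ must be used. Without it, $P$ could be a vertical column, and those need not be horizontally periodic at all.
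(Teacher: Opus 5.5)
\textbf{The gap is Step~2.} That step carries the whole content of the lemma, and you leave it at ``the hope is that comparing the two identities \dots\ will show''. The fallback is not an argument either:
\begin{itemize}
\item $P'$ is defined through the very slice-to-plate correspondence that is in question;
\item the $\mathbb{Z}(0,1)$-periodicity of $F\oplus P'$ is asserted without proof;
\item the nonzero horizontal component of $h_i$ says nothing by itself about where translates of $P^1$-columns go.
\end{itemize}

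\textbf{The obstacle you describe rests on a false premise.} A column slice never mixes plates. Each $F\oplus Q$ equals $X_Q\times\mathbb{Z}$ with the $X_Q$ pairwise disjoint. All tiles $F+a$ with $p_1(a)=x$ have the same shadow $p_1(F)+x$. Hence $A\cap(\{x\}\times\mathbb{Z})$ lies in a single plate, and the only real question is which plate the column at $x+dK$ belongs to.

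\textbf{Your first idea settles this once you add a minimality step.}
\begin{enumerate}
\item Fix $x$ such that $C_x:=A^1\cap(\{x\}\times\mathbb{Z})$ is nonempty and contained in $P$, and put $Y_x=F\oplus C_x$.
\item Lemma~\ref{lem:slices} with $\psi\equiv0$ shows that the $A^1$-column at $x+dK$ is nonempty with image $Y_x+(dK,0)$. By the observation above, it lies in a single plate $Q$.
\item Suppose $Q\neq P$ and take $\psi=c\mathbf{1}_P$. The column at $x$ becomes $C_x+(0,c)$, while the column at $x+dK$ is unchanged. Comparing the two instances of Lemma~\ref{lem:slices} gives $Y_x+(0,c)=Y_x$ for all $c\in\mathbb{Z}$.
\item Then $C_x$ satisfies Definition~\ref{def:discplates}(i), so minimality forces $C_x=P$. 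This gives $P\subseteq A^1$, contradicting $P^i\neq\emptyset$ for some $i\ge2$. Hence $Q=P$.
\item Taking $d=\pm1$ shows that $F\oplus P^1$ is $(K,0)$-invariant. Combined with Step~1 and vertical invariance, this proves the lemma with $\ell=\operatorname{lcm}(M,K)$.
\end{enumerate}
This is the same mechanism the paper itself uses later, in the proof of Theorem~\ref{thm:DiscretePeriodicity}.

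\textbf{Comparison with the paper.} The paper proves this lemma with $\psi\equiv0$ only. It starts from the $M\mathbb{Z}^2$-periodic seed $F\oplus\bigsqcup_{i\ge2}P^i$. It then follows an enumeration of $F\oplus P$ through horizontally overlapping tiles, which exists by minimality. Along the way it adjoins whole residue classes $A^1\cap((p_1(a_n)+\ell\mathbb{Z})\times\mathbb{Z})$. Each class is $\ell\mathbb{Z}^2$-periodic by Lemma~\ref{lem:slices}. Each is forced into $P$ because it overlaps horizontally the already $\ell\mathbb{Z}^2$-periodic part of $F\oplus P$, and $F\oplus P$ is a union of vertical lines. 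In that route the hypothesis $P^i\neq\emptyset$ supplies the periodic seed. In the completed version of yours, it rules out $P$ being a single column, and you avoid the enumeration entirely.
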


\begin{proof}
Set $\ell>0$ to be the least common multiple of $M$ from Lemma~\ref{lem:PeriodicNonParallel}, $K$ from Lemma~\ref{lem:slices} and $|k|$, where $(0,k)=h_1$ from Proposition~\ref{pr:EarthquakeStructure}. 

    Let $P\in \quakeall{A}{F}$ be such that$P^i\not=\emptyset$ for some $2\le i\le m$.

    Let $\{r_n = f_n+a_n\}_{n=0}^\infty$ be an enumeration of all the elements in $F\oplus P$, where $f_n\in F$, $a_n\in P$, such that:
\begin{itemize}
    \item $a_0\in \bigsqcup_{i=2}^m P^i$;
    \item for every $n\geq 1$ there exists $n'<n$ such that $p_1(r_n)\in p_1(F+a_{n'})$.
\end{itemize}
Observe that such an enumeration exists by the $\langle (0,1)\rangle$-periodicity and minimality of $F\oplus P$.

    We construct a sequence $\{C_n\}_{n=0}^\infty$ such that for every $n=0,1,\dots$
    \begin{itemize}
        \item[(i)] $C_n\subseteq C_{n+1}\subseteq P$;
        \item[(ii)] $B_n\coloneqq F\oplus C_n$ is $\ell \Z^2$-periodic;
        \item[(iii)] $r_n\in B_n$;
    \end{itemize}
as follows. We first set $C_0\coloneqq \bigsqcup_{i=2}^m P^i \subseteq P$, so $C_0$ is nonempty and satisfies (i). 
By Lemma~\ref{lem:PeriodicNonParallel}, the  set
    $$B_0=\bigsqcup_{i=2}^m F\oplus P^i$$
    is $M\mathbb{Z}^2$-periodic, giving (ii), by the choice of $\ell$. And, (iii) follows from the fact that  $a_0\in C_0$, $f_0\in F$ and $B_0=F\oplus C_0$.

 Now, let $n\geq 1$ and suppose we have constructed $C_0,\dots, C_{n-1}$ with properties (i),(ii) and (iii). 
We define $C_n$ as follows:

 If  $r_{n}\in B_{n-1}$, then we set $C_n\coloneqq C_{n-1}$, and so properties (i), (ii) and (iii) follow immediately from the hypothesis.

 Otherwise, $r_{n}\not\in B_{n-1}$, and we set $$C_{n}\coloneqq C_{n-1}\cup  \left(A^1\cap \left(\{p_1(a_n)+\ell \Z\}\times \mathbb{Z}\right)\right).$$
 In particular,  $r_{n}\in B_{n}=F\oplus C_n$, giving (iii). 
 
 Note that to show (ii), by hypothesis, it suffices to prove that  $F\oplus(C_{n}\setminus C_{n-1})$ is $\ell \Z^2$-periodic.
As $A^1$ is $\langle h_1\rangle$-periodic, the set
    $$A^1 \cap \left((p_1(a_n)+\ell\Z)\times \mathbb{Z}\right)$$
    is $\langle h_1\rangle$-periodic and thus, by the choice of $\ell$, it is $\mathbb{Z}(0,\ell)$-periodic. Therefore $F\oplus (C_n\setminus C_{n-1})$ is $\mathbb{Z}(0,\ell)$-periodic.
Moreover, as by Lemma~\ref{lem:slices} 
 \begin{align*}
    F\oplus \left(A^{1}\cap (\{p_1(a_n)\}\times \mathbb{Z})\right)+(d\ell,0) &=F\oplus \left(A^{1}\cap (\{p_1(a_n)+d\ell \}\times \mathbb{Z})\right)
 \end{align*}
    for every $d\in \mathbb{Z}$, we conclude that 
$F\oplus(C_{n}\setminus C_{n-1})$ is $\mathbb{Z}(\ell,0)$-periodic as well, giving (ii).

We turn to prove (i). By hypothesis, it suffices to show that $C_{n}\setminus C_{n-1}\subseteq P$.
Note that by construction of $\{r_n\}_{n=0}^\infty$  we have that there exists $n'\leq n-1$ such that $p_1(r_{n})\in p_1(F+a_{n'}) \subseteq p_1(B_{n'})$.
So, as by hypothesis $B_{n'}\subseteq B_{n-1}$, we have 
\begin{equation}\label{eq:p1in}
    p_1(r_{n})\in p_1(B_{n-1}).
\end{equation}
Now, let $a'\in C_{n}\setminus C_{n-1}$. 
By construction of $C_n$, there exists  $d\in \Z$ such that $p_1(a_n)+\ell d=p_1(a')$. 
Since, by hypothesis, $B_{n-1}$ is $\ell\Z^2$-periodic and as
 \begin{equation*}
        \begin{split}
            p_1(f_n+a')= 
             & \ p_1(f_n+ a_n)+\ell d\\
            = & \ p_1(r_{n})+\ell d,
        \end{split}
    \end{equation*}
     from \eqref{eq:p1in},  we conclude that 
\begin{equation}\label{eq:nonemptyslice}
    p_1(f_n+ a')\in p_1(F\oplus a')\cap p_1(B_{n-1}).
\end{equation}
So, since by hypothesis $C_{n-1}\subseteq  P$ and $B_{n-1}=F\oplus C_{n-1}$, from \eqref{eq:nonemptyslice} and  the $(0,1)\Z$-periodicity of $F\oplus P$ we conclude that $f_n +a'\in F\oplus P$, and so in turn we get $a'\in P$, giving (i).

Iterating this process, we obtain a sequence $\{C_n\}_{n=0}^\infty$ of sets such that for every $n=0,1,\dots$, $C_n$ satisfies (i), (ii) and (iii) above.

Using the sequence $\{C_n\}_{n=0}^\infty$ we conclude the proof:
By (i) we have that $$\bigcup_{n=0}^\infty B_n = \bigcup_{n=0}^\infty F\oplus C_n\subseteq F\oplus P.$$ 
On the other hand, by (iii) and since $F\oplus P=\{r_n\}_{n=0}^\infty$, we have that $$F\oplus P\subseteq \bigcup_{n=0}^\infty B_n.$$ Thus, $F\oplus P = \bigcup_{n=0}^\infty B_n$.
And so, since by (ii) the set $\bigcup_{n=0}^\infty B_n$ is $\ell \Z^2$-periodic,  we obtain that $F\oplus P$ is $\ell \Z^2$-periodic, as claimed.
\end{proof}

We are finally ready to prove Theorem~\ref{thm:DiscretePeriodicity}:

\begin{proof}[Proof of Theorem~\ref{thm:DiscretePeriodicity}]
Let $\ell$ be as in Lemma~\ref{lem:PeriodicNotParallel}.
We denote by $\quakeone{A}{F}$ the set of all the $P\in \quakeall{A}{F}$ such that $P=P^1$. Observe that, in particular, if $P\in \quakeall{A}{F}$ is a vertical column then $P\in \quakeone{A}{F}$.

If $P\in \quakeall{A}{F}\setminus \quakeone{A}{F}$, then Lemma~\ref{lem:PeriodicNotParallel} implies that $F\oplus P$ is $\ell \mathbb{Z}^2$-periodic.
Thus, we also have that 
$$\bigsqcup_{P\in \quakeone{A}{F}} F\oplus P = \Z^2\setminus \bigsqcup_{P\in \quakeall{A}{F} \setminus \quakeone{A}{F}} F\oplus P$$
 is $\mathbb{Z}(0,1)$-invariant and $\ell \mathbb{Z}^2$-periodic. Moreover,  as we chose $\ell$ such that $\Z(0,\ell)\subseteq \Z h_1$, every $P\in \quakeone{A}{F}$ is $\mathbb{Z}(0,\ell)$-periodic.

 Let $P_0\in \quakeone{A}{F}\cap \quake{A}{F}$, that is, $P_0$ is a discrete tectonic plate of $A$ which is not a discrete vertical column.
    This means that for every $a\in P_0$ there are $x\in p_1(F+a)$ and $y,y'\in \Z$ such that    
    \begin{equation}\label{eq:xy}
        (x,y)\in F\oplus \left(P_0\cap (\{p_1(a)\}\times \mathbb{Z})\right)
    \end{equation}
    and 
    \begin{equation}\label{eq:xy'}
        (x,y')\not\in F\oplus \left(P_0\cap (\{p_1(a)\}\times \mathbb{Z})\right).
    \end{equation}
    Moreover, by the $\mathbb{Z}(0,1)$-periodicity  of $F\oplus P_0$, we have
    \begin{equation}\label{eq:P0A1}
        P_0\cap (\{p_1(a)\}\times \mathbb{Z})=A^1\cap (\{p_1(a)\}\times \mathbb{Z}).
    \end{equation}

   Let $\psi:\quakeall{A}{F}\to\mathbb{Z}$ be such that $\psi(P_0)=y-y'$ and $\psi(P)=0$ for every $P \in\quakeall{A}{F}\setminus \{P_0\}$.
    Then, by Lemma~\ref{lem:slices} and \eqref{eq:P0A1}, we have that 
    \begin{equation}\label{eq:first}
        F\oplus \left(P_0\cap (\{p_1(a)\}\times \mathbb{Z})\right)+(d\ell,0)=F\oplus \left(A^1\cap (\{p_1(a)+d\ell\}\times \mathbb{Z})\right)
    \end{equation}
    and
    \begin{equation}\label{eq:second}
        F\oplus \left((P_0)_{\psi}\cap (\{p_1(a)\}\times \mathbb{Z})\right)+(d\ell,0)=F\oplus \left(A^{1}_\psi\cap (\{p_1(a)+d\ell\}\times \mathbb{Z})\right)
    \end{equation}
    for every $d\in \Z$.
   By \eqref{eq:xy}, \eqref{eq:first} implies that
    $$(x,y)+(d\ell,0)\in F\oplus \left(A^1\cap (\{p_1(a)+d\ell\}\times \mathbb{Z})\right)$$
    and as, by \eqref{eq:xy'},
    \begin{equation*}
        \begin{split}
            (x,y)=(x,y')+(0,y-y')\not \in & \  F\oplus \left(P_0\cap (\{p_1(a)\}\times \mathbb{Z})\right)+(0,y-y')\\
            = & \  F\oplus \left((P_0+(0,y-y'))\cap (\{p_1(a)\}\times \mathbb{Z})\right)\\
            = & \ F\oplus \left((P_0)_\psi\cap (\{p_1(a)\}\times \mathbb{Z})\right),
        \end{split}
    \end{equation*}
    \eqref{eq:second} implies that
    $$(x,y)+(d\ell,0)\not \in F\oplus \left(A^{1}_\psi\cap (\{p_1(a)+d\ell \}\times \mathbb{Z})\right).$$
    
    As $\psi(P)=0$ for every $P\in \quakeall{A}{F}\setminus \{P_0\}$, we conclude that $$A^1\cap (\{p_1(a)+d\ell\}\times \mathbb{Z})\subseteq  P_0$$ for every $d\in \Z$.
    In particular, as $a\in P_0$ was arbitrary, we get that $p_1(P_0)$ is $\ell\Z$-periodic.
    Combined with the fact that $F\oplus P_0$ is $(0,1)\Z$-periodic by Definition~\ref{def:discplates}~(i), we infer that $F\oplus P_0$ is $\ell\Z^2$-periodic.

    Thus, we have proven that $F\oplus P$ is $(\ell,0)\Z$-periodic for every $P\in \quake{A}{F}$, since $\quake{A}{F}$ can be partitioned as $$\left(\quakeall{A}{F}\setminus \quakeone{A}{F}\right)\sqcup \left( \quakeone{A}{F}\cap \quake{A}{F}\right).$$
    Consequently, 
    $$F\oplus A^\text{disc}_c = \Z^2\setminus \left(\bigcup_{P\in \quake{A}{F}} F\oplus P\right)$$ 
    is $(\ell,0)\Z$-periodic as well. This concludes the proof.
\end{proof}

\subsection{Application to tilings of discrete bands}\label{subsec:Bands}

As an application of the technique utilized in this section, we derive the following result about discrete tilings.

\begin{corollary}\label{cor:BtimesZ}
    Let $B\subseteq  \Z$ be finite and $A\in\Tile(F; B\times \Z)$, where $F\subseteq \Z^2$ is finite.
    Then $A=A^{\operatorname{disc}}_c$.
    In particular, $p_1(F)\oplus p_1(A)=B$.
\end{corollary}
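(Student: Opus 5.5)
Since $B$ is finite, the band $B\times\Z$ is trivially $(\ell,0)\Z$-periodic only in the degenerate sense, so rather than invoking Theorem~\ref{thm:DiscretePeriodicity} directly I will run its mechanism in the band setting (allowed by the remark after Definition~\ref{def:discplates} for finite $E$), or, more cleanly, embed the band tiling into a tiling of $\Z^2$. Concretely, choose $L$ larger than $\diam(B)+\diam(p_1(F))$, extend $B$ periodically to $E=B+L\Z$ and $A$ to $\tilde A=A+L\Z(1,0)$, which tiles $E\times\Z$ since the translated bands are disjoint and $F+a\subseteq B\times\Z$ for $a\in A$. The point of this is that the whole theory of this section applies to tilings of $E\times\Z$ with $E$ periodic. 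The plates of $\tilde A$ are exactly the translates of the plates of $A$, because each plate is determined by the $\Z(0,1)$-invariance and minimality conditions, and these conditions only see one band at a time.

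The heart of the proof is to show that every plate $P$ of $A$ is a discrete vertical column. Suppose $P\in\quake{A}{F}$. The band version of Theorem~\ref{thm:DiscretePeriodicity} gives that $F\oplus P$ (for the plate of $\tilde A$) is $(\ell,0)\Z$-periodic for some $\ell>0$. Alternatively, one can argue directly as in the last part of that proof, which shows that $p_1(P_0)$ is $\ell\Z$-periodic. In either form this contradicts boundedness: $F\oplus P\subseteq B\times\Z$ has bounded first projection, so it is not invariant under a horizontal shift unless $\ell$ is a multiple of $L$. Choosing $L$ coprime to, or simply not dividing, the $\ell$ produced is the delicate point, since $\ell$ may depend on $\tilde A$ and hence on $L$.

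To avoid this circularity I would instead adapt the proof of Theorem~\ref{thm:DiscretePeriodicity} directly to $B\times\Z$, noting that Proposition~\ref{pr:EarthquakeStructure} and Lemmas~\ref{lem:PeriodicNonParallel}--\ref{lem:PeriodicNotParallel} still hold with $F\oplus P^i$ merely periodic in the band sense. A set $P^i$ periodic in a non-vertical direction $h_i$ would have unbounded first projection, which is impossible inside the band, so $m=1$ and $A=A^1$. Then the sliding argument goes through: take $a\in P_0$ with $P_0$ a plate that is not a column, slide $P_0$ by $\psi$, and use Lemma~\ref{lem:slices}. This forces $A^1\cap(\{p_1(a)+d\ell\}\times\Z)\subseteq P_0$ and nonempty for all $d$, whereas $A$ has only finitely many occupied columns. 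Hence every plate is a column, so $A=A^{\operatorname{disc}}_c$.

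Finally, each column $P$ satisfies $F+P=(p_1(F)+k)\times\Z$ with $P\subseteq\{k\}\times\Z$. Distinct columns are disjoint in $B\times\Z$ and together cover it, so the sets $p_1(F)+k$, for $k\in p_1(A)$, partition $B$. This gives $p_1(F)\oplus p_1(A)=B$. The main obstacle is justifying Lemma~\ref{lem:slices} in the finite-band setting, that is, the Greenfeld--Tao slicing lemma for a non-full domain. I would handle this by the periodic embedding above, with $L$ chosen large, and checking that the constant $K$ obtained divides into the argument without depending on the choice of $L$.
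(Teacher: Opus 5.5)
Your route is the paper's: extend $A$ to $\tilde A=A+L\Z(1,0)$, apply the periodic-$E$ version of Theorem~\ref{thm:DiscretePeriodicity} to $\tilde A$, and contradict bounded horizontal extent. However, the ``delicate point'' you raise does not exist, and it leads you to abandon an argument that is already complete.

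You already observed that the plates of $\tilde A$ see one band at a time. Concretely, the bands $(B+dL)\times\Z$ are disjoint and each $A+(dL,0)$ tiles its own band. So for any plate $P$ of $\tilde A$,
\[
F\oplus\bigl(P\cap(A+(dL,0))\bigr)=(F\oplus P)\cap\bigl((B+dL)\times\Z\bigr),
\]
which is $\Z(0,1)$-periodic. By minimality, $P$ lies in a single band. Hence for $P\in\operatorname{Plates}_{\operatorname{nc}}(F,\tilde A)$ the set $F\oplus P$ is nonempty with bounded first projection. Such a set is not invariant under \emph{any} nonzero horizontal translation, so $(\ell,0)\Z$-periodicity with $\ell>0$ is impossible whatever $\ell$ is, and no relation between $\ell$ and $L$ is needed.

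Your claim that invariance fails ``unless $\ell$ is a multiple of $L$'' confuses $F\oplus P$ with $F\oplus(P+L\Z(1,0))$, the union of the translated copies, which is not a plate. Thus $\operatorname{Plates}_{\operatorname{nc}}(F,\tilde A)=\emptyset$ and every plate of $A$ is a column. Your last paragraph then gives $p_1(F)\oplus p_1(A)=B$. This is exactly the paper's sketch, so your first two paragraphs, minus the spurious caveat, already constitute the proof.

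The detour in your third paragraph is unnecessary, and as written it rests on a misreading. The remark after Definition~\ref{def:discplates} allows the earthquake decomposition to be \emph{defined} for arbitrary $E$. It extends Proposition~\ref{pr:EarthquakeStructure} and Lemmas~\ref{lem:PeriodicNonParallel}--\ref{lem:PeriodicNotParallel} only to \emph{periodic} $E$. A nonempty finite $B$ is not periodic, not even ``degenerately''. So asserting that these results ``still hold'' on $B\times\Z$ requires precisely the embedding you were trying to avoid.

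Once you work inside $\tilde A$, the sliding argument just reproduces the last step of the proof of Theorem~\ref{thm:DiscretePeriodicity}: $p_1(P_0)$ is $\ell\Z$-periodic. That again contradicts $P_0$ lying in one band. In particular, there is no need to check that the slicing constant $K$ is independent of $L$.
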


\begin{remark}
    Observe that when the set $B\subseteq \Z$ and $F\subseteq  \mathbb{Z}^2$ are such that $B+[0,1)$ and $F+[0,1)^2$ are connected, the statement of Corollary~\ref{cor:BtimesZ} follows from \cite[Lemma~5.6]{GGRT}. 
\end{remark}

Since the argument is mostly included in the proof of Theorem~\ref{thm:DiscretePeriodicity} (more precisely, in the proof of  Lemma~\ref{lem:slices}), the proof below is not detailed.

\begin{proof}[Sketch of the proof of Corollary~\ref{cor:BtimesZ}]
   Take $\ell>0$ much larger than the diameter of $B$ and define $\tilde{A}=\bigsqcup_{d\in \Z} A+(d\ell,0)$.
It follows from the choice of $\ell$ that $\tilde{A}$ is a tiling of an $\ell\Z^2$-periodic set $X\times \Z$ by $F$, where $X=\bigsqcup_{d\in \Z} B+d\ell$.
A direct application of a version of Theorem~\ref{thm:DiscretePeriodicity} to the periodic set $X$, $F$ and $\tilde{A}$, that can be derived in the same way as Theorem~\ref{thm:DiscretePeriodicity}, shows that
$$\tilde{A}=(\widetilde{A})^{\operatorname{disc}}_c\sqcup \left(\bigsqcup_{P\in \quake{\widetilde{A}}{F}} P\right),$$
where every $F\oplus P$ is $(m,0)\mathbb{Z}$-periodic for some $m>0$ for every $P\in \quake{X}{F}$.
It follows from the definition of $X$ that $\quake{X}{F}=\emptyset$ and the claim follows. 
\end{proof}

\begin{remark}\label{rem:GPTproof}
    The following  alternative proof of Corollary~\ref{cor:BtimesZ} was suggested by an LLM during proofreading.
    We sketch the argument.

    Assume $B\ne\emptyset$, and translate $F$ and $A$ so that $\min p_1(F)=\min p_1(A)=\min B=0$.
    Using \cite[Corollary 3.5]{GreenfeldTaoStructureZd} one can show that $A$ is $(0,q)$-periodic for some positive integer $q$.
    Define the integer polynomials
    $$f(z)=\sum_{(x,y)\in F}z^x,\qquad c(z)=\sum_{\substack{(x,y)\in A\\0\le y<q}}z^x, \qquad b(z)=\sum_{x\in B}z^x$$
    and observe that the tiling equation implies that
    $$f(z)c(z)=q\cdot b(z).$$
    Let $d$ and $e$ be the positive greatest common divisors of the coefficients of $f$ and $c$, respectively.
    Since $b$ has coefficients in $\{0,1\}$, Gauss's lemma gives $de=q$, hence
    $$\frac{f(z)}d\,\frac{c(z)}e=b(z).$$
    Both factors have nonnegative integer coefficients.
    Since their product has coefficients in $\{0,1\}$, the exponent of each monomial on the right hand side, has a unique representation
    as a sum of exponents from the two factors on the left hand side, $p_1(F)$ and $p_1(A)$, so
    $$p_1(F)\oplus p_1(A)=B.$$
    Consequently, $A=A_c^{\operatorname{disc}}$ as desired.
\end{remark}

\section{Rational Approximations}\label{sec:RationalApprox}

Let $X\not=\emptyset$ be a regular open periodic subset of $\R$ such that $\partial X$ has finite local complexity, and set $E=X\times \R$.
In case $X\not=\mathbb{R}$, write $(x_i)_{i\in \Z}$ for some increasing enumeration of $\partial X$ with the convention that $(x_{2k},x_{2k+1})\subseteq X$ for every $k\in \mathbb{Z}$ and denote as $\alpha$ a period of $X$.
Note that $\{x_i\}_{i\in \mathbb{Z}}=\{x_i+\alpha\}_{i\in \Z}$ and the set of gaps
$$D_X=\{x_{2k}-x_{2k-1}:k\in \Z\}$$
is finite.
We set $D_X=\emptyset$ when $X=\R$.

    For an axis-parallel tile $\Omega$ and $T\in \Tile(\Omega; E)$, we denote by
     $K_{(\Omega,T)}$  the vector space over $\mathbb{Q}$ generated by the set
    $$\left\{p_i(v'+t')-p_i(v+t): i\in \{1,2\}, v,v'\in V(\Omega),t,t'\in T\right\},$$
    and let  $(\beta_j)_{j\in J}$ be a fixed basis of $K_{(\Omega,T)}$.

    We fix $v_0\in V(\Omega)$ and $t_0\in T$.
By translation invariance, we may, without loss of generality, assume that $v_0=t_0=(0,0)$. This, in particular, implies that  $p_i(t),p_i(v)\in K_{(\Omega,T)}$ for every $i\in \{1,2\}$, $t\in T$ and $v\in V(\Omega)$; henceforth, this will be our assumption in what follows.
Observe that if $X\not=\R$, then $\{x_i\}_{i\in \Z},D_X\subseteq K_{(\Omega,T)}$ and $\alpha\in K_{(\Omega,T)}$.
Also the assumption that $\partial X$ has finite local complexity is always satisfied when $T\in \Tile(\Omega,E)$, hence this will be tacitly assumed through out.

\begin{definition}
Let $\Omega$ be an axis-parallel polygonal tile and $T\in \Tile(\Omega;E)$.
    Let $\epsilon>0$.
    A $\mathbb{Q}$-linear map $\varphi:K_{(\Omega,T)}\to \mathbb{Q}$ that satisfies $\varphi(r)=r$ for every $r\in \Q\cap K_{(\Omega,T)}$  
    is called an \emph{$\epsilon$-approximation for $(\Omega,T)$} if 
    $$|\beta_j-\varphi(\beta_j)|<\epsilon$$
    for every $j\in J$. For fixed $\Omega,T$, we may abbreviate and say that $\varphi$ is an \emph{$\epsilon$-approximation}.
 We say that $\varphi$ is an \emph{approximation} if it is an $\epsilon$-approximation for some $\epsilon>0$.
\end{definition}

Note that as $\Q$ is dense in $\R$ and $(\beta_j)_{j\in J}$ is a basis of $K_{(\Omega,T)}$ we have that $\epsilon$-approximations exist for any fixed $\epsilon>0$.

Let $\varphi:K_{(\Omega,T)}\to \mathbb{Q}$ be an approximation.
we define the collections $\{v^\varphi\}_{v\in V(\Omega)}$ and $T^\varphi=\{t^\varphi\}_{t\in \T}$ as $p_i(v^\varphi)=\varphi(p_i(v))$ and $p_i(t^\varphi)=\varphi(p_i(t))$ for $i=1,2$.
In particular, $(v_0)^\varphi=(t_0)^\varphi=(0,0)$.
Given $S\subseteq  T$, we set $S^\varphi=\{t^\varphi\in T^\varphi:t\in S\}$.
In case $X\not=\R$, we also define $\alpha^\varphi=\varphi(\alpha)$ and $x^\varphi_i=\varphi(x_i)$ for every $i\in \Z$.

\begin{definition}
Let $\Omega$ be an axis-parallel polygonal tile and $T\in \Tile(\Omega;E)$.
    We say that an approximation $\varphi$ for $(\Omega, T)$ is a \emph{good approximation} if $p_i(v^\varphi)-p_i((v')^\varphi)$ has the same sign ($+$, $-$, or $0$) as $p_i(v)-p_i(v')$ for every $v,v'\in V(\Omega)$ and $i=1,2$, and $\varphi(z)>0$ for every $z\in D_X$. (See Figure \ref{fig:goodapprox}.)  
\end{definition}

\begin{figure}[ht]
    \centering
    \includegraphics[width=0.2\linewidth]{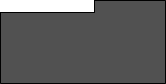}
    \hspace{2em}
    \includegraphics[width=0.2\linewidth]{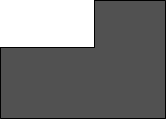}
    \hspace{2em}
    \includegraphics[width=0.2\linewidth]{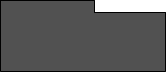}

    \caption{(Left) an irrational tile $\Omega$. (Middle) a good approximation of $\Omega$ by a rational tile. (Right) a bad (not good) approximation of $\Omega$ by a rational tile. Good approximations do not change the \emph{shape} of the tile in a qualitative manner.}
    \label{fig:goodapprox}
\end{figure}

Assuming that $\varphi$ is a good approximation, we define an axis-parallel polygonal set with rational vertices $\Omega^\varphi$.
The set of vertices is defined as $V(\Omega^\varphi)=\{v^\varphi\}_{v\in V(\Omega)}$, and the set of edges $E(\Omega^\varphi)$ consists of line segments $\omega_{v^\varphi,(v')^\varphi}$ whenever $\omega_{v,v'}\in E(\Omega)$.
We also denote as $\omega^\varphi$ the edge in $E(\Omega^\varphi)$ that corresponds to $\omega\in E(\Omega)$.
Consider a maximal bounded connected component $\Omega_0$ of $\Omega$.
The assumption that $\varphi$ is a good approximation guarantees that there is a unique maximal connected open region $\Omega^\varphi_0$ with boundary consisting of those $\omega^\varphi\in E(\Omega^\varphi)$ such that $\omega\in E(\Omega_0)$.
This allows to define $\Omega^\varphi$ as the disjoint union of $\Omega^\varphi_0$, where $\Omega_0$ runs over all connected component of $\Omega$.
Similarly, in case $X\not=\R$, we may define the regular open set $X^\varphi=\bigsqcup_{k\in\Z} (x^\varphi_{2k},x^\varphi_{2k+1})$, as well as $E^\varphi=X^\varphi\times \mathbb{R}$.
Note that $\partial X^\varphi$ has finite local complexity.
We also set $X^\varphi=\R$ and $E^\varphi=\R^2$, in case $X=\R$.

Our main goal in this section is showing that a good approximation is tiling preserving: 

\begin{theorem}\label{thm:GoodApproximation}
    Let $\Omega$ be an axis-parallel tile, $T\in \Tile(\Omega; E)$ and $\varphi$ be a good approximation for $(\Omega, T)$.
    Then $T^\varphi\in \Tile(\Omega^\varphi;E^\varphi)$, where in case that $X\not=\R$ we have that $X^\varphi$ a regular open periodic set with period $\alpha^\varphi$.
\end{theorem}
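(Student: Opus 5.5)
The plan is to encode the tiling property of $T$ as a family of local combinatorial conditions. Each condition is a finite conjunction of equalities and strict inequalities between $\Q$-linear combinations of elements of $K_{(\Omega,T)}$. We then check that a good approximation $\varphi$ preserves every such condition. The statement ``$\Omega\oplus T=E$ up to null sets'' for an axis-parallel $\Omega$ is equivalent to two things: a packing condition, and a covering condition. Both can be read off from the relative order of the coordinates $p_i(v+t)$, $v\in V(\Omega)$, $t\in T$, together with the points $x_k$ of $\partial X$. Concretely, consider the grid generated by all lines $x=p_1(v+t)$, $y=p_2(v+t)$ and $x=x_k$. It cuts each bounded window into finitely many open rectangles (cells). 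Whether a cell lies in $\Omega+t$, and whether it lies in $E$, depends only on the order type of the finitely many coordinates involved.

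The first step is a cell-counting reformulation. $T$ tiles $E$ if and only if, for every cell $Q$ of every such local grid, the number of $t\in T$ with $Q\subseteq\Omega+t$ equals $\1_{Q\subseteq E}$. A cell lies in $\Omega+t$ if and only if its centre lies in $\Omega+t$. This can be decided by a ray-parity test: count the vertical edges $\omega+t$ crossed by a horizontal ray from the centre. That count is determined by comparisons of the form $p_1(a+t)<p_1(c)<p_1(b+t)$ and $p_2(a+t)<p_2(c)<p_2(b+t)$ between grid coordinates. Because $\Omega$ is bounded, only finitely many $t$ matter in each window.

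The second step is order preservation. Let $L$ be the finite set of grid coordinates in a window. We would like $\varphi$ to preserve the order of $L$. It is $\Q$-linear and fixes rationals, but it need not be monotone on all of $K_{(\Omega,T)}$. So this step requires more than the definition of a good approximation, and I expect it to be the main obstacle. The plan is as follows. Writing $p_i(v+t)-p_i(v'+t')=(p_i(v)-p_i(v'))+(p_i(t)-p_i(t'))$, equalities are always preserved by linearity. The dangerous case is a strict inequality whose sign might flip. The idea is to avoid needing global order preservation and instead show that the tiling is rigid under a \emph{continuous} deformation. Consider $\varphi_s=(1-s)\,\mathrm{id}+s\varphi$ for $s\in[0,1]$, which is $\Q$-linear into $\R$. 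Each $\varphi_s$ still preserves the vertex order of $\Omega$ and the positivity of the gaps $D_X$, since these are finitely many strict inequalities that hold at $s=0$ and $s=1$ and are linear in $s$. We then show that the set of $s$ for which $T^{\varphi_s}$ tiles $E^{\varphi_s}$ is open and closed. Closedness follows from weak limits of tilings. Openness is the key point. Near a given $s_0$, coordinates that are equal stay equal by linearity. Coordinates that differ differ by an amount that, within a window, is bounded below. This uses finite local complexity, which holds because $\Omega$ is a tile and $\partial X$ has finite local complexity. Consequently the order types, and hence the cell-counting conditions, are locally constant in $s$.

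The remaining work, where I expect most of the effort to go, is making the openness uniform over all windows simultaneously. Finite local complexity gives only finitely many local configurations, and within each of them $\varphi_s$ acts by a fixed linear map on finitely many differences. From these we extract a positive gap that is uniform over all windows. Edges that properly intersect remain properly intersecting, and edges that are disjoint cannot collide. This is because the relevant difference either lies in $\Q$, and is therefore fixed by $\varphi_s$, or is a difference of vertex coordinates of $\Omega$, whose sign is protected by goodness. Finally, periodicity of $X^\varphi$ with period $\alpha^\varphi$ follows from $\{x_k\}=\{x_k+\alpha\}$ and the linearity of $\varphi$. Regular openness follows from $\varphi(z)>0$ for $z\in D_X$ together with the preservation of the order of the $x_k$, which is obtained by the same argument.
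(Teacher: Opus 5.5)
Your openness step carries all the content, and it fails as described. Linearity keeps equal only coordinates that are already equal in $K_{(\Omega,T)}$. Distinct coordinates can meet at an intermediate $s$ and swap, so the order types of windows are not locally constant in $s$.

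\emph{Example with finite local complexity.} Shift row $k$ of the standard tiling by $(0,1)^2$ horizontally by $\theta_k$, with $\theta_k=0$ for even $k$ and $\theta_k=\sqrt2-1.41$ for odd $k$, and let $\varphi(\sqrt2)=1.4$. This $\varphi$ is good and $T$ is even periodic. Yet the vertical edges at $x=0$ and $x=\sqrt2-1.41$ in adjacent rows swap order near $s=0.3$.

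\emph{Finite local complexity is not available.} Your uniformity claim rests on $T$ having finite local complexity ``because $\Omega$ is a tile and $\partial X$ has finite local complexity''. That is false: only $\partial X$ is guaranteed to have it. The theorem is stated without that hypothesis and used without it, e.g.\ in Lemmas~\ref{lem:PreserveEarthquake}, \ref{lem:ColumnOrUnbounded} and \ref{lem:SimultanousColumns}. If $\theta_k$ is the fractional part of $k^2\sqrt2$, then $|\varphi(\theta_{k+1}-\theta_k)-(\theta_{k+1}-\theta_k)|=(2k+1)|1.4-\sqrt2|$ is unbounded. The parameters $s$ at which some swap occurs are then dense in $[0,1]$, so no neighbourhood of any $s_0$ works.

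\emph{Closedness.} The same unboundedness of $t^\varphi-t$ on $T$ leaves closedness unproved. You cannot conclude $T^{\varphi_{s_n}}\to T^{\varphi_{s_0}}$, and a weak limit that merely contains $T^{\varphi_{s_0}}$ gives packing, not covering.

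\emph{Your justification.} In these examples the tiling survives because the swapping edges never share a horizontal line. Your proof has to explain this, and the sentence where it tries is wrong as stated: $p_1(b'+t')-p_1(a+t)$ with $t\neq t'$ is in general neither rational nor a single difference of vertex coordinates of $\Omega$.

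What is missing is the observation that makes the deformation unnecessary; contrary to your expectation, goodness alone suffices. You only need the signs of differences between parallel edges with overlapping projections, and these lie in a cone spanned by finitely many positive elements. Along a horizontal line, consecutive crossings of vertical edges of $E(\Omega)+T$ and of the lines $\{x_k\}\times\R$ differ in one of two ways:
\begin{itemize}
\item by some $p_1(v')-p_1(v)>0$ with $v,v'\in V(\Omega)$, since the open segment between them lies in one tile whose boundary contains both endpoints;
\item or by an element of $D_X$.
\end{itemize}
Hence a positive difference between vertical edges with overlapping $p_2$-projections is a finite sum of such terms. Goodness together with $\Q$-linearity keeps it positive; this is Lemma~\ref{lem:step0}.

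The paper then proceeds as follows. It applies $\varphi$ to first coordinates only, so the vertical edges meeting each horizontal line are unchanged. It proves packing by pulling an overlap of two image rectangles back through Lemma~\ref{lem:step0}. It proves covering by noting that a vertical boundary segment of an uncovered region would lie on some $\omega^1+t^1$, while the tiles abutting $\omega+t$ from the other side in $T$ still abut it in $T^1$, because equalities are preserved. It then repeats the argument in the second coordinate. The same chain decomposition gives monotonicity of $(\varphi(x_k))_k$, hence the claims about $X^\varphi$, and finite local complexity is never used.
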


In what follows, we will use the following notation, which allows us to view the approximation as a two-step process: We define $K^1_{(\Omega,T)}$ to be the vector space over $\mathbb{Q}$ generated by the sets
    $$\left\{p_1(v'+t')-p_1(v+t):, v,v'\in V(\Omega),t,t'\in T\right\}$$
    and similarly define $K^2_{(\Omega,T)}$.
    We then refer to the collections $\{v^1\}_{v\in V(\Omega)}$ and $T^1=\{t^1\}_{t\in \T}$ as $p_1(v^1)=\varphi(p_1(v))$, $p_2(v^1)=p_2(v)$ and $p_1(t^1)=\varphi(p_1(t))$, $p_2(t^1)=p_2(t)$.
    Analogously as above, define $\Omega^1$, $\omega^1$ for $\omega\in E(\Omega)$, and $X^1$.
    Observe that $K^2_{(\Omega^1,T^1)}=K^2_{(\Omega,T)}$, $(\Omega^1)^2=\Omega^\varphi$, $(T^1)^2=T^\varphi$, and $X^1=(X^1)^2=X^\varphi$.

As a preliminary step in proving Theorem~\ref{thm:GoodApproximation}, we prove the following lemma, which states that the relative position of edges in the tiling is preserved under good approximations.

\begin{lemma}\label{lem:step0}
Let $\Omega$ be an axis-parallel tile, $T\in \Tile(\Omega; E)$ and $\varphi$ be a good approximation.
Suppose that $\omega,\omega'\in E(\Omega)$,  $t,t' \in T$ are  such that $$m_1\left( p_2(\omega+t)\cap p_2(\omega'+t')\right)$$ is positive. Then $p_1(\omega^1+t^1)-p_1((\omega')^1+(t')^1)$  has the same sign\footnote{Note that since $\Omega$ is axis-parallel, we have that $\omega, \omega'$ as well as $\omega^1,(\omega')^1$ are vertical. And so, we refer to their $p_1$ projection as a number rather than a set.} ($+$, $-$, or $0$) as $p_1(\omega+t)-p_1(\omega'+t')$.
\end{lemma}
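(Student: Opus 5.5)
The plan is to reduce the sign comparison to a sum of elementary horizontal differences, each of which visibly keeps its sign under $\varphi$. The case of equality is immediate. If $p_1(\omega+t)=p_1(\omega'+t')$, then $p_1(\omega)+p_1(t)=p_1(\omega')+p_1(t')$ as elements of $K_{(\Omega,T)}$. By $\mathbb{Q}$-linearity of $\varphi$, the first coordinates of $\omega^1+t^1$ and $(\omega')^1+(t')^1$ then coincide. By symmetry it suffices to treat $c:=p_1(\omega+t)<c':=p_1(\omega'+t')$ and show $\varphi(c)<\varphi(c')$.

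First, choose a height $y$ in the interior of $p_2(\omega+t)\cap p_2(\omega'+t')$ such that $y$ is not the height of any horizontal edge of any tile $\Omega+s$, $s\in T$, meeting the bounded box $[c,c']\times[y-1,y+1]$. This is possible because only finitely many tiles meet a bounded region: the tiles are disjoint translates of a set of positive measure contained in a fixed ball. For such a generic $y$, the horizontal segment $L=[c,c']\times\{y\}$ meets each tile $\Omega+s$ in a finite union of open intervals. Each interval endpoint is of the form $p_1(v+s)$ with $v\in V(\Omega)$, namely the abscissa of a vertical edge of $\Omega+s$ crossing height $y$. Since $T$ tiles $E=X\times\mathbb{R}$, $L$ is covered, up to finitely many points, by these intervals together with the gaps $(x_{2k-1},x_{2k})\times\{y\}$ of the complement of $E$.

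Order all these endpoints as $c=c_0<c_1<\dots<c_n=c'$. Each consecutive interval $(c_{k-1},c_k)$ is of one of two kinds. In the first kind it is a component of $(\Omega+s)\cap L$, so
\[
c_k-c_{k-1}=p_1(v+s)-p_1(v'+s)=p_1(v)-p_1(v')>0
\]
for vertices $v,v'\in V(\Omega)$. Its $\varphi$-image is then $\varphi(p_1(v))-\varphi(p_1(v'))$, which is positive because $\varphi$ is a good approximation. In the second kind it is a gap of $X$, so $c_k-c_{k-1}\in D_X$ and its $\varphi$-image is positive by the definition of a good approximation. Each $c_k$ may admit several representations, for example as $p_1(v+s)$ for the tile on its left and as $p_1(\tilde v+\tilde s)$ for the tile on its right, or as some $x_i$. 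All of these are the same element of $K_{(\Omega,T)}$, and $\varphi$ is a function on $K_{(\Omega,T)}$, so $\varphi(c_k)$ is unambiguous. We also check that $c_0$ and $c_n$ are among these endpoints: $\omega+t$ is a vertical edge of $\Omega+t$ crossing height $y$, so $c$ is an endpoint of an interval of $(\Omega+t)\cap L'$ on an extended segment $L'\supseteq L$; the same holds for $c'$. Telescoping gives
\[
\varphi(c')-\varphi(c)=\sum_{k=1}^n\bigl(\varphi(c_k)-\varphi(c_{k-1})\bigr)>0,
\]
which is the claim for the first coordinates of $\omega^1+t^1$ and $(\omega')^1+(t')^1$.

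The main obstacle is the second step: making rigorous that, at a generic height, $L$ decomposes into finitely many intervals, each either a full horizontal chord of a single tile between two of its vertical edges or a full gap of $X$. For this I would use that the tiling holds up to null sets and that $\Omega$ is regular open with axis-parallel boundary. A chord between consecutive crossings of $\partial(\Omega+s)$ at a non-edge height lies entirely inside or entirely outside $\Omega+s$. Distinct tiles have disjoint interiors, and the tiles together with the gaps fill $L$ up to a null set. Because all pieces are open intervals, they then tile $L$ exactly, apart from finitely many endpoints.
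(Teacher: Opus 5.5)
Your proposal is correct and follows essentially the same route as the paper. The paper also joins $\omega'+t'$ to $\omega+t$ by a finite chain of vertical edges sharing a common height interval, whose consecutive horizontal differences are either positive vertex-coordinate differences $p_1(v'_j)-p_1(v_j)$ within a single tile or gaps $x_{2k_j}-x_{2k_j-1}\in D_X$, and then telescopes under the good approximation $\varphi$. Your generic-height slicing argument supplies the justification for the existence of this chain, which the paper simply asserts from the tiling property.
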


\begin{proof}
    Note that the claim follows trivially when $p_1(\omega+t)=p_1(\omega'+t')$.
    Then, by symmetry, it suffices to show that if $p_1(\omega+t)>p_1(\omega'+t')$ then $p_1(\omega^1+t^1)>p_1((\omega')^1+(t')^1)$. 
      By the assumptions that $m_1\left( p_2(\omega+t)\cap p_2(\omega'+t')\right)$ is positive and $T\in \Tile(\Omega;E)$, we can find finitely many  vertical segments $\omega_0,\dots,\omega_\ell\in E(\Omega)+T$ such that $p_1(\omega_0)=p_1(\omega'+t')$, $p_1(\omega_\ell)=p_1(\omega+t)$, $$m_1\left(  p_2(\omega_0)\cap \dots \cap  p_2(\omega_\ell) \right)>0$$ and for every $0\le j\le \ell-1$ there are  $v'_j,v_j\in V(\Omega)$, or $k_j\in Z$ such that
    \begin{equation}\label{eq:positive0}
        p_1(\omega_{j+1})-p_1(\omega_j)=p_1(v'_{j})-p_1(v_j)>0,
    \end{equation}
    or
    \begin{equation}\label{eq:positive00}
        p_1(\omega_{j+1})-p_1(\omega_j)=x_{2k_j}-x_{2k_j-1}>0.
    \end{equation}

    As $\varphi$ is good, by \eqref{eq:positive0}, or \eqref{eq:positive00} we have that $$p_1((\omega_{j+1})^1)-p_1((\omega_j)^1)$$ is positive for every $0\leq j\leq \ell -1$.
    Thus,
    \begin{align*}
        p_1(\omega^1+t^1)-p_1((\omega')^1+(t')^1)&=p_1((\omega_\ell)^1)-p_1((\omega_0)^1)\\ &=\sum_{j=0}^{\ell-1} p_1((\omega_{j+1})^1)-p_1((\omega_j)^1)
    \end{align*} 
    is positive, as desired.
\end{proof}

 We can now finally prove Theorem~\ref{thm:GoodApproximation}.

\begin{proof}[Proof of Theorem~\ref{thm:GoodApproximation}]
    Write $E=X\times \mathbb{R}$, and set $E^1=X^1\times \R$.
    It is enough to show that $T^1\in \Tile(\Omega^1;E^1)$ as we might apply the same reasoning for $K^2_{(\Omega^1,T^1)}$ and $\varphi$ to get that $(T^1)^2\in \Tile((\Omega^1)^2;E^\varphi)$ as $E^\varphi=(E^1)^2$.
    This implies the claim as $K^2_{(\Omega^1,T^1)}=K^2_{(\Omega,T)}$, $(\Omega^1)^2=\Omega^\varphi$ and $(T^1)^2=T^\varphi$.

    In case $X\not=\R$, it follows from the $\Q$-linearity of $\varphi$ that $X^1=X^\varphi$ is $\alpha^\varphi$ periodic and that $\partial X^1=\{x^\varphi_i\}_{i\in \Z}$.
    Moreover, Lemma~\ref{lem:step0} implies that $\Omega^1+t^1\subseteq E^1$ for every $t\in T$.

   The proof of $T^1\in \Tile(\Omega^1;E^1)$ consists of two steps, as follows.

    \emph{Step 1}.
    We show that  $m_2\left((\Omega^1+t^1)\cap (\Omega^1+(t')^1)\right)$ is positive for $t,t'\in T$ if and only if $t=t'$. In particular, $\Omega^1\oplus T^1$ is a packing.    
    
    As $\Omega$ is a polygonal set and $t^1=(t')^1$ when $t=t'$, we have that  $$m_2\left((\Omega^1+t^1)\cap (\Omega^1+(t')^1)\right)=m_2(\Omega^1)$$ is positive if $t=t'$. Conversely, suppose that
    $$m_2\left((\Omega^1+t^1)\cap (\Omega^1+(t')^1)\right)>0.$$
    As $\Omega^1$ is a polygonal set, there are open rectangles $R\subseteq  \Omega^1+t^1$ and $R'\subseteq  \Omega^1+(t')^1$ that satisfy $m_2(R\cap R')>0$ and $J=p_2(R)=p_2(R')$.
    Moreover, we may assume that there are vertical edges $(\omega_{00})^1+t^1,(\omega_{01})^1+t^1\in E(\Omega^1)+t^1$ that cover the vertical boundary of $R$ and $(\omega_{10})^1+(t')^1,(\omega_{11})^1+(t')^1\in E(\Omega^1)+(t')^1$ that cover the vertical boundary of $R'$.
Since $\varphi$ is good and the transition from $(\Omega, T)$  to $(\Omega^1, T^1)$ only changes the $x$-coordinates, the rectangles  $U$ and $U'$ that satisfy $J=p_2(U)=p_2(U')$ and whose vertical boundaries are covered by $\omega_{00}+t,\omega_{01}+t$ and $\omega_{10}+t',\omega_{11}+t'$, respectively, satisfy 
\begin{equation}\label{eq:Uin}
    U\subseteq  \Omega+t \; \hbox{ and }\; U'\subseteq  \Omega+t'.
\end{equation}
Moreover, as $m_2(R\cap R')>0$ and  $p_2(R)=p_2(R')$, we must have  $$(\omega_{00})^1+t^1 < (\omega_{11})^1+(t')^1 \; \hbox{ and }  \; (\omega_{10})^1+(t')^1 < (\omega_{01})^1+t^1.$$
By Lemma \ref{lem:step0}, we thus have 
$$\omega_{00}+t < \omega_{11}+t' \; \hbox{ and }  \; \omega_{10}+t' < \omega_{01}+t,$$
and so, as $p_2(U)=p_2(U')$ we conclude that 
\begin{equation}\label{eq:UU'pos}
    m_2(U\cap U')>0.
\end{equation}   
Hence, since $T\in \Tile(\Omega;E)$, by \eqref{eq:Uin} and \eqref{eq:UU'pos}, we must have $t=t'$.

    \emph{Step 2}. We show that $\Omega^1+T^1$ covers $E^1$ up to null sets.
    We have already observed that $\Omega^1+T^1\subseteq E^1=X^1\times \R$.
    Set
    $$C=E^1\setminus\overline{\Omega^1+T^1},$$
    and suppose towards a contradiction that $C$ is nonempty.
    By Step 1, $\Omega^1\oplus T^1$ is a packing, so $T^1$ is uniformly discrete and $\partial C$ consists of line segments.
    Observe that if $X\not=\R$, then $\{x^1_i\}\times \R\subseteq \partial (\Omega^1+T^1)$ for every $i\in \Z$.
    This is because $\varphi$ is a good approximation that is $\Q$-linear.
    Hence if we write $\{x_i\}\times \R=\bigcup_{\ell\in \Z} \omega_\ell+t_\ell$, then we must have that
    $$\{x^1_i\}\times \R=\bigcup_{\ell\in \Z} \omega^1_\ell+t^1_\ell.$$
    In particular, we have $\partial X^1\times \mathbb{R}\subseteq \partial(\Omega^1+T^1)$ in both cases $X=\R$ and $X\not=R$.

    We claim that $\partial C$ contains a vertical line segment disjoint from $\partial E^1$.
    Suppose for a contradiction that this is not the case, and let $P\subseteq \partial C$ be a maximal connected horizontal line segment.
    Then $P$ is a coset of $\mathbb{R}(1,0)$ in case $X=\R$, and must intersect $\partial E^1$ in case $X\not=\R$.
    The former case is impossible as the transition from $(\Omega, T)$ to $(\Omega^1, T^1)$  only changes the $x$-coordinates and $T\in \Tile(\Omega;\R^2)$, and the latter would contradict the fact that $\partial X^1\times \mathbb{R}\subseteq \partial(\Omega^1+T^1)$.
    Hence, the boundary of $C$ contains a vertical edge $\omega_{a,b}$ disjoint from $\partial E^1$, where $a=(x,y_a),b=(x,y_b)\in \R^2$ are distinct, and there is a vertical  $\omega\in E(\Omega)$ and $t\in T$ such that $\omega_{a,b}\subseteq  \omega^1+t^1$ (so, in particular, $x=p_1(\omega^1+t^1)$).
   Using that $T$ is in $\Tile(\Omega;E)$ and that $\omega\in E(\Omega)$ once again, we have that there are $s_1,\dots,s_\ell\in T$ that are distinct from $t$ such that the closure of $\Omega+\{s_1,\dots,s_\ell\}$ covers $\omega_{a',b'}$, where $a'=(x',y_a)$, $b'=(x',y_b)$ and $x'=p_1(\omega+t)$.
    Thus, since $x=\varphi(x')$, the closure of the set $\Omega^1+\{(s_1)^1,\dots, (s_\ell)^1\}$ covers $\omega_{a,b}$. However, this contradicts our assumption that $\omega_{a,b}$ is contained in $\partial(C)$, as from Step 1 we have that $t^1$ is distinct from $(s_1)^1,\dots, (s_\ell)^1$. This shows that the boundary of $C$ must be empty, hence, either $\Omega^1+T^1$ is empty, or $C$ is empty. Clearly, by construction, $\Omega^1+T^1$ cannot be empty since $\Omega +T$ is nonempty. We conclude that $C$ is empty, which means that $\Omega^1+T^1$ is a covering.
    
    Finally, combining Step 1 and Step 2, we see that $\Omega^1+T^1$ is both a packing and a covering of $E^1$; in other words, $T^1\in \Tile(\Omega^1;E^1)$, as claimed.
\end{proof}
    
Using Theorem~\ref{thm:GoodApproximation} we can now show that any good approximation preserves the sliding properties of the tiling.

\begin{lemma}\label{lem:ApproxKeepsCosetPieces}
    Let $E=X\times \R$ be as above, $\Omega$ be an axis-parallel polygonal tile, $T\in \Tile(\Omega;E)$, $S\subseteq T$ be such that 
    $\Omega\oplus S = Y\times \R$ for some regular open $Y\subseteq \R$ and $\varphi$ be a good approximation for $(\Omega, T)$.
    Then $\Omega^\varphi\oplus S^\varphi= Y^\varphi\times \R$, where $Y^\varphi \subseteq \R$ is such that $\partial Y^\varphi= \varphi(\partial Y)$ (in particular, $\varphi(\partial Y)$ is well-defined) and we have that $[a,b]\times \mathbb{R}\subseteq  \Omega\oplus S$ if and only if $[\varphi(a),\varphi(b)]\times \mathbb{R}\subseteq  \Omega^\varphi\oplus S^\varphi$ for every $a<b\in \partial Y$.
\end{lemma}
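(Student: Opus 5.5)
The plan is to reduce the statement to Theorem~\ref{thm:GoodApproximation} and Lemma~\ref{lem:step0}, using that $\varphi$ is $\Q$-linear and acts on $x$-coordinates through the first step ($\Omega\to\Omega^1$) while only moving $y$-coordinates in the second step. First, $\partial Y$ is a discrete subset of $\R$: since $\Omega\oplus S=Y\times\R$ and $T$ is uniformly discrete, each boundary line $\{y\}\times\R$ with $y\in\partial Y$ is a union of vertical edges $\omega+s$ with $\omega\in E(\Omega)$ and $s\in S$. Hence every $y\in\partial Y$ has the form $p_1(\omega+s)=p_1(v)+p_1(s)\in K_{(\Omega,T)}$, so $\varphi(y)$ makes sense. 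To see that $\varphi(\partial Y)$ is well-defined as a set of lines, I would check that if two vertical edges $\omega+s,\omega'+s'$ lie on the same line $\{y\}\times\R$, then $\omega^\varphi+s^\varphi$ and $(\omega')^\varphi+(s')^\varphi$ also lie on a common line. This holds because both equal $\varphi(y)$ by linearity. Lemma~\ref{lem:step0} shows that the order of these lines is preserved wherever their vertical projections overlap, and along a single line $\{y\}\times\R$ consecutive edges share $y$-intervals.

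Next, I would apply Theorem~\ref{thm:GoodApproximation} to $S$ itself, viewed as a tiling of $Y\times\R$. Strictly, one needs $Y$ periodic, so instead I would argue directly, repeating Steps~1 and~2 of that proof with $E$ replaced by $Y\times\R$. Step~1 (packing) follows at once from the packing statement for $T^\varphi$, since $S^\varphi\subseteq T^\varphi$. For the covering, I would use the description of $\partial Y\times\R$ as a union of edges of $\Omega+S$. Each line $\{y\}\times\R$, $y\in\partial Y$, is mapped to $\{\varphi(y)\}\times\R$ and is covered by the images of the edges $\omega^\varphi+s^\varphi$, because $y$-coordinates of edge endpoints only undergo an order-preserving (by goodness and Lemma~\ref{lem:step0} applied to $p_2$) linear change. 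So $\{\varphi(y)\}\times\R\subseteq\partial(\Omega^\varphi+S^\varphi)$. Then on each strip $(a,b)\times\R$ with consecutive $a<b\in\partial Y$, the set $S_{a,b}=\{s\in S:\Omega+s\subseteq[a,b]\times\R\}$ either tiles the strip or the strip is disjoint from $\Omega\oplus S$. In the first case I would run the Step~2 argument of Theorem~\ref{thm:GoodApproximation} for the band $X=(a,b)$ (made periodic artificially, or treated directly as in that proof) to get $\Omega^\varphi\oplus S_{a,b}^\varphi=(\varphi(a),\varphi(b))\times\R$. In the second case, no $s^\varphi$ lands in the image strip: by Lemma~\ref{lem:step0}, tiles to the left of $a$ stay to the left of $\varphi(a)$.

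Collecting the strips gives $\Omega^\varphi\oplus S^\varphi=Y^\varphi\times\R$, where $Y^\varphi$ is the union of the image strips. This also yields the ``if and only if'' about $[a,b]\times\R$, since each strip's membership is preserved. The main obstacle is the covering step on a strip. The proof of Theorem~\ref{thm:GoodApproximation} relies on the uncovered region having boundary made of edges which, before approximation, were covered from the other side by tiles of the tiling. For $S$ alone, the other side might be a tile of $T\setminus S$. I would handle this by noting that such an edge lies on a line $\{y\}\times\R$ with $y\in\partial Y$, and those lines are exactly the ones shown above to be preserved. So the contradiction argument only has to be run for edges in the interior of the strip, where the covering tiles do belong to $S_{a,b}$.
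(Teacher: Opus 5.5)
Your treatment of $\partial Y$ matches the paper, and so does your packing step. Each $y\in\partial Y$ lies in $K_{(\Omega,T)}$, Lemma~\ref{lem:step0} applied to edges on two full lines gives strict order preservation, and packing is inherited from $S^\varphi\subseteq T^\varphi$.

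The step that fails as written is the strip decomposition. You claim that for consecutive $a<b$ in $\partial Y$ the set $S_{a,b}=\{s\in S:\Omega+s\subseteq[a,b]\times\R\}$ either tiles $(a,b)\times\R$ or the strip misses $\Omega\oplus S$. This presupposes that each tile sits inside a single component of $Y$. That fails for disconnected $\Omega$, which the paper explicitly allows. For example, take $\Omega=(0,1)^2\cup\bigl((2,3)\times(0,1)\bigr)$, $T=\{(4j,k),(4j+1,k):j,k\in\Z\}$ and $S=\{(0,k):k\in\Z\}$. Then $Y=(0,1)\cup(2,3)$ and $S_{0,1}=\emptyset$, although $(0,1)\times\R\subseteq\Omega\oplus S$. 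So the band version of Step~2 of Theorem~\ref{thm:GoodApproximation} cannot be run strip by strip with whole tiles. To repair it, run the contradiction argument once on all of $Y^\varphi\times\R$. The argument is local: a boundary edge of the uncovered region not lying on $\partial Y^\varphi\times\R$ has $Y\times\R$ on both sides before approximation, so it was covered from the other side by tiles of $S$.

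More to the point, you do not need to re-prove covering at all, and this is how the paper argues. You already observe that tiles of $T\setminus S$ stay outside the image strips. For $s\in T$, the tiling property gives $\Omega+s\subseteq Y\times\R$ iff $m_2((\Omega+s)\cap(Y\times\R))>0$. Combined with this, Lemma~\ref{lem:step0} shows that $\Omega^\varphi+s^\varphi\subseteq Y^\varphi\times\R$ when $s\in S$, and that $\Omega^\varphi+s^\varphi$ is disjoint from $Y^\varphi\times\R$ otherwise. This comparison is done edge by edge, so disconnectedness of $\Omega$ is harmless.

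Theorem~\ref{thm:GoodApproximation} gives $\Omega^\varphi\oplus T^\varphi=E^\varphi$, and $E^\varphi\supseteq Y^\varphi\times\R$, again by Lemma~\ref{lem:step0} applied to the boundary lines of $X$ and $Y$. Hence every point of $Y^\varphi\times\R$ is covered by some $\Omega^\varphi+t^\varphi$, and this forces $t\in S$. So $\Omega^\varphi\oplus S^\varphi=Y^\varphi\times\R$, and the statement about $[a,b]\times\R$ follows from the strip correspondence. This route uses the global tiling $T^\varphi$ as a black box rather than reopening its proof, and it avoids periodizing bands artificially.
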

\begin{proof}
    The cases $Y=\emptyset$ and $Y=X$ are immediate.
    As $T\in \Tile(\Omega,E)$, $\partial Y$ is uniformly discrete, hence there is an increasing enumeration $(y_i)_{i\in I}$ of $\partial Y$, where $I=\{1,\dots,n\}$ for some $n\in \N$, $I=\pm\mathbb{N}$, or $I=\Z$.
    Each line $\{y_i\}\times\mathbb{R}$, $i\in I$, is covered by translated vertical edges of $\Omega$, hence $\partial Y\subseteq K_{(\Omega,T)}$.
    In addition, $\varphi(y_{i+1})>\varphi(y_i)$ for every $i\in I$ by the fact that $\varphi$ is a good approximation combined with Lemma~\ref{lem:step0}.
    This shows that $Y^\varphi$ can be defined to satisfy $\partial Y^\varphi= \varphi(\partial Y)$.
    As $T\in \Tile(\Omega,E)$, we have that $\Omega+s\subseteq Y\times \R$ if and only if $m_2((\Omega+s)\cap(Y\times \R))>0$.
    By Lemma~\ref{lem:step0}, we conclude that $\Omega^\varphi+s^\varphi\subseteq Y^\varphi\times\R$ if and only if $s\in S$.
    Now, the claim follows from Theorem~\ref{thm:GoodApproximation}.
\end{proof}

We finish this section by a technical result that implies that good approximations exist and that will be useful later on as well.

\begin{lemma}\label{lm:TechnicalForFiniteSets}
Let $\Omega$ be an axis-parallel polygonal tile, $T\in \Tile(\Omega;E)$, $\eta>0$ and $Z\subseteq  K_{(\Omega,T)}$ be finite.
Then there is $\epsilon>0$ such that every $\epsilon$-approximation $\varphi$ for $(\Omega, T)$ satisfies
$$|z-\varphi(z)|<\eta$$
for every $z\in Z$.
\end{lemma}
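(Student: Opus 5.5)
The plan is a direct finite-dimensional linear algebra argument, using only that $(\beta_j)_{j\in J}$ is a $\mathbb{Q}$-basis of $K_{(\Omega,T)}$ and that $\varphi$ is $\mathbb{Q}$-linear. Since $Z$ is finite, each $z\in Z$ has a unique expansion
$$z=\sum_{j\in J_z} q_{z,j}\,\beta_j,\qquad q_{z,j}\in\mathbb{Q},$$
where $J_z\subseteq J$ is finite. Set $J_Z=\bigcup_{z\in Z}J_z$, a finite subset of $J$, and put
$$C=\max_{z\in Z}\sum_{j\in J_z}|q_{z,j}|,$$
which is a finite nonnegative rational number (we may take $C\ge 1$ to avoid division by zero, or handle $C=0$ trivially).

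Next, for any $\epsilon$-approximation $\varphi$, $\mathbb{Q}$-linearity gives $\varphi(z)=\sum_{j\in J_z}q_{z,j}\varphi(\beta_j)$. Hence
$$|z-\varphi(z)|\le \sum_{j\in J_z}|q_{z,j}|\,|\beta_j-\varphi(\beta_j)|<C\epsilon .$$
Choosing $\epsilon=\eta/C$ (with $C\ge1$) therefore yields $|z-\varphi(z)|<\eta$ for every $z\in Z$. Note that the requirement $\varphi(r)=r$ on $\mathbb{Q}\cap K_{(\Omega,T)}$ plays no role in this estimate; it is already built into the definition of approximation.

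There is essentially no obstacle here. The only point needing care is that $J$ may be infinite, so no uniform constant over all of $K_{(\Omega,T)}$ is available. This is handled by restricting attention to the finite set $J_Z$ of basis elements actually appearing in the expansions of elements of $Z$.

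As the remark before the lemma indicates, the intended use is to take $Z$ to be the finite set of differences $p_i(v)-p_i(v')$ for $v,v'\in V(\Omega)$, together with the finite set $D_X$. Then take $\eta$ smaller than half of the minimal nonzero absolute value among these numbers. The resulting $\epsilon$-approximations preserve all signs, hence are good, and such approximations exist by the density of $\mathbb{Q}$ in $\mathbb{R}$.
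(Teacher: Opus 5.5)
Your proposal is correct and follows essentially the same argument as the paper. You expand the finitely many elements of $Z$ in the finitely many basis vectors $\beta_j$ that occur, apply $\mathbb{Q}$-linearity of $\varphi$ and the triangle inequality, and take $\epsilon$ to be $\eta$ divided by a bound on the rational coefficients; the only difference is that the paper uses the cruder choice $\epsilon=\eta/(10kr)$, with $r$ the maximal coefficient and $k$ the number of basis elements, while your $C=\max_{z}\sum_{j}|q_{z,j}|$ is slightly sharper.
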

\begin{proof}
    We may assume that $Z$ contains a non-zero element as otherwise the claim follows immediately.
    As the set $Z$ is finite, its coordinates are spanned by a finite subset, $\{\beta_1,\dots,\beta_k\}$, of the basis $(\beta_j)_{j\in J}$.
    Thus, by the $\Q$-linearity of $\varphi$, the coordinates of the vectors in $\{\varphi(z)\}_{z\in Z}$ are spanned by $\{\varphi(\beta_1),\dots,\varphi(\beta_k)\}$.
    In other words, for every $z\in Z$ we can write $$z=\sum_{j=1}^k r_j^{z}\beta_j,$$ where $r_j^{z}\in \Q$ for every $j=1,\dots,k$. Thus, $\varphi(z)=\sum_{j=1}^k r_j^{z}\varphi(\beta_j)$.
    
    Let $r=\max\{|r_j^{z}|\colon z\in Z, j=1,\dots,k\}$.
    Taking $\epsilon= \frac{\eta}{10kr}$, we have that if $\varphi$ is an $\epsilon$-approximation for $(\Omega , T)$ then 
    for every $z\in Z$  
      \begin{align*}
          |z-\varphi(z)|& \leq 2r \sum_{j=1}^k |\beta_j-\varphi(\beta_j)| \\ 
          &< 2rk\epsilon < \eta,
      \end{align*}
      thus $\varphi$ has the required properties.
\end{proof}

\begin{corollary}\label{cor:GoodApproxExists}
Let $\Omega$ be an axis-parallel polygonal tile and $T\in \Tile(\Omega;E)$.
    There is $\epsilon>0$ such that every $\epsilon$-approximation $\varphi$ for $(\Omega, T)$ is good. 
\end{corollary}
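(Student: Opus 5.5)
The plan is to reduce goodness to finitely many strict inequalities and then apply Lemma~\ref{lm:TechnicalForFiniteSets} with an $\eta$ smaller than the smallest gap involved. Goodness asks for two things. The first is that, for every $v,v'\in V(\Omega)$ and $i\in\{1,2\}$, the sign of $\varphi(p_i(v))-\varphi(p_i(v'))$ equals the sign of $p_i(v)-p_i(v')$. The second is that $\varphi(z)>0$ for every $z\in D_X$. Both the vertex set $V(\Omega)$ and the gap set $D_X$ are finite; $D_X$ is finite because $\partial X$ has finite local complexity and $X$ is periodic, as noted at the start of the section. So only finitely many elements of $K_{(\Omega,T)}$ have to be controlled.

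Concretely, set
$$Z=\{p_i(v)-p_i(v'): v,v'\in V(\Omega),\ i\in\{1,2\}\}\cup D_X.$$
This is a finite subset of $K_{(\Omega,T)}$. The differences lie in $K_{(\Omega,T)}$ by definition (take $t=t'=t_0$), and $D_X\subseteq K_{(\Omega,T)}$ was observed earlier. Let $\eta>0$ be smaller than $\min\{|z|: z\in Z,\ z\neq 0\}$, where we take the minimum to be $1$ if no such $z$ exists. Lemma~\ref{lm:TechnicalForFiniteSets} then provides $\epsilon>0$ such that every $\epsilon$-approximation $\varphi$ satisfies $|z-\varphi(z)|<\eta$ for all $z\in Z$.

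We then check the sign conditions. By $\Q$-linearity,
$$\varphi(p_i(v))-\varphi(p_i(v'))=\varphi(p_i(v)-p_i(v')).$$
If $z=p_i(v)-p_i(v')\neq 0$, then $|z-\varphi(z)|<\eta<|z|$ forces $\varphi(z)$ to have the same sign as $z$. If $z=0$, then $\varphi(0)=0$ by linearity, and also because $\varphi$ fixes rationals. For $z\in D_X$ we have $z>0$, since the gaps are positive by the increasing enumeration of $\partial X$. The same estimate then gives $\varphi(z)>0$.

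There is no real obstacle in this argument. The only points needing care are the finiteness of $D_X$ and the inclusion $Z\subseteq K_{(\Omega,T)}$, and both were already recorded in the setup of the section.
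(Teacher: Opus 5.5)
Your proposal is correct and is essentially the paper's own proof. You apply Lemma~\ref{lm:TechnicalForFiniteSets} to the finite set $Z$ of vertex-coordinate differences together with $D_X$, using $\eta$ no larger than the smallest nonzero $|z|$, so that every nonzero $z\in Z$ keeps its sign under $\varphi$ and every zero difference stays zero by $\Q$-linearity.
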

\begin{proof}
    Apply Lemma~\ref{lm:TechnicalForFiniteSets} with the finite set
    $$Z=\{p_i(v)-p_i(v')\colon v,v'\in V(\Omega),  i=1,2\}\cup D_X$$
    with $\eta= \min\{|z|:0\not= z\in Z\}$.
    Then for every $(v_1,v_2),(v'_1,v'_2)\in V(\Omega)$ and $i=1,2$ such that $p_i(v)\not\not =p_i(v')$ we have
    \begin{align*}
          |\; | p_i(v^\varphi)-p_i((v')^\varphi)|- |p_i(v)-p_i(v')|\;|&= |\; | \varphi(p_i(v)-p_i(v')) |- |p_i(v)-p_i(v') | \;| \\
          &\leq   \left|\varphi(p_i(v)-p_i(v'))-(p_i(v)-p_i(v'))\right| \\ 
          & < \eta \leq |p_i(v)-p_i(v')|.
    \end{align*}
    Similar argument shows that $\varphi(z)>0$ for every $z\in D_X$, and the claim follows.
\end{proof}

\section{Good approximation of tiling with finite local complexity}\label{sec:GoodApprox}

In this section, we prove properties of good approximations under the additional assumption that the tiling $T$ has \emph{finite local complexity}.

\begin{definition}[Local combinatorial structure]\label{def:LocalCombStructure}
    Let $\Omega$ be a polygonal set. Suppose
     $\Omega\oplus S$ is a packing and let $v\in V(\Omega)+S$. We define
$$\mathcal{B}(v,\Omega\oplus S)=\frac{1}{r}\left(\left((\mathcal{B}_r + v)\cap (\Omega\oplus S)\right)-v\right)$$
where $r>0$ is such that for every $0<r'<r$ we have
$$\frac{1}{r'}\left((\left(\mathcal{B}_{r'}+ v)\cap (\Omega\oplus S)\right)-v\right)=\frac{1}{r}\left(\left((\mathcal{B}_r + v)\cap (\Omega\oplus S)\right)-v\right).$$
Observe that such $r>0$ must exist since $\Omega$ is a polygonal set.
\end{definition}

\begin{theorem}\label{thm:GoodFiniteLocalComplexity}
    Let $X$ be a regular open periodic subset of $\R$, $T\in \Tile(\Omega;E)$, where $\Omega$ is an axis-parallel polygonal set and $E=X\times \mathbb{R}$.
    Suppose that $T$ has finite local complexity and let $M>0$.
    Then there is $\epsilon>0$ such that every $\epsilon$-approximation $\varphi$ of $(\Omega,T)$ is good and
    \begin{enumerate}
        \item $E^\varphi=\Omega^\varphi\oplus T^\varphi$ is an open periodic set of the form $X^\varphi\times \mathbb{R}$;
        \item $T^\varphi, V(\Omega^\varphi)\subseteq  \frac{1}{d}\mathbb{Z}^2$ for some $0\not=d\in \mathbb{N}$;
        \item $\diam(\Omega^\varphi)\le 2\diam(\Omega)$;
        \item if $v+t\in \omega_{ab}+s$, where $a,b,v\in V(\Omega)$ and $t,s\in T$, then
        $$p_i(v+t)-p_i(x+s) \text{ and } p_i(v^\varphi+t^\varphi)-p_i(x^\varphi+s^\varphi)$$
        have the same sign ($+$, $-$, or $0$) for every $i\in \{1,2\}$ and $x\in \{a,b\}$;
        \item $(\omega+t)\cap (\omega'+s)=\emptyset$ if and only if $(\omega^\varphi+t^\varphi)\cap ((\omega')^\varphi+s^\varphi)=\emptyset$ for every $\omega,\omega'\in E(\Omega)$ and $t,s\in T$;
        \item if $(\overline{\Omega}+t)\cap (\overline{\Omega}+s)=\emptyset$, then $(\overline{\Omega^\varphi}+t^\varphi)\cap (\overline{\Omega^\varphi}+s^\varphi)=\emptyset$ for every $t,s\in T$;
        \item for every $S\subseteq  T$ and $v\in V(\Omega)+S$ we have that
        $$\mathcal{B}(v,\Omega\oplus S)=\mathcal{B}(v^\varphi,\Omega^\varphi\oplus S^\varphi);$$
        \item for every $S\subseteq  T$ we have that $S$ is a $\mathbb{R}(0,1)$-column (or $\mathbb{R}(1,0)$-column) if and only if $S^\varphi$ is a $\mathbb{R}(0,1)$-column (or $\mathbb{R}(1,0)$-column);
        \item for every $v,w\in V(\Omega)+ T$, if $\|w-v\| <M$, then $\|(w^\varphi-v^\varphi)-(w-v)\| <1/M$;
        \item for every $v,w\in V(\Omega)+ T$, if $\|w^\varphi-v^\varphi\| <M/2$, then $\|w-v\| <M$;
        \item we have $v^\varphi=w^\varphi$ if and only if $v=w$ for every $v,w\in V(\Omega)+T$, and $t^\varphi=s^\varphi$ if and only if $t=s$ for every $t,s\in T$;
        \item for every $\delta\in \{\mathbb{R}(1,0),\mathbb{R}(0,1)\}$ and $v,w\in V(\Omega)+T$ such that $\|v^\varphi-w^\varphi\| <M/2$, the vertices $v$ and $w$ lie in a coset of $\delta$ if and only if $v^\varphi$ and $w^\varphi$ lie in a coset of $\delta$;
        \item for every $v_0,w_0,v_1,w_1\in V(\Omega)+T$ and $i\in \{1,2\}$ we have
        $p_i(w_0-v_0)=p_i(w_1-v_1)$
        whenever $p_i(w^\varphi_0-v^\varphi_0)=p_i(w^\varphi_1-v^\varphi_1)$ and $\|w^\varphi_0-v^\varphi_0\| ,\|w^\varphi_1-v^\varphi_1\| <M/2$;
        \item for every $n,m<M$, $\omega_1,\dots,\omega_n,\omega'_1,\dots,\omega'_m \in E(\Omega)$ and $i\in \{1,2\}$, if
        $$\sum_{j=1}^n |p_i(\omega^\varphi_j)|=\sum_{j=1}^m |p_i((\omega_j')^\varphi)|$$
         then 
        $$\sum_{j=1}^n |p_i(\omega_j)|=\sum_{j=1}^m |p_i(\omega'_j)|.$$
    \end{enumerate} 
\end{theorem}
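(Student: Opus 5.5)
The plan is to reduce everything to finitely many real linear relations and then use Lemma~\ref{lm:TechnicalForFiniteSets}. Finite local complexity of $T$ gives, for each radius $R$, a finite list of patches $((\mathcal{B}_R+a)\cap T)-a$. Since $\Omega$ is bounded, this also gives a finite list of local configurations of $V(\Omega)+T$ and $E(\Omega)+T$ around any tile. I fix $R \gg M+\diam(\Omega)$ and let $Z\subseteq K_{(\Omega,T)}$ be the finite set of all coordinate differences $p_i(w-v)$, with $v,w\in V(\Omega)+T$ lying in a common $R$-patch. By Corollary~\ref{cor:GoodApproxExists} I also include the vertex differences of $\Omega$ and $D_X$. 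Then $|Z|<\infty$. Set $\eta$ smaller than $\min\{|z|:0\ne z\in Z\}$, than $1/(2M)$, and than the minimal gap $\bigl||\sum|p_i(\omega_j)|-\sum |p_i(\omega'_j)|\bigr|$ over the finitely many nonzero such expressions with $n,m<M$. Taking $\epsilon$ from Lemma~\ref{lm:TechnicalForFiniteSets} for this $Z$ and an $\eta/(10M)$ scaled version, every $\epsilon$-approximation is good and moves every element of $Z$, and every sum of fewer than $M$ of them, by less than $\eta$. Crucially, $\varphi(z)$ depends only on $z$ and not on where the patch sits, because $\varphi$ is $\Q$-linear. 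This uniformity is exactly what finite local complexity buys.

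With this in hand, the items follow in order.
\begin{enumerate}
    \item This is Theorem~\ref{thm:GoodApproximation}, together with the periodicity of $X^\varphi$ stated there.
    \item $K_{(\Omega,T)}$ is spanned over $\Q$ by the finitely many elements of $Z$. Every $p_i(t)$ is a finite $\Z$-sum of patch differences along a chain of neighbouring tiles, and $v_0=t_0=0$. Hence $\varphi(K_{(\Omega,T)})$ is generated by finitely many rationals, so it lies in $\frac1d\Z$.
    \item This follows since the vertex differences of $\Omega$ move by less than $\eta$.
    \item[(4)--(6)] These concern incidences between nearby tiles, so all relevant quantities are elements of $Z$. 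Their signs are preserved because $\eta<\min|z|$.
    \item[(7)] The germ $\mathcal{B}(v,\cdot)$ is determined by which edges through $v$ are incident and in what direction. Item (4) together with the sign preservation shows this data is unchanged.
    \item[(9)] This is immediate from the choice of $\eta<1/M$.
    \item[(10)--(13)] These are contrapositive forms. If $\|w-v\|\ge M$ while $\|w^\varphi-v^\varphi\|<M/2$, I walk along a chain of intermediate vertices at mutual distance less than $R$. Then I use $|z-\varphi(z)|<\eta$ summed over a bounded number of steps, which yields a contradiction. Injectivity and the equality tests then reduce to elements of $Z$ being zero or not.
    \item[(14)] The two sums are rational combinations of finitely many edge lengths, so the difference is an element of a fixed finite set. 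It is zero iff its $\varphi$-image is zero, by the choice of $\eta$.
    \item[(8)] I use Lemma~\ref{lem:ApproxKeepsCosetPieces} with $Y=p_1(\Omega)+c$. Then $\partial Y^\varphi=\varphi(\partial Y)=\partial p_1(\Omega^\varphi)+\varphi(c)$, because the vertex coordinates of $\Omega$ map to those of $\Omega^\varphi$. The converse direction repeats the argument in reverse using (5).
\end{enumerate}

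The main obstacle is (10)--(13). There I must control $\varphi$ on differences of vertices that may be far apart in $T$ but close after approximation. A naive chain argument accumulates error proportional to the length of the chain, which is unbounded. My first attempt would be a compactness/contrapositive argument: any pair with $\|w-v\|\in[M,2M]$ is close in the chain metric and hence covered by $Z$ at radius $R\approx 3M$. Pairs farther away are handled by monotonicity, via the ordering preservation of Lemma~\ref{lem:step0}, which prevents $\varphi$ from folding distant vertices together. This will need care in choosing $R$ relative to $M$.
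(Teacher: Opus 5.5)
\textbf{The gap is in (10).} You correctly identify (10) as the crux but do not resolve it, and (11)--(13) as you set them up depend on it. Your first sketch (``summed over a bounded number of steps'') fails, because the number of steps grows with $\|w-v\|$. Your fallback, controlling far pairs through the order preservation of Lemma~\ref{lem:step0}, does not work as stated. That lemma only compares $p_1$-coordinates of vertical edges whose $p_2$-projections overlap, and it only preserves signs. So it gives no lower bound on $|p_1(w^\varphi-v^\varphi)|$, and it says nothing about pairs that are not in horizontal line of sight.

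Such pairs can genuinely have their order reversed, since finite local complexity does not imply finite horizontal complexity. Take the unit square and $T=\{(n\alpha-m,n):m,n\in\Z\}$ with $\alpha\notin\Q$. Then $\varphi(\alpha)\in\Q$ differs from $\alpha$, so there are $m,n$ for which $n\alpha-m$ and $n\varphi(\alpha)-m$ have opposite signs. Hence some quantitative control of the error along the part of a path that is not in line of sight is unavoidable, and qualitative monotonicity cannot replace it. The diagnosis that steered you away from the chain argument is also off. An error growing linearly with the chain length is harmless, provided each step's error is small relative to its displacement and the displacements point in a coherent direction.

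\textbf{How the paper proves (10).} Since $E$ is periodic, there is $r\ge1$ such that every point of $\R^2$ lies within $r$ of $V(\Omega)+T$. Take $R$ large relative to $r$ and $M$, and apply (9) at scale $3R$. Then every pair of vertices at distance $<3R$ has its difference moved by less than $1/(3R)$. Now let $\|w-v\|\ge R$ and, say, $|p_1(w-v)|\ge|p_2(w-v)|$. Place points along the segment from $v$ to $w$ with consecutive spacing between $R$ and $2R$, and replace each by a vertex within distance $r$. Every step then has $p_1$-component of the same sign, bounded below by a fixed quantity far exceeding the per-step error. Summing gives $\|w^\varphi-v^\varphi\|\ge\sqrt{\|w-v\|/8}$. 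Together with the scale-$3R$ estimate, this forces $\|w-v\|<M$ whenever $\|w^\varphi-v^\varphi\|<M/2$. After that, (11)--(13) do reduce to your finite set $Z$.

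\textbf{Two smaller points.} In (4)--(7), restricting to nearby tiles shows that incidences survive. It does not show that tiles far apart in $T$ cannot collide in $T^\varphi$. The paper gets this from the fact that $T^\varphi$ is a packing (Theorem~\ref{thm:GoodApproximation}); alternatively you can use (10) once it is proved. In (2), $\varphi(K_{(\Omega,T)})$ is a $\Q$-subspace of $\Q$ (indeed all of $\Q$), so it is not contained in $\frac1d\Z$. What lands in $\frac1d\Z$ is the image of the finitely generated $\Z$-module spanned by the coordinates of $T$ and $V(\Omega)$. That is what your chain observation actually gives.
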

\begin{proof}
    By Corollary~\ref{cor:GoodApproxExists}, there is $\epsilon_0>0$ such that every $\epsilon_0$-approximation $\varphi$ is good.

    Recall that in the definition of $T^\varphi$ and $\Omega^\varphi$, we fix $v_0\in V(\Omega)$ and $t_0\in T$ and have that $(v_0)^\varphi=(t_0)^\varphi=(0,0)$.

    (1).
    This follows directly from Theorem~\ref{thm:GoodApproximation}.
    
    (2).
    As $X\times \mathbb{R}$ is periodic and $T$ has finite local complexity, it follows that there is a finite set $R\subseteq  T$ such that
    $$t-t_0=\sum_{r\in R} a_r(r-t_0)$$
    for every $t\in T$, where $a_r\in \mathbb{Z}$.
    By linearity of $\varphi$, it follows that
    $$t^\varphi=\sum_{r\in R} a_r r^\varphi,$$
    which implies that $T^\varphi\subseteq  \frac{1}{d'}\mathbb{Z}^2$ for some integer $d'>0$ such that $r^\varphi\in \frac{1}{d'}\mathbb{Z}^2$ for every $r\in R$.
    An analogous argument for $V(\Omega^\varphi)$ shows that there is an integer $d''>0$ such that $V(\Omega^\varphi)\subseteq  \frac{1}{d''}\mathbb{Z}^2$, and the claim follows by setting $d=d'\cdot d''$.

    (3).
    Follows by taking $\epsilon_0\geq \epsilon_3>0$ small enough as $V(\Omega)$ is finite.

    (4). 
    By the finite local complexity of $T$, the subset $Z$ of $K_{(\Omega,T)}$ of all values of the form $p_i(v+t)-p_i(x+s)$, where $i\in\{1,2\}$, $a,b,v\in V(\Omega)$, $x\in \{a,b\}$, and $t,s\in T$ are such that $v+t\in \omega_{ab}+s$, is finite.
    By Lemma~\ref{lm:TechnicalForFiniteSets}, for every $\eta>0$ there is $\epsilon_3\ge \epsilon_4>0$ such that every $\epsilon_4$-approximation $\varphi$ satisfies 
    $$|(p_i(v+t)-p_i(x+s))-(p_i(v^\varphi+t^\varphi)-p_i(x^\varphi+s^\varphi))|<\eta$$
    for every $i\in\{1,2\}$, $a,b,v\in V(\Omega)$ and $t,s\in T$ are such that $v+t\in \omega_{ab}+s$.
    In particular, taking $0<\eta<1/3\min\{|\alpha|:\alpha\in Z\setminus \{0\}\}$ gives the claim whenever $p_i(v+t)-p_i(x+s)\not= 0$. If $p_i(v+t)-p_i(x+s)=0$, then it follows from $\Q$-linearity of $\varphi$.

    (5).
    Note that part (4) implies that if $(\omega+t)\cap (\omega'+s)\not=\emptyset$, then $(\omega^\varphi+t^\varphi)\cap ((\omega')^\varphi+s^\varphi)\not=\emptyset$.
    Conversely, let $\omega+t\in E(\Omega)+T$.
    From a combination of part (4) and the fact that $\Omega^\varphi\oplus T^\varphi$ is a packing it follows  that if $(\omega+t)\cap \partial (X\times \mathbb{R})=\emptyset$, then $(\omega')^\varphi+s^\varphi$ intersects $\omega^\varphi+t^\varphi$ if and only if $\omega'+s$ intersects $\omega+t$.
    Now suppose that $(\omega+t)\cap \partial (E)\not=\emptyset$.
    Recall the properties of $X^\varphi$ from Theorem~\ref{thm:GoodApproximation}, from which it follows that if $(\omega+t)\cap \partial (E)\not=\emptyset$ then $(\omega')^\varphi+s^\varphi$ intersects $\omega^\varphi+t^\varphi$ if and only if $\omega'+s$ intersects $\omega+t$ for every $\epsilon_4$-approximation $\varphi$.
    Hence, taking $\epsilon_5>0$ so that $\epsilon_4\ge \epsilon_5$ would give the claim (since $\epsilon_0\geq \epsilon_4$).

    (6).
    Follows from (5) combined with the fact that $\Omega^\varphi\oplus T^\varphi$ is a packing.

    (7).
    Follows from (4).

    (8).
    Follows from (7).
    We note that the direction that if $S$ is a column, then $S^\varphi$ is a column does not use finite local complexity.

    (9).
    By the assumption that $T$ has finite local complexity we have that the set 
    $$Z=\{p_i(v-w):\|v-w\| <M, \ v,w\in V(\Omega)+T, \ i\in \{1,2\}\}\subseteq  K_{(\Omega,T)}$$
    is finite. Thus, by Lemma~\ref{lm:TechnicalForFiniteSets} there is $\epsilon_5\ge\epsilon_9>0$ such that the claim of part (9) is satisfied for every $\epsilon_{9}$-approximation $\varphi$.

    (10).
    As $E = X\times \mathbb{R}$ is periodic, there is $r\ge 1$ such that for every $x\in \mathbb{R}^2$, there is $v\in V(\Omega)+T$ such that $\|v-x\| \le r$.
    Pick 
    \begin{equation}\label{eq:R}
        R=\max\{100r^2,2M^2,2/(3M)\}.
    \end{equation}
    By part (9), we can find $\epsilon_9\ge\epsilon_{10}>0$ such that for every $\epsilon_{10}$-approximation $\varphi$ and every $v,w\in V(\Omega)+ T$ with $\|w-v\| <3R$ we have
    $$\|(w-v)-(w^\varphi-v^\varphi)\| <1/3R.$$
    
    Let $v,w\in V(\Omega)+T$ be such that $\|v-w\| \ge R$, and assume without loss of generality that $|p_1(v-w)|\ge |p_2(v-w)|$ and $p_1(v-w)\le 0$.
    Pick a sequence $(x_0,\dots,x_k)$ on the line segment connecting $v$ and $w$ such that
    \begin{itemize}
        \item [(i)] $x_0=v$ and $x_k=w$,
        \item [(ii)] $R\le \|x_i-x_{i+1}\|  \le 2R$ for every $0\le i<k$,
        \item [(iii)] $p_1(x_{i}-x_{i+1})\le 0$.
    \end{itemize}
    Then, in particular, $\|w-v\| \le 2Rk$.
    By the definition of $r$, we find a sequence $(a_0,\dots a_k)$ in $V(\Omega)+T$ such that $a_0=x_0=v$, $a_k=x_k=w$ and $\|a_i-x_i\| \le r$ for every $1\le i \le k-1$. By the assumption $|p_1(v-w)|\ge |p_2(v-w)|$, (ii) and (iii), we have
    $$p_1(a_{i+1}-a_i)= p_1(x_{i+1}-x_i)+p_1(a_{i+1}-x_{i+1})+p_1(x_{i}-a_{i})\ge \sqrt{R/2}-2r.$$
    Since
    $$\|a_{i+1}-a_i\|\le 2R+2r<3R,$$
    the choice of $\epsilon_{10}$ gives
    $$\|((a_{i+1})^\varphi-(a_i)^\varphi)
    -(a_{i+1}-a_i)\|<\frac{1}{3R}.$$
    Consequently,
    $$p_1((a_{i+1})^\varphi-(a_i)^\varphi)>p_1(a_{i+1}-a_i)-\frac{1}{3R}\ge\sqrt{R/2}-2r-\frac{1}{3R}.$$
    By linearity of $\varphi$, this shows that
    $$p_1(w^\varphi-v^\varphi)=\sum_{i=0}^{k-1} p_1((a_{i+1})^{\varphi}-(a_i)^\varphi)\ge k\left(\sqrt{R/2}-2r-2/(3R)\right).$$
    This gives
    \begin{equation}\label{eq:phidist}
        \begin{split}
            \|w^\varphi-v^\varphi\| \ge & \ k\left(\sqrt{R/2}-2r-2/(3R)\right)\\
            \ge & \ k\left(\sqrt{R/2}-\sqrt{R/25}-2/(100r^2)\right)\\
            \ge & \ \sqrt{2Rk/8}\ge \sqrt{\|w-v\| /8},
        \end{split}
    \end{equation}
    by \eqref{eq:R}.
    To complete the proof, let $v,w\in V(\Omega)+T$ be such that $\|w^\varphi-v^\varphi\| \le M/2$.
    Observe that $\|w-v\| \le R$, as otherwise, by \eqref{eq:R}, $\|w-v\|>R\ge 2M^2$, which by \eqref{eq:phidist} contradicts the assumption  $\|w^\varphi-v^\varphi\| \le M/2$. 
    By the choice of $\epsilon_{10}>0$, we can further estimate that 
    $$M/2\ge \|w^\varphi-v^\varphi\| > \|w-v\| -1/(3R),$$
    and the claim follows as $M\ge M/2+1/3R$,  by \eqref{eq:R}.

    (11).
    The reverse implication holds for any good approximation and follows from linearity of $\varphi$.
    Suppose that $v^\varphi=w^\varphi$.
    Then, by part (5), there must be $t\in T$ and $a,b\in V(\Omega)$ such that $v=a+t$ and $w\in \omega_{ab}+t$.
    By part (4), this can only happen if $w=a+t$.
    This shows that $v=w$.
    The claim for $t$ and $s$ follows as a special case.

    (12).
    Without loss of generality assume that $\delta=\mathbb{R}(0,1)$.
    As $T$ has finite local complexity, we have that the set 
    \begin{equation}\label{eq:Zdef}
    Z=\{p_1(v-w):v,w\in V(\Omega)+T \text{ with } \|v-w\| <M\}\subseteq  K_{(\Omega,T)}\end{equation}
    is finite.
    By Lemma~\ref{lm:TechnicalForFiniteSets}, for every $\eta>0$ there is $\epsilon_{10}\ge\epsilon_{12} >0$ such that every $\epsilon_{12}$-approximation $\varphi$ satisfies that
    $$|p_1(v-w)-p_1(v^\varphi-w^\varphi)|<\eta$$
    for every $v,w\in V(\Omega)+T$ such that $\|v-w\| <M$.
    In particular, this holds for $0<\eta<1/3\min\{|\alpha|:\alpha\in Z\setminus \{0\}\}$.

    Let $v,w\in V(\Omega)+T$ be such that $\|v^\varphi-w^\varphi\| <M/2$, and suppose that $v^\varphi$ and $w^\varphi$ lie in the same coset of $\delta$.
    Suppose that $v$ and $w$ are not in the same coset of $\delta$, i.e., $p_1(v-w)\neq 0$. Then, by the choice of $\eta$, we have that $|p_1(v-w)|\geq 3\eta$. Moreover, by part (10), we have that $\|v-w\| <M$, which, by the definition of $Z$, implies that 
    $$|p_1(v-w)-p_1(v^\varphi-w^\varphi)|<\eta.$$
    Therefore,  $|p_1(v^\varphi-w^\varphi)|> 2\eta>0$ which contradicts the assumption that $v^\varphi$ and $w^\varphi$ lie on the same coset of $\delta$.

    (13).
    Without loss of generality, we may assume that $i=1$.
    Define the finite set $Z$ as in \eqref{eq:Zdef} 
    and let $0<\eta<1/6\min\{|\alpha-\beta|:\alpha\not=\beta\in Z\}$.
    By Lemma~\ref{lm:TechnicalForFiniteSets}, there is $\epsilon_{12}\geq \epsilon_{13}>0$ such that every $\epsilon_{13}$-approximation $\varphi$ satisfies 
    $$|p_1(w-v)-p_1(w^\varphi-v^\varphi)|<\eta$$
    for every $v,w\in V(\Omega)+T$ such that $\|v-w\|<M$.

    Let $v_0,v_1,w_0,w_1\in V(\Omega)+T$ be as in the claim.
    By part (10), we have that $\|w_0-v_0\| ,\|w_1-v_1\|<M$.
    Suppose for a contradiction that $p_1(w_0-v_0)\not=p_1(w_1-v_1)$.
    Then we have, by the choice of $\eta$, that $|p_1(w_0-v_0)-p_1(w_1-v_1)|>6\eta$.
    This gives
    \begin{equation*}
        \begin{split}
            & \ |p_i(w^\varphi_0-v^\varphi_0)-p_i(w^\varphi_1-v^\varphi_1)| \\
            \ge & \ |p_1(w_0-v_0)-p_1(w_1-v_1)|-|p_1(w_0-v_0)-p_1(w^\varphi_0-v^\varphi_0)|\\
            & \ -|p_1(w_1-v_1)-p_1(w^\varphi_1-v^\varphi_1)|\\
            \ge & \ 4\eta>0,
        \end{split}
    \end{equation*}
    which contradicts the assumption that $p_i(w^\varphi_0-v^\varphi_0)=p_i(w^\varphi_1-v^\varphi_1)$.

    (14).
    As $M>0$ is fixed, $E(\Omega)$ is finite and so is the set
    $$Z=\left\{\sum_{j=1}^k |p_i(\omega_j)|:1\le k<M \ \omega_i\in E(\Omega), \ i\in \{1,2\} \right\}\subseteq  K_{(\Omega,T)}.$$
    A direct application of Lemma~\ref{lm:TechnicalForFiniteSets} as above shows that there is $\epsilon_{13}\ge\epsilon_{14}>0$ such that every $\epsilon_{14}$-approximation $\varphi$ satisfies the statement of (14).

    Finally, since clearly, if $0<\epsilon'\leq \epsilon$ then every $\epsilon'$-approximation is in particular a $\epsilon$-approximation, we complete the proof by letting $\epsilon\coloneqq \epsilon_{14}$.
\end{proof}

\section{Earthquake decomposition and periodicity}\label{sec:earthquakesperiod}

In this section we introduce the notion of {\bf \emph{earthquake decomposition}} of a tiling in $\R^2$ and, using Theorems~\ref{thm:DiscretePeriodicity} and  \ref{thm:GoodApproximation}, prove the periodicity of the earthquake components.

We begin by introducing the {\bf \emph{proper intersection equivalence relation}}:

 \begin{definition}[Proper intersection equivalence class]\label{def:plates}
     Let $\Omega\subset\R^2$ be an axis-parallel polygonal tile and let $T\in \Tile(\Omega;\R^2)$.
     Define $\sim_{\Omega,\Uparrow}$ to be the equivalence relation on $T$ generated by the pairs $t\not= t' \in T$ such that there are horizontal edges $\omega,\omega'\in E(\Omega)$ with $\omega+t$ and $\omega'+t'$ properly intersect.

We refer to a $\sim_{\Omega,\Uparrow}$-equivalence class as a {\bf \emph{tectonic plate}} of $T$, and denote  the set of tectonic plates of $T$ by $\quakeall{T}{\Omega}$. (See Figure \ref{fig:proper_equiv}.)
 \end{definition}

\begin{figure}[ht]
    \centering
     \includegraphics[width=0.6\linewidth]{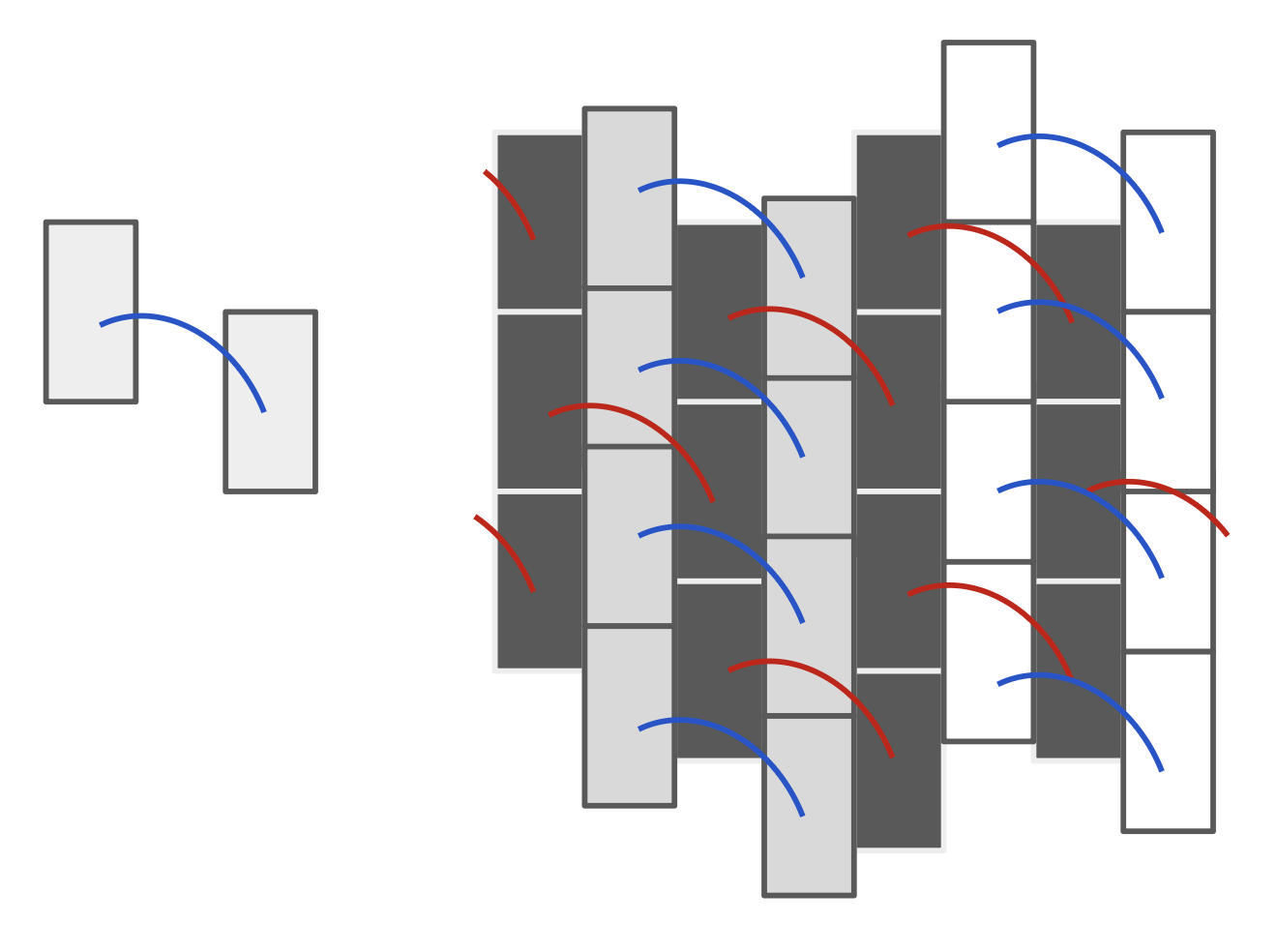}
    \caption{On the left is a tile $\Omega$. On the right, a tiling $T$ by $\Omega$, where the different colors indicate different $\sim_{\Omega,\Uparrow}$-equivalence classes.}
    \label{fig:proper_equiv}
\end{figure}

We show that the tectonic plates in $\quakeall{T}{\Omega}$ constitute a  partition of the tiling into minimal horizontal slabs. 
 
\begin{lemma}\label{lem:BasicQuake}
    Let $T$ be a tiling of $\mathbb{R}^2$ by $\Omega$ and $P\in \quakeall{T}{\Omega}$.
    Then:
    \begin{itemize}
        \item[(i)]  There exists a (unique) regular open set $X_P\subseteq \R$ such that $\Omega\oplus P=X_P\times \R$ up to null sets.
        In fact, we have $\interior(\overline{\Omega}\oplus P)=\interior(\overline{\Omega\oplus P})=X_P\times \R$.
        \item[(ii)] $P$ is minimal with respect to property (i); i.e., if $P'\subseteq P$ satisfies (i) then $P'\in \{\emptyset ,P\}$.
    \end{itemize}
\end{lemma}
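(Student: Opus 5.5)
The strategy is to show that the closed set $\overline{\Omega}\oplus P$ has a boundary consisting only of full vertical lines. The regular open set $U=\interior(\overline{\Omega\oplus P})$ is then a union of vertical strips, and we can read off $X_P$. Minimality will follow because any subset with property (i) is closed under the generating relation of $\sim_{\Omega,\Uparrow}$.

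\emph{Step 1 (no horizontal boundary).} Let $x\in\partial U$ lie in the relative interior of a horizontal boundary segment of $\overline{\Omega}+t$ for some $t\in P$, say $x\in\omega+t$ with $\omega\in E(\Omega)$ horizontal, and suppose $U$ lies on one side of it locally. Since $T$ tiles $\R^2$, the other side near $x$ is covered by some $\Omega+t'$ with $t'\in T$. Its boundary must contain a horizontal edge $\omega'+t'$ at height $p_2(x)$ overlapping $\omega+t$ in positive length. These two edges properly intersect, so $t'\sim_{\Omega,\Uparrow}t$ and hence $t'\in P$. This contradicts $x\in\partial U$. The only care needed is at points of $\omega+t$ that are vertices of other tiles. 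Because $T$ is uniformly discrete, there are finitely many tiles near $x$, and we can perturb $x$ along the edge to a generic point. Consequently $\partial U$ contains no horizontal segment of positive length.

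\emph{Step 2 (structure of $\partial U$).} $\partial U$ is contained in $\partial\Omega+P$, a locally finite union of axis-parallel segments. By Step 1 it is locally a union of vertical segments. At an endpoint of a maximal vertical segment, $\partial U$ would have to turn horizontally or form an isolated endpoint. The first is excluded by Step 1. The second is impossible for the boundary of a regular open set whose boundary is locally a finite union of segments, since near a dangling endpoint $U$ would lie on both sides and the point would belong to $\interior\overline U=U$. Hence $\partial U$ is a union of full vertical lines, locally finite, and $U$ is $\R(0,1)$-invariant, so $U=X_P\times\R$ with $X_P=\{x:(x,0)\in U\}$. This $X_P$ is regular open because $U$ is, and it is unique. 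The equalities $\interior(\overline{\Omega}\oplus P)=\interior(\overline{\Omega\oplus P})$ hold since $P$ is locally finite, and $\Omega\oplus P$ differs from $U$ by a null set since $\partial\Omega$ is null. I expect this endpoint and regularity bookkeeping to be the main technical obstacle.

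\emph{Step 3 (minimality).} Let $\emptyset\neq P'\subsetneq P$ satisfy (i). Since $P$ is an equivalence class, there are $t\in P'$ and $t'\in P\setminus P'$ joined by a generating pair, so horizontal edges $\omega+t$ and $\omega'+t'$ properly intersect. The tiles $\Omega+t$ and $\Omega+t'$ do not overlap, so they lie on opposite sides of this common horizontal segment. Therefore a horizontal segment of positive length lies in $\partial(\Omega\oplus P')$, meaning $\Omega+t\subseteq\Omega\oplus P'$ is on one side and $\Omega+t'\not\subseteq\Omega\oplus P'$ is on the other. By the same argument as in Step 2, that segment lies in $\partial\interior(\overline{\Omega\oplus P'})$. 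This contradicts $\Omega\oplus P'=X_{P'}\times\R$, whose boundary consists of vertical lines. Hence $P'\in\{\emptyset,P\}$.
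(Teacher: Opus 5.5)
Your proof is correct and follows the paper's route. For (i), the paper argues that if $\interior(\overline{\Omega\oplus P})$ were not $\R(0,1)$-invariant, some horizontal edge of a tile in $P$ would meet the boundary in positive length, and the properly intersecting edge on the other side would belong to a tile outside $P$; this is exactly your Step 1, and your Step 2 supplies the regular-openness and no-dangling-endpoint justification that the paper leaves implicit. For (ii), the paper expresses the same generating-pair contradiction through $p_1$-projections (the two tiles have overlapping $p_1$-images, yet $X_{S_0}\cap X_{P\setminus S_0}=\emptyset$), which is equivalent to your local two-sided picture at the shared horizontal edge.
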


\begin{remark}
    Lemma~\ref{lem:BasicQuake} shows, in particular,  that the set $\quakeall{T}{\Omega}$ is the continuous analog of the discrete tectonic plates introduced in Section \ref{sec:discrete}.
\end{remark}

\begin{proof}[Proof of Lemma~\ref{lem:BasicQuake}] 
   Suppose that $P\in \quakeall{T}{\Omega}$. If $\interior(\overline{\Omega\oplus P})$ is not $(0,1)\R$-invariant then there exists a horizontal $\omega\in E(\Omega)$ and $s\in P$ such that  $$m_1\left((\omega+s)\cap \partial (\overline{\Omega\oplus P})\right)$$ is positive. Thus, as $T\in \Tile(\Omega;\R^2)$, there must exist $t\in T\setminus P$ and a horizontal $\omega'\in E(\Omega)$ such that $\omega+s$ and $\omega'+t$ properly intersect. This gives that $t\sim_{\Omega,\Uparrow} s$, which is a contradiction since $P$ is a $\sim_{\Omega,\Uparrow}$-equivalence class and $t\not\in P$. This gives (i).

    To prove (ii), suppose that $S_0\subseteq P$ is nonempty and there exists $X_{S_0}\subseteq \R$ such that $\Omega\oplus S_0=X_{S_0}\times \R$. 
    Let $S_1\coloneqq P\setminus S_0$. If $S_1$ is nonempty then, as $\Omega\oplus P=X_P\times \R$ and $\Omega\oplus S_0=X_{S_0}\times \R$, we have that   $\Omega\oplus S_1=X_{S_1}\times \R$ for $X_{S_1}\coloneqq X_P\setminus X_{S_0}$ (which is clearly nonempty).  Since $S_0\cup S_1 = P$ and $P$ is a $\sim_{\Omega,\Uparrow}$ equivalence class, there must exist $s_0\in S_0$ and $s_1\in S_1$ such that \begin{equation}\label{eq:inter1}
        m_1\left(p_1(\Omega+s_0)\cap p_1(\Omega+s_1)\right)>0.
    \end{equation} 
    On the other hand, by our assumption that $\Omega\oplus S_i=X_{S_i}\times \R$ for both $i=0,1$ we have that the set $p_1(\Omega+s_0)\cap p_1(\Omega+s_1)$ is contained in $X_{S_0}\cap X_{S_1}$, which is empty  by construction, but this contradicts \eqref{eq:inter1}. Thus, if $S_0$ is nonempty, $S_1=P\setminus S_0$ must be empty, as stated.
\end{proof}

Using this relation, we can now define the notion of earthquake decomposition of a tiling:

\begin{definition}[Vertical columns  and earthquake decomposition]\label{def:earthquake}
    Let $\Omega\subset\R^2$ be an axis-parallel polygonal tile, $T\in \Tile(\Omega;\R^2)$, and  $P\in \quakeall{T}{\Omega}$.
\begin{itemize}
    \item We say that $P$ is a {\bf \emph{vertical column of $(\Omega,T)$}} if there exists $x\in \R$  such that $P \subseteq \{x\}\times \R$. (In particular, we have $\Omega+ P= X\times \R$, where $X=p_1(\Omega)+x$.)
    We denote by $T_c$ the union of vertical columns of $T$.
    \item We say that $P$ is in $\quake{T}{\Omega}$ if $P\subseteq T\setminus T_c$. 
    \item Finally, the {\bf \emph{(vertical) earthquake decomposition of $T$}} (with respect to $\Omega$) is the decomposition
    $$T=T_{c}\sqcup \left(\bigsqcup_{P\in \quake{T}{\Omega}} P \right).$$
    We say that the earthquake decomposition\footnote{Note that the decomposition is well-defined due to Lemma~\ref{lem:BasicQuake}.} of $T$ is {\bf \emph{non-trivial}} if $T\not\in \quake{T}{\Omega}$.
\end{itemize}
\end{definition}

For example, in Figure \ref{fig:proper_equiv}, the light gray and white classes are vertical columns in $T_c$, while the dark gray class is in $\quake{T}{\Omega}$.

\begin{remark}
    In the arguments below, we freely pass between the sets $\Omega\oplus P$, $\interior(\overline{\Omega}\oplus P)$, $\interior(\overline{\Omega\oplus P})$ and $X_P\times \R$.
    In particular, when talking about the boundary of $\Omega\oplus P$, where $P\in \quakeall{T}{\Omega}$, or $P=T_c$, we always mean the boundary of $X_P\times \R$.
\end{remark}

\begin{definition}[Finite horizontal complexity]\label{def:finhorcom}
    We say that a tiling $T$ of $\R^2$ by an axis-parallel polygonal set $\Omega$ has {\bf \emph{finite horizontal complexity}} if $p_1(T)$ is discrete and has finite local complexity (as a subset of $\mathbb{R}$).
\end{definition}

\begin{remark} 
Note that finite local complexity does not imply finite horizontal complexity.
\end{remark}

 We are now ready to state the main goal of this section: proving the horizontal periodicity of the earthquake (vertical) slabs $\Omega\oplus P$, for each tectonic plate $P\in \quake{T}{\Omega}$ of a tiling $T\in \Tile(\Omega;\R^2)$  with finite horizontal complexity.

\begin{theorem}[Periodicity of slabs of an earthquake component]\label{thm:PeriodicityMain}
    Let $T$ be a tiling of $\mathbb{R}^2$ by an axis-parallel polygonal set $\Omega$ that has finite horizontal complexity and
    $$T=T_{c}\sqcup \left(\bigsqcup_{P\in \quake{T}{\Omega}} P\right)$$
    be the earthquake decomposition of $T$.
    Then there is $\beta>0$ such that the sets
    $$\Omega\oplus T_{c} \text{ and } \Omega\oplus P$$
    are $(\beta,0)\mathbb{Z}$-periodic for every $P\in \quake{T}{\Omega}$.
\end{theorem}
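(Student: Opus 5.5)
The plan is to reduce Theorem~\ref{thm:PeriodicityMain} to the discrete result, Theorem~\ref{thm:DiscretePeriodicity}, using the rational approximations of Section~\ref{sec:RationalApprox}. Two observations make this possible. First, the conclusion only concerns the regular open sets $X_P\subseteq\R$ with $\Omega\oplus P=X_P\times\R$ (Lemma~\ref{lem:BasicQuake}). Second, the plate structure is determined by proper intersections of \emph{horizontal} edges, and by Lemma~\ref{lem:step0} these are preserved under good approximations.

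\emph{Step 1 (discretize the first coordinate).} Finite horizontal complexity gives that $p_1(T)$ is discrete with finite local complexity. So the differences $p_1(t)-p_1(t')$ lie in a finitely generated subgroup of $\R$, generated by finitely many local gaps as in the proof of Theorem~\ref{thm:GoodFiniteLocalComplexity}(2). I will take a good approximation and first apply only the coordinate-$1$ step $\varphi^1$, producing $(\Omega^1,T^1)$. By the argument of Theorem~\ref{thm:GoodApproximation} this is still a tiling, and now $p_1(T^1)\cup p_1(V(\Omega^1))\subseteq \frac1d\Z$. Choosing $\epsilon$ small, I will verify analogues of items (4), (5), (8) and (11) of Theorem~\ref{thm:GoodFiniteLocalComplexity} that involve only first coordinates. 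Finite horizontal complexity should suffice for these, since they only compare first coordinates of vertical edges at bounded horizontal distance. These analogues give:
\begin{itemize}
\item $P\mapsto P^1$ is a bijection $\quakeall{T}{\Omega}\to\quakeall{T^1}{\Omega^1}$;
\item vertical columns correspond to vertical columns;
\item $\partial X_{P^1}=\varphi(\partial X_P)$, as in Lemma~\ref{lem:ApproxKeepsCosetPieces}.
\end{itemize}

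\emph{Step 2 (discretize the second coordinate; the main obstacle).} The vertical coordinates of $T$ need not have finite local complexity, so a further approximation of $p_2$ gives rational values with unbounded denominators, and there is no tiling of $\Z^2$. I would use that plates are $\R(0,1)$-invariant and that only the horizontal matching pattern matters. Concretely, I plan to approximate $p_2$ by a good approximation $\varphi^2$, which preserves the tiling and the plates by Theorem~\ref{thm:GoodApproximation} and Lemma~\ref{lem:ApproxKeepsCosetPieces}, and then pass to bounded windows.

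For each $R$, the finitely many tiles meeting $[-R,R]\times[-R,R]$ have second coordinates with a common denominator $q_R$. The hope is to build, from the tiles in a horizontal slab of height comparable to $R$, a discrete tiling of a band by $F=q_Rd\,\Omega^\varphi\cap\Z^2$. Corollary~\ref{cor:BtimesZ} and the periodic-$E$ version of Theorem~\ref{thm:DiscretePeriodicity} (see the remark after Definition~\ref{def:discplates}) are the tools for band tilings. If this windowing fails, the fallback is a compactness argument. Vertical translates $T-(0,n)$ have weak limits with the same $X_P$ sets and the same $p_1(T)$, and I would look for a limit whose vertical structure is locally finite enough to discretize globally, then transfer the periodicity of the $X_P$ back.

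\emph{Step 3 (uniform period and transfer back).} Theorem~\ref{thm:DiscretePeriodicity} then yields $\ell$ such that $F\oplus A^{\mathrm{disc}}_c$ and $F\oplus P$ are $(\ell,0)\Z$-periodic for all non-column plates. Rescaling gives a period $\beta^\varphi=\ell/(q_Rd)$ for the sets $X_{P^\varphi}$. The key point is that $\beta^\varphi$ has to be realized as an integer combination of gaps of $\partial X$, since it maps boundary points of $X_{P^\varphi}$ to boundary points. Item~(13) of Theorem~\ref{thm:GoodFiniteLocalComplexity}, applied to first coordinates, then lifts each equality $x^\varphi+\beta^\varphi=y^\varphi$ to $x+\beta=y$ for a fixed real $\beta$ with $\varphi(\beta)=\beta^\varphi$. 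The difficulty is that $\ell$ could grow with the approximation. I would try to control this by the finiteness of the set of gap patterns in $p_1(T)$: there are only finitely many local horizontal configurations, so one period of minimal size in terms of the combinatorial pattern can be chosen independently of $\varphi$.
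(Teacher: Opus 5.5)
Your Step~1 essentially matches Lemmas~\ref{lem:ApproximationsOne}(i) and~\ref{lem:PreserveEarthquake}. Steps~2 and~3, however, each have a genuine gap.

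\textbf{Step~2.} You never obtain a tiling to which Theorem~\ref{thm:DiscretePeriodicity} applies.
\begin{itemize}
\item The windowing idea cannot work. The discrete tile $F$ depends on $q_R$, so the period $\ell$ it produces has no uniformity as $R\to\infty$, and a finite window cannot yield a global periodicity statement anyway.
\item Corollary~\ref{cor:BtimesZ} does not help. It concerns bands $B\times\Z$ with $B$ finite, and such bands can only carry columns, because non-column plates are unbounded in both horizontal directions (Lemma~\ref{lem:ColumnOrUnbounded}).
\item The compactness fallback also fails. Translating the whole tiling vertically does not change the relative vertical offsets between different plates, and these offsets are exactly the source of the unbounded denominators.
\end{itemize}
The missing idea is to slide each plate \emph{independently}. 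Since every $\Omega^\varphi\oplus P^\varphi$ is $\R(0,1)$-invariant, replacing each $P^\varphi$ by $P^\varphi+(0,c_P)$ gives a tiling with the same sets $X_{P^\varphi}$. It also has the same earthquake decomposition, since tiles in different plates have no properly intersecting horizontal edges. Moreover, all second coordinates within one plate lie in a single coset of $\frac1d\Z$. This is because a plate is generated by proper intersections of horizontal edges, and each such intersection forces $p_2(t)-p_2(t')\in p_2(V(\Omega^\varphi)-V(\Omega^\varphi))$. So after sliding you may assume $T^\varphi\subseteq\frac1d\Z^2$ and apply the discrete theorem directly; this is Lemma~\ref{lem:PeriodicityAproximation}.

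\textbf{Step~3.} Item~(13) of Theorem~\ref{thm:GoodFiniteLocalComplexity}, or its first-coordinate version Lemma~\ref{lem:ApproximationsOne}(ii), only identifies differences of size below $M/2$, with $M$ fixed before $\varphi$ is chosen. The period $\beta^\varphi$, by contrast, comes out of the discrete theorem after $\varphi$ is fixed and may grow under refinement. So you cannot lift $x^\varphi+\beta^\varphi=y^\varphi$ directly, and your plan to bound a ``minimal combinatorial period'' independently of $\varphi$ is not justified.

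The remedy is to transfer an \emph{index} shift rather than a length, and to compare only consecutive gaps:
\begin{enumerate}
\item Enumerate the slab boundaries as $\{x_i\}$. One fixed approximation, together with the rational case and Lemma~\ref{lem:ApproximationsOne}(i), shows that the index set is $\Z$ and that $x_{i+1}-x_i<K$ for some $K$ not depending on later approximations.
\item Take a finer $\varphi$ satisfying Lemma~\ref{lem:ApproximationsOne}(ii) with $M=4K$. Periodicity of the rational picture gives $k_0$ with $\varphi(x_{i+k_0})=\varphi(x_i)+b$ and matching slab labels for all $i$.
\item Put $\beta=x_{k_0}-x_0$. The images of the gaps $x_{i+k_0}-x_{i+k_0-1}$ and $x_i-x_{i-1}$ coincide and are $<2K$, so (ii) makes the gaps themselves equal.
\item Induction in both directions gives $x_{i+k_0}=x_i+\beta$ for every $i$, no matter how large $b$ is.
\end{enumerate}
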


\begin{remark}
    The strength of the assumption that $T$ has finite horizontal complexity is addressed in Section~\ref{sec:finhorcomp}. 
\end{remark}

\begin{remark}\label{rem:PeriodicityMain}
    By tracing the proof of Theorem~\ref{thm:PeriodicityMain} we see that the statement generalizes to tilings of any periodic set of the form $E=X\times \R$ for a regular open periodic $X\subseteq \R$. In fact, it turns out that when a regular open periodic set $X$ is a \emph{strict} subset of $\R$, \emph{all} tilings in $\Tile(\Omega;E)$ have finite horizontal complexity; see Remark \ref{rem:CharInfComplex}.
\end{remark}

The proof consists of four steps that are discussed separately in the next subsections:
\begin{itemize}
    \item First, we derive  favorable properties of some well-chosen rational approximations $\varphi$ of a tiling $T\in \Tile(\Omega;\R^2)$ with finite horizontal complexity.
    \item Then, we show that those well-chosen approximations $\varphi$ preserve  the earthquake decomposition.
    \item Using the discrete analog, Theorem~\ref{thm:DiscretePeriodicity}, of Theorem~\ref{thm:PeriodicityMain},  
we show that the rationally approximated tiling $T^\varphi$ of $\R^2$ by $\Omega^\varphi$ (or, any tiling by an  axis-parallel rational tile) satisfies Theorem~\ref{thm:PeriodicityMain}.
\item In the last step we show how to transform this periodicity of $T^\varphi\in \Tile(\Omega^\varphi;\R^2)$ back to the original tiling $T\in \Tile(\Omega; \R^2)$.
\end{itemize}

\subsection{Finite horizontal complexity}\label{subsec:finhorcomp}

We need a technical statement that connects approximations and finite horizontal complexity (cf. Theorem \ref{thm:GoodFiniteLocalComplexity}  for approximations of tilings with finite local complexity). 

\begin{lemma}\label{lem:ApproximationsOne}
    Let  $T$ be a tiling of $\mathbb{R}^2$ by an axis-parallel polygonal set $\Omega$ that has finite horizontal complexity.
    Then for every $M>0$ there is $\epsilon>0$ such that every $\epsilon$-approximation $\varphi$ for $(\Omega,T)$ satisfies:
    \begin{itemize}
        \item[(i)] For every $v,v'\in V(\Omega)+ T$ such that $p_1(v')>p_1(v)$, we have 
        \begin{equation}\label{eq:uplow}
            1/2 (p_1(v')-p_1(v))<p_1((v')^\varphi)-p_1(v^\varphi)<2(p_1(v')-p_1(v)).
        \end{equation} 
        In particular,  for every $v,v'\in V(\Omega)+T$ we have $p_1(v^\varphi)=p_1((v')^\varphi)$ if and only if $p_1(v)=p_1(v')$.
        \item[(ii)]  If $v_0,v'_0,v_1,v'_1\in V(\Omega)+T$ are such that  
        \begin{equation}\label{eq:iequal}
            p_1((v'_0)^\varphi-(v_0)^\varphi)=p_1((v'_1)^\varphi-(v_1)^\varphi)
        \end{equation}  
        and for $i=0,1$
        \begin{equation}\label{eq:Mbound}
            |p_1((v'_i)^\varphi-(v_i)^\varphi)|< M/2
        \end{equation}
        then
        $$p_1(v'_0-v_0)=p_1(v'_1-v_1).$$
     \end{itemize}
\end{lemma}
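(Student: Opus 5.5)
The plan is to mimic the proofs of parts (4), (9), (10), (12) and (13) of Theorem~\ref{thm:GoodFiniteLocalComplexity}, using only the first coordinate. Finite horizontal complexity will replace finite local complexity. The key observation is that the set $Q=p_1(V(\Omega)+T)=p_1(V(\Omega))+p_1(T)$ is discrete and has finite local complexity in $\R$, because $p_1(T)$ does and $p_1(V(\Omega))$ is finite. In particular $Q$ is uniformly discrete: since $\Omega\oplus T=\R^2$, every vertical line meets tiles, and $Q$ is relatively dense. Hence there are $0<\rho\le r$ such that consecutive elements of $Q$ are at distance between $\rho$ and $r$. Moreover, the set of consecutive gaps $G=\{q'-q: q<q' \text{ consecutive in } Q\}$ is a finite subset of $K_{(\Omega,T)}$, and for each $L>0$ so is the set $Z_L=\{q'-q: q,q'\in Q, |q'-q|<L\}$.

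For (i), I will first apply Lemma~\ref{lm:TechnicalForFiniteSets} to the finite set $G$ with $\eta<\frac12\min G$. This makes every consecutive gap $g$ satisfy $g/2<\varphi(g)<2g$; since $g>\eta$ gives $|\varphi(g)-g|<\eta<g/2$, even the stronger two-sided bound $\tfrac12 g<\varphi(g)<\tfrac32 g$ holds. Now let $v,v'\in V(\Omega)+T$ with $p_1(v')>p_1(v)$. Write $p_1(v')-p_1(v)$ as the telescoping sum of the consecutive gaps of $Q$ between these two values. By $\Q$-linearity of $\varphi$, and since $\varphi$ agrees with $p_1(\cdot^\varphi)$ on $Q$ (using the normalization $t_0=v_0=0$), the difference $p_1((v')^\varphi)-p_1(v^\varphi)$ is the sum of the $\varphi$-images of these gaps. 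Summing the termwise bounds gives \eqref{eq:uplow}. The ``in particular'' statement follows immediately: strict positivity transfers, and equality transfers by linearity. This step is where finite horizontal complexity is genuinely needed, because it provides a single $\epsilon$ that works for every pair, including pairs that are arbitrarily far apart. In Theorem~\ref{thm:GoodFiniteLocalComplexity}(10) a chaining argument was needed for this; here the one-dimensional telescoping makes it clean.

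For (ii), I will take $L=M$. By (i), hypothesis \eqref{eq:Mbound} gives $|p_1(v'_i-v_i)|<2\cdot M/2=M$, so both differences lie in the finite set $Z_M$. I will then apply Lemma~\ref{lm:TechnicalForFiniteSets} to $Z_M$ with $\eta<\frac13\min\{|\alpha-\beta|:\alpha\ne\beta\in Z_M\}$, shrinking $\epsilon$ so that both applications hold simultaneously. If the two differences were distinct, their $\varphi$-images would differ by more than $\eta$, contradicting \eqref{eq:iequal}.

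The only subtle point I expect is justifying that $G$ and $Z_M$ are finite and contained in $K_{(\Omega,T)}$. Containment is immediate, since all elements are differences of first coordinates of vertices of translates. Finiteness follows from finite local complexity of $p_1(T)$ together with the finiteness of $V(\Omega)$ and the uniform discreteness of $Q$.
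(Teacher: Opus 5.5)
Your proposal is correct and follows essentially the same route as the paper: for (i), telescope $p_1(v')-p_1(v)$ over the finitely many consecutive gaps of $p_1(V(\Omega)+T)$ and apply Lemma~\ref{lm:TechnicalForFiniteSets} to that finite gap set; for (ii), use (i) to place both differences in the finite set $Z_M$ and separate its distinct elements. In (ii) you bound $|\varphi(z)-\varphi(z')|$ directly by $|z-z'|-2\eta$. This is slightly cleaner than the paper, which appeals to \eqref{eq:uplow} for $z-z'$ even though $z-z'$ need not be a difference of two points of $p_1(V(\Omega)+T)$.
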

\begin{proof}
    We first prove (i).
    As $p_1(T)$ is discrete and $T$ is a tiling of $\R^2$ by $\Omega$, we can write $p_1(V(\Omega)+T)=\{x_i\}_{i\in \mathbb{Z}}$ such that $x_i<x_{i+1}$ for every $i\in \mathbb{Z}$.
    Define $Z=\{x_{i+1}-x_i:i\in \mathbb{Z}\}$.
    Then, as $p_1(T)$ has finite local complexity and $T$ is a tiling of $\R^2$ by $\Omega$, $Z$ is finite.
    Applying Lemma~\ref{lm:TechnicalForFiniteSets} with $$\eta\coloneqq 1/4 \min \{|z|\colon z\in Z\},$$ there is $\epsilon>0$ such that for every $i\in \mathbb{Z}$ and $v,v'\in V(\Omega)+T$ such that $p_1(v)=x_i$ and $p_1(v')=x_{i+1}$,
    we have that
    \begin{equation}\label{eq:uplow1}
        1/2(x_{i+1}-x_i) <p_1((v')^\varphi)-p_1(v^\varphi)<2(x_{i+1}-x_i).
    \end{equation}

    Let $v,v'\in V(\Omega)+T$ and $i<i'\in \mathbb{Z}$ be such that $p_1(v)=x_i$ and $p_1(v')=x_{i'}$.
    Pick any sequence $\{v_k\}_{k=i}^{i'}$ such that $v_i=v$, $v_{i'}=v'$  and $p_1(v_k)=x_k$ for every $i\le k\le i'$.
    Then, by \eqref{eq:uplow1}, we have
    \begin{equation}
        \begin{split}
            p_1((v')^\varphi)-p_1(v^\varphi)= & \ \sum_{k=i}^{i'-1} \left(p_1((v_{k+1})^\varphi)-p_1((v_{k})^\varphi)\right)\\
            > & \ \sum_{k=i}^{i'-1} (x_{k+1}-x_{k})/2\\ = & \ (p_1(v')-p_1(v))/2.
        \end{split}
    \end{equation}
    A similar argument gives the upper bound $p_1((v')^\varphi)-p_1(v^\varphi)< 2(p_1(v')-p_1(v))$. This concludes the proof of part (i) of the lemma.  

    We turn to prove part (ii). 
    Define
    $$Z_M \coloneqq \{p_1(v')-p_1(v):v',v\in V(\Omega)+T, \ |p_1(v')-p_1(v)|<M\}.$$
    If $Z_M=\{0\}$, then part (ii) follows immediately from part (i).
    Hence we may assume that $Z_M\neq\{0\}$.
    As before, by the assumption that $p_1(V(\Omega)+T)$ has finite local complexity, we have that $Z_M$ is finite.
    Thus, there is $\eta'>0$ such that for every $z,z'\in Z_M$ if $|z-z'|<\eta'$, then $z=z'$.
    
    By Lemma~\ref{lm:TechnicalForFiniteSets} applied with $Z\coloneqq Z_M$ and $$\eta\coloneqq 1/4 \min\{\eta',\min\{|z|\colon 0<z\in Z_M\}\}$$ and by the proof of part (i), there is $\epsilon>0$ such that every $\epsilon$-approximation $\varphi$ satisfies part (i),  and so, by \eqref{eq:uplow} and the choice of $\eta'$ for every $z\not=z'\in Z_M$ we have
    \begin{equation}\label{eq:lowereta}
        |\varphi(z)-\varphi(z')|> 1/2|z-z'| \geq \eta'/2 >\eta.
    \end{equation}

    Now, suppose that $v_0,v'_0,v_1,v'_1\in V(\Omega)+T$ are as in the statement.    
    Then, for $i=0,1$, by combining  \eqref{eq:Mbound} and \eqref{eq:uplow}, we obtain that  $p_1(v'_i-v_i)\in Z_M$. Letting   $z\coloneqq p_1(v'_0-v_0)$ and $z'\coloneqq  p_1(v'_1-v_1)$,   condition \eqref{eq:iequal} means that $|\varphi (z)-\varphi (z')|=0$. 
    Then, by \eqref{eq:lowereta}  we must have 
    that $z=z'$, i.e.,   $p_1(v'_0-v_0)=p_1(v'_1-v_1)$, which completes the proof of (ii). 
\end{proof}

\subsection{Earthquake decomposition and approximations}

We show that under the assumption that $T$ has finite horizontal complexity fine  approximations for $(\Omega, T)$ preserve the vertical earthquake decomposition.

\begin{lemma}\label{lem:PreserveEarthquake}
    Let $T$ be a tiling of $\mathbb{R}^2$ by an axis-parallel polygonal set $\Omega$ that has finite horizontal complexity and $$T=T_c\sqcup \left( \bigsqcup_{P\in \quake{T}{\Omega}} P \right)$$ be the earthquake decomposition of $T$ with respect to $\Omega$.
    Then there is $\epsilon>0$ such that  every $\epsilon$-approximation $\varphi$ for $(\Omega,T)$ preserves earthquake; i.e., we have that $T^\varphi\in \Tile(\Omega^\varphi;\R^2)$ and
    $$T^\varphi=(T_{c})^\varphi\sqcup \left(\bigsqcup_{P\in \quake{T}{\Omega}} P^\varphi\right)$$
    is the earthquake decomposition of $T^\varphi$ with respect to $\Omega^\varphi$.
\end{lemma}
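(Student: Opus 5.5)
We first fix $\epsilon>0$ small enough that every $\epsilon$-approximation is good (Corollary~\ref{cor:GoodApproxExists}) and satisfies Lemma~\ref{lem:ApproximationsOne}~(i) for some fixed $M$. Theorem~\ref{thm:GoodApproximation} with $X=\R$ then gives $T^\varphi\in\Tile(\Omega^\varphi;\R^2)$. Lemma~\ref{lem:BasicQuake}(i) says that for each $P\in\quakeall{T}{\Omega}$ we have $\Omega\oplus P=X_P\times\R$, and likewise $\Omega\oplus T_c=X_{T_c}\times\R$. Lemma~\ref{lem:ApproxKeepsCosetPieces} (with $X=\R$, $S=P$ or $S=T_c$) then gives $\Omega^\varphi\oplus P^\varphi=X_P^\varphi\times\R$. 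Thus every $P^\varphi$ is a union of tectonic plates of $T^\varphi$. Note also that $t\mapsto t^\varphi$ is injective: if $t^\varphi=s^\varphi$, then $\Omega^\varphi+t^\varphi$ overlaps $\Omega^\varphi+s^\varphi$, so $t=s$ by Step 1 of the proof of Theorem~\ref{thm:GoodApproximation}. Hence $T^\varphi=\bigsqcup_P P^\varphi$ is a genuine partition.

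The main step is to show that each $P^\varphi$ is a single $\sim_{\Omega^\varphi,\Uparrow}$-class. It suffices to show that proper intersection of horizontal edges is preserved: if $\omega+t$ and $\omega'+t'$ properly intersect, then $\omega^\varphi+t^\varphi$ and $(\omega')^\varphi+(t')^\varphi$ properly intersect. We would then get $t^\varphi\sim t'^\varphi$, so each $P^\varphi$ lies inside one class, and combined with the previous paragraph each $P^\varphi$ is exactly a class.

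We check the two conditions in the definition of proper intersection separately, writing $a,b$ (resp.\ $a',b'$) for the left and right vertices of $\omega$ (resp.\ $\omega'$).
\begin{itemize}
\item \emph{Equal heights.} We need $p_2(a^\varphi+t^\varphi)=p_2(a'^\varphi+t'^\varphi)$. Since $\varphi$ is $\Q$-linear, the second coordinate of $v^\varphi+t^\varphi$ is $\varphi$ applied to $p_2(v+t)$, so equality of heights is preserved exactly.
\item \emph{Strict interleaving.} We need $p_1(a+t)<p_1(b'+t')$ and $p_1(a'+t')<p_1(b+t)$ to survive the approximation. These points all lie in $V(\Omega)+T$, so Lemma~\ref{lem:ApproximationsOne}~(i), which preserves strict order of $p_1$-coordinates of points of $V(\Omega)+T$ globally, gives exactly these inequalities. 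This is the only place where finite horizontal complexity enters.
\end{itemize}

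It remains to identify columns. A plate $P$ is a vertical column iff $P\subseteq\{x\}\times\R$, i.e.\ iff all elements of $P$ have equal $p_1$. By Lemma~\ref{lem:ApproximationsOne}~(i), applied to $V(\Omega)+T$ with the vertex $v_0=(0,0)$, equality of $p_1$-coordinates is preserved in both directions. Hence $P$ is a column iff $P^\varphi$ is, so $(T_c)^\varphi=(T^\varphi)_c$ and $\quake{T^\varphi}{\Omega^\varphi}=\{P^\varphi:P\in\quake{T}{\Omega}\}$.

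The expected obstacle is the second step. Without finite horizontal complexity the strict inequalities could collapse under approximation, since there are infinitely many distinct gaps, which would merge or split plates. The global order-preservation from Lemma~\ref{lem:ApproximationsOne} is exactly what prevents this, so we would invoke it rather than the local Lemma~\ref{lem:step0}.
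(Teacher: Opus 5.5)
Your proof is correct, but it handles the ``plates do not merge'' direction differently from the paper.

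The paper proves that proper intersection of horizontal edges is preserved in both directions, so that $t_0\sim_{\Omega,\Uparrow}t_1$ iff $t_0^\varphi\sim_{\Omega^\varphi,\Uparrow}t_1^\varphi$. The reverse direction is the nontrivial one: equality of approximated heights does not by itself give equality of the original heights, since $\varphi$ has a large kernel. The paper recovers it from Lemma~\ref{lem:ApproximationsOne}~(i) together with the coordinate-swapped Lemma~\ref{lem:step0} applied to $(\Omega^1,T^1)$, and it uses Lemma~\ref{lem:ApproxKeepsCosetPieces} only for columns.

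You instead apply Lemma~\ref{lem:ApproxKeepsCosetPieces} to every plate. Each $\Omega^\varphi\oplus P^\varphi$ is then vertically invariant, so $P^\varphi$ is closed under the generating relation of $\sim_{\Omega^\varphi,\Uparrow}$. This means you only need the easy forward direction of edge-level preservation, where equal heights follow from $\Q$-linearity. The paper's route gives the slightly stronger edge-level equivalence. Yours reuses a region-level lemma, which itself depends on the unswapped Lemma~\ref{lem:step0}, so the dependency is moved rather than removed.

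Two small points to fix:
\begin{itemize}
\item You should supply the one-line closure argument. A horizontal edge of a tile in $P^\varphi$ properly intersecting a horizontal edge of a tile outside $P^\varphi$ would, by packing and regular openness, place the two tiles on opposite sides of a common segment. Vertical invariance of $X_P^\varphi\times\R$ then forces an overlap of positive measure, exactly as in the proof of Lemma~\ref{lem:BasicQuake}~(i).
\item Finite horizontal complexity enters twice, not once. The implication ``$P$ not a column $\Rightarrow$ $P^\varphi$ not a column'' also relies on Lemma~\ref{lem:ApproximationsOne}~(i); only the converse follows from linearity alone.
\end{itemize}
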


\begin{proof}
Let $\epsilon>0$ be small enough to satisfy both  Corollary~\ref{cor:GoodApproxExists} and Lemma~\ref{lem:ApproximationsOne}~(i).
In particular, $\varphi$ is a good approximation and $T^\varphi\in \Tile(\Omega^\varphi;\R^2)$ by Theorem~\ref{thm:GoodApproximation}.

Let $S\subseteq  T$ be a vertical column for $\Omega$, i.e., $S\subseteq \{x\}\times \R$ for some $x\in \R$ and $\Omega+S=X\times \R$ for $X=\interior(\overline{p_1(\Omega)+x})$.
As
$$p_1(s^\varphi)=\varphi(p_1(s))=\varphi(p_1(s'))=p_1((s')^\varphi)$$
for every $s,s'\in S$ it follows  that  $S^\varphi\subseteq \{\varphi(x)\}\times \R$.
By Lemma~\ref{lem:ApproxKeepsCosetPieces}, we have that $\Omega^\varphi\oplus S^\varphi=X'\times \R$ for $X'=\interior(\overline{p_1(\Omega^\varphi)+\varphi(x)})$; thus, $S^\varphi$ is a vertical column for $\Omega^\varphi$.
This implies that $(T_c)^\varphi\subseteq  (T^\varphi)_c$.

To complete the proof we need to show that for every tectonic plate in $\quake{T}{\Omega}$ we have $P^\varphi\in \quake{T^\varphi}{\Omega^\varphi}$.

Recall that $\sim_{\Omega,\Uparrow}$ is the equivalence relation on $T$ generated by the pairs $t_0\not= t_1\in T$ such that  there are horizontal edges $$\omega_0\coloneqq \omega_{v_{0,0},v_{0,1}},\;\omega_1\coloneqq \omega_{v_{1,0},v_{1,1}}\in E(\Omega)$$ such that $\omega_0+t_0$ and $\omega_1+t_1$ \emph{intersect properly}. So, assuming without loss of generality that 
$p_1(v_{i,0})<p_1(v_{i,1})$ for both $i=0,1$, we have 
\begin{equation}\label{eq:intersect}
    p_2(\omega_0+t_0)=p_2(\omega_1+t_1) \ \text{ and } \ p_1(v_{i,0}+t_i)< p_1(v_{i',1}+t_{i'})
\end{equation} for every $(i, i')\in \{(0,1),(1,0)\}$. However, by Lemma~\ref{lem:ApproximationsOne}~(i) and the coordinate-swapped version of Lemma~\ref{lem:step0}, applied to \((\Omega^1,T^1)\), for every $(i, i')\in \{(0,1),(1,0)\}$ \eqref{eq:intersect} holds if and only if the corresponding image conditions
$$p_2((\omega_0)^\varphi+(t_0)^\varphi)=p_2((\omega_1)^\varphi+(t_1)^\varphi) \ \text{ and } \ p_1((v_{i,0})^\varphi+(t_i)^\varphi)<p_1((v_{i',1})^\varphi+(t_{i'})^\varphi)$$
hold. Thus $t_0 \sim_{\Omega,\Uparrow} t_1$ if and only if $(t_0)^\varphi \sim_{\Omega^\varphi ,\Uparrow} (t_1)^\varphi$.
This means that $S\subseteq T$ is in $\quakeall{T}{\Omega}$ if and only if $S^\varphi\subseteq T^\varphi$ is in $\quakeall{T^\varphi}{\Omega^\varphi}$.

By the definition of $\quake{T}{\Omega}$, we have that $P$ is a $\sim_{\Omega,\Uparrow}$-equivalence class of $T$ that satisfies $P\subseteq  T\setminus T_c$.
By the argument above,  we have that $P^\varphi$ is in $\quakeall{T^\varphi}{\Omega^\varphi}$.
Hence, in order to show that $P^\varphi\in \quake{T^\varphi}{\Omega^\varphi}$, it is enough to show that $P^\varphi$ is not a column of $T^\varphi$. 
Note that as $P$ is not a vertical column for $\Omega$, there are $t,t'\in P$ such that $p_1(t)\not=p_1(t')$. 
By Lemma~\ref{lem:ApproximationsOne}~(i), we conclude that $p_1(t^\varphi)\not=p_1((t')^\varphi)$ which shows that $P^\varphi$ is not a vertical column for $\Omega^\varphi$ and the proof is complete.
\end{proof}

\subsection{Periodicity of rational tectonic plates}

Aiming to prove Theorem~\ref{thm:PeriodicityMain}, we first derive a version of it, for axis-parallel sets with \emph{rational vertices},  from its discrete analog, Theorem~\ref{thm:DiscretePeriodicity}.

\begin{lemma}\label{lem:PeriodicityAproximation}
    Let $T$ be a tiling of $\mathbb{R}^2$ by a rational axis-parallel polygonal set $\Omega$.
    Then there is $b>0$ such that the sets
    $$\Omega\oplus T_{c} \text{ and } \Omega\oplus P$$
    are $(b,0)\mathbb{Z}$-periodic for every $P\in \quake{T}{\Omega}$.
\end{lemma}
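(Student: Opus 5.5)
The plan is to reduce to Theorem~\ref{thm:DiscretePeriodicity} by discretizing. Since $\Omega$ is rational, there is $q\in\N$, $q>0$, with $V(\Omega)\subseteq \frac1q\Z^2$. The translations in $T$ need not be rational, so first I normalize them.

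\textbf{Normalizing the coordinates of $T$.} Within a single tectonic plate $P\in\quakeall{T}{\Omega}$, two properly intersecting horizontal edges lie on the same horizontal line. Hence the second coordinates of $P$ agree modulo $\frac1q\Z$ along the generating pairs of $\sim_{\Omega,\Uparrow}$. The first coordinates are linked through the boundary lines $\{x\}\times\R$ of $X_P\times\R$: every such line is covered by vertical edges of tiles from both sides.

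To make this usable, I translate each plate vertically by a small amount. This is an earthquake sliding in the continuous sense. It preserves both the tiling property and the decomposition, because $\Omega\oplus P=X_P\times\R$ is $\R(0,1)$-invariant by Lemma~\ref{lem:BasicQuake}. After this sliding, $p_2(T)\subseteq\frac1q\Z$, and the sets $\Omega\oplus P$ and $\Omega\oplus T_c$, which are what we must show are periodic, are unchanged.

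For the horizontal direction I argue as follows. Every line $\{x\}\times\R$ in the boundary of some $X_P\times\R$, and more generally every vertical line containing a vertical edge of a tile, is covered from the left and right by edges of tiles. Following chains of tiles that abut horizontally along vertical edges, one gets $p_1(t)-p_1(t')\in\frac1q\Z$ whenever $t,t'$ are connected through such adjacencies in the plane. Since $\R^2$ is connected and the tiles' closures cover it, any two $t,t'\in T$ are joined by a chain of tiles whose consecutive members share a boundary segment of positive length. Along a shared vertical segment the $p_1$-difference is in $\frac1q\Z$. Along a shared horizontal segment, the vertical sliding above already ensures the $p_2$-difference is in $\frac1q\Z$, but the horizontal offset is arbitrary.

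\textbf{Main obstacle.} This horizontal offset across a shared horizontal segment is the main obstacle. If a tile meets others only through horizontal edges, its $p_1$-coordinate relative to the rest is unconstrained. That happens only within a plate's interior. Across plate boundaries (vertical lines), the offsets are constrained.

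My fix is to allow horizontal sliding inside a plate. Instead, I would use the $\Q$-linear approximation machinery of Section~\ref{sec:RationalApprox}: choose a good approximation $\varphi$. By Theorem~\ref{thm:GoodApproximation}, $T^\varphi$ tiles by $\Omega^\varphi=\Omega$ (rational vertices are fixed by $\varphi$). The proof of Lemma~\ref{lem:PreserveEarthquake}, which only used the sign-preservation of Lemma~\ref{lem:step0} for properly intersecting edges, preserves the earthquake decomposition. Each $\partial X_P$ lies in $K_{(\Omega,T)}$, and $\varphi$ acts on it monotonically. Finite local complexity of $T^\varphi$ is not guaranteed in general, so I instead pick $\varphi$ with values in $\frac1{q}\Z$-cosets: since $\varphi$ may be any $\Q$-linear map fixing $\Q$, choose it to send the basis elements $\beta_j\notin\Q$ into $\frac1q\Z$. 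This collapses coordinates, so goodness may fail. This is exactly the step I expect to need care.

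\textbf{Discrete reduction.} Once $T\subseteq\frac1q\Z^2$ after normalization, set $F=\{f\in\frac1q\Z^2: f+[0,\frac1q)^2\subseteq\Omega\}$ scaled by $q$, and $A=qT$. Then $A\in\Tile(F;\Z^2)$. Discrete plates correspond to continuous ones (proper intersection of horizontal edges is equivalent to cells of $F+a$, $F+a'$ being vertically adjacent), and discrete vertical columns correspond to vertical columns. Theorem~\ref{thm:DiscretePeriodicity} gives $\ell$ with $F\oplus P$ and $F\oplus A^{\rm disc}_c$ being $(\ell,0)\Z$-periodic, hence $b=\ell/q$ works for $\Omega\oplus P$ and $\Omega\oplus T_c$.

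Since the statement concerns only the sets $\Omega\oplus P$, which depend on $X_P$ alone, the normalization is harmless. The fallback for the obstacle is to treat each plate's horizontal freedom via the slicing argument of Lemma~\ref{lem:slices} applied to the discrete tiling of each $X_P\times\Z$.
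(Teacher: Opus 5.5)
Your vertical sliding and the final discretization (passing to $F$ and $A=qT$ and invoking Theorem~\ref{thm:DiscretePeriodicity}) match the paper's proof. The step you flag as the main obstacle, making the horizontal coordinates lie in $\frac1q\Z$, is left open, and the fix you propose does not work.

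\begin{itemize}
\item Forcing $\varphi(\beta_j)\in\frac1q\Z$ destroys goodness, so Theorem~\ref{thm:GoodApproximation} no longer guarantees that $T^\varphi$ is a tiling.
\item Even for a good $\varphi$, the boundary lines move, $\partial X_P\mapsto\varphi(\partial X_P)$. So $\Omega^\varphi\oplus P^\varphi$ is a different set from $\Omega\oplus P$, and periodicity of the former says nothing directly about the latter. Transferring it back is the whole content of Theorem~\ref{thm:PeriodicityMain}, which needs finite horizontal complexity and itself relies on the present lemma.
\item Lemma~\ref{lem:PreserveEarthquake} uses Lemma~\ref{lem:ApproximationsOne}, hence finite horizontal complexity, and not only Lemma~\ref{lem:step0}.
\item A good $\varphi$ can also produce unbounded denominators across infinitely many plates.
\item The fallback via Lemma~\ref{lem:slices} presupposes the discrete tiling you are trying to construct.
\end{itemize}

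The missing observation is that there is no horizontal freedom at all once the vertical earthquake decomposition is nontrivial. If it is trivial, or $T=T_c$, the statement is immediate.
\begin{enumerate}
\item Normalize so that $V(\Omega)\subseteq\frac1q\Z^2$. Nontriviality gives a line $\{x_0\}\times\R\subseteq\partial(X_P\times\R)$ for some plate $P$, and this line is covered by vertical edges of tiles.
\item For almost every $y$, the line $\R\times\{y\}$ avoids all horizontal edges. It is then partitioned, up to a discrete set, into consecutive open intervals. Each interval is a component of $(\Omega+t)\cap(\R\times\{y\})$, with endpoints $p_1(v+t)$ and $p_1(w+t)$ for some $v,w\in V(\Omega)$, so its length lies in $\frac1q\Z$.
\item Adjacent intervals share an endpoint, and $x_0$ is one of the endpoints. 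Walking from $x_0$ along the line, every endpoint lies in $x_0+\frac1q\Z$. Hence $p_1(t)\in x_0+\frac1q\Z$ for every tile meeting the line; this is the identity \eqref{eq:seqvertex}.
\item Every tile meets such a generic line, so $p_1(T)\subseteq x_0+\frac1q\Z$.
\end{enumerate}
Your chain-of-adjacencies worry only arises for trivial decompositions; think of rows of unit squares with irrational offsets, which form a single plate.

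So only the vertical offsets of the plates need adjusting, and your sliding does exactly that. The paper obtains this normalization, $T\subseteq\Z^2$ after sliding, by citing \cite[Lemma~2.7]{rational}. With it, your discrete reduction goes through.
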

\begin{proof}
    The claim is immediate if $|\quakeall{T}{\Omega}|=1$, or $T=T_c$, hence we assume that the earthquake decomposition is non-trivial.
    Since $V(\Omega)$ is contained in a coset of $\Q^2$ and is finite, by affine invariance of tilings, we may assume that $V(\Omega)$ contains origin and is a subset of $\mathbb{Z}^2$.
    Since $$\Omega\oplus T_{c} \text{ and } \Omega\oplus P$$
    are $(0,1)\mathbb{R}$-invariant for every $P\in \quake{T}{\Omega}$, 
    any vertical sliding of either $T_c$ or $P$ does not affect the horizontal periodicity of $\Omega\oplus T_{c}$ or $ \Omega\oplus P$.
    In particular, we may assume without loss of generality that there are $P_0\not=P_1\in \quakeall{T}{\Omega}$ such that $p_2(P_0)$ and $p_2(P_1)$ do not lie in the same coset of $\Q$.
    This allows to use \cite[Lemma~2.7]{rational} and slide in the direction $\vec v=(0,1)$ to achieve, without loss of generality, that $T\subseteq  \mathbb{Z}^2$.

Next, we define a finite set $F\subseteq  \mathbb{Z}^2$ to be the collection of $(a,b)\in \mathbb{Z}^2$ such that $(a,a+1)\times (b,b+1)\subseteq  \Omega$.
As we assume that $\Omega$ is axis-parallel and $V(\Omega),T\subseteq  \mathbb{Z}^2$, it follows that $T\in \Tile(F;\mathbb{Z}^2)$, $T^{\operatorname{disc}}_c$  is the union of $(0,1)$-discrete vertical columns in $T$ with respect to $F$ and $P\in \quake{T}{F}$  if and only if $P\in \quake{T}{\Omega}$.
Therefore, by Theorem~\ref{thm:DiscretePeriodicity}, we have that there exists $\ell\in \N\setminus \{0\}$ such that the sets 
$$F\oplus T_c\;\text{ and } F\oplus P$$ are $(\ell,0)\Z$-periodic, for every $P\in \quake{T}{\Omega}$.
Consequently, by setting $b=\ell$, we have that 
$$ \Omega \oplus T_c\;\text{ and } \Omega\oplus P$$
are $(b,0)\Z$-periodic, for every $P\in \quake{T}{\Omega}$, as claimed.
\end{proof}

\subsection{Periodicity of irrational tectonic plates} We are finally ready to prove Theorem~\ref{thm:PeriodicityMain}. The proof consists of applying a \emph{well-chosen} rational approximation $\varphi$ to  $(\Omega, T)$ and  using Lemma~\ref{lem:PeriodicityAproximation} for $(\Omega^\varphi,T^\varphi)$.

\begin{proof}[Proof of Theorem~\ref{thm:PeriodicityMain}]

The claim easily follows when either the earthquake decomposition of $T$ is trivial, or $T=T_c$.
Hence, we may assume that either $|\quake{T}{\Omega}|\ge 1$ and $T_c\not=\emptyset$, or $|\quake{T}{\Omega}|\ge 2$.
As $T$ has finite horizontal complexity, there is a non-empty uniformly discrete sequence $\{x_i\}_{i\in I}\subseteq  \mathbb{R}$, indexed by a countable $I\subseteq \R$ in an increasing order (that the index set $I$ inherits from $\R$), such that $\{\ell_{x_i}\}_{i\in I}$ enumerates the boundary of $\interior(\overline{\Omega\oplus T_c})$ and $\interior(\overline{\Omega\oplus P})$ for $P\in \quake{T}{\Omega}$, where $\ell_{x_i}=\{x_i\}\times \mathbb{R}$.

 First, we show that the order type of the index set $I$ is $\mathbb{Z}$ and that there is $K>0$ such that $x_{i+1}-x_i<K$ for every $i\in \mathbb{Z}$.
Observe that $\{x_i\}_{i\in I}\subseteq  p_1(V(\Omega)+T)$ which will be used throughout the argument.
Let $\varphi$ be an $\epsilon_0$-approximation, where $\epsilon_0>0$ is as in Corollary~\ref{cor:GoodApproxExists}, Lemma~\ref{lem:PreserveEarthquake} and Lemma~\ref{lem:ApproximationsOne}~(i).
In particular, $\varphi$ is a good approximation.
By Lemma~\ref{lem:ApproxKeepsCosetPieces}, we have that $\varphi(x_i)$ is well defined for every $i\in I$, $x_i-x_j$ has the same sign  as $\varphi(x_i)-\varphi(x_j)$ for every $i,j\in I$, and $\{\ell_{\varphi(x_i)}\}_{i\in I}$ coincide with the boundary of $\Omega^\varphi\oplus (T_c)^\varphi$ and $\Omega^\varphi\oplus P^\varphi$ for $P\in \quake{T}{\Omega}$.
It therefore follows from Lemma~\ref{lem:PreserveEarthquake} and Lemma~\ref{lem:PeriodicityAproximation}, applied to $\Omega^\varphi$ and $T^\varphi$, that the order type of the index set $I$ is $\mathbb{Z}$.

Moreover, observe that for every $i\in \mathbb{Z}$ we have that
\begin{equation}\label{eq:TechOne}
    (x_i,x_{i+1})\times \mathbb{R}\subseteq  \Omega\oplus S \ \text{ if and only if } \ (\varphi(x_i),\varphi(x_{i+1}))\times \mathbb{R}\subseteq  \Omega^\varphi\oplus S^\varphi,
\end{equation}
where $S=T_c$ or $S\in \quake{T}{\Omega}$.
It follows easily from Lemma~\ref{lem:PeriodicityAproximation} that $\{\varphi(x_{i})\}_{i\in \Z}\subseteq  \mathbb{R}$ is $b$-periodic for some $b>0$.
As it is also non-empty, there is $K/2$ such that $\varphi(x_{i+1})-\varphi(x_i)<K/2$ for every $i\in \mathbb{Z}$.
By Lemma~\ref{lem:ApproximationsOne} (i), we thus conclude that $x_{i+1}-x_i<K$ for every $i\in \mathbb{Z}$ as desired.

Note that $K>0$ is independent of $\varphi$ as long as $\varphi$ is $\epsilon_0$-approximation.
By choosing a finer approximation, still denoted by $\varphi$, we may additionally ensure that Lemma~\ref{lem:ApproximationsOne}(ii) holds with $M=4K$.
Applying Lemma~\ref{lem:PeriodicityAproximation} again, let $b>0$ be a common horizontal period of the image slabs for this approximation.

 Using the $b\Z$-periodicity of $\{\varphi(x_{i})\}_{i\in \Z}\subseteq  \mathbb{R}$ and  by its construction, there exists $k_0\in \mathbb{N}\setminus \{0\}$ such that both conditions:
\begin{itemize}
    \item [(a)] $\varphi(x_i)+b=\varphi(x_{i+k_0})$,
    \item [(b)] the sets $(\varphi(x_i),\varphi(x_{i+1}))\times \mathbb{R}$ and $(\varphi(x_{i+k_0}),\varphi(x_{i+1+k_0}))\times \mathbb{R}$ are subsets of the same $\Omega^\varphi\oplus S^\varphi$, where $S=T_c$ or $S\in \quake{T}{\Omega}$
\end{itemize}
hold for every $i\in \mathbb{Z}$.
Set $\beta\coloneqq x_{k_0}-x_0$ and observe that $\beta>0$ by (a) and Lemma~\ref{lem:ApproximationsOne}~(i).
To complete the proof, it suffices to show that
\begin{itemize}
    \item [(a')] $x_i +\beta = x_{i+k_0}$,
    \item [(b')] the sets $(x_i,x_{i+1})\times \mathbb{R}$ and $(x_{i+k_0},x_{i+1+k_0})\times \mathbb{R}$ are subsets of the same $\Omega \oplus S $, where $S=T_c$ or $S\in \quake{T}{\Omega}$
\end{itemize}
hold for every $i\in \mathbb{Z}$.

Note that condition (b') holds for every $i\in \mathbb{Z}$ by \eqref{eq:TechOne} and (b).
In addition, condition (a') holds for $i=0$ by the definition of $\beta$.
We claim that if (a') holds for some $i_0\in \mathbb{Z}$, then it holds for $i_0+1$ and $i_0-1$ as well.
Indeed, assume (a') holds for $i_0$ and let $i_{-1}=i_0-1$, then we have
\begin{equation*}
    \begin{split}
        x_{i_{-1}+k_0}-x_{i_{-1}}= & \ (x_{i_0+k_0}-x_{i_0})-((x_{i_0+k_0}-x_{i_0-1+k_0})-(x_{i_0}-x_{i_{-1}}))\\
        = & \ \beta -(x_{i_0+k_0}-x_{i_{-1}+k_0})+(x_{i_0}-x_{i_{-1}}).
    \end{split}
\end{equation*}
By  Lemma~\ref{lem:ApproximationsOne}~(i) and since  $x_{i+1}-x_i<K$ for every $i\in \Z$, we have that 
$$|\varphi(x_{i_0+k_0})-\varphi(x_{i_{-1}+k_0})|,|\varphi(x_{i_0})-\varphi(x_{i_{-1}})|<2K=M/2.$$
From (a), we then obtain that
$$\varphi(x_{i_0+k_0})-\varphi(x_{i_{-1}+k_0})=(\varphi(x_{i_0})+b)-(\varphi(x_{i_{-1}})+b)=\varphi(x_{i_0})-\varphi(x_{i_{-1}}).$$
Hence, by Lemma~\ref{lem:ApproximationsOne}~(ii) and our choice $M=4K$, we conclude that $x_{i_0+k_0}-x_{i_{-1}+k_0}=x_{i_0}-x_{i_{-1}}$ which, in turn, shows that (a') holds for $i_{-1}=i_0-1$. Similarly, one can show that if (a') holds for $i_0$ then it holds for $i_1=i_0+1$.

Thus, knowing that (a') holds for $i_0=0$ we conclude that (a') holds for every $i\in  \Z$ by induction. 
This completes the proof. 
\end{proof}

\section{Non-trivial earthquake decompositions}\label{sec:EarthquakeDecomposition}

In this section, we fix an axis-parallel polygonal set $\Omega$ and a non-empty set $E\subseteq  \R^2$ of the form $E=X\times \R$, where $X\subseteq \R$ is regular open and periodic, and consider the set $\Tile(\Omega;E)$.
Recall that $\Gamma(Z)$ is the group of translational symmetries of $Z\subseteq  \R^d$, and observe that $\Gamma(X)$ is non-trivial and closed, hence, either discrete, that is, of the form $\beta\Z$ for some $\beta>0$, or equal to $\R$.
In particular, $\Gamma(E)=\beta\Z\times \R$ or $\Gamma(E)=\R^2$, and the latter case occurs if and only if $E=\R^2$.

\subsection{Weak convergence}
A set $S\subseteq \R^d$ is \emph{uniformly discrete} if there exists $\delta >0$ such that the distance $\|s'-s\|$ between any two distinct elements $s,s'$ of $S$ is at least $\delta$. We say that $\delta$ is a \emph{separation constant} for $S$ and that $S$ is \emph{$\delta$-separated}.

We say that a sequence of uniformly discrete sets $S_n$ \emph{converges weakly} to $S$, in symbols $S_n\to S$, if for every $R>0$ and $\epsilon >0$, there exists $N$ such that 
$$S_n \cap \mathcal{B}_R \subseteq S+\mathcal{B}_\epsilon \ \text{ and } \ S \cap \mathcal{B}_R \subseteq S_n+\mathcal{B}_\epsilon$$
for every $n\geq N$. One can show, by a standard diagonalization argument, that any given sequence $S_n$ of uniformly discrete sets with a common separation constant $\delta$ has a subsequence $S_{n_k}$ that converges weakly (as $k\to \infty$) to some (possibly empty) set $S$, which is $\delta$-separated as well.

Observe that any $T\in \Tile(\Omega;E)$ is uniformly discrete with a common separation constant $\delta=\delta(\Omega)$. Indeed, if $t,t'\in T$ are distinct, then the packing condition implies that 
$$\|t'-t\|\geq \delta(\Omega)\coloneqq \inf\{ \|d\| \colon m_2(\Omega \cap (\Omega +d)) =0\} >0.$$

\begin{lemma}\label{lem:BasicWeak}
    The set $\Tile(\Omega;E)$ endowed with the weak convergence topology is a compact metric space and the translation action of $\Gamma(E)$ on $\Tile(\Omega;E)$ defined as $(g,T)\mapsto g+T\in \Tile(\Omega;E)$ is well-defined and continuous.
\end{lemma}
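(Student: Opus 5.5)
The plan is to realize $\Tile(\Omega;E)$ as a closed subset of the space $\mathcal{D}_\delta$ of all $\delta$-separated subsets of $\R^2$, where $\delta=\delta(\Omega)$. First we show that $\mathcal{D}_\delta$ with weak convergence is a compact metric space. For the metric we take
$$d(S,S')=\sum_{k\ge1}2^{-k}\min\bigl\{1,\ h_k(S,S')\bigr\},$$
where $h_k$ is the smallest $\epsilon$ such that $S\cap\mathcal{B}_k\subseteq S'+\mathcal{B}_\epsilon$ and $S'\cap\mathcal{B}_k\subseteq S+\mathcal{B}_\epsilon$. We check that $d$-convergence is exactly weak convergence. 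Points near $\partial\mathcal{B}_k$ are harmless, since we may pass from radius $R$ to $R+1$. We also check that $d$ separates points; this uses that a $\delta$-separated set is closed and locally finite. Compactness then follows from the diagonalization remark preceding the lemma: every sequence in $\mathcal{D}_\delta$ has a weakly convergent subsequence whose limit is again $\delta$-separated.

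The main step, and the main obstacle, is closedness: if $T_n\in\Tile(\Omega;E)$ and $T_n\to T$ weakly, then $T\in\Tile(\Omega;E)$. We would test the tiling condition through the functions $f_S=\sum_{s\in S}\1_{\Omega+s}$, since $T$ tiles $E$ exactly when $f_T=\1_E$ almost everywhere. Fix $R>0$. Because the $T_n$ are uniformly $\delta$-separated, only boundedly many points of $T_n$ lie in $\mathcal{B}_{R+\diam\Omega}$. For large $n$, weak convergence matches these points bijectively, within $\epsilon$, with the points of $T$ in a slightly larger ball. We have to handle points drifting across the boundary of the ball, which again we do by enlarging the radius.

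With this matching, we bound
$$\int_{\mathcal{B}_R}|f_{T_n}-f_T|\le \sum \bigl|(\Omega+t_n)\,\triangle\,(\Omega+t)\bigr|\le C\epsilon,$$
using continuity of translation in $L^1$. Hence $f_T=\lim f_{T_n}=\1_E$ almost everywhere on $\mathcal{B}_R$, and letting $R\to\infty$ gives the tiling condition everywhere. Pairwise packing is automatic, because $\delta$-separation and the covering identity $f_T=\1_E$ together force the multiplicity to be $1$. A closed subset of a compact metric space is compact, which gives the first claim.

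For the action, we first note that $g+T\in\Tile(\Omega;E)$ when $g\in\Gamma(E)$, since $\Omega\oplus(T+g)=E+g=E$. For joint continuity, suppose $g_n\to g$ and $T_n\to T$. For given $R$ and $\epsilon$, once $|g_n-g|<\epsilon/2$ and $T_n$ is $\epsilon/2$-close to $T$ on $\mathcal{B}_{R+|g|+1}$, the set $T_n+g_n$ is $\epsilon$-close to $T+g$ on $\mathcal{B}_R$. This is a direct triangle-inequality check.
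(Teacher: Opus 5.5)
Your overall route matches the paper's. The paper invokes the standard fact that $\delta$-separated sets form a compact metrizable space under weak convergence, notes that $\Tile(\Omega;E)$ is a closed subset, and refers to Section~3 of \cite{GL} for the action. Your closedness argument via $f_T=\sum_{t\in T}\1_{\Omega+t}$ is correct: match points within $\epsilon<\delta/2$ on a ball enlarged by $\sup_{x\in\Omega}\|x\|$, then use $L^1$-continuity of translation. Your triangle-inequality check for joint continuity of the action is also correct. Both fill in details the paper leaves implicit.

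The step that fails is the explicit metric: your $d$ does not satisfy the triangle inequality, because $h_k$ does not. Enlarging the radius handles convergence statements, but not this. Fix $k\ge 1$ and $0<\eta<1/2$, and let $x=(k-\eta/2,0)$, $y=(k+\eta/2,0)$. Take $S=\{x\}$, $S'=\{y\}$, $S''=\emptyset$, all $\delta$-separated. Then:
\begin{itemize}
\item $h_j(S,S')=\eta$ for $j\ge k$ and $0$ for $j<k$;
\item $h_j(S',S'')=\infty$ exactly for $j\ge k+1$;
\item $h_j(S,S'')=\infty$ exactly for $j\ge k$.
\end{itemize}
Hence
\[
d(S,S'')=2^{1-k}>\eta\,2^{1-k}+2^{-k}=d(S,S')+d(S',S'').
\]
The same failure occurs with $S''=\{z\}$ for any $z$ at distance at least $1$ from $x$ and $y$, if you prefer nonempty sets. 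So as written you have a function whose null sequences are exactly the weakly convergent ones, but not a metric. The conclusion ``compact metric space'' is therefore not yet justified.

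Any of the following standard repairs works:
\begin{itemize}
\item Replace $h_k(S,S')$ by the Hausdorff distance between the compact sets $(S\cap\overline{\mathcal{B}}_k)\cup\partial\mathcal{B}_k$ and $(S'\cap\overline{\mathcal{B}}_k)\cup\partial\mathcal{B}_k$. This is a pseudometric because the Hausdorff distance is one, and the sphere absorbs points that cross $\partial\mathcal{B}_k$. It separates closed sets and induces weak convergence.
\item Use $\min\{2^{-1/2},\inf\{\epsilon>0: S\cap\mathcal{B}_{1/\epsilon}\subseteq S'+\mathcal{B}_\epsilon,\ S'\cap\mathcal{B}_{1/\epsilon}\subseteq S+\mathcal{B}_\epsilon\}\}$. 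Here the radius shrinks as the tolerance grows, which restores the triangle inequality.
\item Observe that weak convergence on $\delta$-separated sets is Chabauty--Fell convergence. That topology is compact, Hausdorff and second countable, hence metrizable.
\end{itemize}
With any of these, the rest of your proof goes through unchanged.
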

\begin{proof}
    It is a standard fact that for every $\delta>0$ the weak topology on the set of $\delta$-separated sets is compact and metrizable.
    Thus, as  $\Tile(\Omega;E)$ is closed in the weak topology in the set of $\delta(\Omega)$-separated sets, the set $\Tile(\Omega;E)$ endowed with the weak convergence topology is a compact metric space.
    The rest of the claim follows by applying the same arguments as in \cite[Section 3]{GL}.
\end{proof}

For $T\in \Tile(\Omega;E)$, we define $\orbit{T}{E}{\Omega}$ as the weak closure of the orbit of $T$ under the action of $\Gamma(E)$.
As $\Tile(\Omega;E)$ endowed with weak topology is metrizable by Lemma~\ref{lem:BasicWeak}, we have that
\begin{equation}\label{def:orbit}
    \orbit{T}{E}{\Omega}\coloneqq \{S \subseteq \R^2 \colon   \text{There exists} \ \{g_n\}_{n\in \N}\subseteq \Gamma(E) \ \text{s.t.} \ T+g_n\to S\}.
\end{equation}

\subsection{Existence of non-trivial earthquakes}

In this section, our aim is to show that infinite local complexity yields the existence of a non-trivial earthquake decomposition for some $S\in \orbit{T}{E}{\Omega}$, where $T$ is any tiling of $E$ by $\Omega$. 

\begin{theorem}\label{thm:MainEarthSplit}
    Let $T$ be a tiling of $E=X\times \R$ by an axis-parallel polygonal set $\Omega$ that does not have finite local complexity.
    Then there is $S\in \orbit{T}{E}{\Omega}$ and a decomposition $S=S_0\sqcup S_1$ such that $S_i\not=\emptyset$ for $i\in \{0,1\}$ and
    \begin{itemize}
        \item if $X\not=\R$, then $\Omega\oplus S_0$ and $\Omega\oplus S_1$ are $\R(0,1)$-invariant,
        \item if $X=\R$, then $\Omega\oplus S_0$ and $\Omega\oplus S_1$ are both either $\R(0,1)$-invariant, or $\R(1,0)$-invariant.
    \end{itemize} 

    Moreover, if $\Lambda\subseteq  \Gamma(E)$ is a lattice, then we may choose $S$ to be a weak limit of a sequence $\{T+\lambda_n\}_{n\in \mathbb{N}}$, where $\lambda_n\in \Lambda$ for every $n\in \mathbb{N}$.
\end{theorem}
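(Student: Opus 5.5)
The plan follows Kenyon's idea: failure of finite local complexity forces, after passing to a limit of translates, two tiles whose relative offset degenerates along an edge. In the limit this produces a straight "fault line" in $\partial(\Omega\oplus S_0)$, and then a whole line along which the tiling splits into two invariant pieces.

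\emph{Step 1: locating a degenerating configuration.} If $T$ does not have finite local complexity, there is $R>0$ such that the patches $((\mathcal{B}_R+t)\cap T)-t$, $t\in T$, take infinitely many values. Since $T$ is $\delta(\Omega)$-separated, each patch has boundedly many points. Passing to a subsequence $t_n$, we obtain two tiles $t_n, s_n\in T$ at bounded distance whose relative positions $d_n=s_n-t_n$ are pairwise distinct and converge to some $d$. Because $\Omega+t_n$ and $\Omega+s_n$ pack and are axis-parallel, only one coordinate of $d_n$ can vary continuously while keeping the tiles in contact along an edge. So we may assume, after possibly swapping axes, that $p_2(d_n)=p_2(d)$ is constant and $p_1(d_n)\to p_1(d)$ strictly monotonically. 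Geometrically, a horizontal edge of $\Omega+t_n$ properly intersects a horizontal edge of $\Omega+s_n$ with a sliding offset. When $X\neq\R$ the vertical lines $\partial X\times\R$ are rigid, which pins down the horizontal coordinate; this forces the vertical option, as in the first bullet. To get translations in $\Gamma(E)$, and in $\Lambda$ for the last claim, replace $t_n$ by $\lambda_n\in\Lambda$ with $t_n-\lambda_n$ in a fixed fundamental domain and pass to a convergent subsequence. We then set $S$ to be a weak limit of $T-\lambda_n$ (Lemma~\ref{lem:BasicWeak}, compactness).

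\emph{Step 2: a fault in the limit.} Compare two limits. Let $S$ be the limit of $T-\lambda_n$, and let $S'$ be the limit obtained by additionally re-centring at $s_n$. In $S$, the tiles near the origin inherit the configuration $t=\lim(t_n-\lambda_n)$ and $s=t+d$. The key observation is that at each finite stage the configurations differ by the small offset $p_1(d_n)-p_1(d)\neq 0$. I would encode this by tracking the two vertical rays that are forced by the packing at the mismatch point. The tiles adjacent to $\Omega+t_n$ and $\Omega+s_n$ around the mismatch have vertical edges at horizontal distance $|p_1(d_n-d)|\to0$ but not equal. The tiling must fill the thin gap between these edges, and it cannot do so near the mismatch with tiles of bounded size unless those edges continue. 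Propagating this argument upward and downward, we expect to show that in $S$ there is a horizontal line (in the swapped case) or vertical line $\ell$ that contains no interior point of any tile and is not crossed by any tile. This is the main obstacle. The issue is to show that thin slivers cannot be filled, and that the fault propagates indefinitely rather than terminating.

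I would first try a measure/edge-counting argument. Along a segment of $\ell$ of length $L$, the edges of tiles on each side must match exactly in the limit. Any crossing tile in $S$ would come from tiles of $T-\lambda_n$ crossing at small offset, and such a tile would have to cover a sliver of width tending to $0$ with a polygonal set of fixed shape, which is impossible for small widths. Induction on $L$ then shows $\ell$ is entirely a fault.

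\emph{Step 3: splitting.} Given the fault line $\ell$, which we take to be vertical after relabelling, define $S_0$ as the tiles of $S$ lying to the left of $\ell$ and $S_1$ as those to the right. Both are nonempty and pack. However, they are only half-planes, not yet $\R(0,1)$-invariant in the sense required. To conclude as stated we need $\Omega\oplus S_0$ and $\Omega\oplus S_1$ to be unions of vertical lines. Here we take the finest splitting: apply the equivalence relation $\sim_{\Omega,\Uparrow}$ of Definition~\ref{def:plates} (which makes sense for $E$ as well). By Lemma~\ref{lem:BasicQuake}, each class covers a vertical slab $X_P\times\R$. The fault guarantees that no class contains tiles on both sides of $\ell$, so there are at least two classes. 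Letting $S_0$ be one class and $S_1$ the rest gives nonempty sets with $\R(0,1)$-invariant unions, completing the proof.
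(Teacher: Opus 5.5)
There is a genuine gap, and it is the step you flag yourself: Step~2 gives no argument that a fault line exists in $S$.

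The sliver heuristic does not work. In the limit $S$ there is no sliver at all: a tile of $S$ crossing your candidate line $\ell$ is a limit of tiles of the translates that also cross $\ell$. In a translate, a sliver of width $\epsilon_n$ can be covered by a tile of ordinary size whose edge runs along the sliver's boundary, so no thin tile is ever forced. What actually has to be excluded is that the interface between the ``displaced'' and ``undisplaced'' tiles turns a corner or terminates. Your induction on $L$ gives no mechanism for passing from length $L$ to anything longer.

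The missing idea is to split the whole limit instead of following one line:
\begin{enumerate}
\item After a diagonal subsequence (Lemma~\ref{lem:Diagonalization}), let $S_1$ be the tiles of $S$ that lie in $T+g_n$ for all large $n$, and let $S_0=S\setminus S_1$.
\item At any $x\in\partial(\overline{\Omega\oplus S_1})$, the $S_1$-tiles near $x$ are eventually present exactly. So the approximants $U_n$ of the $S_0$-tiles $U$ touching $x$ reproduce the same local picture near $x$.
\item Since $U_n\cap U=\emptyset$, eventually $x$ is not a vertex of any tile of $U_n$. Hence the two boundary segments at $x$ lie in an edge of a single tile and are collinear (Kenyon's argument, Lemma~\ref{lm:HalfBalls}).
\item Thus $W(\Omega\oplus S_1)=\emptyset$. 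Lemma~\ref{lem:TechnicalClaim} then makes $\Omega\oplus S_1$, and hence its complement $\Omega\oplus S_0$, invariant in one coordinate direction.
\end{enumerate}
This makes your Step~3 unnecessary. In any case, a half-plane bounded by a vertical line is already $\R(0,1)$-invariant, so no refinement by $\sim_{\Omega,\Uparrow}$ was needed.

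This split needs an anchor, and your construction loses it. You need a tile $t$ lying in every translate, so that $S_1\neq\emptyset$. You also need a tile $s=\lim(s_n+g_n)$ with $s_n+g_n\neq s$, so that $S_0\neq\emptyset$. The paper takes $g_n=t-t_n$ when $E=\R^2$. When $X\neq\R$ it anchors at tiles having a vertical edge on $\partial E$, so that $g_n\in\Gamma(E)$ (Lemma~\ref{pr:LocalizingEarthquake}).

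Translating by lattice vectors from the outset, as you do, lets both $t$ and $s$ drift, and the set of eventually fixed tiles can then be empty. For example, take unit-square rows with row $k$ shifted by $k\sqrt2$ and $\Lambda=\Z^2$. The lattice statement should come only at the end: choose $\lambda_n\in\Lambda$ with $\lambda_n-g_n$ bounded, pass to $\lambda_n-g_n\to\gamma$, and translate the decomposition by $\gamma$.

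Finally, your Step~1 claim that $\Omega+t_n$ and $\Omega+s_n$ may be taken in edge contact is unjustified. For $E=\R^2$ it can be repaired by a path-connectivity argument in a large ball. For $X\neq\R$ it is false in general. Take unit squares stacked in the strips of $X=\bigsqcup_k(2k,2k+1)$ with independent vertical offsets. Local complexity is infinite, yet edge-adjacent tiles have only finitely many relative positions. The stable/unstable split does not need such contact.
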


We start by detecting the weak limit $S$. 

\begin{lemma}\label{pr:LocalizingEarthquake}
    Let $T$ be a tiling of $E$ by an axis-parallel polygonal set $\Omega$ that does not have finite local complexity.
    Then there are $t\in T$, $\{s_n\}_{n\in \mathbb{N}}\subseteq  T$, $\{g_n\}_{n\in \mathbb{N}}\subseteq  \Gamma(E)$ and $S\in \orbit{T}{E}{\Omega}$ such that
    \begin{enumerate}
        \item[(i)] $t\in T+g_n$ for every $n\in \mathbb{N}$,
        \item[(ii)] $(T+g_n)\to S$,
        \item[(iii)] there is $s\in S$ such that $s\not = s_n+g_n$ for every $n\in \mathbb{N}$ and $(s_n+g_n)\to s$.
    \end{enumerate}
\end{lemma}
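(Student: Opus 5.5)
Failure of finite local complexity will be turned into two tiles whose difference has no convergent constant subsequence, and compactness in the weak topology will then produce the limit $S$. Since $T$ does not have finite local complexity, there is $R>0$ such that the family of patches $\{((\mathcal{B}_R+a)\cap T)-a : a\in T\}$ is infinite. Each such patch is a $\delta(\Omega)$-separated subset of $\mathcal{B}_R$, so it has uniformly bounded cardinality. Hence infinitely many distinct patches force infinitely many distinct difference vectors $b-a$ with $a,b\in T$ and $\|b-a\|<R$. I pick a sequence of pairs $(a_n,b_n)$ in $T$ with $\|b_n-a_n\|<R$ and pairwise distinct differences $d_n=b_n-a_n$. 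Passing to a subsequence, $d_n\to d$ with $d_n\neq d$ for all $n$; if some $d_n$ equals $d$, I discard it, which is possible since the $d_n$ are distinct.

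Next I reduce to $\Gamma(E)$-translations that keep a fixed point of $T$. Take $g'_n\in\Gamma(E)$ with $a_n+g'_n$ lying in a fixed compact fundamental region $D$ of $\Gamma(E)$; when $E=\R^2$, set $g'_n=-a_n$. Passing to a subsequence, $a_n+g'_n\to a^*$, and since every limit of tilings lies in $\orbit{T}{E}{\Omega}$, compactness (Lemma~\ref{lem:BasicWeak}) also gives $T+g'_n\to S'$ along a further subsequence.

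To satisfy (i) literally, I want a \emph{fixed} $t$ with $t\in T+g_n$. The natural correction is $g_n=g'_n+(a^*-(a_n+g'_n))$, which sends $a_n$ exactly to $a^*$. The problem is that this correction need not lie in $\Gamma(E)$ when $X\neq\R$, since $\Gamma(E)=\beta\Z\times\R$. I handle this with the finite local complexity of $\partial X$. The first coordinates of $a_n+g'_n$ relative to $\partial X$ take values in $p_1(V(\Omega)+T)$ modulo $\beta$, which near a tile touching a fixed boundary line take finitely many values. Passing to a subsequence where $p_1(a_n+g'_n)$ is constant, the correction becomes vertical and hence lies in $\Gamma(E)$. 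I set $t:=a^*=a_n+g_n$, which gives (i), and (ii) holds with $S=S'$ because the corrections tend to zero. If the first-coordinate reduction fails, I will instead allow only $g'_n$ and take $t$ as the limit; I expect the paper's statement intends a genuine fixed point, so the reduction should work.

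Finally, set $s_n=b_n$. Then $s_n+g_n=t+d_n\to t+d=:s$, and since weak convergence carries convergent points of $T+g_n$ into the limit, $s\in S$. Moreover $s_n+g_n\neq s$ because $d_n\neq d$, which gives (iii). I expect the main obstacle to be arranging that $t$ lies exactly in $T+g_n$ while $g_n$ stays in $\Gamma(E)$. When $E=\R^2$ this is trivial; when $X\neq\R$ it needs the local finiteness of $p_1$ near the boundary lines, or, failing that, a discrete pigeonhole on the horizontal offset modulo $\beta$.
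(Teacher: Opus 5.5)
Your first step (pairs $a_n,b_n\in T$ with pairwise distinct differences $d_n\to d$, $d_n\neq d$) is fine, as is your last step ($s_n=b_n$, $s=t+d\in S$). The case $E=\R^2$ also works.

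\textbf{The gap.} It sits exactly where you suspect. When $X\neq\R$ you need $g_n\in\Gamma(E)=\beta\Z\times\R$ with $a_n+g_n$ \emph{constant}. This forces $p_1(a_n)$ to be eventually constant modulo $\beta$ along a subsequence, i.e.\ $p_1(T)$ must meet only finitely many residue classes modulo $\beta$. You assert this without proof, and nothing established before this lemma gives it. It is strictly stronger than finite horizontal complexity: a discrete set of finite local complexity, e.g.\ a Fibonacci-type set with gaps $1$ and $\tau$, has dense residues mod $1$. So local finiteness near the boundary lines cannot be what makes it true.

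Your fallback (keep only $g'_n$ and let $t=\lim(a_n+g'_n)$) does not prove (i). And (i) is exactly what the proof of Theorem~\ref{thm:MainEarthSplit} uses to put $t$ into the persistent part $S_1$ while $s\in S_0$. A point that is only a limit of $a_n+g'_n$ may itself be sliding, so nothing would guarantee $S_1\neq\emptyset$.

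\textbf{How to close it.} The finiteness claim is true. The missing input is that the components $(x_{2k},x_{2k+1})$ of $X$ have length at most $\beta$; this is where $X\neq\R$ is used. For a.e.\ $c$, each segment $(x_{2k},x_{2k+1})\times\{c\}$ is partitioned from left to right into intervals $I_1,\dots,I_r$. Each $I_j$ is a component of a slice of some tile $\Omega+t\subseteq E$, so $|I_j|\in\{p_1(w-v)>0:v,w\in V(\Omega)\}$. Hence $r\le\beta/A_1$, with $A_1$ the least element of that set. The owner $t$ of $I_j$ then satisfies $p_1(t)\in x_{2k}+F$ for a finite set $F$ depending only on $\Omega$ and $X$. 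Since every tile owns some such $I_j$ and $\partial X$ is finite modulo $\beta$, the claim follows. You should also replace your $t=a^*$, which need not lie in $T$ as the statement requires, by $t=a_{n_0}$ and $g_n=a_{n_0}-a_n$.

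\textbf{Comparison with the paper.} The paper avoids this finiteness fact entirely. It anchors not at $a_n$ but at a nearest tile $u_n$ having a \emph{fixed} vertical edge $\omega$ (fixed by pigeonhole) on one of the lines $\{x+m\beta\}\times\R\subseteq\partial E$. Then $p_1(u_1-u_n)\in\beta\Z$ automatically, so $g_n=u_1-u_n\in\Gamma(E)$ and $t=u_1\in T$. Since $u_n-a_n$ and $u_n-b_n$ are bounded and their difference $d_n$ is injective, one of them is injective along a subsequence, and that tile supplies $s_n$. With the finiteness lemma added, your route is also correct and slightly more direct ($s_n$ is always $b_n$), but it costs an extra structural fact about tilings of $X\times\R$.
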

\begin{proof}
    Recall that for $m\in \mathbb{N}$, we write $\mathcal{B}_m$ for the open ball of radius $m$ around $(0,0)$.
    By the assumption that $T$ does not have finite local complexity, there is $m\in \mathbb{N}$ such that
    $$\{\mathcal{B}_m\cap (T-t):t\in T\}$$
    is infinite.
    In particular, as $T$ is $\delta(\Omega)$-separated, there is a sequence $\{\mathcal{B}_m\cap (T-t_n)\}_{n\in \mathbb{N}}$ of finite and pairwise distinct sets.
    Consequently, using compactness and relabelling, we may pick $t'_n\in (\mathcal{B}_m+t_n)\cap T$ for every $n\in \mathbb{N}$ such that $\{t'_n-t_n\}_{n\in \mathbb{N}}$ is a convergent sequence in $\R^2$ that consists of pairwise distinct elements.
    We let $b =\lim_{n\to \infty} t'_n-t_n$ and assume without loss of generality that $t'_n-t_n\not= b$ for every $n\in \mathbb{N}$.
    We now divide into two possible cases, as follows:

    {\bf Case 1.}
    If $\Gamma(E)=\mathbb{R}^2$ or, equivalently $E=\mathbb{R}^2$, then we pick any $t\in T$ and define $g_n=t-t_n$ and $s_n=t'_n$ for every $n\in\mathbb{N}$.
    By Lemma~\ref{lem:BasicWeak} there exists  a subsequence $\{g_{n_k}\}_{k\in \N}$ such that $T+g_{n_k}\to S\in \orbit{T}{E}{\Omega}$, which gives (ii).
    By construction, we have  $$t=t_{n_k}+(t-t_{n_k})=t_{n_k}+g_{n_k}\in T+g_{n_k}$$ for every $k\in \N$, which gives (i). Moreover, as 
    $$s_{n_k}+g_{n_k}=t'_{n_k}+g_{n_k}=(t'_{n_k}-t_{n_k})+t$$ for every $k\in \mathbb{N}$, we have that $\{s_n+g_n\}_{n\in \mathbb{N}}$  consists of pairwise distinct elements that converge (to $b+t$), and thus, since $s_{n_k}+g_{n_k}\in T+g_{n_k}$ for every $k\in \N$ and $T+g_{n_k}\to S$ we have that the limit $s\coloneqq b+t$ is in $S$, and $s\not= s_n+g_n$ for every $n\in \mathbb{N}$ by the assumption, giving (iii).

     {\bf Case 2.}
    If $\R^2\not=\Gamma(E)=\Gamma(X)\times \R$, where $E=X\times \R$, then $\Gamma(X)=\beta\Z$ for some $\beta>0$.
    Let $P\subseteq  \partial(E)$ be of the form $\{x\}\times \R$ for some $x\in \partial (X)\subseteq \R$.  Note that $P+(\beta,0)\Z\subseteq \partial(E)$.

     Let $A \subseteq T$  be the set of all elements $a$ of $T$ with the property that there is $m\in \mathbb{Z}$ and a vertical edge $\omega\in E(\Omega)$ such that $\omega+a\subseteq P+(m\beta,0)$. For every $n\in\mathbb{N}$ we define $a_n\in A$ to be such that 
    $$\|a_n-t_n\|=\min \{\|a-t_n\|\colon a\in A \},$$ and let $m_n\in \Z$ and $\omega_n\in E(\Omega)$ be such that $\omega_n+a_n\subseteq P+(m_n\beta,0)$.
    Since $E(\Omega)$ is finite, by the pigeonhole principle, there is $\omega\in E(\Omega)$ such that the set $\{n\in \N\colon \omega_n=\omega\}$ is infinite. Thus, on passing to a subsequence if necessary, we may assume that $\omega_n=\omega$ for every $n\in \N$.

    Observe that, by construction, both sequences $\{a_n-t_n\}_{n\in \mathbb{N}}$ and $\{a_n-t'_n\}_{n\in \mathbb{N}}$ are bounded.
    By the assumption that the map $n\mapsto t'_n-t_n$ is injective, and possibly passing to a subsequence, we can assume that at least one of the sequences $\{a_n-t_n\}_{n\in \mathbb{N}}$ and $\{a_n-t'_n\}_{n\in \mathbb{N}}$ consists of pairwise distinct elements.
    Without loss of generality, assume that it is $\{a_n-t_n\}_{n\in \mathbb{N}}$.

    Define $t\coloneqq a_1$, $s_n\coloneqq t_n$ and $g_n\coloneqq a_1-a_n$ for every $n\in \mathbb{N}$.
    Note that $g_n= a_1-a_n\in \Gamma(E)$ as $p_1(a_1-a_n)=(m_1-m_n)\beta$.

    By Lemma~\ref{lem:BasicWeak}, there exists a subsequence $\{g_{n_k}\}_{k\in \N}$ such that 
     $(T+g_{n_k})\to S\in \orbit{T}{E}{\Omega}$, giving (ii).  Moreover, for every $n\in \N$ $$t=a_n+g_n\in A+g_n\subseteq T+g_n,$$ thus we have (i).  Finally, we have $$s_n+g_n=t_n-a_n+a_1$$ for every $n\in \mathbb{N}$.
    In particular, $\{s_{n_k}+g_{n_k}\}_{k\in \mathbb{N}}$ is a bounded sequence that consists of pairwise distinct elements.
    Thus, after passing to a further subsequence if necessary, by the definition of $S$ and since $s_n\in T$ for every $n\in \N$, there is $s\in S$ such that $s_{n_k}+g_{n_k}\to s$ and we may assume (on removing one element in the sequence if needed) that $s\not=s_n+g_n$ for every $n\in \mathbb{N}$, giving (iii).
    The claim follows.
\end{proof}

Recall that a point $x\in \partial \overline{D}$ is a proper vertex of $D\subseteq  \R^2$ if for every $\epsilon>0$, the intersection  $(\mathcal{B}_\epsilon + x) \cap \partial \overline{D}$ is \emph{not} a line segment.
We denote by $W(D)$ the set of proper vertices of $D$.

The next lemma is a special case of \cite[Theorem~1]{Kenyon}. We include a short proof for completeness.

\begin{lemma}\label{lm:HalfBalls}
    Let $\Omega$ be a polygonal set, $x\in \mathbb{R}^2$ and $U\subseteq \mathbb{R}^2$ be such that $\Omega\oplus U$ is a packing, $x\in \partial(\overline{\Omega\oplus U})$ and $x\in \partial(\Omega+u)$ for every $u\in U$.
    Suppose that there is $\epsilon_0>0$ and a sequence $U_n\subseteq  \mathbb{R}^2$, $n\in \mathbb{N}$, such that $U_n\to U$, $U_n\cap U=\emptyset$, $\Omega\oplus U_n$ is a packing and
    \begin{equation}\label{eq:UUn}
        (\mathcal{B}_{\epsilon_0}+x)\cap (\Omega\oplus U)=(\mathcal{B}_{\epsilon_0}+x)\cap (\Omega\oplus U_n)
    \end{equation}
for every $n\in \N$.    Then $x\not\in W(\Omega\oplus U)$.
\end{lemma}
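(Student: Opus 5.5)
Suppose, aiming for a contradiction, that $x\in W(\Omega\oplus U)$. The plan is to use local finiteness at $x$ to freeze the combinatorics near $x$, and then let the disjoint approximants $U_n$ produce a second, incompatible set of translates touching $x$.

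First I would localize. Since $\Omega$ is bounded and $\Omega\oplus U$ is a packing, only finitely many translates $\Omega+u$ meet a small ball around $x$. Because $x\in\partial(\Omega+u)$ for every $u\in U$, the set $U$ is finite and nonempty; write $U=\{u_1,\dots,u_k\}$. Shrinking $\epsilon_0$ to some $\epsilon<\epsilon_0$, I may assume that inside $\mathcal{B}_\epsilon+x$ each $\Omega+u_j$ is a finite union of open cones (sectors) with apex $x$. The same then holds for $\Omega\oplus U$, whose local picture is a union of sectors $C$ with apex $x$. Since $x\in\partial(\overline{\Omega\oplus U})$, the complement of $\overline{C}$ near $x$ is nonempty. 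By assumption $x$ is a proper vertex, so $\partial\overline{C}$ is not a straight line through $x$.

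Next I would use convergence. Because $U_n\to U$, for large $n$ the elements of $U_n$ lying within distance about $\diam(\Omega)+\epsilon_0$ of $x$ are exactly $k$ points $u_j^n\to u_j$, with $u_j^n\ne u_j$ since $U_n\cap U=\emptyset$. Far-away elements of $U_n$ do not affect $\mathcal{B}_{\epsilon}+x$. Set $d_j^n=u_j^n-u_j\neq 0$, so $d_j^n\to0$. Condition \eqref{eq:UUn} then says that, inside the ball, $\bigcup_j(\Omega+u_j+d_j^n)$ coincides with $\bigcup_j(\Omega+u_j)$.

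The key step is to derive a contradiction from this near $x$. Look at the union of the boundaries $\partial(\Omega+u_j)$ inside the ball: it is a finite union of rays from $x$. Since the translates are disjoint, each ray of $\partial\overline{C}$ belongs to exactly one tile boundary. Translating $\Omega+u_j$ by $d_j^n\neq 0$ moves the vertex of $\Omega+u_j$ at $x$ to $x+d_j^n$, or, if $x$ is not a vertex of $\Omega+u_j$, slides it along an edge line. Fix a ray $\rho$ of $\partial\overline{C}$ belonging to $\Omega+u_j$. For $\partial\overline{C}$ to be unchanged, $d_j^n$ must be parallel to $\rho$, and the point $x$ must still be covered correctly. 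Any tile $j$ contributing two non-collinear rays of $\partial\overline{C}$ cannot move at all, which contradicts $d_j^n\ne0$. Hence every contributing tile has only collinear rays of $\partial\overline{C}$ at $x$, each lying on one line through $x$ along which it slides. If two such tiles contribute along different directions, I would derive a contradiction from the fact that, after sliding, the apex region near $x$ is either uncovered (a gap in the union) or doubly covered by comparison with \eqref{eq:UUn}. It follows that all rays of $\partial\overline{C}$ lie on a single line through $x$. A local boundary consisting of a single ray is impossible, since a sector bounded by a single ray is not regular open. Two opposite rays form a straight segment through $x$, so $x\notin W$, which is the contradiction.

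The main obstacle is this final combinatorial step: ruling out configurations where several tiles each slide along different edge lines yet jointly preserve $C$. I would handle it by examining the first tile in angular order adjacent to a ray of $\partial\overline{C}$ and tracking where its moved apex $x+d_j^n$ lands: if it lands inside $C$, then some point near $x$ leaves the union, and if outside, a point of the complement becomes covered. Either outcome violates \eqref{eq:UUn} for large $n$.
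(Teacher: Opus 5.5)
\textbf{There is a genuine gap in your final step}, which is the step you flag as the main obstacle.

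Your parallelity claims can be justified for large $n$. A tile adjacent to a ray $\rho$ of $\partial\overline{C}$ must slide parallel to $\rho$; to see this, look near a point of $\rho$ away from $x$, where only that tile is present. Consequently, a tile with two non-collinear boundary rays cannot move.

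The remaining claim is that tiles sliding along different lines force a contradiction. You only sketch this, and the sketch does not work as stated.
\begin{itemize}
\item Tiles for which $x$ is not a vertex are half-disks at $x$, so they have no apex to track.
\item A tile with a vertex at $x$ slides along its own line $L_j$. Its apex therefore stays on $L_j$, and the dichotomy ``inside $C$ / outside $C$'' is not the relevant one. For example, let $C$ be a three-quarter disk, $\Omega+u_1$ the upper half-disk sliding horizontally, and $\Omega+u_2$ the lower-left quadrant sliding vertically. Pushing the apex of $\Omega+u_2$ up into $C$ produces an overlap, not a point leaving the union. Pushing it down leaves it on $\rho$ itself.
\item A thin region vacated by one tile could a priori be refilled by other shifted tiles. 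This includes tiles not adjacent to $\partial\overline{C}$, which your parallelity constraint does not control.
\end{itemize}
Turning this into a proof would need a global accounting of all tiles at $x$, which you have not supplied.

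\textbf{The missing idea} makes all of this unnecessary: analyze the approximants $U_n$ rather than the limit configuration $U$. Since $u_j^n\to u_j$, $u_j^n\neq u_j$, and $x-V(\Omega)$ is finite, for all large $n$ the point $x$ is not a vertex of any $\Omega+u_j^n$. The paper proves this by pigeonhole: if $x\in V(\Omega)+U_n$ for infinitely many $n$, then a fixed $u=x-v$ lies in infinitely many $U_n$, hence in $U$, contradicting $U_n\cap U=\emptyset$.

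The rest follows quickly.
\begin{itemize}
\item Every $\Omega+u$ with $u\in U_n$ and $x\in\overline{\Omega+u}$ is then locally a half-disk at $x$. It cannot contain $x$, by \eqref{eq:UUn} and $x\in\partial(\overline{\Omega\oplus U})$.
\item Disjoint half-disks at $x$ lie on opposite sides of a single line through $x$.
\item There cannot be two of them, since that would put $x$ in the interior of $\overline{\Omega\oplus U_n}$.
\item So near $x$ the boundary of $\overline{\Omega\oplus U_n}$ is a straight segment. By \eqref{eq:UUn} the same holds for $\overline{\Omega\oplus U}$, which gives $x\notin W(\Omega\oplus U)$.
\end{itemize}
This is the paper's argument, and your setup already contains every ingredient it needs.
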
 
\begin{proof}
    First, we show that $x\not \in V(\Omega)+ U_n$ for all large enough $n\in \mathbb{N}$.
    To see this, assume the contrary; i.e., there is an infinite set $Z\subseteq  \mathbb{N}$ such that $x\in V(\Omega)+U_n$ for every $n\in Z$.
    As $V(\Omega)$ is finite, by the pigeonhole principle, there is an infinite set $Z'\subseteq Z$ and $v\in V(\Omega)$ such that  for every $n\in Z'$ we have $v+u_n=x$.
    In particular, $u\coloneqq u_n=u_m$ for every $n,m\in Z'$.
    Consequently, as $U_n\to U$, for every $n\in Z'$ we have $u\in U_n\cap U$, contradicting $U_n\cap U=\emptyset$.
    Thus, on removing finitely many elements of our original sequence $\{U_n\}_{n\in\N}$ if needed, we can assume that $x\not\in V(\Omega)+U_n$ for \emph{every} $n\in \N$.

    Let $b,b'\in \mathcal{B}_{\epsilon_0}+x$ be such that the segments $\omega_{x,b}$ and $\omega_{x,b'}$ are contained in $\partial(\overline{\Omega\oplus U})$. To complete the proof, it suffices to show that $\omega_{x,b}$ and $\omega_{x,b'}$ are parallel.
    
    Observe that by \eqref{eq:UUn}, $\omega_{x,b},\omega_{x,b'}\subseteq \partial(\overline{\Omega\oplus U_n})$ for every $n\in \mathbb{N}$.
    Hence, for every $n\in \N$, there are $u_n,u'_n\in U_n$, $a_n\in \omega_{x,b}$ and $a'_n\in \omega_{x,b'}$, such that 
    \begin{equation}\label{eq:bb'}
        \omega_{x,a_n}\subseteq \omega_{x,b}\cap \partial(\Omega+u_n) \ \text{ and } \ \omega_{x,a'_n}\subseteq  \omega_{x,b'}\cap \partial(\Omega+u'_n).
    \end{equation}
    Since $x\not\in V(\Omega)+U_n$, \eqref{eq:bb'} implies that $m_2((\Omega+u_n)\cap (\Omega+u'_n))$ is positive, for every  $n\in \mathbb{N}$.
    Consequently, due to the packing condition we must have $u_n=u'_n$ for every $n\in \mathbb{N}$.
    Thus, exploiting again the fact that $x\not\in V(\Omega)+u_n$, we must have that $\omega_{x,b}$ and $\omega_{x,b'}$ are parallel.
\end{proof}

\begin{lemma}\label{lem:Diagonalization}
    Let $T$ be a tiling of $E$ by an axis-parallel polygonal set $\Omega$ and $\{g_n\}_{n\in \mathbb{N}}\subseteq  \Gamma(E)$ be such that $T+g_n\to S$.
    Then there is a subsequence $\{n_k\}_{k\in \mathbb{N}}$ such that if $A$ is the collection of $s\in S$ such that $\{n_k\in \mathbb{N}:s \in T + g_{n_k}\}$  is infinite, then
    \begin{itemize}
        \item[(i)] for every $s\in A$  there is $n_s\in \{n_k\}_{k\in \mathbb{N}}$ such that $s \in T + g_{n_k}$ for every $n_k\ge n_s$;
        \item[(ii)]  $W(\Omega\oplus A)$ is empty.
    \end{itemize}
\end{lemma}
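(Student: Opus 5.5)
Part (i) will come from a diagonal extraction, and part (ii) from Lemma~\ref{lm:HalfBalls} applied locally around each candidate proper vertex.

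\emph{Diagonal extraction.} Since $S$ is $\delta(\Omega)$-separated, it is countable; enumerate it as $S=\{s^1,s^2,\dots\}$. Starting from the full sequence, at stage $j$ I refine the current subsequence so that either $s^j\in T+g_{n}$ for all its terms, or $s^j\notin T+g_n$ for all its terms. One of the two index sets is infinite, so this refinement is always possible. The diagonal subsequence $\{n_k\}$ then has the property that, for each $s^j$, membership $s^j\in T+g_{n_k}$ is eventually constant. Hence $A$, the set of $s$ with infinitely many memberships, consists exactly of those $s$ with eventual membership, which is (i).

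\emph{Setup for (ii).} Suppose $x\in W(\Omega\oplus A)$. Fix a small $\epsilon_0$, say smaller than $\delta(\Omega)$ and smaller than the distance from $x$ to any vertex or edge of $\Omega+s$ that does not contain $x$, for $s$ in the finitely many elements of $S$ near $x$. Let $U\subseteq A$ be the finitely many $a\in A$ with $x\in\partial(\Omega+a)$. Let $U'\subseteq S\setminus A$ be the finitely many elements $s$ with $(\Omega+s)\cap(\mathcal{B}_{\epsilon_0}+x)\neq\emptyset$. By weak convergence, $T+g_{n_k}$ near $x$ consists of points converging to $U\cup U'$. By (i) and the definition of $A$, for large $k$ it contains $U$ exactly and contains points converging to $U'$ that are distinct from those limits. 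Since $\Omega\oplus S$ and $\Omega\oplus(T+g_{n_k})$ both tile $E$ and $\Omega\oplus A$ is a common part near $x$, the complement $(\mathcal{B}_{\epsilon_0}+x)\setminus\overline{\Omega\oplus U}$ is covered by translates in $U'_k:=(T+g_{n_k})\setminus A$ near $x$. Moreover $U'_k\to U'$, and $U'_k\cap U'=\emptyset$ after passing to a subsequence, since elements of $S\setminus A$ appear only finitely often.

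\emph{Applying Lemma~\ref{lm:HalfBalls}.} I apply it with $U'$ in place of $U$ and $U'_k$ in place of $U_n$. Because $\Omega\oplus U'$ and $\Omega\oplus U'_k$ both fill exactly $(\mathcal{B}_{\epsilon_0}+x)\setminus\overline{\Omega\oplus U}$ up to null sets, equation \eqref{eq:UUn} holds. If $x\in W(\Omega\oplus A)$, then $x$ is also a proper vertex of the complementary region $\Omega\oplus U'$ inside the ball, since the two regions share the same boundary there. I must restrict $U'$ to those elements whose boundary contains $x$; the others are bounded away from $x$ and are irrelevant after shrinking $\epsilon_0$. The lemma then gives $x\notin W(\Omega\oplus U')$, a contradiction.

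\emph{Main obstacle.} The delicate step is to ensure the hypothesis $x\in\partial(\Omega+u)$ for every $u\in U'$ together with the exact local equality \eqref{eq:UUn}. The difficulty is that elements of $U'_k$ near $x$ might avoid $x$, or the limit set might include translates that touch $x$ only at a point. I would handle this by shrinking $\epsilon_0$ and by passing to a further subsequence on which the combinatorial configuration of $U'_k$ near $x$ stabilizes; this is possible by pigeonhole over vertex and edge incidences.
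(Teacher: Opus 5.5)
Your proposal is correct and follows essentially the same route as the paper: part (i) comes from diagonalization, and part (ii) passes to the non-persistent part $B=S\setminus A$ near $x$ (whose proper vertices coincide with those of $\Omega\oplus A$) and applies Lemma~\ref{lm:HalfBalls} to the limit set $U'$ and its approximants $U'_k$. The obstacle you flag at the end is not actually an obstacle. Lemma~\ref{lm:HalfBalls} only requires $x\in\partial(\Omega+u)$ for the limit elements $u\in U'$, which your choice of $\epsilon_0$ already guarantees; it places no requirement on the approximants. The local identity \eqref{eq:UUn} then follows directly from (i) and the tiling property, so no further pigeonhole stabilization is needed.
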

\begin{proof}
    Part (i) follows from a standard diagonalization argument. 
    
    Note that  part (ii) is equivalent to the assertion that $W(\Omega\oplus B)$ is empty, where $B\coloneqq S\setminus A$. 
    Let $x\in \partial (\overline{\Omega\oplus B})$.
    Define $$X\coloneqq \{u\in A:x\in \partial(\Omega+u)\}$$ and $$U\coloneqq \{u\in B:x\in \partial(\Omega+u)\}.$$
    For each $u\in U$, let $u_{n_k}\in (T+g_{n_k})$ be such that $u_{n_k}\to u$, and set $$U_{n_k}\coloneqq \{u_{n_k}:u\in U\}.$$
    By the definition of $B$ and the fact that $T$ is uniformly discrete, we have that $U_{n}\cap U=\emptyset$ for all large enough $n\in \{n_k\}_{k\in \N}$.
Since  $X$ is a finite set, by (i), there is $\epsilon_0>0$ and $n_X\in \{n_k\}_{k\in \N}$ such that 
    $$\mathcal{B}(x,\epsilon_0)\cap (\Omega\oplus (X\sqcup U))=\mathcal{B}(x,\epsilon_0)\cap (\Omega\oplus (X\sqcup U_{n_k}))$$
    for every $n_k\ge n_X$, which implies that
    $$\mathcal{B}(x,\epsilon_0)\cap (\Omega\oplus U)=\mathcal{B}(x,\epsilon_0)\cap (\Omega\oplus U_{n_k})$$
    for every $n_k\ge n_X$.
By Lemma~\ref{lm:HalfBalls}, we thus have that $x\not\in W(\Omega\oplus U)$.
    By the definition of $U$, we conclude that $x\not\in W(\Omega\oplus B)$, as claimed.
\end{proof}

\begin{lemma}\label{lem:TechnicalClaim}
    Let $\Omega$ be an axis-parallel polygonal set and $\Omega\oplus S$ be a packing.
    Then we have that $W(\Omega\oplus S)=\emptyset$ if and only if $\Omega\oplus S$ is of the form $X\times \R$ or $\R\times X$ for some $X\subseteq  \R$.
\end{lemma}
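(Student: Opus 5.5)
The plan is to prove the two directions separately, with the reverse direction being the substantive one.

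\emph{Easy direction.} Suppose $\Omega\oplus S=X\times\R$ (up to null sets, with $X$ regular open). Then $\partial\overline{\Omega\oplus S}=\partial X\times\R$. Since $\Omega\oplus S$ is a packing of translates of a bounded polygonal set, $\partial X$ is discrete, so near any boundary point $x$ the boundary is a vertical line segment. Hence $x\notin W(\Omega\oplus S)$. The case $\R\times X$ is symmetric.

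\emph{Hard direction.} Assume $W(\Omega\oplus S)=\emptyset$ and set $D=\Omega\oplus S$. If $D$ is empty or $\partial\overline{D}$ is empty, then $D\in\{\emptyset,\R^2\}$ and we are done. Otherwise, the steps are as follows.

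First, since $\Omega$ is axis-parallel and $S$ is uniformly discrete (by the packing), $\partial\overline{D}$ is a locally finite union of axis-parallel segments. Every point of $\partial\overline D$ lies on some edge of some $\Omega+s$.

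Second, take a point $x\in\partial\overline{D}$. Because $x$ is not a proper vertex, there is $\epsilon>0$ such that $(\mathcal B_\epsilon+x)\cap\partial\overline D$ is a line segment through $x$. This segment is axis-parallel, say vertical, and it must cross the whole ball: if it ended inside the ball, its endpoint would be a proper vertex.

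Third, let $L$ be the maximal connected subset of $\partial\overline D$ containing this segment. The set of points of $L$ lying on the line $\{p_1(x)\}\times\R$ is open in that line, by the local picture just described. It is also closed in the line, since $\partial\overline D$ is closed and the local segments cannot turn at a point without creating a proper vertex. Hence $\{p_1(x)\}\times\R\subseteq\partial\overline D$. So every boundary component is a full vertical or full horizontal line.

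Fourth, rule out mixed directions. A full vertical line and a full horizontal line in $\partial\overline D$ would intersect. At the intersection point the boundary would be a cross, not a segment, giving a proper vertex, which is a contradiction. Therefore all boundary lines are parallel, say all vertical. Then $D$ is a union of complementary components of a closed discrete family of vertical lines. Each component is a vertical strip, and $D$ is regular open (as the interior of a closure up to null sets), so $D=X\times\R$ with $X=\{y:(y,0)\in D\}$.

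\emph{Main obstacle.} The main care is needed in the third step: making rigorous that the local segment structure propagates along the entire line. I would argue by taking the supremum of the $t$ for which $\{p_1(x)\}\times[p_2(x),t]\subseteq\partial\overline D$. At that supremum point, the boundary is locally a segment containing a vertical piece, so it is a vertical segment extending past the supremum, which is a contradiction. Local finiteness is used to guarantee that $\partial\overline D$ is locally a finite union of segments, so that the hypothesis ``not a line segment'' really characterizes corners, T-junctions and endpoints.
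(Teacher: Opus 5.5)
Your proposal is correct and follows the same route as the paper. Both arguments observe that $\partial(\overline{\Omega\oplus S})$ is a locally finite union of axis-parallel segments, that the absence of proper vertices forces these segments to be full lines, and that a horizontal and a vertical line cannot both occur, since their crossing point would be a proper vertex. The paper's proof is only a few lines and leaves implicit both the propagation to full lines and the easy direction, which your supremum argument and first paragraph supply. The one point to phrase carefully is that a boundary segment cannot end inside a ball: a free endpoint is impossible because $\overline{\Omega\oplus S}$ is the closure of an open set. Argue this directly rather than by calling the endpoint a proper vertex.
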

\begin{proof}
    As $S$ is $\delta(\Omega)$-separated and $\Omega$ is axis-parallel, we conclude that $\partial(\overline{\Omega\oplus S})$ is a union of line segments that are parallel with the horizontal and vertical axes.
    But since  $\Omega\oplus S$ is a packing and $W(\Omega\oplus S)=\emptyset$, $\partial(\overline{\Omega\oplus S})$ cannot contain both a coset of the horizontal axis and a coset of the vertical axis. The claim then follows.
\end{proof}

\subsection{Proof of Theorem~\ref{thm:MainEarthSplit}} Using the previous lemmas, we are finally ready to conclude the proof of Theorem~\ref{thm:MainEarthSplit}:

\begin{proof}[Proof of Theorem~\ref{thm:MainEarthSplit}]
    Let $T$ be a tiling of $E=X\times \R$ by an axis-parallel polygonal set $\Omega$ that does not have finite local complexity. Then we can find  $t\in T$, $\{s_n\}_{n\in \mathbb{N}}\subseteq  T$, $\{g_n\}_{n\in \mathbb{N}}\subseteq  \Gamma(E)$, $S\in \orbit{T}{E}{\Omega}$ and $s\in S$ that satisfy conditions (i), (ii) and (iii) of Lemma~\ref{pr:LocalizingEarthquake}.
     Additionally,  on passing to a subsequence if necessary, we may assume that the sequence $\{g_n\}_{n\in\N}$ satisfies the conclusion of Lemma~\ref{lem:Diagonalization}.
     Let $S_1$ be the set of all $u\in S$ such that the set $\{n\colon u\in T+g_n\}$ is infinite; and set $S_0=S\setminus S_1$.  Observe that by Lemma~\ref{pr:LocalizingEarthquake}~(i) we have that $t\in S_1$.
    Moreover, by Lemma~\ref{pr:LocalizingEarthquake}~(iii) we have that 
    $$0<\|s-(s_n+g_n)\|=\|(s-g_n)-s_n\|\to 0,$$
    which, as $s_n\in T$ and $T$ is uniformly discrete, implies that $(s-g_n)\not\in T$ for every large enough $n\in \mathbb{N}$. Thus, $s\in S_0$.
    Then, by Lemma~\ref{lem:Diagonalization} we obtain that $W(\Omega\oplus S_1)=\emptyset$. And so, by  Lemma~\ref{lem:TechnicalClaim}, the decomposition $S=S_0\sqcup S_1$   satisfies Theorem~\ref{thm:MainEarthSplit}.

    Now, suppose that $\Gamma(E)$ contains a lattice $\Lambda$. Then there is $R>0$ and a sequence $\{\lambda_n\}_{n\in \N}\subseteq  \Lambda$ such that $\|\lambda_n-g_n\|<R$ for every $n\in \N$. Thus, by compactness (using a standard diagonalization argument), there exists a subsequence $n_k$, $k\in \N$ such that the sequence $\lambda_{n_k}-g_{n_k}$ converges as $k\to \infty$ to some $\gamma\in \Gamma(E)$. Thus, 
    $$T+\lambda_{n_k}=(T+g_{n_k})+(\lambda_{n_k}-g_{n_k})\to S+\gamma.$$ And so, using the partition $S=S_0\sqcup S_1$ as defined above,  we obtain the partition $$(S+\gamma)=(S_0+\gamma)\sqcup (S_1+\gamma)$$ of $S+\gamma$, which satisfies Theorem~\ref{thm:MainEarthSplit}.
\end{proof}

\section{Finite horizontal complexity}\label{sec:finhorcomp}

In this section we discuss the structural strength of the assumption of finite horizontal complexity (Definition~\ref{def:finhorcom}).
We show the following.

\begin{theorem}\label{thm:FinLocalComplexityTrichotomy}
    Let $T$ be a tiling of $\R^2$ by an axis-parallel polygonal set $\Omega$ that does not have finite local complexity.
    Then, after possibly swapping the vertical and horizontal axes, there is $S\in \orbit{T}{\R^2}{\Omega}$ such that $S$ has finite horizontal complexity and the vertical earthquake decomposition of $S$ is non-trivial.
\end{theorem}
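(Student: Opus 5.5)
The plan is to start from Theorem~\ref{thm:MainEarthSplit} and then improve the limit tiling until its horizontal structure becomes tame. Applied with $E=\R^2$, the theorem gives, after possibly swapping the axes, some $S\in\orbit{T}{\R^2}{\Omega}$ and a decomposition $S=S_0\sqcup S_1$ into nonempty sets such that $\Omega\oplus S_0$ and $\Omega\oplus S_1$ are $\R(0,1)$-invariant. Each plate in $\quakeall{S}{\Omega}$ is a $\sim_{\Omega,\Uparrow}$-class. By Lemma~\ref{lem:BasicQuake}(ii) it cannot straddle $S_0$ and $S_1$, since the intersection of a class with a vertically invariant union is again vertically invariant. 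Hence $S$ has at least two plates, and its vertical earthquake decomposition is already non-trivial. Moreover, every closed limit of vertical translates of $S$ lies in $\orbit{T}{\R^2}{\Omega}$ and keeps a non-trivial vertical decomposition, because full vertical lines in $\partial(\Omega\oplus S_i)$ survive vertical translation and weak limits. What remains is to arrange finite horizontal complexity, i.e.\ that $p_1(S)$ is discrete with finite local complexity.

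Next I would record the vertical full lines $\mathcal L(S)=\{x:\{x\}\times\R\subseteq\partial(\Omega\oplus P)\text{ for some }P\in\quakeall{S}{\Omega}\}$. I would show that $\mathcal L(S)$ is nonempty and meets every window of length bounded in terms of $\diam(\Omega)$. Indeed, between two consecutive lines, say at $a<b$, the tiles of one plate tile the strip $(a,b)\times\R$, and minimality of plates (Lemma~\ref{lem:BasicQuake}(ii)) makes each such strip connected through proper horizontal intersections. My plan is then to analyse a single strip $(a,b)\times\R$ tiled by one plate. If $p_1$ of that plate is finite (discrete, FLC) in every such strip, and only finitely many strip configurations occur up to translation, then $p_1(S)$ is discrete with finite local complexity and we are done.

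If instead some strip has infinitely many distinct $x$-coordinates of tiles in a bounded window (or infinitely many local patterns of $p_1(S)$), I would redo the argument of Lemma~\ref{pr:LocalizingEarthquake} and Lemma~\ref{lem:Diagonalization}, but using only translations $g_n\in\R(0,1)$. This is exactly Case~2 of Lemma~\ref{pr:LocalizingEarthquake} with the strip structure playing the role of $X$. It yields a new limit $S'$ and a split whose union has no proper vertices, so by Lemma~\ref{lem:TechnicalClaim} it is a vertical or horizontal slab. A horizontal full line would have to cross the existing vertical full lines at a proper vertex, so the split must be vertical, and $S'$ acquires a new vertical full line in the bad window.

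To make this terminate, I would use a compactness and maximality argument. Consider the closed, translation-invariant subset of the orbit closure consisting of tilings whose plate boundaries contain a prescribed family of vertical lines. Choose $S$ maximizing the set of vertical full lines in a suitable sense: for instance, take a minimal element (by Zorn, using weak closedness) among limits in which the line structure is as fine as possible, or maximize the number of full lines meeting $[0,R]\times\R$ for each $R$. Each bad window then produces a strictly finer limit, which is a contradiction.

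The main obstacle is this termination step. Full lines occur only at $x$-coordinates in $p_1(V(\Omega))+p_1(S)$, but these need not be separated, so there is no a priori density bound on $\mathcal L$. I would first try to show that consecutive full lines are at distance at least $\min$ of the positive widths $|p_1(\omega)|$ over horizontal edges $\omega\in E(\Omega)$. The reason is that a strip narrower than every horizontal edge cannot contain a whole tile, and the tiles there must still cover it. This separation would give the density bound and make the maximality argument work. Failing that, I would fall back on a sliding argument within a strip, using Corollary~\ref{cor:BtimesZ} after a rational approximation, to show that a strip tiled by a single plate has $p_1$ of its translations finite.
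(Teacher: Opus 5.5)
Your opening step matches the paper. Theorem~\ref{thm:MainEarthSplit} yields, after a possible swap of axes, a limit $S$ with a non-trivial vertical earthquake decomposition, and your argument that no plate straddles $S_0$ and $S_1$ is fine. The gap is in everything after that. The claim that $\mathcal L(S)$ meets every window of bounded length is not a lemma toward the theorem; for this $S$ it is essentially the theorem itself. By Lemma~\ref{lem:BoundedImpliesFiniteHorizontal}, bounded gaps between consecutive boundary lines already give finite horizontal complexity. By Lemma~\ref{lem:CharInfComplex}, for a non-trivial decomposition, failure of finite horizontal complexity is \emph{equivalent} to having a single non-column plate $P$ with $[x-R,x+R]\subseteq p_1(\Omega\oplus P)$ for every $R$, i.e.\ arbitrarily wide line-free strips, together with a non-empty, non-periodic column region. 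Connectedness of a strip through proper horizontal intersections says nothing about its width. The separation you propose (a lower bound on the distance between consecutive lines, which is true and noted after the definition of $\mathcal L(T)$) bounds the number of lines per window, not the gaps between them. The fallback via Corollary~\ref{cor:BtimesZ} at best reproduces Lemma~\ref{lem:ColumnOrUnbounded}, which again gives no control on widths.

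The repair-and-maximize step does not close this gap, for three reasons.

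First, $p_1(S)$ is discrete by Lemma~\ref{lem:Discrete}, so its infinitely many local patterns sit at different horizontal positions. Limits along $g_n\in\R(0,1)$ satisfy $p_1(S')\subseteq p_1(S)$ and cannot bring these patterns into a common window. Case~2 of Lemma~\ref{pr:LocalizingEarthquake} works only because $X$ is $\beta$-periodic and the translations range over $\beta\Z\times\R$, which is exactly what fails here.

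Second, if you allow horizontal translations, the vertical lines near the bad windows need not survive in the limit, since the line-free strips grow without bound.

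Third, your reason that the new split must be vertical is invalid. Horizontal and vertical full lines coexist without creating proper vertices of $\Omega\oplus S_1'$; think of the unit square tiled by $\Z^2$. In fact Lemma~\ref{lem:Options} shows that localizing at the source of infinite horizontal complexity produces a \emph{horizontal} split. That source is horizontal edges $\omega$ and $\omega'+s_n-t_n$ that properly intersect with infinitely many distinct overlaps (Lemma~\ref{lem:CharInfComplex}(4)). So no strictly finer vertical structure appears, and the maximality argument has nothing to feed on.

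The missing idea is to accept this horizontal earthquake, swap axes, and then rule out that the new limit also lacks finite vertical complexity. The paper does this in four steps:
\begin{itemize}
\item That failure would produce a horizontal column along stabilizing translations (Lemma~\ref{lem:Options}(b)).
\item Passing to good rational approximations, where vertical columns are periodic and hence persist in limits, gives a tiling by $\Omega^\varphi$ with both vertical and horizontal columns (Lemma~\ref{lem:SimultanousColumns}).
\item This forces $\Omega$ to be a product set (Lemmas~\ref{lem:ProductOne} and~\ref{lem:ProductThree}).
\item For product sets the column region is periodic (Lemma~\ref{lem:ProductTwo}), contradicting Lemma~\ref{lem:CharInfComplex}(2).
\end{itemize}
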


The proof of Theorem~\ref{thm:FinLocalComplexityTrichotomy} consists of four steps.
\begin{itemize}
    \item In the first step we provide several equivalent characterizations of the situation when $T$ admits a non-trivial earthquake decomposition but it does not have finite horizontal complexity.
    In particular, we show that in this case we must have $T_c\not=\emptyset$ and $|\quake{T}{\Omega}|=1$.
\item In the second step, we use one of these equivalent conditions to show that there is $S\in \orbit{T}{\R^2}{\Omega}$ that admits a non-trivial \emph{horizontal} earthquake decomposition.
Write $S'$ for the tiling that one gets from $S$ after swapping the horizontal and vertical axes.
\item Now we have that if $S'$ does \emph{not} satisfy  Theorem~\ref{thm:FinLocalComplexityTrichotomy} then, by step one, we must have $(S')_c\not=\emptyset$.
In step three, we show that in this case we must have that $\Omega$ is a product set.
\item Finally, in step four, we show that this is a contradiction as if $\Omega$ is a product set, then $\Omega\oplus T_c$ is $((\beta,0)\mathbb{Z})$-periodic, for some $\beta>0$, which contradicts one of the characterization in step one and thus, contradicts our assumption.
Consequently, this shows that either $T$, or $S'$ satisfies Theorem~\ref{thm:FinLocalComplexityTrichotomy} as required.
\end{itemize}

\subsection{Characterization of finite horizontal complexity}

Let $T$ be a tiling of $\R^2$ by an axis-parallel polygonal set $\Omega$. Observe that if $x\in \R$ is such that $\{x\}\times \R\subseteq  E(\Omega)+T$,  then for every $v\in V(\Omega)$ and $t\in T$ such that $p_1(v+t)> x$ there is $m\geq 1$ and a sequence of vertices $\{a_i\}_{i=1}^{m},\{b_i\}_{i=1}^{m}\subseteq  V(\Omega)$ with $p_1(b_i)> p_1(a_i)$ for every $1\le i\le m$ and 
    \begin{equation}\label{eq:seqvertex}
        p_1(v+t)-x=\sum_{i=1}^{m} p_1(b_i)-p_1(a_i).
    \end{equation}

\begin{lemma}\label{lem:Discrete}
    Let $T$ be a tiling of $\R^2$ by an axis-parallel polygonal set $\Omega$ such that the earthquake decomposition of $T$ is non-trivial.
    Then $p_1(T)$ is discrete.    
\end{lemma}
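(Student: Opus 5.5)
The plan is to produce one full vertical line contained in $E(\Omega)+T$, and then use the observation \eqref{eq:seqvertex} to show that, starting from that line, the $x$-coordinates of vertices can only take finitely many values in any bounded window.

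\emph{Step 1: a full vertical line.} Since the earthquake decomposition is non-trivial, $T$ is not a single tectonic plate. So there is $P\in\quakeall{T}{\Omega}$ with $\emptyset\neq P\subsetneq T$. By Lemma~\ref{lem:BasicQuake}(i), $\Omega\oplus P=X_P\times\R$ for a regular open $X_P\subseteq\R$.
\begin{itemize}
\item $X_P\neq\R$: otherwise $\Omega\oplus P$ would cover $\R^2$ and the packing condition would force $P=T$.
\item $X_P\neq\emptyset$: because $P\neq\emptyset$.
\end{itemize}
Hence $X_P$ has a boundary point $x$. The line $\{x\}\times\R$ lies in $\partial(\Omega\oplus P)$, and is therefore covered by translated vertical edges of tiles, i.e.\ $\{x\}\times\R\subseteq E(\Omega)+T$.

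\emph{Step 2: finiteness in bounded windows.} Set
\[
D=\{p_1(b)-p_1(a): a,b\in V(\Omega),\ p_1(b)>p_1(a)\},
\]
a finite set of positive numbers, and let $\delta=\min D>0$. Fix $L>0$. By \eqref{eq:seqvertex}, every vertex $v+t$ with $x<p_1(v+t)\le x+L$ satisfies $p_1(v+t)-x=\sum_{i=1}^m d_i$ with $d_i\in D$. Since each $d_i\ge\delta$, we have $m\le L/\delta$. Sums of at most $L/\delta$ elements of the finite set $D$ form a finite set. Hence
\[
p_1(V(\Omega)+T)\cap(x,x+L]
\]
is finite.

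The symmetric version of the observation handles vertices to the left of the line: walk leftwards from $v+t$ to the line, giving $x-p_1(v+t)$ as a sum of elements of $D$. So $p_1(V(\Omega)+T)\cap[x-L,x)$ is finite as well. As $L$ is arbitrary, $p_1(V(\Omega)+T)$ is discrete.

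\emph{Step 3: conclusion.} For a fixed $v\in V(\Omega)$ we have $p_1(T)=p_1(v+T)-p_1(v)\subseteq p_1(V(\Omega)+T)-p_1(v)$. A translate of a discrete set is discrete, so $p_1(T)$ is discrete.

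The only point needing care is the observation \eqref{eq:seqvertex} itself, which the paper states just before the lemma without proof and which I take as given. To justify it, one runs a horizontal segment at a generic height $y$ near $p_2(v+t)$ from $(x,y)$ to the vertical edge through $v+t$. This segment crosses finitely many tiles, and each crossing is a horizontal chord between two vertical edges of a single tile, of length $p_1(b_i)-p_1(a_i)\in D$. Consecutive chords abut, since the tiling covers $\R^2$ and the endpoints lie on full vertical edges, so the chord lengths add up to $p_1(v+t)-x$. The genericity of $y$ is what avoids horizontal edges and vertices along the way.
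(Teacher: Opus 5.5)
Your proof is correct and follows the paper's argument. Both take a full vertical boundary line of some plate and use \eqref{eq:seqvertex} to write horizontal distances from it as sums of a bounded number of elements of the finite set of positive vertex differences. The paper concludes by showing convergent sequences in $p_1(T)$ are eventually constant, which is equivalent to your finiteness in bounded windows, and your added justifications for the existence of the boundary line and for \eqref{eq:seqvertex} are sound.
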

\begin{proof}
 As $T$ has a non-trivial earthquake decomposition, there is $x\in \R$ such that $\{x\}\times \R\subseteq  \partial (\overline{\Omega\oplus P})$ for some $P\in \quakeall{T}{\Omega}$.
 Note that to prove that $p_1(T)$ is discrete it suffices to show that every $\{x_\ell\}_{\ell\in \mathbb{N}}\subseteq  p_1(T)$ that converges in $\R$ has a constant tail. 
 
 Let  $\{x_\ell\}_{\ell\in \mathbb{N}}\subseteq  p_1(T)$ be a convergent sequence, and let $z\coloneqq \lim_{\ell\to \infty} x_\ell$. Without loss of generality, we may assume that there is $v\in V(\Omega)$ such that  $x\le p_1(v)+x_\ell$ for every $\ell\in \mathbb{N}$. 
 
 For every $\ell\in \N$ such that $x\neq  p_1(v)+x_\ell$, by \eqref{eq:seqvertex}, we can find finite sequences of vertices $\{a^\ell_i\}_{i=1}^{m_\ell},\{b^\ell_i\}_{i=1}^{m_\ell}\subseteq  V(\Omega)$ such that $p_1(b^\ell_i)>p_1(a^\ell_i)$ for every $1\le i\le m_\ell$ and 
    \begin{equation}\label{eq:positiveSum}
        p_1(v)+x_\ell-x=\sum_{i=1}^{m_\ell} p_1(b^\ell_i)-p_1(a^\ell_i).
    \end{equation}
   If $\ell\in \N$ such that $x =  p_1(v)+x_\ell$, then we set $m_\ell=1$ and pick \emph{identical} vertices $a_1^\ell,b_1^\ell\in V(\Omega)$, so \eqref{eq:positiveSum} is satisfied in this case as well. 
   
    We claim that the sequence $\{m_\ell\}_{\ell\in\N}$ is uniformly bounded.
    Indeed, let $R>0$ be such that  $$p_1(v)+z-x+\diam(\Omega)\le R.$$ Then,
    as $x_\ell\to z$, there is $\ell_0\in \mathbb{N}$ such that $p_1(v)+x_\ell-x\le R$ for every $\ell\geq \ell_0$.
    Unless $x =  p_1(v)+x_\ell$, the right-hand side of \eqref{eq:positiveSum} is a sum of strictly positive numbers  picked from the finite  set $p_1(V(\Omega)-V(\Omega))$, and the claim follows.
    
    Finally, since $\{m_\ell\}_{\ell\in\N}$ is uniformly bounded and $\{x_\ell\}_{\ell\in \N}$ converges, we have that $x_\ell=z$ for every large enough $\ell\in \N$, as needed.
\end{proof}

\begin{lemma}\label{lem:ColumnOrUnbounded}
    Let $T$ be a tiling of $\R^2$ by an axis-parallel polygonal set $\Omega$ and $P\in \quakeall{T}{\Omega}$ be such that there is $R>0$ with the property that
    $$\text{ either } \ p_1(\Omega\oplus P)\cap (-\infty,-R)=\emptyset \ \text{ or } \ p_1(\Omega\oplus P)\cap (R,\infty)=\emptyset.$$
    Then $P\subseteq  T_c$. 
\end{lemma}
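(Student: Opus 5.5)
The plan is to show that the leftmost part of $P$ is already vertically invariant, so that the minimality in Lemma~\ref{lem:BasicQuake}~(ii) forces $P$ to be a single vertical column. By the symmetry $x\mapsto -x$ we may assume $p_1(\Omega\oplus P)\subseteq[-R,\infty)$. Then $X_P\neq\R$, so $P\neq T$ and the earthquake decomposition of $T$ is non-trivial. Lemma~\ref{lem:Discrete} then gives that $p_1(T)$ is discrete.

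Set $x_0=\inf X_P$ and $c=x_0-\min p_1(\overline\Omega)$. Let $C=\{s\in P: p_1(s)=c\}$; this is nonempty, since $\{x_0\}\times\R$ lies in the boundary of $\Omega\oplus P$ and is covered by left edges of tiles of $P$. Because $p_1(T)$ is discrete, there is $\delta>0$ such that the strip $(x_0,x_0+\delta)\times\R$ is covered only by tiles of $C$. All tiles of $C$ lie on the single line $\{c\}\times\R$. The goal is to prove $\Omega\oplus C=(p_1(\Omega)+c)\times\R$. Lemma~\ref{lem:BasicQuake}~(ii) then gives $C=P$, so $P$ is a vertical column and $P\subseteq T_c$.

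To prove that $C$ fills its column, I would pass to the discrete setting and use the generating-function argument of Remark~\ref{rem:GPTproof}, adapted to a half-line.

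\textbf{Reduction to the discrete case.} The set $p_1(V(\Omega)+T)$ is discrete, and by the identity \eqref{eq:seqvertex} its gaps come from a finite set of positive values. So, restricted to $P$, the set has finite local complexity in the horizontal direction. This is exactly the input used in Lemma~\ref{lem:ApproximationsOne} and Lemma~\ref{lem:PreserveEarthquake}, so a fine approximation $\varphi$ should make $\Omega^\varphi$ rational. It should also keep $P^\varphi$ a plate and $C^\varphi$ its leftmost column, and it preserves the property of being a column (Lemma~\ref{lem:ApproxKeepsCosetPieces}). The remaining step is to slide vertically via \cite[Lemma~2.7]{rational} and rescale, so that $T\subseteq\Z^2$. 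This would give a discrete tiling $A$ by $F\subseteq\Z^2$ of $B\times\Z$, where $B\subseteq\Z$ is bounded below and $A$ is a discrete plate.

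\textbf{Generating-function step.} By \cite[Corollary~3.5]{GreenfeldTaoStructureZd} (or the plate analogue used in Section~\ref{sec:discrete}), $A$ is $(0,q)$-periodic. Form the one-sided power series
\[
f(z)=\sum_{(x,y)\in F}z^x,\qquad c(z)=\sum_{\substack{(x,y)\in A\\0\le y<q}}z^x,\qquad b(z)=\sum_{x\in B}z^x ,
\]
which satisfy $f(z)c(z)=q\,b(z)$ in $\Z[[z]]$ after a shift. Gauss's lemma holds in $\Z[[z]]$: the content of a product is the product of the contents. Since $b$ has $0/1$ coefficients, this yields
\[
\frac{f}{d}\cdot\frac{c}{e}=b ,
\]
with both factors having nonnegative integer coefficients, and hence $p_1(F)\oplus p_1(A)=B$. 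So every vertical line of $B\times\Z$ is met by exactly one $x$-translate. Each set $A\cap(\{a\}\times\Z)$ is then a column: it covers $(p_1(F)+a)\times\Z$ without interference from other translates. By minimality the plate is a single column, and transporting this back through $\varphi$ gives the claim.

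\textbf{Main obstacle.} I expect the reduction step to be the hardest part. Lemma~\ref{lem:PreserveEarthquake} is stated under finite horizontal complexity of the whole tiling $T$, whereas here only discreteness of $p_1(T)$ is available a priori. The first thing I would try is to apply the approximation results to the restricted tiling of the half-infinite region $X_P\times\R$ in place of $T$. On the side bounded by $x_0$, the gap structure of $p_1(P)$ should be finite, since the gaps are positive sums of elements of $p_1(V(\Omega)-V(\Omega))$. If this restriction fails to give finite local complexity, the fallback is to run the Gauss-lemma argument directly with real exponents. That would use the $\Q$-basis $(\beta_j)$ and treat $f,c,b$ as generalized power series in several variables, with exponents ordered compatibly with the order on $\R$.
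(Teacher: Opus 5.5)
\textbf{Verdict: there is a genuine gap, in transferring the discrete conclusion from $\Omega^\varphi$ back to $\Omega$.}

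\textbf{What is correct.} Your set-up is sound: $C$ on the line $\{c\}\times\R$, the strip $(x_0,x_0+\delta)\times\R$ covered by $C$ alone, and the reduction to ``$C$ fills its column'' via Lemma~\ref{lem:BasicQuake}~(ii). Your discrete core is also sound for $B$ bounded below. Gauss's lemma holds in $\Z[[z]]$ by the least-index argument. A uniform vertical period comes from Proposition~\ref{pr:EarthquakeStructure} and Lemma~\ref{lem:PeriodicNonParallel}, because a nonempty $M\Z^2$-periodic piece cannot lie in a region bounded on the left. So this part is a legitimate substitute for the paper's use of Lemma~\ref{lem:PeriodicityAproximation}.

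\textbf{The forward direction is not the obstacle.} It needs no complexity assumption. Any good $\varphi$ gives $T^\varphi\in\Tile(\Omega^\varphi;\R^2)$ by Theorem~\ref{thm:GoodApproximation}. By Lemma~\ref{lem:ApproxKeepsCosetPieces}, $\Omega^\varphi\oplus P^\varphi$ is vertically invariant and bounded on the left, so $P^\varphi$ is a union of plates, and your discrete argument applies to it. Since \eqref{eq:seqvertex} writes $p_1(v+s)-x_0$ as a sum of positive vertex differences, and $\varphi$ preserves these signs, you also get $C^\varphi=P^\varphi\cap(\{\varphi(c)\}\times\R)$. Your attempt to obtain finite horizontal complexity for Lemma~\ref{lem:PreserveEarthquake} is therefore unnecessary. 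It is also unjustified: \eqref{eq:seqvertex} controls positions, not gaps between consecutive positions. For example, the consecutive gaps of $\{a+b\sqrt2:a,b\in\N\}$ take infinitely many values.

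\textbf{What is missing is the way back.} From ``$C^\varphi$ is a column for $\Omega^\varphi$'' you say that transporting back through $\varphi$ gives the claim. But the implication ``$S^\varphi$ column $\Rightarrow$ $S$ column'' is exactly the direction of Theorem~\ref{thm:GoodFiniteLocalComplexity}~(8) that uses finite local complexity. Lemma~\ref{lem:ApproxKeepsCosetPieces} only applies to sets already known to cover a vertically invariant region, which is what you are trying to prove for $C$. The Hahn-series fallback does not help either. Without an a priori vertical period for $P$ there is no integer-coefficient series $c$. Passing to densities gives real coefficients, which destroys the integrality that Gauss's lemma relies on.

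\textbf{How the paper closes it.} The paper transfers a property of the \emph{tile} rather than of the tiling.
\begin{enumerate}
\item If $C^\varphi$ is a column, then all generic vertical slices of $\Omega^\varphi$ have the same length.
\item This holds for every sufficiently fine $\varphi$, so Lemma~\ref{lm:TechnicalForFiniteSets} forces all generic vertical slices of $\Omega$ to have a common length $m$.
\item On each vertical line, the strip covered by $C$ alone, together with Lagarias--Wang, gives that $p_2(C)$ has density $1/m$.
\item Hence every slice packing $\Omega_w+p_2(C)$ has density one and therefore covers the line.
\end{enumerate}
Adding this step would complete your argument. Alternatively, you could show directly that an overlap of some $t\in P\setminus C$ with the column strip survives under fine $\varphi$. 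Along a generic horizontal line, the relevant offsets from $x_0$ are sums of boundedly many positive vertex differences, so they lie in a finite set whose order relations $\varphi$ preserves.
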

\begin{proof}
    Let $P\in \quakeall{T}{\Omega}$ and assume, without loss of generality,that $$p_1(\Omega\oplus P)\cap (-\infty,-R)=\emptyset.$$
    Then there is $x\in p_1(P)$ such that $x\le p_1(s)$ for every $s\in P$, and $y\in \R$ such that $\{y\}\times \R\subseteq  \partial(\overline{\Omega\oplus P})$ and $y\le z$ for every $z\in p_1(\Omega\oplus P)$. To complete the proof, it suffices to show that $S=P\cap (\{x\}\times \R)$ is a vertical column, which also implies that $P=S$.
    
    We claim that there are $\epsilon,\eta>0$ such that for every good $\epsilon$-approximation $\varphi$ for $(\Omega,T)$ and every $s\in P$ we have that if $x<p_1(s)$, then $\varphi(x)+\eta<p_1(s^\varphi)$.
    Indeed, this follows from Lemma~\ref{lm:TechnicalForFiniteSets} combined with \eqref{eq:seqvertex}, as 
    $$p_1(s)-x=\sum_{i=1}^m p_1(b_i)-p_1(a_i)$$
    for a finite sequences of vertices $\{a_i\}_{i=1}^m,\{b_i\}_{i=1}^m\subseteq  V(\Omega)$ with $p_1(b_i)>p_1(a_i)$ for every $1\le i\le m$.

    Let $\varphi$ be such an approximation.
    By Lemma~\ref{lem:ApproxKeepsCosetPieces}, we have that $\Omega^\varphi\oplus P^\varphi$ is $\R(0,1)$-invariant, $\{\varphi(y)\}\times \R\subseteq \partial(\overline{\Omega^\varphi\oplus P^\varphi})$ and $\varphi(y)\le z$ for every $z\in p_1(\Omega^\varphi\oplus P^\varphi)$.
    By Lemma~\ref{lem:PeriodicityAproximation}, we must have that $P^\varphi$ is a union of vertical columns of $T^\varphi$.
    In particular, $S^\varphi$ is a vertical column of $P^\varphi$ as
    $$S^\varphi=P^\varphi\cap (\{\varphi(x)\}\times \R)$$
    is non-empty and $\varphi(x)+\eta<p_1(s^\varphi)$ for every $s\in P$ such that $x<p_1(s)$.

    The fact that $S^\varphi$ is a vertical column implies that the vertical slices
    $$\Omega^\varphi\cap (\{w\}\times \R)$$
    have the same measure for every $w\in p_1(\Omega^\varphi)\setminus p_1(V(\Omega^\varphi))$.
    As this holds true for every good $\epsilon$-approximation $\varphi$, we conclude that all the vertical slices
    $$\Omega \cap (\{w\}\times \R)$$
    have the same measure for every $w \in p_1(\Omega)\setminus p_1(V(\Omega))$.
    As $\Omega$ is axis-parallel polygonal set, $P\in \quakeall{T}{\Omega}$ and $x+\eta\le p_1(s)$ for every $s\in P\setminus S$, there is $y<y'\in \R$ such that
    $$(y,y')\times \R\subseteq  \Omega\oplus S.$$
    Combined with the fact that the vertical slices of $\Omega$ have the same measure, we conclude by a standard volume argument that $S$ is a vertical column of $T$ as desired.   
\end{proof}

\begin{lemma}[There are finitely many plates that are not vertical columns]\label{lem:FinNumberPlatesSecondVersion}
    Let $T$ be a tiling of $\R^2$ by an axis-parallel polygonal set $\Omega$.
    Then for every $P\in \quake{T}{\Omega}$, the intersection of $p_1(\Omega\oplus P)$ with every interval of length at least $8\diam(p_1(\Omega))$ has positive measure.
\end{lemma}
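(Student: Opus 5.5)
The plan is a proof by contradiction that reduces everything to the minimality of tectonic plates (Lemma~\ref{lem:BasicQuake}~(ii)) and then to Lemma~\ref{lem:ColumnOrUnbounded}. Set $D=\diam(p_1(\Omega))$. Fix $P\in\quake{T}{\Omega}$ and suppose there is an interval $I=[c,c+L]$ with $L\ge 8D$ such that $m_1(p_1(\Omega\oplus P)\cap I)=0$. By Lemma~\ref{lem:BasicQuake}~(i) we have $\Omega\oplus P=X_P\times\R$ with $X_P$ regular open, so $p_1(\Omega\oplus P)=X_P$ up to null sets. Hence the open set $X_P$ is disjoint from the interior of $I$.

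\emph{Step 1 (splitting $P$).} For every $s\in P$, the set $p_1(\Omega+s)$ is open and contained in $X_P$, so it misses $\interior(I)$. Its diameter is $D<L$, so it cannot contain points both $\le c$ and $\ge c+L$. Therefore each $s\in P$ falls into exactly one of two classes. Let $P_L$ consist of those $s$ with $p_1(\Omega+s)\subseteq(-\infty,c)$, and let $P_R$ consist of those with $p_1(\Omega+s)\subseteq(c+L,\infty)$. Then $\Omega\oplus P_L=(X_P\cap(-\infty,c))\times\R$ and $\Omega\oplus P_R=(X_P\cap(c+L,\infty))\times\R$, up to null sets. Both are therefore of the form required in Lemma~\ref{lem:BasicQuake}~(i), with regular open bases, since intersecting a regular open set with an open half-line away from the gap preserves regularity.

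\emph{Step 2 (minimality).} By Lemma~\ref{lem:BasicQuake}~(ii), $P_L\in\{\emptyset,P\}$. Since $P=P_L\sqcup P_R$ and $P\neq\emptyset$, exactly one of $P_L,P_R$ equals $P$. Say $P=P_R$. Then $p_1(\Omega\oplus P)\cap(-\infty,c)=\emptyset$. Taking $R>|c|$, this gives $p_1(\Omega\oplus P)\cap(-\infty,-R)=\emptyset$; the case $P=P_L$ is symmetric.

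\emph{Step 3 (conclusion).} Lemma~\ref{lem:ColumnOrUnbounded} now yields $P\subseteq T_c$. This contradicts $P\in\quake{T}{\Omega}$, i.e.\ $P\subseteq T\setminus T_c$. Hence every interval of length at least $8D$ meets $p_1(\Omega\oplus P)$ in positive measure.

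I expect the only delicate point to be the measure-theoretic bookkeeping in Step~1. This means justifying that a null intersection with $I$ forces the open sets $p_1(\Omega+s)$ to avoid $\interior(I)$, and checking that $P_L,P_R$ genuinely satisfy property~(i) of Lemma~\ref{lem:BasicQuake}, so that its minimality clause applies. The constant $8$ is far more than needed; any $L>D$ works with this argument. The substantive input is entirely contained in Lemma~\ref{lem:ColumnOrUnbounded}.
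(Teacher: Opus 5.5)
Your proof is correct and is essentially the paper's argument: the paper also splits the plate into the translates lying to the left and to the right of an empty vertical strip, and combines Lemma~\ref{lem:BasicQuake}~(ii) with Lemma~\ref{lem:ColumnOrUnbounded}, only in the opposite order (it uses Lemma~\ref{lem:ColumnOrUnbounded} to make both halves nonempty and then minimality for the contradiction). The only cosmetic difference is how the empty strip is obtained: the paper derives it from a quantitative bound, $m_1\bigl(p_1(\Omega\oplus P)\cap(-2\alpha,2\alpha)\bigr)>\tfrac12 m_1(p_1(\Omega))$ with $\alpha=2\diam(p_1(\Omega))$, whereas you get it directly from the openness of the projections, which suffices for the statement as given.
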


\begin{proof}
        Let $\alpha\coloneqq 2\diam(p_1(\Omega))$. Note that, by the translation invariance of tilings and the $\R(0,1)$-invariance of $\Omega\oplus P$ for every $P\in \quake{T}{\Omega}$, it suffices to show  that
        \begin{equation}\label{eq:BigIntersection}
            m_1\left(p_1(\Omega\oplus P)\cap (-2\alpha,2\alpha)\right)>1/2 m_1(p_1(\Omega))>0
        \end{equation}
        for every $P\in \quake{T}{\Omega}$. Suppose that this is not the case, i.e.,  there exists $P_0\in \quake{T}{\Omega}$ such that $$m_1(p_1(\Omega\oplus P_0)\cap (-2\alpha,2\alpha))\leq 1/2 m_1(p_1(\Omega)).$$ Then, by the choice of $\alpha$, we have that the intersection $p_1(\Omega\oplus P_0)\cap (-\alpha,\alpha)$ is empty. Then, we have a partition $P_0=S_+\sqcup S_-$ of $P_0$, where 
        $$S_+\coloneqq \{t\in P_0:(\Omega+t)\subseteq  [\alpha,\infty)\times \R\}$$
        and 
        $$S_-\coloneqq \{t\in P_0:(\Omega+t)\subseteq  (-\infty,-\alpha]\times \R \}.$$
        By Lemma~\ref{lem:ColumnOrUnbounded}, as $P_0$ is nonempty, neither $S_+$ nor $S_-$ is empty. However, by the choice of $\alpha$ and Lemma~\ref{lem:BasicQuake}~(i) we have that $\Omega\oplus S_i$ is $\R(0,1)$-invariant for every $i\in \{+,-\}$, which contradicts Lemma~\ref{lem:BasicQuake}~(ii).
        This gives \eqref{eq:BigIntersection}, which concludes the proof.
\end{proof}

As an immediate corollary of Lemma~\ref{lem:FinNumberPlatesSecondVersion}, we obtain:

\begin{corollary}\label{cor:FinNumberPlates}
    Let $T$ be a tiling of $\R^2$ by an axis-parallel polygonal set $\Omega$.
    Then the set $\quake{T}{\Omega}$ is finite.
\end{corollary}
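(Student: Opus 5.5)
The plan is to derive Corollary~\ref{cor:FinNumberPlates} directly from Lemma~\ref{lem:FinNumberPlatesSecondVersion} by a measure-counting argument on a single fixed interval. Set $L\coloneqq 8\diam(p_1(\Omega))$ and fix the interval $I\coloneqq(0,L)$. By Lemma~\ref{lem:BasicQuake}~(i), for each $P\in\quake{T}{\Omega}$ we have $\Omega\oplus P=X_P\times\R$ up to null sets. Since $T$ is a tiling, the sets $X_P\times\R$ for distinct plates are disjoint up to null sets, so the sets $X_P\cap I$ are pairwise disjoint up to null sets. Hence
$$\sum_{P\in\quake{T}{\Omega}} m_1(X_P\cap I)\le m_1(I)=L.$$

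Lemma~\ref{lem:FinNumberPlatesSecondVersion} only gives that each term $m_1(X_P\cap I)$ is positive, which does not bound the number of terms. So the key point is to extract a uniform lower bound. Inspecting its proof, equation \eqref{eq:BigIntersection} actually shows that $m_1(p_1(\Omega\oplus P)\cap(-2\alpha,2\alpha))>\tfrac12 m_1(p_1(\Omega))$ for every plate $P$, where $\alpha=2\diam(p_1(\Omega))$. Translation invariance then gives the same bound for every interval of length $4\alpha=L$. This bound is independent of $P$.

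Combining the two estimates, $|\quake{T}{\Omega}|\cdot\tfrac12 m_1(p_1(\Omega))<L$, so
$$|\quake{T}{\Omega}|<\frac{16\diam(p_1(\Omega))}{m_1(p_1(\Omega))},$$
which is finite since $m_1(p_1(\Omega))>0$ for a nonempty open $\Omega$.

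The only delicate step is the uniform lower bound. The lemma's statement as written gives only positivity, so I would either quote \eqref{eq:BigIntersection} explicitly or restate the lemma with the constant $\tfrac12 m_1(p_1(\Omega))$. One more check is that the translation reduction in that proof is legitimate: translating $T$ horizontally maps plates to plates and preserves the property of not being a vertical column. With that in place the counting is immediate.
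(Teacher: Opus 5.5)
Your argument is correct and is the counting argument the paper leaves implicit when it calls this an immediate corollary of Lemma~\ref{lem:FinNumberPlatesSecondVersion}: the uniform bound \eqref{eq:BigIntersection} from that lemma's proof, combined with the almost-disjointness of the sets $X_P$, bounds the number of plates in $\quake{T}{\Omega}$ by $16\diam(p_1(\Omega))/m_1(p_1(\Omega))$. You are right that the lemma as stated only gives positivity, though positivity alone would also suffice: if a plate's projection meets $I$ in positive measure, then that plate's region contains a whole translate of $p_1(\Omega)$ inside the interval obtained by enlarging $I$ by $\diam(p_1(\Omega))$ on each side.
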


For a tiling $T$ that admits a non-trivial earthquake decomposition, we define the {\bf \emph{boundary set}} $\mathcal{L}(T)\subseteq \R$ to consist of those $x\in \R$ such that there is either $P\in \quake{T}{\Omega}$ or a vertical column $P\subseteq T_c$ such that $\{x\}\times \R\subseteq \partial(\overline{\Omega+P})$.
Observe that there is $r>0$, that only depends on $\Omega$, such that if $x<x'\in \mathcal{L}(T)$, then $r<x'-x$.

\begin{lemma}\label{lem:BoundedImpliesFiniteHorizontal}
    Let $T$ be a tiling of $\R^2$ by an axis-parallel polygonal set $\Omega$ such that the earthquake decomposition of $T$ is non-trivial and the boundary set $\mathcal{L}(T)$ admits an increasing enumeration $\{x_k\}_{k\in \mathbb{Z}}$ indexed by $\Z$. Suppose that $R>0$ is such that $x_{k+1}-x_k<R$ for every $k\in \mathbb{Z}$.
    Then $T$ has finite horizontal complexity.        
\end{lemma}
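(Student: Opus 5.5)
We first invoke Lemma~\ref{lem:Discrete}: since the earthquake decomposition of $T$ is non-trivial, $p_1(T)$ is discrete. It remains to show finite local complexity of $p_1(T)$. The natural unit is the strip $(x_k,x_{k+1})\times\R$, which is covered by a single piece ($T_c$ or some $P\in\quake{T}{\Omega}$), since each $\Omega+t$ lies between consecutive boundary lines. Concretely, $\Omega+t\subseteq [x_k,x_k+R+\diam(p_1(\Omega))]\times\R$ for the appropriate $k$.

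The key step is to bound the relative positions. We show that for each $k$ the finite set $p_1(T)\cap[x_k-\diam(p_1(\Omega)),x_{k+1}]$, translated by $-x_k$, ranges over a finite family of configurations as $k$ varies. By \eqref{eq:seqvertex} applied to the line $\{x_k\}\times\R\subseteq E(\Omega)+T$, every $p_1(v+t)$ with $v\in V(\Omega)$ and $t\in T$ in this window satisfies
\[
p_1(v+t)-x_k=\sum_{i=1}^m \bigl(p_1(b_i)-p_1(a_i)\bigr),
\]
with positive summands taken from the finite set $p_1(V(\Omega)-V(\Omega))$. Since the left side is at most $R+2\diam(\Omega)$, the number of summands $m$ is bounded, exactly as in the proof of Lemma~\ref{lem:Discrete}. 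Hence $p_1(V(\Omega)+T)-x_k$, restricted to the window, lies in a fixed finite set $Z\subseteq\R$ independent of $k$; for points slightly left of $x_k$, run the same argument from the line $x_{k-1}$ instead. Therefore the windows take only finitely many configurations, namely subsets of $Z$.

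Next we deduce finite local complexity. Given radius $\rho$ and a point $a\in p_1(T)$, choose $k$ with $x_k\le a<x_{k+1}$. The set $(p_1(T)\cap(a-\rho,a+\rho))-a$ is determined by the configurations of the boundedly many windows $k-N,\dots,k+N$ with $N\approx\rho/r+1$, where $r$ is the minimal gap of $\mathcal{L}(T)$. It also depends on the offsets $x_{k+j}-x_k$, and each such offset is itself a sum of at most $N$ elements of $Z$, since each $x_{k+1}-x_k\in Z$. It further depends on the offset $a-x_k\in Z-p_1(V(\Omega))$, a finite set. This gives finitely many patterns.

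The main obstacle is the bounded-$m$ claim: it requires that every vertex near $x_k$ be linked to the line $\{x_k\}\times\R$ by a monotone chain of vertex differences. The remark before Lemma~\ref{lem:Discrete} provides this, but only for points to the right of a full line. We also need the window $[x_k,x_{k+1}]$ to contain every relevant vertex, which holds because each tile's projection starts at or after some $x_k$ within distance $R+\diam(p_1(\Omega))$.
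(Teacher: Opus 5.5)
Your proposal is correct and follows essentially the paper's argument: Lemma~\ref{lem:Discrete} for discreteness, then \eqref{eq:seqvertex} measured from a nearby boundary line $x_k$, with the number of summands from the finite set $p_1(V(\Omega)-V(\Omega))$ bounded via the gap bound $R$. The paper only streamlines the bookkeeping: it reduces finite local complexity of $p_1(T)=\{z_i\}$ to finiteness of the gap set $\{z_{i+1}-z_i\}$, and writes each gap as a difference of two such bounded sums measured from the same $x_k<p_1(v)+z_i$, which makes your window-gluing step and the lower bound $r$ unnecessary (and note that your aside that each $\Omega+t$ lies between consecutive boundary lines is false in general, though you never actually use it).
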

\begin{proof}
    By Lemma~\ref{lem:Discrete}, we can enumerate the set $p_1(T)$ as $\{z_i\}_{i\in \mathbb{Z}}$ such that $z_i<z_{i+1}$ for every $i\in \mathbb{Z}$.
    The claim is easily seen to be equivalent with the assertion that the set
    \begin{equation}\label{eq:FinSet}
        \{z_{i+1}-z_i:i\in \mathbb{Z}\}
    \end{equation}
    is finite.
    For $i\in \mathbb{Z}$ and $v\in V(\Omega)$,
    let $k\in \mathbb{Z}$ be such that
    $$x_k<p_1(v)+z_i\; \text{ and }\; p_1(v)+z_{i+1}<x_k+2R+2\diam(\Omega).$$
    By \eqref{eq:seqvertex}, there are finite sequences of vertices
    $$\{a_\ell\}_{\ell=1}^m,\; \{b_\ell\}_{\ell=1}^m, \; \{c_j\}_{j=1}^{m'},\; \{d_j\}_{j=1}^{m'}\subseteq  V(\Omega)$$
    such that $p_1(b_\ell)>p_1(a_\ell)$ for every $1\le \ell\le m$ and $p_1(d_j)>p_1(c_j)$ for every $1\le j\le m'$ and both 
    $$p_1(v)+z_i-x_k=\sum_{\ell=1}^m p_1(b_\ell)-p_1(a_\ell) $$
    and $$p_1(v)+z_{i+1}-x_k=\sum_{j=1}^{m'} p_1(d_j)-p_1(c_j)$$ are satisfied.
    Note that we have $m,m'<M$ for some $M$ that only depends on $\Omega$ and $R$, but not on $i\in \mathbb{Z}$.
    It follows that
    $$z_{i+1}-z_i=\left(\sum_{j=1}^{m'} p_1(d_j)-p_1(c_j)\right)-\left(\sum_{\ell=1}^m p_1(b_\ell)-p_1(a_\ell)\right),$$
    where all the summands on the right hand side belong to the finite set $p_1(V(\Omega)-V(\Omega))$. This implies that \eqref{eq:FinSet} is finite as desired.
\end{proof}

\begin{lemma}[Characterization of infinite horizontal complexity]\label{lem:CharInfComplex}
    Let $T$ be a tiling of $\R^2$ by an axis-parallel polygonal set $\Omega$ with a non-trivial earthquake decomposition.
    Then the following are equivalent:
    \begin{enumerate}
        \item $T$ does not have finite horizontal complexity.
        \item $p_1(\Omega\oplus T_c)$ is non-empty and not periodic.
        \item $|\quake{T}{\Omega}|=1$ and for every $R>0$ there is $x\in \R$ such that
        \begin{equation}\label{eq:charinfcomp}
            [x-R,x+R]\subseteq  p_1(\Omega\oplus P)
        \end{equation}
        for the only $P\in \quake{T}{\Omega}$.
        \item There are horizontal edges $\omega,\omega'\in E(\Omega)$ and $\{t_n\}_{n\in \mathbb{N}}, \{s_n\}_{n\in \mathbb{N}}\subseteq  T$ such that the set
        \begin{equation}\label{eq:infiniteIntersectionTwo}
            \{\omega\cap (\omega'+s_n-t_n)\colon n\in \mathbb{N}\}
        \end{equation}
        is infinite. 
    \end{enumerate}
\end{lemma}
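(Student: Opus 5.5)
I will prove the cycle $(1)\Rightarrow(3)\Rightarrow(2)\Rightarrow(1)$ and then $(1)\Leftrightarrow(4)$. Throughout, $p_1(T)$ is discrete by Lemma~\ref{lem:Discrete}, and $\mathcal{L}(T)$ is $r$-separated. By Corollary~\ref{cor:FinNumberPlates}, $\quake{T}{\Omega}$ is finite.

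\emph{$(1)\Rightarrow(3)$.} By Lemma~\ref{lem:BoundedImpliesFiniteHorizontal}, failure of finite horizontal complexity forces the gaps of $\mathcal{L}(T)$ to be unbounded. The order type of the index set is not the issue, since every plate in $\quake{T}{\Omega}$ is unbounded in both directions by Lemma~\ref{lem:ColumnOrUnbounded}, and columns have bounded width. So there are gaps $(x_k,x_{k+1})$ of arbitrary length. Each such open gap lies inside a single slab $\Omega\oplus P$, and a column has width at most $\diam(p_1(\Omega))$. Hence for large $R$ the gap lies in some $P\in\quake{T}{\Omega}$, which gives \eqref{eq:charinfcomp} for that $P$. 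Lemma~\ref{lem:FinNumberPlatesSecondVersion} says every other plate $P'\in\quake{T}{\Omega}$ meets every interval of length $8\diam(p_1(\Omega))$ in positive measure. This contradicts a long interval being covered by $P$ alone. So, letting $R\to\infty$ and using pigeonhole over the finitely many plates, $|\quake{T}{\Omega}|=1$.

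\emph{$(3)\Rightarrow(2)$.} The decomposition is non-trivial and there is only one non-column plate, so $T_c\neq\emptyset$. If $p_1(\Omega\oplus T_c)$ were periodic with period $\beta$, its complement $p_1(\Omega\oplus P)$ would be periodic too. Nonemptiness of $T_c$ means every interval of length larger than $\beta+\diam(p_1(\Omega))$ meets a column, which contradicts \eqref{eq:charinfcomp}.

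\emph{$(2)\Rightarrow(1)$.} I argue by contraposition: if $T$ has finite horizontal complexity, then Theorem~\ref{thm:PeriodicityMain} makes $\Omega\oplus T_c$ horizontally $\beta\Z$-periodic.

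\emph{$(1)\Leftrightarrow(4)$.} For $(4)\Rightarrow(1)$: infinitely many distinct overlaps $\omega\cap(\omega'+s_n-t_n)$ give infinitely many distinct values $p_1(s_n-t_n)$ within distance $\diam(\Omega)$. Normalising $t_n$ to a fixed position makes these local patterns of $p_1(T)$ pairwise distinct, which contradicts finite local complexity of $p_1(T)$. Here I use that the first coordinates of the horizontal edges are relevant. For $(1)\Rightarrow(4)$, I will use the structure just established: a single plate $P$ with arbitrarily long $\R(0,1)$-invariant strips $[x-R,x+R]\times\R$ inside $\Omega\oplus P$. Suppose every overlap set were finite. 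Then the horizontal offsets $p_1(s-t)$ between properly intersecting horizontal edges would take finitely many values. The $\sim_{\Omega,\Uparrow}$-connectivity inside $P$ would then express all $p_1$-differences in $P$ through finitely many increments. Together with the finitely many vertex differences in \eqref{eq:seqvertex}, the neighbouring gaps $z_{i+1}-z_i$ would lie in a finite set, giving finite horizontal complexity and a contradiction. This last step is the main obstacle. The columns in $T_c$ are not reached by proper intersections, so I must control the gaps of $p_1(T)$ near the columns separately. I plan to do this via \eqref{eq:seqvertex} from a column boundary line, since a column's $p_1$-pattern is a translate of $p_1(V(\Omega))$. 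Alternatively, I would apply Lemma~\ref{lem:BoundedImpliesFiniteHorizontal} after showing that finiteness of the overlaps forces bounded gaps in $\mathcal{L}(T)$.
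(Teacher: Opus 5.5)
Your cycle (1)$\Rightarrow$(3)$\Rightarrow$(2)$\Rightarrow$(1) and your (4)$\Rightarrow$(1) are sound. The cycle is a valid rearrangement of the paper's chain: it closes (1)–(3) without passing through (4), though (2)$\Rightarrow$(1) still rests on Theorem~\ref{thm:PeriodicityMain}.

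One small correction in (1)$\Rightarrow$(3). Lemma~\ref{lem:ColumnOrUnbounded} does not force $\mathcal{L}(T)$ to be unbounded on both sides, because the single plate could contain a whole ray and still be unbounded in both directions. This is harmless: if $\mathcal{L}(T)$ is bounded on one side, you get an infinite ray inside one slab, which supplies the long intervals anyway.

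\textbf{The gap is (1)$\Rightarrow$(4),} which is where the real content of the lemma lies. Since you have already shown (1)$\Leftrightarrow$(3), this step is exactly the paper's (3)$\Rightarrow$(4).

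Under $\neg$(4), the offsets $p_1(s-t)$ between properly intersecting horizontal edges do lie in a finite set $Z_0$. But a $\sim_{\Omega,\Uparrow}$-chain joining two horizontally adjacent tiles of $P$ can be arbitrarily long, since it may travel far vertically. So you only learn that $p_1(P)-p_1(t_0)$ lies in the finitely generated group generated by $Z_0$, which is typically dense in $\R$. This gives no finiteness for the gaps $z_{i+1}-z_i$.

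Likewise, \eqref{eq:seqvertex} bounds the number of summands only in terms of the distance to the nearest line of $\mathcal{L}(T)$; that is exactly Lemma~\ref{lem:BoundedImpliesFiniteHorizontal}. By (3) there are arbitrarily long strips of $\Omega\oplus P$ containing no such line, and deep inside them neither tool bounds the word length. So the obstacle is in the interior of the long strips, not near the columns as you suggest. Your fallback ("show that finiteness of the overlaps forces bounded gaps in $\mathcal{L}(T)$") is precisely the statement to be proved, and you give no mechanism for it.

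The missing idea is rational approximation combined with the discrete periodicity theorem. The paper proves $\neg$(4)$\Rightarrow\neg$(3) as follows.
\begin{itemize}
\item Because the overlap data are finite, Lemma~\ref{lm:TechnicalForFiniteSets} and Corollary~\ref{cor:GoodApproxExists} give $\epsilon_0>0$ such that every $\epsilon$-approximation $\varphi$ with $\epsilon<\epsilon_0$ is good and satisfies $t\sim_{\Omega,\Uparrow}s\iff t^\varphi\sim_{\Omega^\varphi,\Uparrow}s^\varphi$.
\item Hence $T^\varphi=(T_c)^\varphi\sqcup P^\varphi$ is a non-trivial earthquake decomposition of a tiling by a rational tile.
\item Lemma~\ref{lem:PeriodicityAproximation} (ultimately Theorem~\ref{thm:DiscretePeriodicity}) makes these slabs horizontally periodic. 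So $p_1(\Omega^\varphi\oplus P^\varphi)$ contains no interval of some length $R'$.
\item Now take $(x,y)\times\R\subseteq\Omega\oplus P$ with $\{x,y\}\times\R$ on its boundary. Then $y-x=\sum_{i=1}^m \bigl(p_1(b_i)-p_1(a_i)\bigr)$ with positive summands, so $m\ge (y-x)/\diam(p_1(\Omega))$.
\item Each image summand is at least a fixed $\eta>0$. By Lemma~\ref{lem:ApproxKeepsCosetPieces}, $(\varphi(x),\varphi(y))\subseteq p_1(\Omega^\varphi\oplus P^\varphi)$ has length at least $m\eta$, which exceeds $R'$ once $y-x$ is large.
\end{itemize}

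Without importing this argument, or some other route to horizontal periodicity of the approximated slabs, your proof of (1)$\Rightarrow$(4) does not close.
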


\begin{remark}\label{rem:CharInfComplex}
    The proof of Lemma \ref{lem:CharInfComplex} goes through for tilings of any periodic set of the form $E=X\times \R$ for a regular open periodic set $X\subseteq \R$ (where the set $[x-R,x+R]$ in \eqref{eq:charinfcomp} should be replaced with $[x-R,x+R]\cap X$ for more general statement). In particular, when $X$ is a proper subset of $\R$, we obtain that \emph{any} tiling in $\Tile(\Omega;E)$ has finite horizontal complexity, since one can show that in this case the condition (4) in Lemma~\ref{lem:CharInfComplex} cannot be satisfied.
\end{remark}

\begin{proof}
    We start by showing that (4) implies (1).
    To that end, first note that the set $p_1(\mathcal{B}_r)\cap p_1(T-t_n)$ is finite for every $r>0$ and $n\in \mathbb{N}$, as $p_1(T)$ is discrete by Lemma~\ref{lem:Discrete}.
    Taking $r>\diam(\Omega)$, we see that after possibly passing to a subsequence we have that $p_1(s_n-t_n)\in p_1(\mathcal{B}_r)\cap p_1(T-t_n)$ are pairwise distinct.
    By (4), we have that $\{p_1(s_n-t_n)\}_{n\in \mathbb{N}}$ is infinite, which shows (1).

    Next, we show that (1) implies (2).
    First, we show that $T_c\not=\emptyset$.
    If this is not the case, then, as we assume that the earthquake decomposition is non-trivial, we have $|\quake{T}{\Omega}|\ge 2$.
    Note that this implies, by Lemma~\ref{lem:FinNumberPlatesSecondVersion}, that the assumption of Lemma~\ref{lem:BoundedImpliesFiniteHorizontal} are satisfied and that is a contradiction with (1), hence $T_c\not=\emptyset$.
    Note, however, that if $p_1(\Omega\oplus T_c)$ is periodic, then $\mathcal{L}(T)$ must be relatively dense in $\R$, that is, there is $Q>0$ such that $\mathcal{B}_Q+\mathcal{L}(T)=\R$.
    Consequently, the assumptions of Lemma~\ref{lem:BoundedImpliesFiniteHorizontal} are satisfied and that contradicts (1). 
    Hence, $p_1(\Omega\oplus T_c)$ is non-periodic, giving (2).

    Next, we show that (2) implies (3).
    Suppose that (3) does not hold.
    As shown in the previous paragraph, this implies that the assumptions of Lemma~\ref{lem:BoundedImpliesFiniteHorizontal} are satisfied.
    Hence $T$ has finite horizontal complexity and by Theorem~\ref{thm:PeriodicityMain} we have that $p_1(\Omega\oplus T_c)$ is periodic (possibly empty), hence (2) does not hold.
    This shows that (2) implies (3).

    It remains to check that (3) implies (4), or equivalently, that the negation of (4) implies the negation of (3).
    The negation of (4) states that the set
    $$\{\omega\cap (\omega'+s-t):s,t\in T \ \omega,\omega' \text{ horizontal edges}\}$$
    is finite.
    Combined with Lemma~\ref{lm:TechnicalForFiniteSets} and Corollary~\ref{cor:GoodApproxExists}, we conclude that there is $\epsilon_0>0$ such that for every $0<\epsilon<\epsilon_0$ any $\epsilon$-approximation $\varphi$ for $(\Omega,T)$ is good and $t\sim_{\Omega,\Uparrow} s$ if and only if $t^\varphi \sim_{\Omega^\varphi,\Uparrow} s^\varphi$.
    In particular,
    $$T^\varphi=(T_c)^\varphi\sqcup \left(\bigsqcup_{P\in \quake{T}{\Omega}} P^\varphi\right)$$
    is the earthquake decomposition of $T^\varphi$ and it is non-trivial.
    Thus, by Lemma~\ref{lem:PeriodicityAproximation}, we have that there is $R'>0$ such that $p_1(\Omega^\varphi\oplus P^\varphi)$ does not contain any interval of length $R'$ for every $P\in \quake{T}{\Omega}$.

    Now, suppose for a contradiction that (3) holds.
    Let  $P$ be the unique element in $\quake{T}{\Omega}$ and for every $R>0$,  let $x,y\in \R$ be such that $y-x>R$ and $\{x,y\}\times \R\subseteq  \partial(\overline{\Omega\oplus P})$ and $(x,y)\times \R\subseteq  \Omega\oplus P$.
    Note that such $x$ and $y$ exist as any increasing enumeration of $\mathcal{L}(T)$ must have order type $\Z$ as the same is true for $\mathcal{L}(T^\varphi)$.
    By Lemma~\ref{lem:ApproxKeepsCosetPieces}, we have that $\varphi(x),\varphi(y)$ are well defined and
    \begin{equation}\label{eq:ContainIntervals}
        (\varphi(x),\varphi(y))\times \R\subseteq  \Omega^\varphi\oplus P^\varphi.
    \end{equation}
    As $T$ is a tiling of $\R^2$ by an axis-parallel polygonal set $\Omega$, we find a finite sequence of vertices $\{a_i\}_{i=1}^m,\{b_i\}_{i=1}^m\subseteq  V(\Omega)$ such that $p_1(b_i)>p_1(a_i)$ for every $1\le i \le m$ and
    $$y-x=\sum_{i=1}^m p_1(b_i)-p_1(a_i).$$
    As $\varphi$ is a good approximation, there is $\eta>0$, independent of $x,y\in \R$, such that
    $$\varphi(y)-\varphi(x)=\sum_{i=1}^m p_1(b^\varphi_i)-p_1(a^\varphi_i)\ge m\eta.$$
    That contradicts \eqref{eq:ContainIntervals} as we may take
    $$R=R'\diam(p_1(\Omega))/\eta$$
    which gives that $\varphi(y)-\varphi(x)\ge (R'/\eta)\eta=R'$ which is not possible due to the choice of $R'$.
\end{proof}

\subsection{Horizontal earthquake decomposition}

Let $T$ be a tiling of $\R^2$ by an axis-parallel polygonal set $\Omega$.
Analogously to Definition~\ref{def:earthquake} and only in this section, we define the notion of a \emph{horizontal column} of $(\Omega,T)$ and the \emph{horizontal earthquake decomposition} of $T$ (with respect to $\Omega$) that we denote as 
$$T=T^h_c\sqcup \left(\bigsqcup_{Q\in \quake{T}{\Omega}^h} Q\right).$$
We say that the horizontal earthquake decomposition is \emph{non-trivial} if $T\not\in \quakeall{T}{\Omega}^h$.
Similarly, we say that $S$ has \emph{finite vertical complexity} if $p_2(S)\subseteq  \R$ is discrete and has finite local complexity.
All the results derived in previous sections for earthquake decomposition and finite horizontal complexity can be applied to horizontal earthquake decomposition and finite vertical complexity by conjugating with a linear transformation that swaps the horizontal and vertical axes.

The main goal of this section is to apply Lemma~\ref{lem:CharInfComplex} to show that if $T$ has a non-trivial earthquake decomposition and does not have finite horizontal complexity then there is $S\in \orbit{T}{\R^2}{\Omega}$ that  admits a non-trivial horizontal earthquake decomposition.

\begin{lemma}\label{lem:Options}
    Let $T$ be a tiling of $\R^2$ by an axis-parallel polygonal set $\Omega$ such that the earthquake decomposition of $T$ is non-trivial and $T$ does not have finite horizontal complexity.
    Then there is $S\in\orbit{T}{\R^2}{\Omega}$ and the horizontal earthquake decomposition of $S$ is non-trivial.
    In addition, one of the following holds: 
    \begin{itemize}
        \item [(a)] $S$ has finite vertical complexity. 
        \item [(b)] There is a sequence $\{g_n\}_{n\in \mathbb{N}}\subseteq  \R^2$ such that $T+g_n\to S$ and  there are $t\in T$, $\{t_n\}_{n\in \N}\subseteq T$ such that $g_n=t-t_n$ for every $n\in \N$. Moreover, the set $C=S\cap (\R\times \{p_2(t)\})$ is a horizontal column and for every $s\in C$ there is $n_s\in \mathbb{N}$ such that $s-t+t_n\in T$ for every $n\ge n_s$.
    \end{itemize}
\end{lemma}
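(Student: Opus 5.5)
Since $T$ has a non-trivial earthquake decomposition and lacks finite horizontal complexity, Lemma~\ref{lem:CharInfComplex} gives condition (4): horizontal edges $\omega,\omega'\in E(\Omega)$ and $t_n,s_n\in T$ such that the intersections $\omega\cap(\omega'+s_n-t_n)$ are pairwise distinct. By (3) there is a single plate $P$ and $T_c\neq\emptyset$. Passing to a subsequence, I would take the horizontal offsets $p_1(s_n-t_n)$ pairwise distinct and convergent, with $p_2(s_n-t_n)=c$ constant, because the edges are horizontal and properly intersect. The plan is to recentre at $t_n$: fix $t\in T$, set $g_n=t-t_n$, and use Lemma~\ref{lem:BasicWeak} to extract $T+g_n\to S$. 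Then $t\in T+g_n$ for all $n$, while $s_n+g_n$ converges to some $s\in S$ on the horizontal line $p_2=p_2(t)+c$ and is never equal to $s$. This reproduces Case~1 of Lemma~\ref{pr:LocalizingEarthquake}, but with a purely horizontal sliding direction.

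Next I would run the argument of Theorem~\ref{thm:MainEarthSplit}. Pass to the subsequence of Lemma~\ref{lem:Diagonalization}, let $S_1$ be the set of $u\in S$ with $u\in T+g_n$ for infinitely many $n$, and let $S_0=S\setminus S_1$. Then $t\in S_1$, $s\in S_0$, and $W(\Omega\oplus S_1)=\emptyset$. By Lemma~\ref{lem:TechnicalClaim}, $\Omega\oplus S_1$ is $\R(0,1)$- or $\R(1,0)$-invariant. I need to rule out vertical invariance. The two tiles $\Omega+t$ and $\Omega+s$ have horizontal edges $\omega+t$ and $\omega'+s$ that lie on the same line and overlap in a segment. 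This comes from the limiting intersection, and I may need to choose the subsequence so that the limit overlap has positive length, using finiteness of edges and condition (4). Since $t\in S_1$ and $s\in S_0$, the common horizontal boundary between $S_0$ and $S_1$ contains a segment. A vertically invariant $\Omega\oplus S_1$ has only vertical boundary, so $\Omega\oplus S_1$ must be $\R(1,0)$-invariant. It follows that the horizontal earthquake decomposition of $S$ is non-trivial.

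For the dichotomy, if $S$ has finite vertical complexity we are in (a). Otherwise I would show (b) with the same $g_n=t-t_n$. Let $C=S\cap(\R\times\{p_2(t)\})$. I would first argue that the horizontal plate of $S$ containing $t$ lies in $S_1$ and is a horizontal column; this is exactly the step I expect to be the main obstacle. Two routes seem available. One is to apply the axis-swapped Lemma~\ref{lem:CharInfComplex} to $S$: failure of finite vertical complexity forces $S^h_c\neq\emptyset$ and a single non-column horizontal plate. Combined with Lemma~\ref{lem:ColumnOrUnbounded} and Lemma~\ref{lem:FinNumberPlatesSecondVersion}, this should let me choose $t$ (adjusting the base point among tiles of $S_1$ and re-extracting) so that its horizontal plate is bounded in $p_2$, and hence a column. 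The other route is to use that $\Omega\oplus S_1$ is a horizontal slab union whose boundary lines come from the limiting configuration. The eventual-membership statement, that for $s\in C$ we have $s-t+t_n\in T$ for all $n\ge n_s$, follows from Lemma~\ref{lem:Diagonalization}(i) once $C\subseteq S_1$. Finally, replacing the $\liminf$ membership by eventual membership requires passing to the diagonal subsequence, which is compatible with the form $g_n=t-t_n$ after reindexing the $t_n$.
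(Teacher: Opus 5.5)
\textbf{Verdict: genuine gap.} The first half of your argument matches the paper's proof: you recentre at $t_n$, take $S_1$ to be the eventually present tiles, and get $t\in S_1$, $s\in S_0$ and $W(\Omega\oplus S_1)=\emptyset$. The gap is in the dichotomy (b), which you flag but do not close, and the routes you sketch do not close it.

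Nothing in your argument excludes the case where $S_1$ is exactly the unique non-column horizontal plate of $S$ and every horizontal column lies in $S_0$. In that case no re-choice of base point inside $S_1$ yields an eventually present horizontal column.

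The missing idea is to use the second tile $s$ symmetrically.
\begin{itemize}
\item Since $\Omega\oplus S_1$ is $\R(1,0)$-invariant, so is its complement $\Omega\oplus S_0$. Hence both $S_1$ and $S_0$ are unions of horizontal plates, and the plates of $t$ and of $s$ are distinct.
\item If $S$ lacks finite vertical complexity, the axis-swapped Lemma~\ref{lem:CharInfComplex}(3) leaves only one non-column horizontal plate. So one of these two plates is a horizontal column.
\item If it is the plate of $t$, Lemma~\ref{lem:Diagonalization}(i) gives (b).
\item If it is the plate of $s$, recentre along the other sequence. Put $h_n=s_0-s_n$. Then $h_n-g_n\to w:=s_0-s$, so $T+h_n\to S+w$, and $s_0=s+w$ lies in every $T+h_n$.
\item The same argument with the roles of $(t_n)$ and $(s_n)$ exchanged shows that the eventually present part of $S+w$ is $\R(1,0)$-invariant. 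It therefore contains the plate of $s_0$, which is the column $(S+w)\cap(\R\times\{p_2(s_0)\})$.
\item So (b) holds for $S+w$ with base point $s_0$ and sequence $(s_n)$.
\end{itemize}
This is the paper's device of working with the two limits $S$ and $S'=S+w$. The paper gets distinctness of the two plates by proving $(A+w)\cap A'=\emptyset$ rather than from the complement argument above.

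\textbf{A smaller issue.} Ruling out vertical invariance via the limiting configuration needs $\omega+t$ and $\omega'+s$ to overlap in a segment. Passing to a subsequence cannot guarantee this. The offsets $p_1(s_n-t_n)$ lie in an open interval but may accumulate only at its endpoints. Then the limiting edges meet in a single point, and a vertical boundary line through that point is consistent with the limit picture.

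The paper instead argues at finite $n$:
\begin{itemize}
\item Let $A_R$ be the finite set of tiles of $S_1$ meeting $\mathcal{B}_R+v+t$, with $R=4\diam(\Omega)$.
\item For large $n$, $A_R-g_n\subseteq T$, and $s_n\notin A_R-g_n$ because the $s_n+g_n$ are pairwise distinct.
\item Vertical invariance would force $\Omega\oplus(A_R-g_n)$ to contain the part of $(\Omega+t_n)+\R(0,1)$ inside $\mathcal{B}_R+v+t_n$.
\item But $\Omega+s_n$ meets that set in positive measure, by the proper intersection of $\omega+t_n$ and $\omega'+s_n$ at stage $n$. This contradicts the packing.
\end{itemize}
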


\begin{proof}
    By item (4) in Lemma~\ref{lem:CharInfComplex}, there are horizontal edges $\omega,\omega'\in E(\Omega)$ and
    $$\{t_n\}_{n\in \mathbb{N}}, \{t'_n\}_{n\in \mathbb{N}}\subseteq  T$$
    such that the set
        \begin{equation*}
            \{\omega\cap (\omega'+t'_n-t_n)\colon n\in \mathbb{N}\}
        \end{equation*}
    is infinite.
    Thus, on passing to subsequences if needed, we may assume without loss of generality that
    \begin{equation}\label{eq:infiniteIntersectionTwoTwo}
        (\omega+t_n)\cap (\omega'+t'_n) \text{ properly intersect for every } n\in \mathbb{N}.
    \end{equation}
    
    Passing to a further subsequence, we may assume that $d_n=t'_n-t_n$ are pairwise distinct and converge to $d$, with $d_n\neq d$ for every $n$.
    Set $t=t_0$, $t'=t'_0$, $g_n=t-t_n$, $h_n=t'-t'_n$.
    After possibly passing to subsequences, we may assume that $T+g_n\to S\in \orbit{T}{\R^2}{\Omega}$ and $T+h_n\to S'\in \orbit{T}{\R^2}{\Omega}$.
    Moreover, by Lemma~\ref{lem:Diagonalization}, we may assume (after possibly passing to a subsequence) that if $s\in S$ is such that $\{n\in \mathbb{N}:s-g_n\in T\}$ is infinite, then there is $n_s\in \mathbb{N}$ such that $s-g_n\in T$ for every $n\ge n_s$, and similarly with the sequence $\{h_n\}_{n\in \N}$ for every $s'\in S'$.

    Define $A\subseteq S$ to be the set of all $s\in S$ for which there exists $n_s\in \mathbb{N}$ such that $s-g_n\in T$ for all $n\geq n_s$ and, similarly, $A'\subseteq S'$ to be the set of all $s'\in S'$ for which there exists $n_{s'}\in \mathbb{N}$ such that $s'-h_n\in T$ for all $n\geq n_{s'}$. 
    Observe that $t\in A$ and $t'\in A'$.
    Then, setting  $u=\lim_{n\to\infty} t'_n+g_n$ and $u'=\lim_{n\to\infty} t_n+h_n$,  we have that $u\not\in A$ and $u'\not \in A'$.
    Moreover, by Lemma~\ref{lem:Diagonalization}~(ii) we have that $W(\Omega\oplus A)=\emptyset$ and $W(\Omega\oplus A')=\emptyset$.

    We claim that $\Omega\oplus A$ is a union of cosets of the horizontal axis.
    Indeed, let $v\in V(\Omega)$, $R=4\diam(\Omega)$ and define
    $$A_R=\{s\in A:(\Omega+s)\cap (\mathcal{B}_R+v+t) \not=\emptyset\}.$$
    Observe that $t\in A_R$ and $u\in (\mathcal{B}_R+v+t)\setminus A_R$.
    As $A_R$ is finite and by the definition of $A$, there is $n_0\in \mathbb{N}$ such that $n_0\ge n_s$ for every $s\in A_R$.
    As $W(\Omega\oplus A)=\emptyset$, we have that
    $$(\Omega\oplus A_R)\cap (\mathcal{B}_R+v+t)=\left((\Omega\oplus A_R)+\mathbb{R}\vec d\right)\cap (\mathcal{B}_R+v+t),$$
    for some $\vec d\in \{(0,1),(1,0)\}$.
    Shifting by $-g_n$, we get that 
    $$(\Omega\oplus (A_R-g_n))\cap (\mathcal{B}_R+v+t_n)=\left((\Omega\oplus (A_R-g_n))+\mathbb{R}\vec d\right)\cap (\mathcal{B}_R+v+t_n)$$
    for every $n\ge n_0$.
    Note that for all sufficiently large $n$, we have $t'_n\notin A_R-g_n$, since $t'_n+g_n\to u\notin A_R$ and $A_R$ is finite.
    By \eqref{eq:infiniteIntersectionTwoTwo}, this implies that the set
    $$((\Omega+t_n)+\mathbb{R}(0,1))\cap (\mathcal{B}_R+v+t_n)\cap (\Omega+t'_n)$$
    has positive measure for every $n\in \mathbb{N}$.
    Hence we must have $\vec d=(1,0)$ and the claim follows.
    An identical argument gives that $\Omega\oplus A'$ is a union of cosets of the horizontal axis as well.
    
    We have just proven that the horizontal earthquake decomposition of both $S$ and $S'$ is non-trivial.
    If in addition $S$ or $S'$ has finite vertical complexity, then (a) holds and we are done.
    Otherwise, we show that (b) holds either for $t$ and $\{t_n\}_{n\in \mathbb{N}}$, or for $t'$ and $\{t'_n\}_{n\in \mathbb{N}}$.
    
    Let $w=t'-u$. Then, as 
     $$T+g_n+w=T+(t'-t'_n)-(u-(t'_n+g_n))\to S',$$
     we have that $S+w=S'$. Moreover, 
    we claim that $(A+w)\cap A'=\emptyset$.
    Indeed, first note that
    $$t+w=t+t'-u=u'\notin A',$$
    and thus $t+w\not \in A'$.
    Let $s\in A$ and suppose for a contradiction that there is $m_s\in \mathbb{N}$ such that $s+w-h_n\in T$ for every $n\ge m_s$.
    By the definition of $A$, we have that $s-g_n\in T$ for every $n\ge n_s$. Then, since  $(s-g_n),(s+w-h_n)\in T$ for every $n\ge n_s+m_s$ and as $$-(s-g_n)+(s+w-h_n)=g_n-h_n+w\to 0,$$ we conclude that  
    $$s+w-h_n= s-g_n$$
    for all sufficiently large $n$.
    This gives that 
    $$u'-h_n=t+w-h_n=t-g_n=t_n\in T$$
    for all sufficiently large $n$, which contradicts $u'\not\in A'$.

    Finally, we have that $\Omega\oplus (A+w)$ and $\Omega\oplus A'$ are disjoint $\R(1,0)$-invariant sets.
    As we assume that $S'$ does not have a finite vertical complexity, we conclude by Lemma~\ref{lem:CharInfComplex} that
    $$\text{ either } (\R\times \{p_2(t+w)\})\cap S' \text{ or } (\R\times \{p_2(t')\})\cap S'$$
    is a horizontal column of $(\Omega,S')$.
    Note that the former case implies that $$(\R\times \{p_2(t)\})\cap S$$ is a horizontal column of $(\Omega,S)$ as $S+w=S'$.
    Hence, in the former case $t$ and $\{t_n\}_{n\in \mathbb{N}}$ work as required while in the latter we can take $t'$ and $\{t'_n\}_{n\in \mathbb{N}}$. 
\end{proof}

\begin{remark}
    In the following subsection we show that option (b) in Lemma~\ref{lem:Options} implies that $\Omega$ is a \emph{product set}.
    It is easy to check, see Lemma~\ref{lem:ProductTwo}, that in that case any tiling $T$ of $\mathbb{R}^2$ by $\Omega$ that has non-trivial earthquake decomposition has finite horizontal complexity.
    In particular, this implies that option (b) in Lemma~\ref{lem:Options} cannot occur.
\end{remark}

\subsection{Simultaneous vertical and horizontal columns}

It might be tempting to think that option (b) in Lemma~\ref{lem:Options} directly implies the existence of $S\in \orbit{T}{\R^2}{\Omega}$ that contains both vertical and horizontal columns.
Unfortunately, as $\Omega\oplus T_c$ is not periodic, it is difficult to control the existence of vertical columns when passing to the limit as in Lemma~\ref{lem:Options} (b).
To circumvent these issues, we show that (b) in Lemma~\ref{lem:Options} implies that for every good approximation $\varphi$ there is a tiling $S(\varphi)$ of $\R^2$ by $\Omega^\varphi$ that contains both vertical and horizontal columns.
This is then used in the next subsection to show that $\Omega$ is a product set.

\begin{lemma}\label{lem:SimultanousColumns}
    Let $T$ be a tiling of $\R^2$ by an axis-parallel polygonal set $\Omega$ such that the earthquake decomposition of $T$ is non-trivial and $T$ does not have finite horizontal complexity.
    Suppose that there exist $t\in T$ and $\{t_n\}_{n\in \N}\subseteq  T$ such that 
    \begin{itemize}
        \item the sequence $T+(t-t_n)$ converges to $S\in \orbit{T}{\R^2}{\Omega}$;
        \item the set $C\coloneqq S\cap (\R\times \{p_2(t)\})$ is a horizontal column for $(\Omega,S)$;
        \item for every $s\in C$ there is $n_s\in \mathbb{N}$ such that $s-t+t_n\in T$ for every $n\ge n_s$.
    \end{itemize}
    Then for every good approximation  $\varphi$ for $(\Omega,T)$ there is $S(\varphi)\in \orbit{T^\varphi}{\R^2}{\Omega^\varphi}$ that contains both vertical and horizontal columns.   
\end{lemma}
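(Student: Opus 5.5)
The plan is to transport everything to the rational side with the fixed good approximation $\varphi$, and then build $S(\varphi)$ as a weak limit of translates of $T^\varphi$ along the rational images of the same sequence $t-t_n$. The horizontal column is inherited from $C$, and the vertical columns are forced by the periodicity of $\Omega^\varphi\oplus(T^\varphi)_c$.

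\emph{Step 1: the rational translation vectors.} For $s\in C$ we have $s-t+t_n\in T$ for $n\ge n_s$. Hence $s-t=(s-t+t_n)-t_n$ has coordinates in $K_{(\Omega,T)}$, and the vector
$$c_s\coloneqq\bigl(\varphi(p_1(s-t)),\varphi(p_2(s-t))\bigr)$$
is well defined. By linearity of $\varphi$,
$$(s-t+t_n)^\varphi=t_n^\varphi+c_s\in T^\varphi\quad\text{for every } n\ge n_s.$$
Set $C'\coloneqq\{c_s:s\in C\}$ and $g_n^\varphi\coloneqq t^\varphi-t_n^\varphi$. By Lemma~\ref{lem:BasicWeak} we can pass to a subsequence with $T^\varphi+g_n^\varphi\to S(\varphi)\in\orbit{T^\varphi}{\R^2}{\Omega^\varphi}$. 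Every point $t^\varphi+c_s$ lies in $T^\varphi+g_n^\varphi$ for all large $n$. Therefore $t^\varphi+C'\subseteq S(\varphi)$, since all these points are exact members of the approximating sets and not merely limits.

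\emph{Step 2: $t^\varphi+C'$ is a horizontal column for $\Omega^\varphi$.} This is the step I expect to be the main obstacle. The set $C-t\subseteq\R\times\{0\}$ tiles the strip $\R\times(p_2(\Omega)+c)$. I would apply the axis-swapped form of Theorem~\ref{thm:GoodApproximation} to this strip, in the spirit of Lemma~\ref{lem:ApproxKeepsCosetPieces}. The strip is not of the periodic form assumed in Section~\ref{sec:RationalApprox}, but the proof of Theorem~\ref{thm:GoodApproximation} is local. It uses only three facts:
\begin{itemize}
\item the chain-of-vertices argument of Lemma~\ref{lem:step0};
\item that the boundary lines of the strip are covered by translated edges;
\item that $\varphi$ preserves the sign pattern of $V(\Omega)$, with $D_X=\emptyset$ here.
\end{itemize}
All relevant differences come from finite configurations $C_R-t$ with $C_R=C\cap(\mathcal{B}_R+t)$. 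These configurations occur inside $T$ (near $t_n$), so their coordinates lie in $K_{(\Omega,T)}$, and $\varphi$ is $\Q$-linear on them. Running Step 1 (packing) and Step 2 (covering) of that proof then gives $\Omega^\varphi\oplus C'=\R\times(p_2(\Omega^\varphi)+\varphi(c))$. If this local adaptation turns out to be delicate, I would first check that horizontal order along the strip is preserved by the argument of Lemma~\ref{lem:step0}. I would then verify packing and covering on each bounded window separately, by comparing with the corresponding window of $T^\varphi$ around $t_n^\varphi$, where Theorem~\ref{thm:GoodApproximation} applies directly.

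\emph{Step 3: vertical columns.} By Lemma~\ref{lem:CharInfComplex}, $T_c\neq\emptyset$ and the earthquake decomposition of $T$ is non-trivial. The column part of Lemma~\ref{lem:PreserveEarthquake} does not use finite horizontal complexity (cf.\ Theorem~\ref{thm:GoodFiniteLocalComplexity}(8)). Hence $(T_c)^\varphi\subseteq(T^\varphi)_c$, so $T^\varphi$ has a non-trivial earthquake decomposition with vertical columns. By Lemma~\ref{lem:PeriodicityAproximation}, $\Omega^\varphi\oplus(T^\varphi)_c$ is nonempty and $(b,0)\Z$-periodic. So every vertical strip of width $b+\diam(\Omega^\varphi)$ contains a vertical column of $T^\varphi$. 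After translating by $g_n^\varphi$, there is a vertical column $V_n\subseteq T^\varphi+g_n^\varphi$ with $p_1(V_n)$ in a fixed bounded interval. Passing to a further subsequence, $p_1(V_n)\to a$ and $V_n$ converges weakly to some $V\subseteq S(\varphi)\cap(\{a\}\times\R)$. Packing of $\Omega^\varphi\oplus V$ is a closed condition. Covering of the strip $(p_1(\Omega^\varphi)+a)\times\R$ follows because the $V_n$ cover their strips and $S(\varphi)$ is a tiling: every point of the limit strip is a limit of covered points, and hence is covered by limits of the covering tiles. Thus $V$ is a vertical column of $S(\varphi)$.

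Together with Step 2, this shows that $S(\varphi)$ contains both a vertical and a horizontal column, as required.
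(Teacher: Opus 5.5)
Your proof is correct. The overall plan matches the paper's:
\begin{itemize}
\item the same limit $S(\varphi)$ of $T^\varphi+(t^\varphi-t_n^\varphi)$;
\item the same horizontal candidate, since your $t^\varphi+c_s$ is exactly the paper's $s^\star=(s-t+t_n)^\varphi+t^\varphi-t_n^\varphi$;
\item the same source of vertical columns, namely the nonempty $(b,0)\Z$-periodic set $\Omega^\varphi\oplus(T^\varphi)_c$ obtained from Lemmas~\ref{lem:CharInfComplex}(2), \ref{lem:ApproxKeepsCosetPieces} and~\ref{lem:PeriodicityAproximation}.
\end{itemize}
What differs is how the two column properties are verified.

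\emph{Horizontal column.} The paper shows $W(\Omega^\varphi\oplus C^\star)=\emptyset$ by checking that a good approximation preserves the local picture at each vertex of the finite clusters $C_{v,s}$. It then invokes Lemma~\ref{lem:TechnicalClaim}. You instead prove the strip identity directly through packing and covering. Your fallback window comparison is the most economical way to make this rigorous. Packing is immediate, because $C'_R+t_n^\varphi\subseteq T^\varphi$ and $T^\varphi$ is a tiling by Theorem~\ref{thm:GoodApproximation}. Covering reduces to generic horizontal slices, since all tiles of $C-t$ sit at the same height:
\begin{itemize}
\item consecutive slice intervals abut through $\Q$-linear relations $p_1(c'-c)=b-a$ with $a,b\in p_1(V(\Omega))$, and $\varphi$ preserves these relations;
\item the interval lengths are positive vertex differences, so their images are positive and bounded below;
\item hence the image intervals again tile $\R$.
\end{itemize}
This is more transparent than adapting the whole proof of Theorem~\ref{thm:GoodApproximation} to a non-periodic strip. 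A minor point: $D_X$ need not be empty, since $p_2(\Omega)$ may be disconnected. Its elements are positive vertex differences, though, so goodness covers them, and the transverse direction is trivial anyway.

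\emph{Vertical column.} The paper picks the nearest column point $u_n$ and uses Lagarias--Wang to show that the one-dimensional column tilings through $u_n$ eventually stabilize. Your argument is that the weak limit $V$ of whole columns $V_n$ is a packing and covers the limit strip, by closedness of $\overline{\Omega^\varphi}$ and the fact that limits of $v_n\in V_n$ lie in $V$. This is more elementary and avoids the one-dimensional structure theory entirely.
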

\begin{proof}
    By Theorem~\ref{thm:GoodApproximation}, we have that $T^\varphi\in \Tile(\Omega^\varphi ;\R^2)$.
    Setting $\ell_n\coloneqq t^\varphi-(t_n)^\varphi$, 
    by Lemma~\ref{lem:BasicWeak} (after possibly passing to a subsequence) we may assume that the sequence $T^\varphi+\ell_n$ converges to $S(\varphi)\in \orbit{T^\varphi}{\R^2}{\Omega^\varphi}$.
    We show that $S(\varphi)$ contains both vertical and horizontal columns, as required.

    Note that $t^\varphi=(t_n)^\varphi+\ell_n\in T^\varphi+\ell_n$ for every $n\in \mathbb{N}$, hence, $t^\varphi\in S(\varphi)$.
    For $n\in \N$ we set $g_n\coloneqq t-t_n$. Then, for $s\in C$, we have that $s-g_n\in T$, $$p_2(s-g_n)=p_2(t_n) \text{ and } p_1((s-g_n)-t_n)=p_1((s-g_m)-t_m)$$ for every $n,m\ge n_s$.
    Thus, by the definition of $T^\varphi$, we have that $(s-g_n)^\varphi\in T^\varphi$,
    $$p_2((s-g_n)^\varphi)=p_2((t_n)^\varphi) \text{ and } p_1((s-g_n)^\varphi-(t_n)^\varphi)=p_1((s-g_m)^\varphi-(t_m)^\varphi)$$
    for every $n,m\ge n_s$.
    Consequently, $(s-g_n)^\varphi+\ell_n=(s-g_m)^\varphi+\ell_m$ for every $n,m\ge n_s$.
    Set $s^\star=(s-g_n)^\varphi+\ell_n$ for any $n\ge n_s$ and note that $s^\star\in S(\varphi)$.
    Define $C^\star=\{s^\star:s\in C\}$ and observe that $C^\star\subseteq  S(\varphi)\cap (\R\times \{p_2(t^\varphi)\})$.

    \begin{claim}\label{cl:Cstar}
        $C^\star$ is a horizontal column of $(\Omega^\varphi,S(\varphi))$.
    \end{claim}
    \emph{Proof of Claim \ref{cl:Cstar}:}
    By Lemma~\ref{lem:TechnicalClaim}, it is enough to show that $W(\Omega^\varphi\oplus C^\star)=\emptyset$.
    Let $v^\varphi+s^\star$ for some $v\in V(\Omega)$ and $s\in C$.
    As $C$ is a horizontal column of $(\Omega,S)$ we have that $v+s\not \in W(\Omega\oplus C_{v,s})$, where $C_{v,s}\coloneqq \{s_1,\dots,s_k\}$ is the enumeration of $u\in C$ such that $v+s\in V(\Omega)+u$.
    For every 
    $$n\ge \max\{n_{s_i}:1\le i\le k\}$$
     we have that $C_{v,s}-g_n \subseteq T$, $$v+(s-g_n)\not\in W(\Omega\oplus (C_{v,s}-g_n))$$ and $v+(s-g_n)\in V(\Omega)+(s_i-g_n)$ for every $1\le i\le k$.
    Thus, since $\varphi$ is a good approximation, we obtain that
    $$v^\varphi+(s-g_n)^\varphi\not\in W(\Omega^\varphi \oplus (C_{v,s}-g_n)^\varphi)$$
    and consequently
    $$v^\varphi+s^\star\not\in W(\Omega^\varphi\oplus C_{v,s}^\star ),$$ 
    where $C_{v,s}^\star\coloneqq \{(s_1)^\star,\dots,(s_k)^\star\}.$
    
    Now suppose that $y\in W(\Omega^\varphi \oplus C^\star)$.
    Then, as $\Omega^\varphi$ is a polygonal set and $S(\varphi)\in \Tile(\Omega^\varphi; \R^2)$, we must have $y=v^\varphi+s^\star$ for some $v\in V(\Omega)$ and $s\in C$.
    Observe that if $y=w^\varphi+u^\star$ for some $w\in V(\Omega)$ and $u\in C$, then 
    $$ \text{either } C^\star_{v,s}=C^\star_{w,u} \text{ or } C^\star_{v,s}\cap C^\star_{w,u}=\emptyset.$$    Indeed, if $(u')^\star\in C^\star_{v,s}\cap C^\star_{w,u}$, then there are $v',w'\in V(\Omega)$ such that $v+s=v'+u'$ and $w+u=w'+u'$.
    This further implies that
    $$(v')^\varphi+(u')^\star=v^\varphi+s^\star=y=w^\varphi+u^\star=(w')^\varphi+(u')^\star,$$
    which, as $\varphi$ is a good approximation, gives that $w'=v'$.
    In particular, $v+s=w+u$, hence $C^\star_{v,s}=C^\star_{w,u}$.  Now, suppose that $A_1,\dots ,A_m$ are sets of the form $C^\star_{v,s}$,  with  $v\in V(\Omega)$ and $s\in C$ such that  $y=v^\varphi+s^\star$.
    Then, as $\Omega$ is axis-parallel, $\Omega^\varphi\oplus C^\star$ is a packing and $y\not\in W(\Omega^\varphi\oplus A_i)$ for every $1\le i\le m$, we conclude that $m\leq 2$.
    If $m=1$, then we have that $y\not\in W(\Omega^\varphi\oplus C^\star)$ as $y\not\in W(\Omega^\varphi\oplus A_1)$ and $y\not\in V(\Omega^\varphi)+u^\star$ for every $u^\star\in C^\star\setminus A_1$.
    Otherwise, $m=2$ and  $A_1\cap A_2=\emptyset$.  Thus, as $\Omega^\varphi\oplus C^\star$ is a packing and $\Omega^\varphi$ is axis-parallel, if  $y \in W(\Omega^\varphi\oplus C^\star)$ then there must be a small $\eta>0$ such that the open ball $\mathcal{B}_\eta +y$ of radius $\eta$ around $y$ is contained in $\Omega^\varphi\oplus (A_1\sqcup A_2)$. 
    But this is a contradiction with $y\in W(\Omega^\varphi\oplus C^\star)$.
    We thus have Claim \ref{cl:Cstar}.
    $\qquad \blacksquare$

    To finish the proof of Lemma \ref{lem:SimultanousColumns}, it remains to show that  $(\Omega^\varphi,S(\varphi))$ contains a vertical column.
    Combining Lemmas~\ref{lem:ApproxKeepsCosetPieces},~\ref{lem:PeriodicityAproximation} and Lemma~\ref{lem:CharInfComplex}~(2), we have that $\Omega^\varphi\oplus (T^\varphi)_c$ is non-empty and $(b,0)\mathbb{Z}$-periodic for some $b>0$.
    For every $n\in \mathbb{N}$, let $u_n\in (T^\varphi)_c$ be of minimal distance from $(t_n)^\varphi$.
    Since $\Omega^\varphi\oplus(T^\varphi)_c$ is nonempty, vertically invariant and horizontally periodic, it is relatively dense in $\R^2$.
    As $\Omega^\varphi$ is bounded, $(T^\varphi)_c$ is relatively dense as well.
    Thus $\{u_n-(t_n)^\varphi\}_{n\in\mathbb{N}}$ is bounded.
    Since
    $$u_n+\ell_n=(u_n-(t_n)^\varphi)+t^\varphi,$$
    the sequence $\{u_n+\ell_n\}_{n\in\mathbb{N}}$ is bounded.   
    Therefore, as $T^\varphi+\ell_n\to S(\varphi)$, we may without loss of generality assume that $u_n +\ell_n\to u\in S(\varphi)$.
    We claim that
    $$D=S(\varphi)\cap(\{p_1(u)\}\times \R)$$
    is a vertical column of $S(\varphi)$.
    Indeed, since for every $n$ $$p_2(T^\varphi\cap(\{p_1(u_n)\}\times \R))\in \Tile(p_2(\Omega^\varphi\cap (\{z\}\times \mathbb{R}));\R)$$
    for any fixed $z\in p_1(\Omega^\varphi)\setminus p_1(V(\Omega^\varphi))$, and using \cite[Theorem~1]{LW} together with the fact that $\Omega^\varphi$ is an axis-parallel polygonal set, there is $n_0\in \mathbb{N}$ such that, after possibly passing to a further subsequence, $D_n=D_m$ for every $n,m\ge n_0$, where
    $$D_n=\left(T^\varphi\cap(\{p_1(u_n)\}\times \R)\right)-u_n,$$
    which implies that, up to a translation, $D$ is equal to $D_n$ for every $n\ge n_0$, as needed.
\end{proof}

\subsection{Product sets}

In this section we conclude the proof of  Theorem~\ref{thm:FinLocalComplexityTrichotomy}, by analyzing tilings by \emph{polygonal product sets}. 

\begin{definition}[Polygonal product set]
    We say that an axis-parallel polygonal set $\Omega$ is a {\bf \emph{product set}}, if it has a Cartesian product  form $$A\times B=\{(a,b)\colon a\in A,\ b\in B\},$$ where $A=p_1(\Omega)$ and $B=p_2(\Omega)$
     are finite unions of intervals.
\end{definition}

First, we show that the existence of both horizontal and vertical columns in a tiling by a polygonal set $\Omega$ is only possible if $\Omega$ is a product set.

\begin{lemma}[Vertical and horizontal columns imply product structure]\label{lem:ProductOne}
    Let $T$ be a tiling of $\R^2$ by an axis-parallel polygonal set $\Omega$ and $S_0,S_1\subseteq  T$ be vertical and horizontal column of $T$ respectively.
    Then $\Omega$ is a product set.
\end{lemma}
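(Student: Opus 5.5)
The plan is to exploit where the two column regions cross. Set $A=p_1(\Omega)$ and $B=p_2(\Omega)$. Since $S_0$ is a vertical column, $\Omega\oplus S_0=(A+a)\times\R$ for some $a\in\R$, with $S_0\subseteq\{a\}\times\R$. Since $S_1$ is a horizontal column, $\Omega\oplus S_1=\R\times(B+b)$ for some $b\in\R$, with $S_1\subseteq\R\times\{b\}$.

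First we check that $S_0\cap S_1=\emptyset$. An element lying in both would have all its translates of $\Omega$ inside the intersection $((A+a)\times\R)\cap(\R\times(B+b))=(A+a)\times(B+b)$, which is bounded. That is incompatible with $\Omega\oplus S_0$ being an unbounded strip (the translates in $S_0$ stacked along a single vertical line would have to all lie in one bounded box). Since $T$ is a tiling, the packings $\Omega\oplus S_0$ and $\Omega\oplus S_1$ are then disjoint up to null sets. But their intersection is the box $(A+a)\times(B+b)$, whose measure is $m_1(A)m_1(B)>0$. This contradicts disjointness.

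So the literal hypothesis, with $S_0$ and $S_1$ disjoint subsets of a single tiling $T$, can only hold if something degenerate happens. I therefore expect the intended use, as in Lemma~\ref{lem:SimultanousColumns} where $S(\varphi)$ ``contains both vertical and horizontal columns'', to concern the set $\Omega\oplus S_0$ and $\Omega\oplus S_1$ as regions. The argument I would give must account for this. The honest plan is to prove the statement by showing the hypothesis forces the product structure vacuously only if the crossing box is null, which cannot happen.

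Concretely, the key steps are as follows:
\begin{enumerate}
\item Show $S_0\cap S_1=\emptyset$. If $s\in S_0\cap S_1$, then $p_1(s)=a$ and $p_2(s)=b$, so $s=(a,b)$ is unique. The remaining elements of $S_0$ lie on $\{a\}\times\R$ and those of $S_1$ on $\R\times\{b\}$. Then $\Omega+s$ must cover the crossing box $(A+a)\times(B+b)$. Comparing measures gives $m_2(\Omega)\ge m_1(A)m_1(B)$.
\item Since $\Omega\subseteq A\times B$ by definition of the projections, the reverse inequality always holds. Hence $\Omega=A\times B$ up to null sets, and regularity of $\Omega$ upgrades this to equality, so $\Omega$ is a product set.
\item If instead $S_0\cap S_1=\emptyset$, the two regions overlap in a set of positive measure, contradicting the packing property of $T$. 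So this case cannot occur.
\end{enumerate}

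The main obstacle is step 1: making rigorous that some tile of $S_0$ must also belong to $S_1$, and that this shared tile alone covers the whole crossing box. For the first part I would consider the tile of $T$ covering a generic point of the box. That point lies in both $\Omega\oplus S_0$ and $\Omega\oplus S_1$, so by the packing property the tile covering it belongs to $S_0\cap S_1$, giving a unique $s=(a,b)$. Every generic point of the box is then covered by $\Omega+s$, since by the same reasoning it is covered by an element of $S_0\cap S_1=\{s\}$. This yields $(A+a)\times(B+b)\subseteq\overline{\Omega+s}$, and the measure comparison in step 2 finishes the proof.
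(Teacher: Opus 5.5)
Your working argument, namely the ``main obstacle'' paragraph together with step~2, is correct and is essentially the paper's proof. The crossing box $X=(p_1(\Omega)+a)\times(p_2(\Omega)+b)$ has positive measure. Almost every point of $X$ is covered by a tile of $T$ that must lie in both $S_0$ and $S_1$, and $S_0\cap S_1\subseteq\{(a,b)\}$. So $X$ is covered by the single translate $\Omega+(a,b)$, and $\Omega\subseteq p_1(\Omega)\times p_2(\Omega)$ then forces $X=\Omega+(a,b)$.

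You should, however, delete your opening paragraph and the commentary that the hypothesis is ``degenerate''. The claim $S_0\cap S_1=\emptyset$ is false, and its justification confuses the single tile $\Omega+s$ with the whole strip $\Omega\oplus S_0$. If $s\in S_0\cap S_1$, only $\Omega+s$ has to lie in the bounded box. The remaining tiles of $S_0$ sit elsewhere along $\{a\}\times\R$, so nothing contradicts unboundedness. For instance, with $\Omega=(0,1)^2$ and $T=\Z^2$, the vertical column $\{0\}\times\Z$ and the horizontal column $\Z\times\{0\}$ share the element $(0,0)$.

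Your own covering argument, and equally your step~3, in fact proves the opposite: $S_0\cap S_1=\{(a,b)\}$. So step~3 is not a separate case to be ruled out. It is exactly the observation that produces the shared tile, and step~1 should not be headed ``Show $S_0\cap S_1=\emptyset$''. With these corrections the proposal is a complete proof along the same lines as the paper.
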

\begin{proof}
    Observe that $X\coloneqq (\Omega\oplus S_0)\cap (\Omega\oplus S_1)$  is a product set that has positive measure, since it is of the form $p_1(\Omega)\times p_2(\Omega) + u$ for some $u\in \R^2$.
    Let $s_0\in S_0$ and $s_1\in S_1$ be such that $(\Omega\oplus s_0)\cap (\Omega\oplus s_1)\cap X$ has positive measure.
    As $T$ is a tiling and $s_0,s_1\in T$, we must have that $s_0=s_1$.
    Moreover, as $S_0$ is a subset of a coset of the vertical axis and $S_1$ is a subset of a coset of the horizontal axis, we  have  that $|S_0\cap S_1|\le 1$. 
    Combined with the fact that $X$ has positive measure, we conclude that  $X=\Omega+s_0=\Omega+s_1$, which gives the claim.
\end{proof}

Next, we prove the periodicity of columns in any tiling by a polygonal product set.

\begin{lemma}[Structure of tiling by product sets]\label{lem:ProductTwo}
    Let $T$ be a tiling of $\R^2$ by an axis-parallel polygonal set $\Omega$ that is a product set such that $T_c\not=\emptyset$.
    Then there is $\beta>0$, that only depends on $\Omega$, such that $\Omega\oplus T_c$ is $(\beta,0)\mathbb{Z}$-periodic.
\end{lemma}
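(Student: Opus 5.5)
Let $\Omega=A\times B$ with $A=p_1(\Omega)$ and $B=p_2(\Omega)$ finite unions of intervals. The plan is to show that $\Omega\oplus T_c$ is itself a union of full vertical strips whose complement is forced, column by column, into a rigid periodic pattern. The approach is to reduce to one-dimensional tilings of $\R$ by $A$ and then invoke the Lagarias--Wang periodicity \cite{LW}, as was already done at the end of the proof of Lemma~\ref{lem:SimultanousColumns}.

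\emph{Step 1: every vertical line meeting $\Omega\oplus T_c$ sees only columns.} Fix $s\in T_c$, with $s$ lying in a vertical column $S\subseteq\{p_1(s)\}\times\R$ such that $\Omega\oplus S=(A+p_1(s))\times\R$. Consider any $t\in T\setminus T_c$ with $p_1(\Omega+t)$ overlapping $A+p_1(s)$ in positive measure. Such a $t$ cannot exist, since the strip $(A+p_1(s))\times\R$ is already fully covered by $S$. Now take a vertical line $\{z\}\times\R$ with $z\notin p_1(V(\Omega)+T)$. The tiles of $T$ meeting this line cut it into pieces of the form $\{z\}\times(B'+p_2(t))$, where $B'$ is one of the finitely many fibers of $\Omega$ over $z$; for a product set, every such fiber equals $B$. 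So $p_2(T\cap\{t:z\in p_1(\Omega+t)\})$ is a tiling of $\R$ by $B$, hence periodic by \cite{LW}. I do not actually need this last fact; it is included only as a sanity check.

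\emph{Step 2: horizontal structure.} The key observation is that, for a product set, the tiles meeting any horizontal line $\R\times\{y\}$ with $y$ generic have first coordinates forming a tiling of $\R$ by $A$. Their horizontal sections are all translates of $A$, and together they cover the line exactly once. Pick a generic $y$. The set $p_1(T\cap\{t:y\in p_2(\Omega+t)\})=:\Lambda_y$ then satisfies $A\oplus\Lambda_y=\R$. By \cite{LW} (more precisely, the Lagarias--Wang structure theorem for tilings of $\R$ by a finite union of intervals), $\Lambda_y$ is periodic with a period $\beta$ belonging to a finite set depending only on $A$. Taking $\beta$ to be the least common multiple (or a common multiple via the finite list of possible periods) makes it independent of $y$ and of $T$. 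Next, $p_1(T_c)\subseteq\Lambda_y$ for every generic $y$, because each column crosses every horizontal line. So $(A+p_1(T_c))$ is a union of ``blocks'' of the 1D tiling $\Lambda_y$.

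\emph{Step 3: periodicity of the column set.} The main obstacle is to show that $p_1(T_c)$ itself is $\beta$-periodic, not merely contained in the periodic set $\Lambda_y$. I would argue as follows. If $x\in p_1(T_c)$, then for every generic $y$ the tiling $\Lambda_y$ contains $x$. By $\beta$-periodicity, $\Lambda_y$ also contains $x+\beta$, so every generic horizontal line has a tile with first coordinate $x+\beta$ covering it. Hence the strip $(A+x+\beta)\times\R$ is covered, up to a null set, by tiles with first coordinate $x+\beta$. These tiles therefore form a subset $S'\subseteq\{x+\beta\}\times\R$ with $\Omega\oplus S'=(A+x+\beta)\times\R$, and $S'$ is a vertical column; by minimality of plates (Lemma~\ref{lem:BasicQuake}(ii)) it lies in $T_c$. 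The delicate point is to make sure that the period $\beta$ of $\Lambda_y$ does not depend on $y$. If the finite-list argument for periods is insufficient, I would instead use the Kolountzakis--Lagarias structure \cite{KolountzakisLagarias}: every tiling of $\R$ by $A$ is periodic with period at most a bound $N(A)$, and $\Lambda_y\subseteq$ a fixed finitely generated group. This gives only finitely many possible periods, whose product serves as $\beta$. The same argument with $-\beta$ completes the proof that $p_1(T_c)+\beta=p_1(T_c)$, and therefore that $\Omega\oplus T_c$ is $(\beta,0)\Z$-periodic.
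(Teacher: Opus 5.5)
Your proposal is correct and follows essentially the same route as the paper: generic horizontal lines yield tilings $\Lambda_y$ of $\R$ by $A=p_1(\Omega)$, Lagarias--Wang finiteness (finitely many such tilings up to translation) gives a single period $\beta$ depending only on $\Omega$, and a column at $x$ then forces a column at $x\pm\beta$; your Step~1 is not needed. Two small fixes: first, the uniform $\beta$ should be a common integer multiple of the finitely many possible periods, which all lie in $m_1(A)\N$ by a volume count, and not their product. Second, $S'\subseteq T_c$ holds because $\Omega\oplus S'$ is a full strip, so any tile properly intersecting a horizontal edge of a tile of $S'$ must itself lie in $S'$, which makes $S'$ a union of plates; it does not follow from the minimality in Lemma~\ref{lem:BasicQuake}(ii).
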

\begin{proof}
    Observe that for almost every $y\in \R$ we have  that the coset $C_y=\R\times \{y\}$ of the horizontal axis satisfies $$m_1(C_y\cap (E(\Omega)+T))=0.$$ 
    Let
    $$S_{C_y}=\{t\in T:(\Omega+t)\cap C_y\not=\emptyset\}.$$
    As $\Omega$ is a product set, we have that $p_1(S_{C_y})\in \Tile(p_1(\Omega);\R)$.
    Observe that as $p_1(\Omega)$ is a finite union of intervals, the set $\Tile(p_1(\Omega),\R)$ is finite up to translations.
    Thus, by \cite[Theorem~1~(iii)]{LW}, we have that there is $\beta>0$ that is an integer multiple of $m_1(p_1(\Omega))$, that only depends on $\Omega$ and not on $y$, such that $p_1(S_{C_y})$ is $\beta\mathbb{Z}$-periodic.
    
    Let $S\subseteq  T$ be a vertical column of $(\Omega,T)$, $P=\{x\}\times \R$ be such that $S\subseteq P$, and  
    $S_k=T\cap (P+(\beta k,0))$ for every $k\in \mathbb{Z}$.
    Then, for $k\in \Z$  we have $x+\beta k\in p_1(S_{C_y})$ for almost every $y\in \R$, and thus $\Omega\oplus S_k$ covers $(\Omega\oplus S_k)+\mathbb{R}(0,1)$ up to a null set.
    Consequently, $S_k$ is a vertical column of $(\Omega,T)$ and we are done.
\end{proof}

Our next lemma shows that we can read a product structure of a tile from product structure of its good approximations.

\begin{lemma}[Product approximations imply product structure]\label{lem:ProductThree}
    Let $T$ be a tiling of $\R^2$ by an axis-parallel polygonal set $\Omega$ and suppose that there is $\epsilon>0$ such that for every good $\epsilon$-approximation $\varphi$ we have that $\Omega^\varphi$ is a product set.
    Then $\Omega$ is a product set.    
\end{lemma}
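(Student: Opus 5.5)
The plan is to show that being a product set is a purely combinatorial property of $\Omega$. This property depends only on the relative order of the vertex coordinates and on the edge structure. A good approximation preserves both, so a single good $\epsilon$-approximation suffices. Such a $\varphi$ exists: Corollary~\ref{cor:GoodApproxExists} gives $\epsilon'>0$ such that every $\epsilon'$-approximation is good, and we take any $\min(\epsilon,\epsilon')$-approximation.

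First I set up a grid encoding of $\Omega$. Let $x_1<\dots<x_a$ be the distinct values of $p_1(V(\Omega))$ and $y_1<\dots<y_b$ the distinct values of $p_2(V(\Omega))$. Since $\Omega$ is axis-parallel and regular open, it is, up to a null set, the union of some of the open cells $Q_{ij}=(x_i,x_{i+1})\times(y_j,y_{j+1})$; indeed, no edge meets the interior of a cell. Let $M_{ij}\in\{0,1\}$ record whether $Q_{ij}\subseteq\Omega$. Then $p_1(\Omega)$ is, up to finitely many points, the union of the columns $(x_i,x_{i+1})$ for which some $M_{ij}=1$, and similarly for $p_2(\Omega)$ with rows. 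Hence $\Omega$ is a product set iff $M_{ij}=1$ whenever column $i$ and row $j$ are each nonempty, i.e.\ $M_{ij}=c_ir_j$ with $c_i=\max_j M_{ij}$ and $r_j=\max_i M_{ij}$. Regularity of $\Omega$ and of $p_1(\Omega)\times p_2(\Omega)$ (as interiors of closures) lets one ignore grid lines.

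Next I compare the grids of $\Omega$ and $\Omega^\varphi$. Because $\varphi$ is good, $p_i(v^\varphi)-p_i((v')^\varphi)$ has the same sign as $p_i(v)-p_i(v')$. So the distinct coordinates of $V(\Omega^\varphi)$ are exactly $\varphi(x_1)<\dots<\varphi(x_a)$ and $\varphi(y_1)<\dots<\varphi(y_b)$, giving a cell bijection $Q_{ij}\leftrightarrow Q^\varphi_{ij}$. I then claim that $M^\varphi_{ij}=M_{ij}$. To see this, count the vertical edges of $\Omega$ met by a horizontal ray from the center of $Q_{ij}$ to $+\infty$: the cell lies in $\Omega$ iff this count is odd. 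An edge $\omega_{v,v'}$ is counted iff $p_1(v)\ge x_{i+1}$ and its $y$-range contains $(y_j,y_{j+1})$. These are order conditions among vertex coordinates, so by goodness they hold for $\omega^\varphi$ and $Q^\varphi_{ij}$ iff they hold for $\omega$ and $Q_{ij}$. Since $E(\Omega^\varphi)=\{\omega^\varphi:\omega\in E(\Omega)\}$, the parities agree.

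I expect this step to be the main obstacle. The difficulty is justifying that $\Omega^\varphi$, defined componentwise in Section~\ref{sec:RationalApprox}, really is the parity region of its edge set. Concretely, one must check that the ray-parity description of $\Omega$ transfers component by component, including for non-simply-connected components and for components nested inside holes. I would handle this by applying the parity description to each connected component $\Omega_0$ and its image $\Omega^\varphi_0$ separately, and then taking unions. The key input is that goodness preserves incidences between vertical edges and horizontal lines through vertices.

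Finally, by hypothesis $\Omega^\varphi$ is a product set, so $M^\varphi$ has the form $c_ir_j$. Since $M=M^\varphi$, the matrix $M$ has the same form, and therefore $\Omega$ is a product set.
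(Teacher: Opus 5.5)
Your argument is correct, but it follows a different route from the paper's.

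\textbf{The paper's route.} It uses the hypothesis for arbitrarily fine approximations. It picks good $\epsilon_n$-approximations $\varphi_n$ with $\epsilon_n\to 0$ and writes $\Omega^{\varphi_n}=A_n\times B_n$. It then passes to the limit in $m_2(\Omega^{\varphi_n})=m_1(A_n)\,m_1(B_n)$ to get $m_2(\Omega)=m_1(p_1(\Omega))\,m_1(p_2(\Omega))$. Since $\Omega\subseteq p_1(\Omega)\times p_2(\Omega)$ and $\Omega$ is regular open, equality follows.

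\textbf{Your route.} You show that being a product set is determined by the order type of the vertex coordinates together with the edge incidences. A good approximation preserves both exactly. So a single good approximation suffices (and you correctly secure its existence via Corollary~\ref{cor:GoodApproxExists}). You in fact get the sharper statement that $\Omega$ is a product set if and only if $\Omega^\varphi$ is, for every good $\varphi$. The paper's limit argument tacitly relies on the same correspondence. It needs the interval endpoints of $A_n,B_n$ to be the $\varphi_n$-images of those of $p_1(\Omega),p_2(\Omega)$, and it needs $m_2(\Omega^{\varphi_n})\to m_2(\Omega)$. Your version makes that input explicit and removes the limiting step.

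\textbf{A simpler way through your flagged obstacle.} You can avoid ray parity. Let $f,g\colon\R\to\R$ be the increasing piecewise-linear bijections with $f(x_i)=\varphi(x_i)$ and $g(y_j)=\varphi(y_j)$, and set $h(x,y)=(f(x),g(y))$. Then $h$ is a homeomorphism sending every edge $\omega$ onto $\omega^\varphi$. Hence it sends each component $\Omega_0$ onto a connected open region with boundary $\bigcup\{\omega^\varphi:\omega\in E(\Omega_0)\}$. By the uniqueness built into the definition of $\Omega^\varphi$, that region is $\Omega_0^\varphi$. So $\Omega^\varphi=h(\Omega)$. Since $h(A\times B)=f(A)\times g(B)$, being a product set transfers in both directions directly, without the cell matrix.

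\textbf{A small inaccuracy.} $p_1(\Omega)\times p_2(\Omega)$ need not be regular open in general. For example, with two unit squares meeting at a corner, $p_1(\Omega)=(0,1)\cup(1,2)$. Your equivalence between product sets and $M_{ij}=c_ir_j$ only needs the regularity of $\Omega$, so nothing breaks.
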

\begin{proof}
    Let $\epsilon_n\to 0$, where $0<\epsilon_n<\epsilon$, and $\varphi_n$ be a good $\epsilon_n$-approximation for every $n\in \mathbb{N}$ (recall that these exist by Corollary~\ref{cor:GoodApproxExists}).
    By the assumption, for every $n\in \mathbb{N}$ we have that $\Omega^{\varphi_n}=A_n\times B_n$ for finite union of rational intervals $A_n,B_n\subseteq  \R$.
    Since $p_1(\Omega^{\varphi_n})=A_n$ and $p_2(\Omega^{\varphi_n})=B_n$ for every $n\in \mathbb{N}$, the endpoints of these intervals are determined by $V(\Omega^{\varphi_n})$.
    Consequently, as $\varphi_n$ is a good $\epsilon_n$-approximation and $\epsilon_n\to 0$, we get that $A_n\to p_1(\Omega)$, $B_n\to p_2(\Omega)$ and $m_2(\Omega^{\varphi_n})\to m_2(\Omega)$.
    Finally, this shows the claim by the combination of
    $$m_2(\Omega^{\varphi_n})=m_1(A_n)\times m_1(B_n)\to m_2(p_1(\Omega)\times p_2(\Omega))$$
    and $\Omega\subseteq  p_1(\Omega)\times p_2(\Omega)$.
\end{proof}

\subsection{Proof of Theorem~\ref{thm:FinLocalComplexityTrichotomy}}
We can now conclude the proof of Theorem~\ref{thm:FinLocalComplexityTrichotomy}:

\begin{proof}[Proof of Theorem~\ref{thm:FinLocalComplexityTrichotomy}]
    By Theorem~\ref{thm:MainEarthSplit}, there is $S_0\in \orbit{T}{\R^2}{\Omega}$ such that, after possibly swapping the vertical and horizontal axes, the earthquake decomposition of $S_0$ is non-trivial.
    If $S_0$ has finite horizontal complexity, then we are done.
    Otherwise, we apply Lemma~\ref{lem:Options} to get $S_1\in \orbit{S_0}{\R^2}{\Omega}$ that admits non-trivial horizontal earthquake decomposition.
    If it additionally satisfies (a) in Lemma~\ref{lem:Options}, then we are done after swapping the vertical and horizontal axes.
    Otherwise, we appeal to Lemmas~\ref{lem:SimultanousColumns},~\ref{lem:ProductOne} and \ref{lem:ProductThree} to conclude that $\Omega$ is a product set.
    By Lemma~\ref{lem:ProductTwo}, we thus have that $\Omega\oplus (S_0)_c$ is periodic,  which contradicts Lemma~\ref{lem:CharInfComplex}, since we assumed   that $S_0$ does not have finite horizontal complexity.
\end{proof}

\section{Structure theorem for minimal tilings}\label{sec:MinimalTiling}

In this section we prove a structural result for \emph{topologically minimal} tilings by axis-parallel polygonal sets.

\begin{definition}[Topologically minimal tilings]\label{def:topminimal}
    Let $X\not=\emptyset$ be a regular open periodic subset of $\R$, $E=X\times \R$, $\Omega$ be a polygonal set and $T\in \Tile(\Omega;E)$.
    We say that  $T$ is {\bf \emph{topologically minimal}} if for every $S\in \orbit{T}{E}{\Omega}$ we have  $\orbit{T}{E}{\Omega}=\orbit{S}{E}{\Omega}$.
\end{definition}

A classical theorem of Birkhoff asserts that for any tiling $T$ of $E$ by $\Omega$ there exists $S\in \orbit{T}{E}{\Omega}$ that is topologically minimal.

We show that topologically minimal tilings by an axis-parallel polygonal set split, after possibly swapping the axes, into finitely many pieces each covering a periodic region and with finite local complexity, together with a union of vertical columns. This  gives Theorem \ref{thm:IntroMainTectonicPlatesFinLocComp} from the Introduction that we restate here for readers convenience.

\mainEarthquaketheorem*

We prove the theorem by analyzing the tectonic plates of the tiling $T$, and showing that under the assumptions of the theorem, there are only finitely many plates of $(T,\Omega)$ which are not vertical columns and each such tectonic plate has finite local complexity.

\begin{lemma}[Weak limits of translates preserve finite horizontal complexity]\label{lem:FinHorComplexLimits}
    Let $T$ be a tiling of $\R^2$ by an axis-parallel polygonal set $\Omega$ that has finite horizontal complexity and $S\in \orbit{T}{\R^2}{\Omega}$.
    Then $S$ has finite horizontal complexity.
\end{lemma}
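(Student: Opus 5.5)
The plan is to show directly that the gap set of $p_1(S)$ is contained in the (finite) gap set of $p_1(T)$. Write $p_1(T)=\{z_i\}_{i\in\Z}$ in increasing order, and let
$$D=\{z_{i+1}-z_i: i\in\Z\}.$$
By finite horizontal complexity, $D$ is a finite set of positive numbers. In particular $p_1(T)$ is $\delta$-separated with $\delta=\min D>0$. The same separation then holds for every translate $p_1(T+g)$.

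Fix a sequence $g_n$ with $T+g_n\to S$. First, $p_1(S)$ is discrete and $\delta$-separated. Suppose $s,s'\in S$ with $p_1(s)\neq p_1(s')$. Pick $s_n,s'_n\in T+g_n$ with $s_n\to s$ and $s'_n\to s'$. For large $n$ we have $p_1(s_n)\neq p_1(s'_n)$, and hence $|p_1(s_n)-p_1(s'_n)|\ge\delta$. Passing to the limit gives $|p_1(s)-p_1(s')|\ge\delta$.

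Next we bound the consecutive gaps of $p_1(S)$. Let $x<y$ be consecutive elements of $p_1(S)$, realized by $s,s'\in S$, and approximate them as above by $s_n,s'_n\in T+g_n$. The key claim is that for large $n$, the points $p_1(s_n)<p_1(s'_n)$ are consecutive in $p_1(T+g_n)=p_1(T)+p_1(g_n)$.

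Suppose instead that for infinitely many $n$ some $w_n\in T+g_n$ has $p_1(s_n)<p_1(w_n)<p_1(s'_n)$. We may choose $w_n$ with $p_2(w_n)$ bounded, using that the tiling covers $\R^2$. Indeed, every vertical line with abscissa in $p_1(T+g_n)$ meets vertices of translates within bounded distance of any given height. Concretely, if $x'\in p_1(T)$ then $(x',y)$ lies within $\diam(\Omega)$ of some element of $T$ whose first coordinate differs from $x'$ by a bounded amount. This needs care, so the cleaner route is to work with $p_1(V(\Omega)+T)$. I would reformulate finite horizontal complexity equivalently in terms of $p_1(V(\Omega)+T)$, as done in Lemma~\ref{lem:ApproximationsOne}. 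For that set, every point $x'$ is realized by a vertex lying on a vertical edge of the tiling, and the vertical line $\{x'\}\times\R$ meets $E(\Omega)+T$ at bounded vertical spacing: every height $y$ has a vertex $v+t$ with $p_1(v+t)=x'$ within distance $\diam(\Omega)$ times a constant. That the horizontal boundary structure along $\{x'\}\times\R$ recurs at bounded vertical spacing is the delicate point, and I expect it to be the main obstacle. I would establish it by the local argument behind \eqref{eq:seqvertex}: an edge endpoint at abscissa $x'$ near height $y$ forces nearby tiles with vertices at abscissa $x'$.

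Granting this, the $w_n$ are bounded. By uniform discreteness and a subsequence, $w_n\to w\in S$ with $x\le p_1(w)\le y$. Moreover $p_1(w_n)$ lies in a gap set drawn from the finite set of sums of elements of $D$ bounded by $y-x+1$. Since $p_1(w_n)-p_1(s_n)$ takes finitely many values, it is eventually constant and positive, and strictly less than $p_1(s'_n)-p_1(s_n)$, which is also eventually constant. Hence $x<p_1(w)<y$, contradicting consecutiveness. Therefore $y-x=\lim\,(p_1(s'_n)-p_1(s_n))\in D$, so every consecutive gap of $p_1(S)$ lies in the finite set $D$.

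Finally, $p_1(S)$ is also unbounded in both directions with gaps bounded by $\max D$, by the same covering argument. Hence $p_1(S)$ has finite local complexity, and $S$ has finite horizontal complexity.
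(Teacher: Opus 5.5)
\textbf{Gap: the gaps of $p_1(S)$ need not lie in $D$.} The central claim of your argument is false. A consecutive gap of $p_1(S)$ need not belong to the gap set $D$ of $p_1(T)$. Equivalently, $p_1(s_n)<p_1(s'_n)$ need not be eventually consecutive in $p_1(T+g_n)$. The ``delicate point'' you flag also fails outright: an abscissa in $p_1(T)$, or in $p_1(V(\Omega)+T)$, need not be realized by vertices at bounded vertical spacing along the whole line.

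Here is a counterexample. Take $\Omega=(0,1)^2$ and $T=\{(k+a_m,m):k,m\in\Z\}$ with $a_0=0$ and $a_m=\tfrac12$ for $m\neq 0$.
\begin{itemize}
\item We have $p_1(T)=\tfrac12\Z$, so $T$ has finite horizontal complexity and $D=\{\tfrac12\}$. The abscissae in $\Z$ occur only in row $0$.
\item With $g_n=(0,-n)$ we get $T+g_n\to S=\{(k+\tfrac12,m):k,m\in\Z\}$.
\item Then $p_1(S)=\Z+\tfrac12$ has consecutive gap $1\notin D$.
\end{itemize}
In your contradiction argument, the intermediate points $w_n\in T+g_n$ with $p_1(w_n)=1$ are forced to sit at height $-n$. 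So they cannot be chosen with bounded $p_2$, and they converge to nothing in $S$. Your closing assertion that the gaps of $p_1(S)$ are bounded by $\max D$ fails in the same example.

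\textbf{The fix.} You do not need consecutiveness at all, and you already have the right tool. If $t_n,t'_n\in T$ and $p_1(t'_n)-p_1(t_n)$ stays bounded, it ranges over a finite set by finite local complexity of $p_1(T)$. A convergent such sequence is therefore eventually constant. Applied to $s_n\to s$ and $s'_n\to s'$, this gives $p_1(s')-p_1(s)\in\{p_1(t')-p_1(t):t,t'\in T\}$ for all $s,s'\in S$.

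Hence, for every $R>0$ and $s\in S$, the window $\{p_1(s')-p_1(s):s'\in S,\ |p_1(s')-p_1(s)|<R\}$ is a subset of the fixed finite set $\{p_1(t')-p_1(t):t,t'\in T,\ |p_1(t')-p_1(t)|<R\}$. So only finitely many windows occur. This also yields discreteness, recovering your correct $\delta$-separation step. This is exactly the paper's proof.
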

\begin{proof}
    First, we show that
   \begin{equation}\label{eq:s'st't}
       p_1(s')-p_1(s)\in \{p_1(t')-p_1(t):t,t'\in T\}
   \end{equation}
    for every $s,s'\in S$.
    By the definition \eqref{def:orbit} of $\orbit{T}{\R^2}{\Omega}$ there is a sequence $\{g_n\}_{n\in \mathbb{N}}\subseteq \R^2$ such that $T+g_n\to S$, and by the definition of weak convergence there are sequences $\{t_n\}_{n\in \mathbb{N}},\{t'_n\}_{n\in \mathbb{N}}\subseteq  T$ such that $t_n+g_n\to s$ and $t'_n+g_n\to s'$.
    Therefore, by the continuity of the projection $p_1$, we have that $$p_1(t'_n)-p_1(t_n)\to p_1(s')-p_1(s).$$
    As $p_1(T)$ has finite local complexity, there must be $n_0\in \mathbb{N}$ such that $p_1(t'_n)-p_1(t_n)$ is constant for every $n\ge n_0$, thus equal to $p_1(s')-p_1(s)$, giving \eqref{eq:s'st't}.
    In particular, $p_1(S)$ is discrete since $T$ has finite horizontal complexity.
    Fix $R>0$ and observe that for every $s\in S$ we have that the set $(\mathcal{B}_R\cap p_1(S-s))$ is contained in  $$\{p_1(t)-p_1(t'):t,t'\in T \ \text{s.t.} \ |p_1(t)-p_1(t')|< R\}.$$
    Since the latter set is finite as $T$ has finite horizontal complexity, we conclude that there are only finitely many sets of the form $(\mathcal{B}_R\cap p_1(S-s))$ for $s\in S$.
    This shows that $S$ has finite horizontal complexity as required.
\end{proof}

\begin{lemma}[Weak limits of translates preserve earthquake decomposition]\label{lem:EarthquakePreserved}
    Let $T$ be a tiling of $\R^2$ by an axis-parallel polygonal set $\Omega$ that is topologically minimal and has finite horizontal complexity, $\beta>0$ be as in Theorem~\ref{thm:PeriodicityMain} and $T+g_n\to S\in \orbit{T}{\R^2}{\Omega}$, where $\{g_n\}_{n\in \mathbb{N}}\subseteq  \beta\Z\times \R$.
    Then
    \begin{equation}\label{eq:e0}
        \Omega\oplus T_c=\Omega\oplus S_c
    \end{equation}
    and there is a bijection
    $$\kappa:\quake{T}{\Omega}\to\quake{S}{\Omega}$$
    such that $\Omega\oplus P=\Omega\oplus \kappa(P)$ for every $P\in \quake{T}{\Omega}$.
\end{lemma}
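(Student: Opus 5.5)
The plan is to track the earthquake regions through the limit by exploiting that each region $\Omega\oplus P$ (and $\Omega\oplus T_c$) is invariant under $\R(0,1)$ and under $(\beta,0)\Z$ (Theorem~\ref{thm:PeriodicityMain}). Since $g_n\in\beta\Z\times\R$, we have $\Omega\oplus(P+g_n)=\Omega\oplus P$ for every $P\in\quake{T}{\Omega}$, and likewise for $T_c$. Corollary~\ref{cor:FinNumberPlates} says $\quake{T}{\Omega}$ is finite. Passing to a subsequence, we may assume that for each $P\in\quakeall{T}{\Omega}$ the translates $P+g_n$ converge weakly to some $S^P\subseteq S$, and that $T_c+g_n\to S^c$. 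A local argument then shows that these limits partition $S$ and that $\Omega\oplus S^P=\Omega\oplus P$ and $\Omega\oplus S^c=\Omega\oplus T_c$ up to null sets. The local argument runs as follows. Near any point, only finitely many tiles meet a bounded ball, and $T$ is uniformly discrete. So each $s\in S$ is a limit of $t_n+g_n$ with $t_n$ eventually in a single piece, because pieces are separated by the fixed vertical lines $\{x\}\times\R$ that bound the invariant regions. Distinct pieces cover disjoint open regions, so their limits also lie in disjoint regions. The set $p_1$ of the boundary lines is fixed and discrete by finite horizontal complexity, so no region can degenerate in the limit.

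Next we identify $S^P$ with a plate of $S$. By Lemma~\ref{lem:FinHorComplexLimits}, $S$ also has finite horizontal complexity, so the earthquake theory applies to $S$. Since $\Omega\oplus S^P=X_P\times\R$ is vertically invariant, $S^P$ is a union of plates of $S$ by Lemma~\ref{lem:BasicQuake}(ii). The task is to show that $S^P$ is a single plate, and that it is not a column whenever $P$ is not a column. Here we use topological minimality. By minimality, $T\in\orbit{S}{\R^2}{\Omega}$. By the Moreover part of the lattice-type argument (compactness, as at the end of the proof of Theorem~\ref{thm:MainEarthSplit}), we can take $S+h_n\to T$ with $h_n\in\beta\Z\times\R$. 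This is possible because the orbit is closed and translations by $[0,\beta)\times\{0\}$ form a compact set. Running the same limiting argument backwards, each plate $Q$ of $S$ yields $T^Q\subseteq T$ with $\Omega\oplus T^Q=\Omega\oplus Q$, and $T^Q$ is a union of plates of $T$.

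Composing the two directions shows that the regions $X_P$ of plates of $T$ and the regions of plates of $S$ refine each other. Each $X_P$ is therefore a union of plate-regions of $S$, and each plate-region of $S$ is a union of plate-regions of $T$. Hence the two partitions of $\R$ into plate-regions coincide, which gives the bijection $\kappa$ with $\Omega\oplus P=\Omega\oplus\kappa(P)$.

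It remains to check that columns correspond to columns, which gives \eqref{eq:e0}. A region of the form $(p_1(\Omega)+a)\times\R$ tiled by a plate forces that plate to be a vertical column. By the volume argument in Lemma~\ref{lem:ColumnOrUnbounded}, such a region is bounded in $p_1$. In contrast, a plate in $\quake{T}{\Omega}$ has a region that meets every long interval (Lemma~\ref{lem:FinNumberPlatesSecondVersion}). So the correspondence preserves the column/non-column dichotomy.

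I expect the main obstacle to be the local limiting step. It must be shown that the limit of a single plate cannot split its region among several plates of $S$ in a way that is not recovered by the reverse direction, and that plates do not merge at the boundary lines in the limit. The first point is handled by the two-sided (minimality) refinement argument. The second relies on the boundary lines $\{x_i\}\times\R$ being fixed under $g_n$, and on proper intersections of horizontal edges being a local, closed condition.
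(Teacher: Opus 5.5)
There is a genuine gap in how you handle the columns. Your limiting step only works for pieces whose regions are invariant under the $g_n$, namely the finitely many plates in $\quake{T}{\Omega}$ and the union $T_c$. An individual column $P\subseteq T_c$ has region $(p_1(\Omega)+x_P)\times\R$, and $g_n$ moves this by $p_1(g_n)\in\beta\Z$, possibly off to infinity. So $P+g_n$ has no reason to converge to a set with region $\Omega\oplus P$, and the claim in your first paragraph for all of $\quakeall{T}{\Omega}$ fails for columns. What you can actually prove is only that each $X_P$ with $P\in\quake{T}{\Omega}$, and $p_1(\Omega\oplus T_c)$, are unions of plate regions of $S$, and symmetrically for $S$. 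This leaves open that $\lim_n(T_c+g_n)$ contains a non-column plate of $S$ whose region happens to be a union of column regions of $T$. It also leaves open that some $\lim_n(P+g_n)$ consists only of columns of $S$. So the two partitions into individual plate regions are not shown to coincide. Your final paragraph presupposes that every column region of $T$ is a single plate region of $S$, so it has nothing to act on. The missing idea is a direct local argument that weak limits of columns are columns. If $s=\lim_n(t_n+g_n)$ with $t_n\in T_c$, the columns $A_n=T\cap(\{p_1(t_n)\}\times\R)$ are one-dimensional tilings by the fixed cross-section of $\Omega$. Hence by \cite{LW}, after passing to a subsequence, $A_n-t_n$ is eventually a fixed $\tilde A$. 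This gives $\tilde A+s\subseteq S$ and $s\in S_c$, and therefore $\Omega\oplus T_c\subseteq\Omega\oplus S_c$.

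The reverse direction also needs repair. Compactness of $[0,\beta)$ only gives $S+h_n\to T-(\alpha,0)$ for some $\alpha$, not $T$ itself. Moreover, to preserve the regions of the individual plates of $S$ (not just of the unions $\lim_n(P+g_n)$), the $h_n$ must lie in $\beta'\Z\times\R$, where $\beta'$ is the period from Theorem~\ref{thm:PeriodicityMain} applied to $S$. With this shift you only get that plate regions of $T$ are unions of plate regions of $S$, which in turn are unions of plate regions of $T-(\alpha,0)$. So "the partitions coincide" does not follow without a further argument. The paper closes both issues as follows:
\begin{enumerate}
\item The column argument in both directions gives $\Omega\oplus T_c\subseteq\Omega\oplus S_c\subseteq(\Omega\oplus T_c)-(\alpha,0)$.
\item $\beta$-periodicity of $p_1(\Omega\oplus T_c)$ forces equality, so $\lim_n(T_c+g_n)=S_c$.
\item Each $\lim_n(P+g_n)$ is then a union of plates disjoint from $S_c$, hence contains a plate of $\quake{S}{\Omega}$.
\item Counting in both directions gives $|\quake{T}{\Omega}|=|\quake{S}{\Omega}|$, which forces each such limit to be a single plate.
\end{enumerate}
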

\begin{proof}
    Let $E_c\coloneqq \Omega\oplus T_c$ and $E_P\coloneqq \Omega\oplus P$ for every $P\in \quake{T}{\Omega}$.
    By Theorem~\ref{thm:PeriodicityMain}, we have that \begin{equation}\label{eq:EGamma}
        \{g_n\}_{n\in \mathbb{N}}\subseteq  \Gamma(E_c)\cap \left(\bigcap_{P\in \quake{T}{\Omega}} \Gamma(E_P)\right).
    \end{equation}
    Thus, by Lemma~\ref{lem:BasicWeak} and since $T+g_n\to S$, we can partition $S$ as 
    \begin{equation}\label{eq:Decomposition}
        S=U_c\sqcup \left(\bigsqcup_{P\in \quake{T}{\Omega}} U_P\right),
    \end{equation}
    where $U_c$ is the weak limit of the sequence $T_c+g_n$, as $n\to \infty$ and for every $P\in \quake{T}{\Omega}$, $U_P$ is the weak limit of the sequence $P+g_n$, as $n\to \infty$. Moreover, as by \eqref{eq:EGamma}, for every $n\in \N$, $E_c=E_c+g_n$ and $E_P=E_P+g_n$, for every $P\in \quake{T}{\Omega}$, we must have that  $E_c=\Omega\oplus U_c$ and $E_P=\Omega\oplus U_P$ for every $P\in \quake{T}{\Omega}$. Thus, by Lemma~\ref{lem:BasicQuake} for every $P\in \quake{T}{\Omega}$ there is $P_S\in \quakeall{S}{\Omega}$ such that  $P_S\subseteq U_P$.
    
   Note that to complete the proof, it suffices to show that  $S_c=U_c$ and that \eqref{eq:Decomposition} is in fact the earthquake decomposition of $S$. Indeed, if this is the case, then setting $\kappa(P)=U_P$ gives the claim. Thus, the rest of the proof will be devoted to showing that weak limits of translates of a tiling preserve its earthquake decomposition.

    Let $s\in U_c$ and fix a sequence $\{t_n\}_{n\in \N}\subseteq T_c$ such that $t_n+g_n\to s$. Then, we have that the set  $A_n\coloneqq T\cap (\{p_1(t_n)\}\times \R)$  is a vertical column of $(\Omega,T)$ for every $n\in \mathbb{N}$.  Let $x\in \Omega$ be such that $p_1(x)\not\in p_1(V(\Omega))$, and $\Omega_x\coloneqq \Omega \cap (\{p_1(x)\}\times \R)$. Then $\Omega_x$ is a finite union of intervals (since $\Omega$ is an open polygonal set) and $A_n\in \Tile(\Omega_x; \{p_1(t_n+x)\}\times \R)$.
    By  \cite[Theorem~1]{LW} and since the sequence $\{A_n-t_n\}_{n\in \N}$ converges weakly, we conclude that there is $n_0\in \mathbb{N}$  such that $A_n=\tilde{A}+t_n$ for every $n\ge n_0$, where $\tilde{A}=A_{n_0}-t_{n_0}$.
    Hence $$\tilde{A}+s\subseteq  S\cap (\{p_1(s)\}\times \R),$$ and as $\Omega\oplus S$ is a tiling we conclude that $s\in S_c$.
    Since this is true for every $s\in U_c$, we conclude that $U_c\subseteq S_c$ and therefore 
    \begin{equation}\label{eq:e1}
        E_c =\Omega\oplus T_c =\Omega \oplus U_c \subseteq \Omega\oplus S_c. 
    \end{equation}

     By Lemma~\ref{lem:FinHorComplexLimits}, we have that $S$ has finite horizontal complexity. So, by applying Theorem~\ref{thm:PeriodicityMain} for $S\in \Tile(\Omega; \R^2)$, there is $\beta'>0$ such that $\Omega\oplus S_c$ and $\Omega\oplus P$ are $(\beta',0)\Z$-periodic for every $P\in \quake{S}{\Omega}$. As $T$ is topologically minimal, there is a sequence $\{h_n\}_{n\in \mathbb{N}}$ such that $S+h_n\to T$.
       Let $\alpha_n$ be the number in $[0,\beta')$ such that $\alpha_n =p_1(h_n) \mod \beta'\Z $.
        By compactness, on restricting to a subsequence if needed, we may assume that  $\alpha_n\to \alpha\in [0,\beta']$, and thus
        $$S+(h_n-(\alpha_n,0))\to T-(\alpha,0).$$
        Let $g_n'\coloneqq h_n-(\alpha_n,0)$. Then, by construction $g_n'\in \beta'\Z\times \R$ for every $n\in \mathbb{N}$. Therefore, repeating the argument above, we obtain that $U_c'\subseteq T_c-(\alpha,0)$, where $U_c'$ is the weak limit of $S_c+g_n'$. Therefore 
        \begin{equation}\label{eq:e2}
            \Omega \oplus S_c=\Omega\oplus U_c' \subseteq \Omega\oplus T_c -(\alpha,0) =E_c-(\alpha,0).
        \end{equation}
        Combining \eqref{eq:e1} and \eqref{eq:e2} and using the $\beta\mathbb{Z}$-periodicity of $p_1(E_c)$, we obtain that 
        $p_1(E_c)= p_1(E_c)-\alpha$ and so,  \eqref{eq:e0} holds. Moreover, as both $S_c$ and $U_c$ are subsets of the tiling $S$, we must have $U_c=S_c$.  
Thus, for every $P\in \quake{T}{\Omega}$ there is $P_S\in \quake{S}{\Omega}$ such that $P_S\subseteq U_P$;
    in particular,  $$|\quake{T}{\Omega}|\le |\quake{S}{\Omega}|.$$
        By the same argument applied to the case where the roles of $S$ and $T$ are swapped,  we infer that for every $P\in \quake{S}{\Omega}$ there is $P_T\in \quake{T}{\Omega}$ such that $P_T-(\alpha,0)\subseteq U_P'$, and thus
      $$|\quake{T}{\Omega}|\ge |\quake{S}{\Omega}|.$$
        We therefore have $$|\quake{T}{\Omega}|= |\quake{S}{\Omega}|,$$ which completes the proof.
\end{proof}

\subsection{Proof of Theorem~\ref{thm:IntroMainTectonicPlatesFinLocComp}} We can now conclude the proof of Theorem~\ref{thm:IntroMainTectonicPlatesFinLocComp}:

\begin{proof}
    The claim easily follows if, after possibly swapping the vertical and horizontal axes, we have $T=T_c$.
    Let us assume that this is not the case, and suppose first that $T$ has finite local complexity.
    By Corollary~\ref{cor:FinNumberPlates}, there is $m\ge 0$ and an enumeration of $\quake{T}{\Omega}$ as $T_1,\dots,T_m$.
    The claim follows easily in case that the earthquake decomposition of $T$ with respect to $\Omega$ is trivial by taking $T=T_1$ and $m=1$.
    Otherwise, we have that $P$ has finite local complexity for every $P\in \quake{T}{\Omega}$ and, by Lemma~\ref{lem:CharInfComplex}~(4), that $T$ has finite horizontal complexity.
    Hence the claim follows with $T=T_c\sqcup T_1\sqcup\dots\sqcup T_m$ by Theorem~\ref{thm:PeriodicityMain}.
     
    Suppose now that $T$ does not have finite local complexity.
    By Theorem~\ref{thm:FinLocalComplexityTrichotomy}, after possibly swapping the vertical and horizontal axes, there is $S\in \orbit{T}{\R^2}{\Omega}$ with finite horizontal complexity and with non-trivial vertical earthquake decomposition.
    Let $\beta_S>0$ be as in Theorem~\ref{thm:PeriodicityMain} applied to $S$.
    Since $T$ is topologically minimal, there is a sequence $(h_n)_{n\in \N}$ such that $S+h_n\to T$.
    Write $p_1(h_n)=k_n\beta_S+a_n$, where $a_n\in[0,\beta_S)$.
    After passing to a subsequence, assume $a_n\to a$.
    Then
    $$S+(h_n-(a_n,0))\longrightarrow T-(a,0),\qquad h_n-(a_n,0)\in\beta_S\Z\times\R.$$
    Lemma~\ref{lem:EarthquakePreserved} implies that $T-(a,0)$, and hence $T$, has nontrivial vertical earthquake decomposition, and by Lemma~\ref{lem:FinHorComplexLimits} we have that $T$ has finite horizontal complexity.
     
    By Theorem~\ref{thm:PeriodicityMain}, there is $\beta>0$ such that 
    $$\Omega\oplus T_{c} \text{ and } \Omega\oplus P$$
    are $\R(0,1)$-invariant and $(\beta,0)\mathbb{Z}$-periodic for every $P\in \quake{T}{\Omega}$. Since $T\neq T_c$ and by Corollary~\ref{cor:FinNumberPlates},  the set $\quake{T}{\Omega}$ has $m$ elements, where $1\leq m<\infty$; thus, $T=T_c\sqcup T_1\sqcup\dots\sqcup T_m$, where $\{T_1,\dots, T_m\}=\quake{T}{\Omega}$. Now, by Theorem~\ref{thm:PeriodicityMain}, for each $1\leq j\leq m$ $\Omega\oplus T_j$ is $(\beta,0)$-periodic. Hence, to complete the proof it suffices to show for every $P\in \quake{T}{\Omega}$ that $P$ has finite local complexity.

    Suppose, for a contradiction, that there is $P_0\in \quake{T}{\Omega}$ that does not have finite local complexity.
    Let $\Lambda$ be the lattice generated by $(0,1)$ and $(\beta,0)$ and observe that, by Lemma~\ref{lem:BasicQuake}~(i) and Theorem~\ref{thm:PeriodicityMain}, we have 
    $\Lambda\subseteq \Gamma(\Omega\oplus P_0)$.
    Thus, by Theorem~\ref{thm:MainEarthSplit}, there is a sequence $\{\lambda_n\}_{n\in \mathbb{N}}\subseteq \Lambda$ such that  $$P_0+\lambda_n\to U_{P_0},$$ where $U_{P_0}$ can be partitioned as $U_{P_0}=S_0\sqcup S_1$ with $S_i\not=\emptyset$ and $\Omega\oplus S_i$ is $\R(0,1)$-invariant for every $i\in \{0,1\}$. Moreover, since by Lemma~\ref{lem:BasicWeak}, after possibly passing to a subsequence, we may assume that $$T+\lambda_n\to S\in \orbit{T}{\R^2}{\Omega},$$ we can apply Lemma~\ref{lem:EarthquakePreserved} to conclude that $U_{P_0}\in \quake{S}{\Omega}$. However, this contradicts Lemma~\ref{lem:BasicQuake}~(ii), therefore such $P_0$ cannot exist and the statement is proven.
\end{proof}

\section{Finer structure theorem for discrete and rational tiles}\label{sec:FinerStructure}

In this section, we prove a refinement of the decomposition provided by Theorem~\ref{thm:IntroMainTectonicPlatesFinLocComp} for tilings by axis-parallel polygonal sets with rational vertices.
Recall that a vector $(a,b)\in \Z^2$ is {\bf \emph{primitive}} if $\text{gcd}(a,b)=1$, and vectors $v,w\in \Z^2\setminus \{(0,0)\}$ are {\bf \emph{incommensurable}} if there are no nonzero integers $m$ and $n$ such that $m\cdot v=n\cdot w$.

\begin{theorem}\label{thm:GoodParamDiscrete}
    Let $F$ be a finite non-empty subset of $\mathbb{Z}^2$, $E$ be an $\ell\mathbb{Z}^2$-periodic subset of $\mathbb{Z}^2$ for some positive integer $\ell$ and $A$ be a tiling of $E$ by $F$.
    Then there is a lattice $\langle h,g\rangle\subseteq  \mathbb{Z}^2$, a sequence $(k_j,\ell_j)_{j=1}^m\subseteq \Z^2$ such that $k_j\not=0$ for every $1\le j \le m$, decompositions $A=A_1\sqcup \dots\sqcup A_m$ and
    $$A_j=\bigsqcup_{Q \in L_j} A_{j,Q},$$
    where $L_j$ is the set of all cosets of $\langle  h_j\rangle$ that intersect $A_j$, $ h_j$ is a primitive element of $\mathbb{Z}^2$ such that $k_jh+\ell_jg\in \langle  h_j\rangle$ and $A_{j,Q}=A_j\cap Q$ for every $Q\in L_j$, such that
    \begin{enumerate}
        \item $A_{j,Q}$ is $\langle k_jh+\ell_jg\rangle$-periodic for every $Q\in L_j$ and $1\le j \le m$,
        \item $F\oplus A_{j,Q}+ng=F\oplus A_{j,Q+ng}$ for every $n\in \mathbb{Z}$, $Q\in L_j$ and $1\le j \le m$.
    \end{enumerate}
\end{theorem}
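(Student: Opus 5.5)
The plan is to start from the Greenfeld--Tao structure theory for tilings of periodic subsets of $\Z^2$, namely the version of Proposition~\ref{pr:EarthquakeStructure} and \cite[Theorem~5.4, Lemma~5.1]{GreenfeldTaoStructureZd} for $\ell\Z^2$-periodic $E$. This gives pairwise incommensurable $h_1,\dots,h_m\in\Z^2\setminus\{0\}$ and a decomposition $A=A^1\sqcup\dots\sqcup A^m$ with $A^j$ being $\langle h_j\rangle$-periodic and $F\oplus A^j$ being $N\Z^2$-periodic, exactly as in Lemma~\ref{pr:SetPeriodic}. The first step is to choose the lattice $\langle h,g\rangle$. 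Take $g=(0,N')$ and $h=(N',0)$ with $N'$ a common multiple of $N$, $\ell$ and all the relevant periods, so that every $h_j$ (after replacing it by a multiple) can be written as $k_jh+\ell_jg$. The difficulty is that condition (2) demands translation by $g$ specifically, while $k_j\neq0$ forces the period $k_jh+\ell_jg$ to be non-parallel to $g$. So I will instead choose $g$ adapted to the slicing direction. Since the $h_j$ are pairwise incommensurable, at most one of them is parallel to any fixed direction. I will pick $g$ to be a large multiple of a primitive vector that is incommensurable with every $h_j$, and then set $h$ so that $\langle h,g\rangle$ is a sublattice of $N'\Z^2$ containing suitable multiples of all $h_j$. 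Incommensurability of $g$ with each $h_j$ then gives $k_j\neq 0$ automatically. With this choice, put $A_j:=A^j$ and let $h_j$ be the primitive vector in direction $h_j$.

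The second step sets up the decomposition. Let $L_j$ be the set of cosets of $\langle h_j\rangle$ (viewed in $\Z^2$, i.e.\ the lines $Q=x+\Z h_j$) meeting $A_j$, and let $A_{j,Q}=A_j\cap Q$. Property (1) is immediate: $A_j$ is periodic under a multiple of $h_j$, that multiple lies in $\langle h,g\rangle$, and it can be written as $k_jh+\ell_jg$. I will scale $h,g$ if necessary so that this multiple exactly equals $k_jh+\ell_jg$.

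The third step, which is the main obstacle, is property (2): translating the slice $A_{j,Q}$ by $g$ must give the slice on $Q+g$ after applying $F\oplus$. This is precisely the slicing statement of \cite[Lemma~5.1]{GreenfeldTaoStructureZd}, as used in Lemma~\ref{lem:slices}, now applied with the direction $h_j$ in place of $(0,1)$. First I would apply a unimodular change of coordinates sending $h_j$ to a multiple of $(0,1)$. Then Lemma~5.1 (with $A:=A_j$, whose hypotheses hold because $F\oplus A_j$ is $N\Z^2$-periodic) gives $q,s$ such that $(F\cap \text{line})\oplus A_j$ is $qks\Z^2$-periodic for each line parallel to $h_j$. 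Repeating the argument of Lemma~\ref{lem:slices} then yields $F\oplus A_{j,Q}+v=F\oplus A_{j,Q+v}$ for every $v$ in $K_j\Z^2$. The plan is to ensure $g\in K_j\Z^2$ for all $j$ by taking $g$ to be a multiple of $\mathrm{lcm}_j K_j$ times the chosen primitive direction. The delicate points are that the slice identity in Lemma~\ref{lem:slices} is stated for translations transverse to the period direction, which $g$ is by incommensurability, and that enlarging $g$ and $h$ remains compatible with the requirement $k_jh+\ell_jg\in\langle h_j\rangle$ and $k_j\neq0$. I expect checking this compatibility to take the most care.
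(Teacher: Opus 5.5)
Your proposal is correct and follows essentially the paper's route. You decompose $A$ via the Greenfeld--Tao structure theorem into $\langle h_j\rangle$-periodic pieces with $F\oplus A_j$ periodic. You then derive the slice-shifting identity along cosets of $\langle h_j\rangle$, where the paper packages this as Lemma~\ref{lm:ShiftingSlices} via \cite[Corollary~5.3]{GreenfeldTaoStructureZd} and you reach it through \cite[Lemma~5.1]{GreenfeldTaoStructureZd} plus the argument of Lemma~\ref{lem:slices}. Finally you take $g$ inside the shift lattice and incommensurable with every $h_j$, which makes $k_j\neq 0$ automatic.

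The two points you flag as delicate are harmless. The argument of Lemma~\ref{lem:slices} never uses that the translation is transverse, so it gives the identity for every $v\in K_j\Z^2$. Also, any full-rank $\langle h,g\rangle$ contains a nonzero multiple of $h_j$ that can be taken to be a multiple of the period of $A_j$; this is the precise form of your ``that multiple lies in $\langle h,g\rangle$'' in property (1).
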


To prove Theorem \ref{thm:GoodParamDiscrete}, we will use the following lemma, which follows from \cite[Corollary~5.3]{GreenfeldTaoStructureZd}.

\begin{lemma}\label{lm:ShiftingSlices}
    Let $k,\ell\ge 1$ be natural numbers, and let $h$ be a primitive element of $\mathbb{Z}^2$.
    Suppose that $F$ is a finite subset of $\mathbb{Z}^2$, $A$ is $\langle \ell k h\rangle$-periodic subset of $\mathbb{Z}^2$ such that $F\oplus A$ is $\ell\mathbb{Z}^2$-periodic.
    Set $h^\perp$ to be a primitive element of $\mathbb{Z}^2$ that satisfies $h^\perp \perp h$.
    Then there is a positive integer $K$ such that 
    $$\left(F\oplus (A\cap (\langle h\rangle+x))\right)+Kh^\perp=F\oplus (A\cap (\langle h\rangle+x+Kh^\perp))$$
    holds for every $x\in \mathbb{Z}^2$. 
\end{lemma}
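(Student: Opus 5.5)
The plan is to reduce to the slicing statement of \cite[Corollary~5.3]{GreenfeldTaoStructureZd} (equivalently \cite[Lemma~5.1]{GreenfeldTaoStructureZd}), after a linear change of coordinates, and then argue exactly as in the proof of Lemma~\ref{lem:slices}.

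\textbf{Coordinates and the slicing lemma.} Since $h$ is primitive, I choose $g\in\Z^2$ with $\langle h,g\rangle=\Z^2$. Let $L\in GL_2(\Z)$ send $h\mapsto(0,1)$ and $g\mapsto(1,0)$. Tiling equations are preserved by $L$. Hence $LA$ is $\langle(0,\ell k)\rangle$-periodic and $LF\oplus LA=L(F\oplus A)$ is $L(\ell\Z^2)=\ell\Z^2$-periodic. These are precisely the hypotheses used in the proof of Lemma~\ref{lem:slices}, namely $h=(0,1)$, period $\ell$, and vertical period $k$ (after replacing $k$ by $\ell k$). Applying \cite[Lemma~5.1]{GreenfeldTaoStructureZd} therefore gives positive $q,s$ such that
$$(LF\cap(\{n\}\times\Z))\oplus LA$$
is $q\ell k s\Z^2$-periodic for every $n\in\Z$. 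Pulling back by $L^{-1}$, each "slice" $(F\cap(\langle h\rangle+f))\oplus A$ is $K'\Z^2$-periodic for $K'=q\ell ks$ and every $f\in F$.

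\textbf{Shifting slices.} Fix $x$ and $u=a+f\in F\oplus(A\cap(\langle h\rangle+x))$. By periodicity of the slice through $f$, we can write $u+K'g=a'+f'$ with $f'\in F\cap(\langle h\rangle+f)$. The $g$-coordinate (the coefficient of $g$ in the basis $h,g$) of $f'$ equals that of $f$, so the $g$-coordinate of $a'$ is that of $a$ plus $K'$. This gives $a'\in A\cap(\langle h\rangle+x+K'g)$, and hence one inclusion. The reverse inclusion follows by the same argument with $-K'$. This proves the identity with $K'g$ in place of $Kh^\perp$.

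\textbf{Passing to $h^\perp$, the main obstacle.} The statement uses $h^\perp$, not $g$. Write $h^\perp=\pm g+ch$ for some $c\in\Z$; this is possible because $\det(h,h^\perp)=\pm1$ when $h=(a,b)$ and $h^\perp=(-b,a)$ are primitive. The cosets $\langle h\rangle+x+K'h^\perp$ and $\langle h\rangle+x\pm K'g$ coincide, since they differ by $K'ch\in\langle h\rangle$. On the other side, translating $F\oplus(\cdot)$ by $K'h^\perp$ instead of $\pm K'g$ introduces an extra translate by $K'ch$. The main point to verify is that $F\oplus(A\cap(\langle h\rangle+y))$ is invariant under this translate. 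It is: $A$ is $\langle \ell kh\rangle$-periodic, so each slice $A\cap(\langle h\rangle+y)$ is too. I will therefore take $K=\ell kK'$, which makes $K'ch$ a multiple of $\ell kh$. The sign $\pm$ is handled by the reverse inclusion already proved, which completes the lemma.
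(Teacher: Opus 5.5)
Your first two steps are sound. After the $GL_2(\Z)$ change of coordinates, \cite[Lemma~5.1]{GreenfeldTaoStructureZd} applies exactly as in the proof of Lemma~\ref{lem:slices}, and the elementwise argument correctly gives the identity with $K'g$ in place of $Kh^\perp$.

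The last paragraph, however, rests on a false claim. For $h=(a,b)$ and $h^\perp=(-b,a)$ one has $\det(h,h^\perp)=a^2+b^2$, not $\pm1$. So $\langle h,h^\perp\rangle$ has index $a^2+b^2$ in $\Z^2$, and $h^\perp$ is in general \emph{not} of the form $\pm g+ch$. For example, $h=(1,1)$ and $g=(1,0)$ give $h^\perp=(-1,1)=-2g+h$.

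The repair is easy. Since $\{h,g\}$ is a basis, $h^\perp=\mu g+ch$ for some $\mu,c\in\Z$. With $K=\ell kK'$, the identity for the shift $K\mu g$ follows by applying your $K'g$-identity $|\mu|\ell k$ times in the appropriate direction. The extra translate $Kch\in\langle \ell kh\rangle$ is absorbed by the periodicity of each slice, as you say.

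Better still, the detour through $g$ is unnecessary. Each slice sum $(F\cap(\langle h\rangle+f))\oplus A$ is $K'\Z^2$-periodic, hence invariant under $K'h^\perp$. Your Step~2 argument only uses that $f'-f\in\langle h\rangle$, so it yields the lemma directly with $K=K'$ and the shift $K'h^\perp$. The step you called the main obstacle is therefore not an obstacle at all.

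The paper takes a different route. It invokes \cite[Corollary~5.3]{GreenfeldTaoStructureZd}, which provides a $qks\Z^2$-periodic set $A'$ with $F\oplus(A\cap(\langle h\rangle+x))=F\oplus(A'\cap(\langle h\rangle+x))$ for every $x$. The identity then follows from a one-line chain of equalities, because $A'+Kh^\perp=A'$ and translation commutes with intersecting with cosets. No change of basis or coordinate bookkeeping is needed.

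Your route uses only the slice-periodicity statement of \cite[Lemma~5.1]{GreenfeldTaoStructureZd}, not the replacement set $A'$, together with the elementary decomposition argument from Lemma~\ref{lem:slices}. It is a bit more hands-on but equally valid once the final step is corrected as above.
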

\begin{proof}
    Indeed, \cite[Corollary~5.3]{GreenfeldTaoStructureZd} guarantees the existence of positive integers $s,q$ and a $qks\mathbb{Z}^2$-periodic set $A'$ such that
    $$F\oplus (A\cap (\langle h\rangle+x))=F\oplus (A'\cap (\langle h\rangle+x))$$
    holds for every $x\in \mathbb{Z}^2$. Thus, setting $K=qks$, we obtain that
    \begin{equation*}
        \begin{split}
            \left(F\oplus (A\cap (\langle h\rangle+x))\right)+Kh^\perp= & \ \left(F\oplus (A'\cap (\langle h\rangle+x))\right)+Kh^\perp\\
            = & \ F\oplus ((A'\cap (\langle h\rangle+x))+Kh^\perp)\\
            = & \ F\oplus ((A'+Kh^\perp)\cap (\langle h\rangle+x+Kh^\perp))\\
            = & \ F\oplus (A'\cap (\langle h\rangle+x+Kh^\perp))\\
            = & \ F\oplus (A\cap (\langle h\rangle+x+Kh^\perp))
        \end{split}
    \end{equation*}
    for every $x\in \mathbb{Z}^2$, where in the fourth equality we have used that $A'+Kh^\perp=A'$ as $Kh^\perp \in K\mathbb{Z}^2$ and $A'$ is  $K\mathbb{Z}^2$-periodic.
\end{proof}

\begin{proof}[Proof of Theorem~\ref{thm:GoodParamDiscrete}]
    If $|F|=1$, then the claim follows by taking $h=(\ell,0)$, $g=(0,\ell)$, $m=1$, $k_1=1$, $\ell_1=0$, $h_1=(1,0)$ and $A=A_1$.
    Hence, we assume that $|F|>1$, and, after possibly shifting $F$ and $A$, that $0\in F$.
    By \cite[Theorem~1.4]{GreenfeldTaoStructureZd}, there is a lattice $\Lambda\subseteq  \ell\mathbb{Z}^2$, pairwise incommensurable vectors $h_1,\dots,h_m\in \mathbb{Z}^2\setminus \{(0,0)\}$ and a positive integer $L$ such that for every coset $\Lambda+x$ of $\Lambda$ there is $1\le j\le m$ such that the intersection of $A$ with $\Lambda+x$ is $\langle \ell L h_j\rangle$-periodic.
    Replacing each $h_j$ by its primitive parallel vector and enlarging $L$ if necessary, we may assume that every $h_j$ is primitive.

    Assign to each coset $\Lambda+x$ the smallest index $1\le j\le m$ such that the intersection of $A$ with $\Lambda+x$ is $\langle \ell L h_j\rangle$-periodic.
    For $1\le j\le m$, define $A_j$ to be the union of those intersections of $A$ with cosets of $\Lambda$ that are assigned index $j$.
    Observe that $A_j$ is $\langle \ell L h_j\rangle$-periodic for every $1\le j\le m$ and we have $A=A_1\sqcup\dots\sqcup A_m$.

    If $m=1$, then we set $N_1=1$ and observe that $F\oplus A_1=F\oplus A$ is $\ell L N_1\Z^2$-periodic by the assumption.
    If $m>1$, then by \cite[Theorem~5.4]{GreenfeldTaoStructureZd}, we have that $F\oplus A_j$ is $\ell LN_j\mathbb{Z}^2$-periodic for some integer $N_j$ for every $1\le j\le m$. Then, applying Lemma~\ref{lm:ShiftingSlices} for $F\oplus A_j$ we obtain a positive integer $K_j$ and a primitive vector $h^\perp_j$ such that $h_j\perp h^\perp_j$ and
    \begin{equation}\label{eqseven}
        \left(F\oplus (A_j\cap (\langle h_j\rangle+x))\right)+K_jh^\perp_j=F\oplus (A_j\cap (\langle h_j\rangle+x+K_jh^\perp_j))
    \end{equation}
    for every $x\in \mathbb{Z}^2$.

    Let $L_j$ be the set of cosets of $\langle h_j\rangle$ that have non-empty intersection with $A_j$ and for every $Q\in L_j$ let $A_{j,Q}=A_j\cap Q$. Set $M_j=\ell L N_j$.
    
    {\bf Claim I}
        For every $1\le j\le m$, we have that $A_{j,Q}$ is $\langle M_jh_j\rangle$-periodic and 
        $$F\oplus A_{j,Q}+ (a(M_jh_j)+b(K_j h^\perp_j))=F\oplus A_{j,Q +(a(M_jh_j)+b(K_j h^\perp_j))}$$
        for every $(a,b)\in \mathbb{Z}^2$ and $Q\in L_j$.

    \begin{proof}[Proof of Claim I]
        Let $t\in A_{j,Q}$ and $r\in \mathbb{Z}$, then we have that $t+rM_jh_j\in A_j$ as $A_j$ is $\langle M_jh_j\rangle$-periodic.
        Since $t\in A_{j,Q}=A_j\cap Q$ and $Q$ is a coset of $\langle h_j\rangle$, we obtain that $t+rM_jh_j\in Q$.
        Consequently, $t+rM_jh_j\in A_{j,Q}$.
        As $t\in A_{j,Q}$ and $r\in \mathbb{Z}$ were arbitrary, we conclude that $A_{j,Q}$ is $\langle M_jh_j\rangle$-periodic. Thus, we have
        \begin{equation*}
            \begin{split}
                F\oplus A_{j,Q}+ (a(M_jh_j)+b(K_j h^\perp_j))= & \ F\oplus (A_{j,Q}+ a(M_jh_j))+b(K_j h^\perp_j)\\
                = & \ F\oplus A_{j,Q}+ b(K_j h^\perp_j)
            \end{split}
        \end{equation*}
        and, since $Q +a(M_jh_j)=Q$, we also have 
        $$F\oplus A_{j,Q +(a(M_jh_j)+b(K_j h^\perp_j))}=F\oplus A_{j,Q +b(K_j h^\perp_j)}.$$ 
        Therefore, we may assume that $a=0$.

        Let $x\in \mathbb{Z}^2$ be such that $Q=\langle h_j\rangle+x$.
        Then we have 
        \begin{equation*}
            \begin{split}
                F\oplus A_{j,Q}+K_jh^\perp_j= & \ \left(F\oplus (A_j\cap Q)\right)+K_j h^\perp_j\\
                = & \ \left(F\oplus (A_j\cap (\langle h_j\rangle+x))\right)+K_j h^\perp_j\\
                = & \ F\oplus (A_j\cap (\langle  h_j\rangle+x+K_j h^\perp_j))\\
                = & \ F\oplus (A_{j}\cap (Q+K_j h^\perp_j))\\
                = & \ F\oplus A_{j,Q+K_j h^\perp_j}
            \end{split}
        \end{equation*}
        by \eqref{eqseven}, where the last equality follows from the fact that $A_{j}\cap (Q+K_j h^\perp_j)$ is nonempty.
        By induction, this gives the claim for every $b\ge 0$.

        Now, let $b=-1$.
        Take $Q-K_j h^\perp_j=\langle  h_j\rangle+y$ for some $y\in \mathbb{Z}^2$.
        Then, by \eqref{eqseven}  we have
        \begin{equation*}
            \begin{split}
                F\oplus (A_{j}\cap (Q-K_j h^\perp_j))+K_j h^\perp_j= & \ F\oplus (A_{j}\cap (\langle  h_j\rangle+y))+K_j h^\perp_j\\
                = & \ F\oplus (A_j\cap (\langle  h_j\rangle+y+K_j h^\perp_j))\\
                = & \ F\oplus (A_j\cap Q)=F\oplus A_{j,Q}.
            \end{split}
        \end{equation*}
        In particular, we have that $A_{j}\cap (Q-K_j h^\perp_j)$ is nonempty.
        Consequently, we obtain
        $$F\oplus (A_{j}\cap (Q-K_j h^\perp_j))=F\oplus A_{j,Q}-K_j h^\perp_j.$$
        By induction, this gives the claim for every $b< 0$.
    \end{proof}

    To complete the proof, it remains to find the lattice $\langle h,g\rangle$ and the sequence $(k_i,\ell_i)_{i=1}^m$ such that $k_i\not =0$ for every $1\le i\le m$ satisfying the statement.
Let $h,g$ be two nonzero vectors in the lattice $\bigcap_{j=1}^m \langle M_j h_j,K_j h^\perp_j\rangle$ such that $h\perp g$ and $g$ is incommensurable with $ h_j$ for every $1\le j\le m$.

        {\bf Claim II.} For every $1\le j\le m$, there is $(k_j,\ell_j)\in \mathbb{Z}^2$ such that $k_j\not= 0$ and $k_j h+\ell_j g$ is a non-zero integer multiple of $M_j h_j$.

    \begin{proof}[Proof of Claim II] 
        Let  $1\le j\le m$. Note that if such $(k_j,\ell_j)$ exists, then by the incommensurability of $g$ and $h_j$, we must have $k_j\not= 0$. Then, there exist $a,b,c,d\in \mathbb{Z}$ such that $aM_j h_j+bK_j h^\perp_j=g$ and $cM_j h_j+dK_j h^\perp_j=h$.
        Recall that $M_j,K_j>0$ and note that $b\not= 0$. 
        
        If $d=0$, then $c\not= 0$ and we can complete the proof by setting $k_j=1$ and $\ell_j=0$.
        If $d\not =0$, then we have
        $$dg-bh=daM_j h_j+dbK_j h^\perp_j-bcM_j h_j-bdK_j h^\perp_j=(da-bc)M_j h_j.$$
        The assumption $h\perp g$ implies that $dg-bh\not =0$.
        Consequently $(da-bc)M_j\not = 0$ and we can set $k_j=-b$ and $\ell_j=d$ to complete the proof of the claim.       
    \end{proof}

    Note that part (1) of Theorem \ref{thm:GoodParamDiscrete} immediately follows from the Claim II.
    To see part (2), let $1\le j\le m$, $Q\in L_j$ and $n\in \mathbb{Z}$.
    Write $g=aM_j h_j+bK_j h^\perp_j$.
    Then, by Claim I, we have
    \begin{equation*}
        \begin{split}
            F\oplus A_{j,Q}+ng= & \ F\oplus A_{j,Q}+(naM_j h_j+nbK_j h^\perp_j)\\
            = & \ F\oplus A_{j,Q+n(aM_j h_j+bK_j h^\perp_j)}=F\oplus A_{j,Q+ng},
        \end{split}
    \end{equation*}
    as needed.   
\end{proof}

As an immediate corollary of Theorem~\ref{thm:GoodParamDiscrete}, we get a decomposition for tilings by axis-parallel polygonal sets with rational vertices. 

\begin{corollary}\label{cor:MainRational}
    Let $X\not=\emptyset$ be a regular open periodic subset of $\R$, $T\subseteq  \mathbb{R}^2$ be a tiling of $E=X\times \mathbb{R}$ by an axis-parallel polygonal set $\Omega$ such that $T, V(\Omega)\subseteq  \frac{1}{d}\mathbb{Z}^2$ for some $0\not=d\in \mathbb{N}$.
    Then there is a lattice $\langle\lambda,\gamma\rangle\subseteq  \mathbb{Q}^2$ and vectors $(k_i,\ell_i)_{i=1}^n\subseteq \mathbb{Z}^2$ such that $k_i\not=0$ for every $1\le i\le n$, decompositions $T=T_1\sqcup\dots \sqcup T_n$ and 
    $$T_i=\bigsqcup_{Q\in L_i} T_{i,Q},$$
    where for every $1\le i\le n$ the index set $L_i$ is equal to the set of cosets of the one-dimensional linear subspace of $\mathbb{R}^2$ that is generated by $k_i\lambda+\ell_i\gamma$ that intersect $T_i$ and $T_{i,Q}=Q\cap T_i$ for every $Q\in L_i$, such that
    \begin{enumerate}
        \item $T_{i,Q}$ is $(k_i\lambda+\ell_i\gamma)$-periodic for every $Q\in L_i$ and $1\le i\le n$,
        \item for every $m\in \mathbb{Z}$ and $Q\in L_i$ we have that
        $$(\Omega\oplus T_{i,Q})+m\gamma=\Omega\oplus T_{i,Q+m\gamma}$$
        for every $1\le i\le n$.
    \end{enumerate} 
\end{corollary}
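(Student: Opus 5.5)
The plan is to reduce Corollary~\ref{cor:MainRational} to Theorem~\ref{thm:GoodParamDiscrete} by discretizing, essentially as in the proof of Lemma~\ref{lem:PeriodicityAproximation}. Since $T,V(\Omega)\subseteq \frac1d\Z^2$, the first step is to rescale by the factor $d$. Then $V(d\Omega)$ and $dT$ lie in $\Z^2$, and $dT$ is a tiling of $dE=dX\times\R$ by $d\Omega$. Tiling statements are invariant under this dilation, and the conclusion transfers back by replacing the lattice $\langle\lambda,\gamma\rangle$ with its image under $x\mapsto x/d$. So I may assume $d=1$.

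The second step is the discretization. Define
$$F=\{(a,b)\in\Z^2:(a,a+1)\times(b,b+1)\subseteq\Omega\},$$
which is a finite set, and
$$E'=\{(a,b)\in\Z^2:(a,a+1)\times(b,b+1)\subseteq E\}.$$
Because $\Omega$ is axis-parallel with integer vertices and $T\subseteq\Z^2$, each translate $\Omega+t$ is, up to a null set, the union of the unit cells $(F+t)+(0,1)^2$. Hence $F\oplus T=E'$ as subsets of $\Z^2$. I also need $E'$ to be $\ell\Z^2$-periodic for some $\ell$. The boundary points $x_i$ of $X$ lie in $p_1(V(\Omega)+T)\subseteq\Z$, so the period $\alpha$ of $X$, being a difference $x_{i+k}-x_i$, is an integer, and $E'=(X\cap\Z)'\times\Z$ is $\alpha\Z^2$-periodic. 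In the case $X=\R$ we simply have $E'=\Z^2$. Either way we may take $\ell=\max(\alpha,1)$, or $\ell=1$ when $X=\R$.

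The third step applies Theorem~\ref{thm:GoodParamDiscrete} to $F$, $E'$ and $A=T$. This produces $h,g$, the pairs $(k_j,\ell_j)$, the primitive vectors $h_j$, and the decompositions $A_j$, $A_{j,Q}$. I set $\lambda=h$, $\gamma=g$, $n=m$ and $T_i=A_i$. The index sets need one adjustment: the theorem indexes by cosets of $\langle h_j\rangle$ in $\Z^2$, whereas the corollary indexes by cosets of the real line $\R(k_i\lambda+\ell_i\gamma)$. Since $k_jh+\ell_jg\in\langle h_j\rangle$ is nonzero, the line $\R(k_jh+\ell_jg)=\R h_j$ meets $\Z^2$ exactly in a coset of $\langle h_j\rangle$ (here I use that $h_j$ is primitive). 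So the two families of cosets correspond bijectively, and $T_{i,Q}=Q\cap T_i$ agrees with $A_{i,Q\cap\Z^2}$. Property (1) is then immediate from property (1) of the theorem.

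For property (2), translate back: $\Omega\oplus S$ equals the interior of the closure of $(F\oplus S)+[0,1]^2$ for every $S\subseteq T$. Then the identity $F\oplus A_{j,Q}+ng=F\oplus A_{j,Q+ng}$ from the theorem gives $(\Omega\oplus T_{i,Q})+m\gamma=\Omega\oplus T_{i,Q+m\gamma}$. The step I expect to be the main obstacle, though it is only bookkeeping, is checking that the discretization really is a tiling of a \emph{periodic} subset of $\Z^2$. This means confirming that $\alpha$ is an integer and handling the passage between $\Omega\oplus S$ and $F\oplus S$ up to null sets. Everything else is a direct translation of Theorem~\ref{thm:GoodParamDiscrete}.
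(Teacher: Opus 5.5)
Your proposal is correct and is essentially the paper's argument. The paper states the corollary as an immediate consequence of Theorem~\ref{thm:GoodParamDiscrete}, via the same rescale-by-$d$ and unit-cell discretization $F=\{(a,b):(a,a+1)\times(b,b+1)\subseteq\Omega\}$ used in the proof of Lemma~\ref{lem:PeriodicityAproximation}. Your extra bookkeeping is sound: $\partial X\subseteq\Z$, so $E'$ is periodic; $k_jh+\ell_jg\neq 0$ because $h,g$ are independent and $k_j\neq0$; and, since $h_j$ is primitive, real-line cosets of $\R h_j$ meeting $\Z^2$ correspond bijectively to cosets of $\langle h_j\rangle$.
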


\section{Periodicity of tilings of finite local complexity}\label{sec:PeriodicityFLC}

In this section we prove the following theorem, on the structure of tilings by axis-parallel tiles that have finite local complexity.

\begin{theorem}[Periodicity of tilings of finite local complexity]\label{thm:MainLocallyFin}
Let $X\not=\emptyset$ be a regular open periodic subset of $\R$, $\Omega$ be an axis-parallel polygonal set and $E=X\times \R$. Then:
\begin{itemize}
    \item[(i)] Any $T\in \Tile(\Omega;E)$ that has finite local complexity is weakly periodic.
    \item[(ii)] If there is $T\in \Tile(\Omega;E)$ that has finite local complexity, then there is $S\in \Tile(\Omega;E)$ that is periodic.
\end{itemize}
\end{theorem}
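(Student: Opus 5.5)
The plan is to transfer the rational structure of Corollary~\ref{cor:MainRational} back to $T$ through a sufficiently fine approximation $\varphi$ from Theorem~\ref{thm:GoodFiniteLocalComplexity}.

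\textbf{Setting up the approximation.} Fix $T\in\Tile(\Omega;E)$ with finite local complexity. Let $\varphi$ be an $\epsilon$-approximation as in Theorem~\ref{thm:GoodFiniteLocalComplexity}, with $M$ large, to be chosen below. By items (1) and (2) of that theorem, $T^\varphi$ tiles $E^\varphi=X^\varphi\times\R$ by $\Omega^\varphi$, and all coordinates lie in $\frac1d\Z^2$. Corollary~\ref{cor:MainRational} then provides:
\begin{itemize}
\item a lattice $\langle\lambda,\gamma\rangle$;
\item a decomposition $T^\varphi=\bigsqcup_i T^\varphi_i$ into pieces whose lines $T^\varphi_{i,Q}$ are $(k_i\lambda+\ell_i\gamma)$-periodic;
\item the slice-shift identity $(\Omega^\varphi\oplus T^\varphi_{i,Q})+m\gamma=\Omega^\varphi\oplus T^\varphi_{i,Q+m\gamma}$.
\end{itemize}
By item (11), $t\mapsto t^\varphi$ is a bijection, so this pulls back to a decomposition $T=\bigsqcup_i T_i$ with $T_{i,Q}=\{t: t^\varphi\in T^\varphi_{i,Q}\}$.

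\textbf{Transferring the periodic structure.} The key step is to show that each pulled-back line piece $T_{i,Q}$ is periodic in $T$ under a genuine real vector $w_i$, with $w_i^\varphi=k_i\lambda+\ell_i\gamma$. I would argue this in the same way as the proof of Theorem~\ref{thm:PeriodicityMain}.

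First, consecutive elements of $T^\varphi_{i,Q}$ along its line have bounded gaps. This holds because the pieces are periodic and finite in number modulo the lattice, so the gaps are bounded in terms of $|k_i\lambda+\ell_i\gamma|$. The danger here is that $\lambda,\gamma$ depend on $\varphi$. I expect to handle this by choosing $\varphi$ first and then $M$ relative to it, but that is circular, since $M$ fixes $\varphi$. So instead I would use finite local complexity. The difference vectors of nearby tiles form a finite set independent of $\varphi$, so the only real issue is bounding the period length uniformly. I flag this uniformity as the main obstacle.

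Granting the bound, items (10) and (13) show that equal local difference vectors in $T^\varphi$ come from equal differences in $T$. An induction along the line, as in the (a')/(b') argument of Theorem~\ref{thm:PeriodicityMain}, then yields a real vector $w_i$ with $T_{i,Q}+w_i=T_{i,Q}$. This gives (i): $T$ is the finite disjoint union of the nonempty sets $T_{i}$. Each $T_i$ is $\langle w_i\rangle$-periodic once the period is taken uniform in $Q$, which follows from finite local complexity: there are only finitely many local patterns and hence finitely many periods, and we take their lcm-type common multiple. So $T$ is weakly periodic.

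\textbf{Constructing a periodic tiling for (ii).} Work in the rational picture, where periodicity is easy. The identity $(\Omega^\varphi\oplus T^\varphi_{i,Q})+m\gamma=\Omega^\varphi\oplus T^\varphi_{i,Q+m\gamma}$ lets us replace each $T^\varphi_{i,Q+m\gamma}$ by $T^\varphi_{i,Q}+m\gamma$ within a fundamental family of cosets. The result is a $\langle k_i\lambda+\ell_i\gamma,\gamma\rangle$-periodic tiling, and $k_i\neq0$ makes this a lattice; intersecting over $i$ gives a full lattice.

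Doing the same replacement in $T$ gives a tiling. The real analogue of the slice-shift identity follows from item (7), since matching local combinatorial structure transfers the identity exactly, with a real vector $g$ such that $g^\varphi=\gamma$, obtained by the same induction. The resulting $S$ is invariant under $g$ and under all $w_i$. It remains to show that $\langle g,w_i\rangle$ contains a common lattice. For this I would use that the constructed $S$ has finite local complexity and that its differences are determined combinatorially, and then possibly approximate again so that the $w_i$ become commensurable modulo $g$. This last point, obtaining a common full lattice in $\R^2$ rather than a merely dense group, is where I expect extra work. The fallback is to construct $S$ only from a single $i$-type replicated periodically.
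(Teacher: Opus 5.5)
Your proposal has genuine gaps. The load-bearing step is assumed rather than proved, and the assumption fails in general. Your induction along $T^\varphi_{i,Q}$ needs consecutive points within $M/2$ of each other, so that items (10) and (13) of Theorem~\ref{thm:GoodFiniteLocalComplexity} apply, and you simply grant a $\varphi$-uniform bound. No such bound is available. The pieces come from the Greenfeld--Tao decomposition of the discretized tiling, whose scales grow as $\varphi$ is refined, so $T^\varphi_{i,Q}$ can contain arbitrarily long empty stretches that your induction cannot cross. Mimicking Theorem~\ref{thm:PeriodicityMain} does not help. There the boundary set $\{x_i\}$ is intrinsic to $T$, so a bound found with one approximation persists for finer ones; here the pieces $T_{i,Q}$ themselves change with $\varphi$.

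The paper's resolution is a dichotomy at a fixed real scale: $T_{i,Q}$ \emph{has gaps} if its $60\diam(\Omega)$-chains disconnect.
\begin{itemize}
\item Without gaps, consecutive rational points lie within $90\diam(\Omega)$ (Claim~\ref{cl:Enumeration}). So $M$ depends on $\Omega$ alone and is fixed before $\varphi$, which removes the circularity you noticed.
\item With gaps, the slab $\Omega^\varphi\oplus T^\varphi_{i,Q}$ has a unique tiling (Proposition~\ref{pr:GapsOne}). This upgrades the slab identity to $T^\varphi_{i,Q}+m\widetilde{\gamma}=T^\varphi_{i,Q+m\widetilde{\gamma}}$, so $(T_g)^\varphi$ is periodic. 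Real periods for $T_g$ are recovered only later, by a global argument.
\end{itemize}
Your step for (i) also fails. Finitely many real periods need not have a common nonzero multiple, since they need not be commensurable; in the real picture $T_{i,Q}$ need not even lie on a line. Moreover, finiteness of the set of periods itself rests on the same ungranted bound.

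The transfer of the slab identity and the global assembly are also missing.
\begin{itemize}
\item Item (7) only matches neighbourhoods of single vertices. To get $\Omega\oplus T_{i,Q}+\beta=\Omega\oplus T_{i,Q+m\widetilde{\gamma}}$, one must propagate a single $\beta$ through $M/3$-linked classes of proper vertices (Claim~\ref{cl:LinkedEquivalence}).
\item When $D_i$ is axis-parallel, $\partial(\Omega\oplus T_{i,Q})$ can contain arbitrarily long or infinite segments with no proper vertices. When $\Omega$ tiles a column, even gap-free pieces need not be $r$-linked for any fixed $r$. This is why the paper needs Propositions~\ref{pr:NotLinkedOne} and \ref{pr:NotLinkedThree}, and the edge-length counting with item (14) in Proposition~\ref{pr:NoGapsNoColumnButColumn}.
\item The shifts $\beta_{Q,m}$ are a priori piece-dependent, not one vector $g$. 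A single pair $\gamma,\lambda$ is obtained in Claims~\ref{cl:OneClaim} and \ref{cl:TwoClaim} by showing that the set of tiles where it works covers a region without proper vertices. By Lemma~\ref{lem:TechnicalClaim} that set is then all of $T$, but only when the earthquake decomposition is trivial.
\item That is why the proof first removes columns (Proposition~\ref{pr:WLOGcolumns}). It then splits along plates using Lemma~\ref{lem:CharInfComplex} (finite local complexity gives finite horizontal complexity) and Theorem~\ref{thm:PeriodicityMain}. None of this appears in your plan.
\end{itemize}

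The lattice question you leave open is settled by finite local complexity.
\begin{itemize}
\item Within a plate, $\langle\lambda,\gamma\rangle+v_0\subseteq V(\Omega)+T$ and uniform discreteness of $V(\Omega)+T$ exclude $\lambda\in(\R\setminus\Q)\gamma$. Independence of $\lambda^\varphi$ and $\gamma^\varphi$ excludes $\lambda\in\Q\gamma$.
\item Across plates, Claim~\ref{cl:Lattice} uses finite local complexity of $\bigcup_P(\langle\lambda_P,\gamma_P\rangle+v_P)$ together with equidistribution to intersect the lattices.
\item For the columns, the free parameter $z$ in $S^z_c$ makes their lattice commensurable with the plates' lattice.
\end{itemize}
Your fallback of replicating a single $i$-type does not in general produce a tiling of $E$.
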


Throughout this section, we assume that $X\subseteq \R$, $\Omega$ and $E=X\times \R$ and $T$ are as in Theorem~\ref{thm:MainLocallyFin}.

\subsection{Good parametrization}\label{subsec:DefinitionDecomposition}
\newcommand{\wlambda}{\widetilde{\lambda}}
\newcommand{\wgamma}{\widetilde{\gamma}}
In some of the arguments in the proof we need to distinguish cases based on properties of the polygonal set $\Omega$.
Recall that $\Omega$ {\bf \emph{can tile a column}} if there is $S\subseteq  \mathbb{R}(0,1)$ such that $\Omega\oplus S=p_1(\Omega)\times \mathbb{R}$.

As the first step in the proof we apply Theorem~\ref{thm:GoodFiniteLocalComplexity} with large enough $M>0$.
For most of the arguments in this section, we require that 
\begin{equation}\label{eq:LargeParameter}
    M>\max\left\{189\diam(\Omega),\frac{1}{\diam(\Omega)},\sqrt{6},4|E(\Omega)|\left(\frac{\diam(\Omega)}{\min\{|\omega|:\omega\in E(\Omega)\}}+1\right)\right\},
\end{equation}
which clearly depends only on $\Omega$, and not on $T$.
Additional lower bounds on $M>0$, that also only depends on $\Omega$, will come from Proposition~\ref{pr:NotLinkedOne} and Proposition~\ref{pr:NotLinkedThree}, see the discussion after the corresponding proofs and in the beginning of Section~\ref{subsubsec:ProofMainLemma}.

Take $\epsilon>0$ as in Theorem~\ref{thm:GoodFiniteLocalComplexity}.
By a combination of Theorem~\ref{thm:GoodFiniteLocalComplexity}~(1) and (2) and Corollary~\ref{cor:MainRational}, we have that for any $\epsilon$-approximation $\varphi$ there is a lattice $\langle\wlambda,\wgamma\rangle\subseteq  \mathbb{Q}^2$, $(k_i,\ell_i)_{i=1}^n\subseteq \mathbb{Z}^2$ with $k_i\not=0$ for every $1\le i\le n$, such that there are decompositions $T^\varphi=T^\varphi_1\sqcup\dots \sqcup T^\varphi_n$ and 
    $$T^\varphi_i=\bigsqcup_{Q\in L_i} T^\varphi_{i,Q},$$
where for every $1\le i\le n$ the index set $L_i$ is the set of cosets of the one-dimensional linear subspace $D_i\subseteq \mathbb{R}^2$ generated by $k_i\wlambda+\ell_i\wgamma$ that intersect $T^\varphi_i$, that is, $D_i=\R(k_i\wlambda+\ell_i\wgamma)$, and for every $Q\in L_i$ the set $T^\varphi_{i,Q}=Q\cap T^\varphi_i$ is the corresponding intersection, and 
\begin{enumerate}
    \item $T^\varphi_{i,Q}$ is $(k_i\wlambda+\ell_i\wgamma)$-periodic for every $Q\in L_i$ and $1\le i\le n$,
    \item for every $m\in \mathbb{Z}$ and $Q\in L_i$ we have that
    $$(\Omega^\varphi\oplus T^\varphi_{i,Q})+m\wgamma=\Omega^\varphi\oplus T^\varphi_{i,Q+m\wgamma}$$
    for every $1\le i\le n$.
\end{enumerate}
Letting $T_{i,Q}=\{t\in T:t^\varphi\in T^\varphi_{i,Q}\}$ for all $1\le i\le n$ and $Q\in L_i$, we have
\begin{equation}\label{eq:decT}
    T_i=\{t\in T:t^\varphi\in T^\varphi_i\}=\bigsqcup_{Q\in L_i} T_{i,Q}, \quad 1\le i\le n.
\end{equation}
In the remainder of this section, we show that the collections $T_{i,Q}$ satisfy properties analogous to (1.) and (2.) that hold for $T^\varphi_{i,Q}$.
Note that $T_{i,Q}$ is not necessarily a subset of some coset of $D_i$.
Let $N$ be the the least common multiple of $|k_1|,\dots,|k_n|$ and define\footnote{Recall that the set of proper vertices $W(D)$ of a set $D\subseteq  \mathbb{R}^2$ is defined in Section~\ref{sec:Definitions}.} $W_{i,Q}=W(\Omega\oplus T_{i,Q})$.

\subsection{Columns}

The following proposition allows the reduction of Theorem \ref{thm:MainLocallyFin} to the case when $T$ does not contain any columns (i.e., $T_c$ is empty). 

\begin{proposition}\label{pr:WLOGcolumns} 
    Let $T$ be a tiling of $E$ by $\Omega$ that has finite local complexity and assume that $T_c\not=\emptyset$.
    Then $p_1(\Omega\oplus T_c)$ and $p_1(T_c)$ are $\beta\Z$-periodic for some $\beta>0$ and $T_c$ is $(0,\alpha)\Z$-periodic for some $\alpha>0$.
\end{proposition}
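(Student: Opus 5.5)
The plan is to reduce the first two claims to Theorem~\ref{thm:PeriodicityMain} (in the generalized form of Remark~\ref{rem:PeriodicityMain} for $E=X\times\R$), and to obtain the vertical period of $T_c$ from the one-dimensional theory of Lagarias--Wang together with finite local complexity.

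\emph{Step 1: finite horizontal complexity.} When $X\neq\R$, Remark~\ref{rem:CharInfComplex} already gives that every tiling of $E$ has finite horizontal complexity. When $X=\R$, the assumption $T_c\neq\emptyset$ makes the earthquake decomposition non-trivial, unless $T=T_c$ is a single column, which is impossible in $\R^2$. Finite local complexity means the set of differences $t'-t$ with $\|t'-t\|<R$ is finite for each $R$. Hence the configurations $\omega\cap(\omega'+s-t)$ of properly intersecting horizontal edges take only finitely many values, so condition (4) of Lemma~\ref{lem:CharInfComplex} fails. Therefore $T$ has finite horizontal complexity.

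\emph{Step 2: the horizontal periods.} Theorem~\ref{thm:PeriodicityMain} now gives $\beta>0$ such that $\Omega\oplus T_c$ is $(\beta,0)\Z$-periodic, hence $p_1(\Omega\oplus T_c)$ is $\beta\Z$-periodic. To pass to $p_1(T_c)$, note that $T_c$ is a disjoint union of columns $S\subseteq\{x\}\times\R$ with $\Omega\oplus S=(p_1(\Omega)+x)\times\R$. I will recover each $x$ from the region: it is the translate of $p_1(\Omega)$ occupying a maximal $\R(0,1)$-invariant component of $\Omega\oplus T_c$. The subtle case is when the decomposition of $X_{T_c}$ into translates of $p_1(\Omega)$ might be non-unique. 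I would handle it by noting that the columns are exactly the vertical plates, i.e.\ the $\sim_{\Omega,\Uparrow}$-classes, which are determined by the region through the minimality in Lemma~\ref{lem:BasicQuake}(ii). Translating by $\beta$ maps $\Omega\oplus T_c$ to itself and thus permutes the decomposition into minimal invariant pieces. Since $p_1(T_c)$ is discrete, I then pass to a multiple of $\beta$ (using the finitely many local configurations) to get $p_1(T_c)+\beta=p_1(T_c)$. This identification is where I expect the main care to be needed.

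\emph{Step 3: the vertical period.} Fix $z\in p_1(\Omega)\setminus p_1(V(\Omega))$. For each column $S\subseteq\{x\}\times\R$, the set $p_2(S)$ tiles $\R$ by the finite union of intervals $p_2(\Omega\cap(\{z\}\times\R))$. By \cite[Theorem~1]{LW}, $p_2(S)$ is periodic. By finite local complexity the relevant differences form a finite set, so only finitely many tilings occur up to translation. Taking $\alpha$ to be a common multiple of their minimal periods (their periods are rational multiples of the interval lengths appearing in the finite set, which allows a common multiple) makes every column $(0,\alpha)\Z$-periodic, and therefore $T_c$ is $(0,\alpha)\Z$-periodic.

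The main obstacle is Step 3's uniformity, namely getting a single $\alpha$ for infinitely many columns. Finite local complexity is exactly what bounds the number of distinct column patterns, but commensurability of the different minimal periods must still be checked. I would first try to argue that all columns have the same pattern up to translation, via the $\beta$-periodicity from Step 2 together with finite local complexity.
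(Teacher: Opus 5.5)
\textbf{Steps 1 and 3.} These match the paper. Finite local complexity rules out condition (4) of Lemma~\ref{lem:CharInfComplex}, so Theorem~\ref{thm:PeriodicityMain} applies. The vertical period then comes from Lagarias--Wang applied to the slice $\Omega_z$. For a uniform $\alpha$ you do not need all columns to have the same pattern. An $r$-periodic tiling of $\R$ by $\Omega_z$ with $k$ tiles per period satisfies $r=k\,m_1(\Omega_z)$, so finitely many column patterns always have a common period. (The finiteness of the patterns itself needs the rigidity described below.)

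\textbf{The gap is in Step 2.} You claim the columns are determined by the region $\Omega\oplus T_c$ through Lemma~\ref{lem:BasicQuake}(ii), so that a horizontal period of the region permutes the columns. This is false. Lemma~\ref{lem:BasicQuake}(ii) is a minimality statement about subsets of a given plate $P\subseteq T$; it says nothing about uniqueness of decompositions of a region.

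For a counterexample, take $\Omega=\bigl((0,1)\cup(3,4)\bigr)\times(0,1)$ and $T=T_c=(\{0,1,2\}+6\Z)\times\Z$. Every plate is a column and $\Omega\oplus T_c=\R^2$. Yet $T_c+(3,0)$ is a different column decomposition of the same region, so translation by the period $3$ of the region does not permute the columns. Worse, every $\beta>0$ is a period of $\R^2$. Theorem~\ref{thm:PeriodicityMain} may therefore return $\beta=\sqrt2$, and no multiple of that is a period of $p_1(T_c)=\{0,1,2\}+6\Z$.

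Your fallback (``pass to a multiple of $\beta$ using the finitely many local configurations'') lacks the ingredient that makes finiteness useful, namely rigidity of one-dimensional tilings. The set $p_1(T_c)$ tiles $Y=p_1(\Omega\oplus T_c)$ by $p_1(\Omega)$, and every tile is forced. Given all tiles to the left of a point, the left endpoint of the next tile must be the leftmost point of $Y$ not yet covered, and symmetrically from the right. Hence the pattern of $p_1(T_c)$ in a window of length $2\diam(p_1(\Omega))$ determines $p_1(T_c)$ on both sides.

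\begin{itemize}
\item If $\overline{Y}\neq\R$, pick $c\in\partial Y\subseteq p_1(V(\Omega))+p_1(T_c)$. Finite horizontal complexity leaves only finitely many windows around the points $c+k\beta$, so two coincide for some $k_1<k_2$. Rigidity and the $\beta$-periodicity of $Y$ then give that $p_1(T_c)$ is $(k_2-k_1)\beta$-periodic.
\item If $Y=\R$ (i.e.\ $X=\R$ and $T=T_c$), the region carries no information. Apply Lagarias--Wang directly to $p_1(T_c)\in\Tile(p_1(\Omega);\R)$.
\end{itemize}

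This Newman-type pigeonhole argument is what the paper uses in place of your identification step.
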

\begin{proof}
By Theorem \ref{thm:PeriodicityMain} (and Remark \ref{rem:PeriodicityMain}), we have that the set $\Omega\oplus T_c$ is $(\beta',0)\Z$-periodic, for some $\beta'>0$, thus $p_1(\Omega\oplus T_c)$ is $\beta'\Z$-periodic.
Observe that by the definition of $T_c$, we have that $p_1(T_c)$ is a tiling of $p_1(\Omega\oplus T_c)$ by $p_1(\Omega)$.
Adapting Newman’s pigeonhole-principle argument \cite{Newman} to tilings of periodic subsets of $\mathbb R$ by finite unions of intervals, we obtain that $p_1(T_c)$ is $\beta\mathbb Z$-periodic for some $\beta>0$.
It remains to argue that if $p_1(\overline{\Omega\oplus T_c})\not =\R$, then $\beta\in \Q\beta'$.
This follows by a routine application of the ergodic theorem.

In addition, letting $x \in \Omega$ be such that $p_1(x)\not\in p_1(V(\Omega))$ and applying \cite[Theorem 1]{LW} to the region $$\Omega_x \coloneqq p_2(\Omega\cap (\{p_1(x)\}\times \R))\subseteq \R,$$
we see that there exists $\alpha>0$ such that every $p_2(P)\in \Tile(\Omega_x ;\R)$, where $P= \left(\{x_{P}\}\times \R\right) \cap T_c$ for some $x_P\in p_1(T_c)$, is $\alpha\Z$-periodic.
In turn, we have that $T_c$ is $(0,\alpha)\Z$-periodic, as a union of $(0,\alpha)\Z$-periodic sets.
\end{proof}

In order to prove part (i) of Theorem~\ref{thm:MainLocallyFin}, using Proposition \ref{pr:WLOGcolumns}, we may assume that $T_c=\emptyset$ without loss of generality, since the set $E\setminus (\Omega\oplus T_c)$ is periodic of the form $X'\times \mathbb{R}$ for some open set $X'\subseteq  \mathbb{R}$.
We note that reducing part (ii) of Theorem~\ref{thm:MainLocallyFin} to the case $T\not=T_c$ requires an additional argument, as will be discussed separately later in Subsection~\ref{subsec:ProofOfThmLocallyFin}.

\subsection{Gaps and periodicity}
In this section, we assume that $T_c=\emptyset$.

\begin{definition}[Gaps]
    We say that $T_{i,Q}$ \emph{has gaps} if the equivalence relation on $T_{i,Q}$  that is the transitive closure of the set
    $$\{(t,t')\in T_{i,Q}\times T_{i,Q}: \|t-t'\| <60\diam(\Omega)\}$$
    has more than one equivalence class. 

    Let $T_g \subseteq  T$ be the set of all $t\in T$ such that there exist $1\le i\le n$, $Q\in L_i$ and $m\in \Z$ such that $T_{i,Q}$ has gaps and $t\in T_{i,Q+m\widetilde{\gamma}}$.    
\end{definition}

\begin{lemma}[Rationalization of $T_g$ is periodic]\label{lem:GapsPeriodicity}
    We have that $(T_g)^\varphi$ is $\langle N\widetilde{\lambda},N\widetilde{\gamma}\rangle$-periodic.
\end{lemma}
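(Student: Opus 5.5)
The plan is to establish the two periods separately. Invariance under $N\widetilde{\gamma}$ will come from property (2) of the rational decomposition together with a rigidity statement for finite clusters. Invariance under $N\widetilde{\lambda}$ will then follow from the identity
$$N\widetilde{\lambda}=\tfrac{N}{k_i}\left(k_i\widetilde{\lambda}+\ell_i\widetilde{\gamma}\right)-\tfrac{N\ell_i}{k_i}\widetilde{\gamma},$$
in which both coefficients are integers because $N=\operatorname{lcm}(|k_1|,\dots,|k_n|)$. The first term preserves each $T^\varphi_{i,Q}$ by property (1). The second term preserves $(T_g)^\varphi$ once we know that $(T_g)^\varphi$ is $\widetilde{\gamma}$-invariant. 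Since $(T_g)^\varphi$ is the union of the sets $T^\varphi_{i,Q}$ over gapped pairs $(i,Q)$ and their shifts $Q+m\widetilde{\gamma}$, and this index family is closed under $Q\mapsto Q+\widetilde{\gamma}$, it suffices to prove
$$T^\varphi_{i,Q}+\widetilde{\gamma}=T^\varphi_{i,Q+\widetilde{\gamma}}\qquad\text{for every gapped } T_{i,Q}.$$
From there, $N\widetilde{\gamma}$-invariance is immediate.

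\emph{Transfer of gaps to $T^\varphi$.} By Theorem~\ref{thm:GoodFiniteLocalComplexity}~(9), (10) and (3), with $M$ as in \eqref{eq:LargeParameter}, distances below about $60\diam(\Omega)$ are preserved up to $1/M$ under $\varphi$. Hence the clusters of $T_{i,Q}$ correspond bijectively to clusters of $T^\varphi_{i,Q}$ for a threshold comparable to $60\diam(\Omega)$, which is much larger than $\diam(\Omega^\varphi)\le 2\diam(\Omega)$.

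\emph{Clusters are finite.} $T^\varphi_{i,Q}$ lies on the line $Q$ and is periodic along it. Suppose some cluster were unbounded. In one dimension it would leave no gap longer than the threshold anywhere along $Q$, so every other point would be linked to it. This contradicts the existence of at least two classes. So all clusters are finite and pairwise separated by more than $10\diam(\Omega^\varphi)$. Consequently the closures $\overline{\Omega^\varphi}\oplus C$ for distinct clusters $C$ are pairwise far apart. The region $\Omega^\varphi\oplus T^\varphi_{i,Q}$ therefore splits canonically into bounded pieces, one per cluster.

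\emph{Uniqueness for finite packings.} If $C,C'$ are finite and $\Omega^\varphi\oplus C=\Omega^\varphi\oplus C'$ up to null sets, then $C=C'$. To see this, take the lexicographically minimal point of the closure of the union. It equals $\operatorname{lexmin}\overline{\Omega^\varphi}+\operatorname{lexmin} C$, so it determines $\operatorname{lexmin} C=\operatorname{lexmin} C'$. Remove that tile from both packings and induct on $|C|$.

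\emph{Conclusion.} Property (2) with $m=1$ gives $(\Omega^\varphi\oplus T^\varphi_{i,Q})+\widetilde{\gamma}=\Omega^\varphi\oplus T^\varphi_{i,Q+\widetilde{\gamma}}$. The right-hand side is also a union of separated bounded pieces coming from clusters of $T^\varphi_{i,Q+\widetilde{\gamma}}$. Matching pieces by their supports, each cluster $C$ of $T^\varphi_{i,Q}$ has $C+\widetilde{\gamma}$ equal to a cluster of $T^\varphi_{i,Q+\widetilde{\gamma}}$, which yields the displayed identity.

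The main obstacle is this matching step. It is not a priori clear that the pieces on the right-hand side are exactly the cluster regions of $T^\varphi_{i,Q+\widetilde{\gamma}}$; that set might not itself be gapped. My first approach is to argue directly from the left-hand side. That region is a disjoint union of far-separated bounded sets. Any tile $\Omega^\varphi+t$ with $t\in T^\varphi_{i,Q+\widetilde{\gamma}}$ has diameter at most $2\diam(\Omega)$, so it must lie inside a single piece. Thus $T^\varphi_{i,Q+\widetilde{\gamma}}$ partitions into finite subsets, one per piece, each tiling that piece, and uniqueness then applies piece by piece.
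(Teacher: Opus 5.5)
Your proposal is correct and follows the same overall route as the paper. Property~(2) gives equality of the covered regions $(\Omega^\varphi\oplus T^\varphi_{i,Q})+m\widetilde{\gamma}=\Omega^\varphi\oplus T^\varphi_{i,Q+m\widetilde{\gamma}}$. A rigidity statement for gapped pieces upgrades this to $T^\varphi_{i,Q}+m\widetilde{\gamma}=T^\varphi_{i,Q+m\widetilde{\gamma}}$. Property~(1), together with $N=\operatorname{lcm}(|k_1|,\dots,|k_n|)$, then yields the period $N\widetilde{\lambda}$.

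Where you differ is in the rigidity step, the paper's Proposition~\ref{pr:GapsOne}. The paper uses a single gap of length at least $30\diam(\Omega)$ to cut $T^\varphi_{i,Q}$ into two well-separated half-infinite pieces. It then determines the tiles one at a time, by induction starting from the extreme tile along $Q$. You instead use periodicity along $Q$ to obtain infinitely many gaps, hence finite, far-separated clusters. You then apply the lemma that a finite packing is determined by its union, via the lexicographic minimum of its closure. Your version is more self-contained. It also makes explicit the diameter argument showing that a competing tile cannot straddle two pieces, which the paper leaves implicit. The paper's version needs only one gap rather than periodicity.

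Two small points of precision.
\begin{itemize}
\item Clusters need not correspond bijectively at a single threshold. What you actually need, and what Theorem~\ref{thm:GoodFiniteLocalComplexity}~(9),~(10) give, is that $T^\varphi_{i,Q}$ has at least two clusters at threshold about $60\diam(\Omega)-1/M$.
\item Do not reduce to $m=1$ for gapped $T_{i,Q}$ alone, because $T_{i,Q+\widetilde{\gamma}}$ need not itself be gapped at threshold $60\diam(\Omega)$. Either apply (2) with arbitrary $m\in\mathbb{Z}$ directly from the gapped $Q$, as the paper does. Or observe that your argument only uses the $\varphi$-side cluster structure, which $T^\varphi_{i,Q+\widetilde{\gamma}}=T^\varphi_{i,Q}+\widetilde{\gamma}$ inherits, so the iteration goes through.
\end{itemize}
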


To prove Lemma \ref{lem:GapsPeriodicity}, we will use the following proposition.

\begin{proposition}\label{pr:GapsOne}
    Suppose that $T_{i,Q}$ has gaps and let $S\subseteq \R^2$ be such that $\Omega^\varphi\oplus S=\Omega^\varphi\oplus T^\varphi_{i,Q}$.
    Then $S=T^\varphi_{i,Q}$.
\end{proposition}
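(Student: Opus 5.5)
The plan is to reduce the statement to uniqueness of tilings of \emph{bounded} regions, which follows from an extremal (lexicographic corner) argument. The bounded regions arise because the gaps cut the periodic set $T^\varphi_{i,Q}$ into finite clusters.

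First, transfer the gap structure from $T_{i,Q}$ to $T^\varphi_{i,Q}$. By Theorem~\ref{thm:GoodFiniteLocalComplexity}~(9) and~(10), with $M$ as in \eqref{eq:LargeParameter} (so $M>189\diam(\Omega)$):
\begin{itemize}
\item pairs $t,t'$ with $\|t-t'\|<60\diam(\Omega)$ satisfy $\|t^\varphi-(t')^\varphi\|<60\diam(\Omega)+1/M$;
\item pairs with $\|t^\varphi-(t')^\varphi\|<M/2$ satisfy $\|t-t'\|<M$, and, by finite local complexity together with~(9), these approximately preserve their distances.
\end{itemize}
Hence the equivalence classes $C$ of $T_{i,Q}$ correspond, under $t\mapsto t^\varphi$, to subsets $C^\varphi\subseteq T^\varphi_{i,Q}$. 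Points in different classes satisfy $\|t^\varphi-(t')^\varphi\|\ge 60\diam(\Omega)-1/M$, and in particular this exceeds $50\diam(\Omega)$.

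Next, show that every class is finite. The set $T^\varphi_{i,Q}$ lies on the line $Q$ and is $p$-periodic, where $p=k_i\wlambda+\ell_i\wgamma$. Order $Q$ along $p$. Since the classes are defined by a closeness relation and the points are collinear, each class $C^\varphi$ is an ``interval'' of $T^\varphi_{i,Q}$ in this order. Translation by $p$ maps $T^\varphi_{i,Q}$ to itself and, by the distance control above, permutes classes. Suppose some class were unbounded, say in the positive direction. Then, together with its translate by $-p$ and the interval property, it would swallow every point of $T^\varphi_{i,Q}$. This contradicts the assumption that there are at least two classes. So each $C^\varphi$ is finite, and its diameter is bounded. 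Consequently $\Omega^\varphi\oplus T^\varphi_{i,Q}$ is the disjoint union of the bounded regions $R_C=\Omega^\varphi\oplus C^\varphi$, and any two of these are at distance at least $50\diam(\Omega)-4\diam(\Omega)$, using $\diam(\Omega^\varphi)\le 2\diam(\Omega)$ from Theorem~\ref{thm:GoodFiniteLocalComplexity}~(3).

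Now take $S$ with $\Omega^\varphi\oplus S=\Omega^\varphi\oplus T^\varphi_{i,Q}$. Each tile $\Omega^\varphi+s$ has diameter at most $2\diam(\Omega)$ and is contained, up to a null set, in the union of the $R_C$. By the separation of the regions it therefore lies in a single $R_C$. Thus $S=\bigsqcup_C S_C$ with $\Omega^\varphi\oplus S_C=R_C$, and $S_C$ is finite by an area count. It remains to prove the following: \emph{if $\Omega^\varphi\oplus A=\Omega^\varphi\oplus B$ with $A,B$ finite packings, then $A=B$.} We prove this by induction on $|A|$. Let $z$ be the lexicographically minimal point of $\overline{\Omega^\varphi\oplus A}$, and let $z_0$ be the lexicographically minimal point of $\overline{\Omega^\varphi}$. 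The point $z$ lies in $\overline{\Omega^\varphi}+a$ for some $a\in A$, and it must equal $z_0+a$, since otherwise $z_0+a$ would be lexicographically smaller. Similarly $z=z_0+b$ for some $b\in B$, so $a=b$. Removing this common tile leaves equal regions $\Omega^\varphi\oplus(A\setminus\{a\})=\Omega^\varphi\oplus(B\setminus\{a\})$ up to null sets, and the induction hypothesis finishes the step. Applying this to each pair $(S_C,C^\varphi)$ gives $S=T^\varphi_{i,Q}$.

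The main obstacle is the second step: making rigorous that the classes are intervals along $Q$ and that $p$-translation permutes them. Gaps are defined via distances in $T$, not in $T^\varphi$, and $T_{i,Q}$ need not be collinear. I would handle this by working entirely with $T^\varphi_{i,Q}$, which is collinear, and invoking~(9)--(10) with the large $M$ to show that its $60\diam(\Omega)$-closeness classes coincide with the images of the classes of $T_{i,Q}$.
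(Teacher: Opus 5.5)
Your proof is correct in outline, but it takes a different route from the paper.

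\textbf{The paper's route.} The paper uses (9)--(10) only to find a single index $j_0$ with $\|t^\varphi_{j_0}-t^\varphi_{j_0+1}\|\ge 30\diam(\Omega)$. It splits $\Omega^\varphi\oplus T^\varphi_{i,Q}$ into the two half-infinite pieces $F_0,F_1$ on either side of that one gap, which are at distance at least $2\diam(\Omega^\varphi)$. It then shows each piece has a unique tiling by peeling off tiles one at a time from the end next to the gap, using the extremal coset of $D_i^\perp$ that touches $\overline{F_1}$. The periodicity of $T^\varphi_{i,Q}$ plays no role.

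\textbf{Your route and the trade-off.} You instead use $p$-periodicity to cut the region into infinitely many bounded clusters. You then prove uniqueness for finite packings by a lexicographic-minimum induction. The paper's version needs only one gap and no information about the class decomposition. Yours pays for an extra finiteness step, but its endgame is cleaner: the lexicographic order resolves ties automatically. The paper's claim that $t^\varphi_{j_0+1}$ is the ``only possible choice'' is stated tersely, and it tacitly needs such a tie-break when the extremal face of $\overline{\Omega^\varphi}$ is more than one point.

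\textbf{A step that needs repair.} Approximate distance control does not show that translation by $p$ permutes the transported classes $C^\varphi$. If $x,y$ come from a related pair (true distance $<60\diam(\Omega)$), you only learn that the preimages of $x+p$ and $y+p$ are at true distance $<60\diam(\Omega)+2/M$. That does not make them related, nor force them into the same class. You need the exact statement Theorem~\ref{thm:GoodFiniteLocalComplexity}~(13): equal $\varphi$-differences of norm $<M/2$ force equal true differences. This makes the relation exactly $p$-invariant.

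Your fallback claim, that the $60\diam(\Omega)$-closeness classes of $T^\varphi_{i,Q}$ coincide with the images of the classes of $T_{i,Q}$, can fail at the threshold. A cross-class pair at true distance in $[60\diam(\Omega),60\diam(\Omega)+1/M)$ may land below $60\diam(\Omega)$ under $\varphi$. In that case $T^\varphi_{i,Q}$ could even form a single such class.

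The simplest fix is to define clusters by the $\varphi$-gaps of length at least $30\diam(\Omega)$:
\begin{enumerate}
\item One such gap exists by the paper's first step.
\item The $p$-periodicity of $T^\varphi_{i,Q}$ then gives such a gap in every period, so the clusters are finite intervals.
\item The separation $30\diam(\Omega)\ge 15\diam(\Omega^\varphi)$ is enough for your tile-localization argument and your lexicographic induction.
\end{enumerate}
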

\begin{proof}
    Recall that $T^\varphi_{i,Q}\subseteq  Q$, where $Q$ is a coset of the linear subspace $D_i$.
    Consider the canonical enumeration $T^\varphi_{i,Q}=(t^\varphi_j)_{j\in \mathbb{Z}}$.
    Since $T_{i,Q}$ has gaps, there must be $j\in \Z$ such that $\|t^\varphi_j-t^\varphi_{j+1}\| \ge 30\diam(\Omega)$.
    Indeed, if $\|t^\varphi_j-t^\varphi_{j+1}\| < 30\diam(\Omega)$ for every $j\in \mathbb{Z}$,
    by Theorem~\ref{thm:GoodFiniteLocalComplexity}~(10) and \eqref{eq:LargeParameter}, we have $\|t_j-t_{j+1}\| < M$, since $\|t^\varphi_j-t^\varphi_{j+1}\| < 30\diam(\Omega)\le M/2$ for every $j\in \Z$.
    By Theorem~\ref{thm:GoodFiniteLocalComplexity}~(9) and \eqref{eq:LargeParameter}, we would then have 
    $$\|(t_j^\varphi-t_{j+1}^\varphi)-(t_j-t_{j+1})\| <1/M,$$
    which implies that $\|t_j-t_{j+1}\| <30\diam(\Omega)+1/M<31\diam(\Omega)$, contradicting our assumption that $T_{i,Q}$ has gaps.
    Thus, we can fix $j_0\in \Z$ such that $\|t^\varphi_{j_0}-t^\varphi_{j_0+1}\|$ is bounded below by  $30\diam(\Omega)$, which in turn, by Theorem~\ref{thm:GoodFiniteLocalComplexity}~(3), is bounded below by $15\diam(\Omega^\varphi)$. Thus, as $T^\varphi_{i,Q}\subseteq  Q$, we have that the sets
    $$F_0=\Omega^\varphi\oplus \{t^\varphi_{j}:j\le j_0\} \quad \text{and} \quad F_1=\Omega^\varphi\oplus \{t^\varphi_{j}:j>j_0\}$$
    are of distance at least $2\diam(\Omega^\varphi)$ from each other.
    
    Now consider the linear subspace $D^\perp_i$ that is orthogonal to $D_i$ and let $P$ be its coset intersecting $\overline{F_1}$ and such that $p=P\cap Q$ is minimal (in the canonical order induced on $Q$).
    Then it follows that $t^\varphi_{j_0+1}$ is the only possible choice of a translation of $\Omega^\varphi$ that satisfies both $\Omega^\varphi+t^\varphi_{j_0+1}\subseteq  F_1$ and $P\cap \overline{F_1}\subseteq  \overline{\Omega^\varphi+t^\varphi_{j_0+1}}$. Thus, by induction, we have that the tilings of $F_0$ and $F_1$ by $\Omega^\varphi$ are unique. Hence if $\Omega^\varphi \oplus S =\Omega^\varphi \oplus T^\varphi_{i,Q}$  then we must have $S=T^\varphi_{i,Q}$.
\end{proof}

We can now conclude the proof of Lemma~\ref{lem:GapsPeriodicity}.

\begin{proof}[Proof of Lemma~\ref{lem:GapsPeriodicity}]
    Suppose that $T_{i,Q}$ has gaps.
    By Corollary~\ref{cor:MainRational} and the construction of the decomposition, we have that 
    $$\Omega^\varphi\oplus T^\varphi_{i,Q}+m\widetilde{\gamma}=\Omega^\varphi\oplus T^\varphi_{i,Q+m\widetilde{\gamma}}$$
    for every $m\in \Z$.
    Hence,for every $m\in \Z$ 
    $$\Omega^\varphi\oplus T^\varphi_{i,Q}=\Omega^\varphi\oplus \left(T^\varphi_{i,Q+m\widetilde{\gamma}}-m\widetilde{\gamma}\right),$$
    which, by Proposition~\ref{pr:GapsOne}, implies  that $T^\varphi_{i,Q}+m\widetilde{\gamma}=T^\varphi_{i,Q+m\widetilde{\gamma}}$.
    Combined with the fact that $T^\varphi_{i,Q}$ is $(k_i\widetilde\lambda+\ell_i\widetilde\gamma)$-periodic (recall $k_i\not=0$), we conclude that $\bigsqcup_{m\in \mathbb{Z}} T^\varphi_{i,Q+m\widetilde{\gamma}}$ is $\langle N\widetilde{\lambda},N\widetilde{\gamma}\rangle$-periodic.
    As this is true for every $T_{i,Q}$ that has gaps, $T^\varphi_g$ is $\langle N\widetilde{\lambda},N\widetilde{\gamma}\rangle$-periodic, as a union of $\langle N\widetilde{\lambda},N\widetilde{\gamma}\rangle$-periodic sets.
\end{proof}

\subsection{No gaps}

In this section, we assume that $T_c=\emptyset$, and let $T_{ng}\coloneqq T\setminus T_g$.

\begin{lemma}[Structure of $T_{ng}$]\label{lem:MainShifts}
    Let $1\le i\le n$ and $Q\in L_i$ be such that $T_{i,Q}\subseteq  T_{ng}$ and $W_{i,Q}$ is nonempty.
    Then there is a nonzero vector $\alpha_{i,Q+\mathbb{Z}\widetilde{\gamma}}\in \mathbb{R}^2$ such that $T_{i,Q+m\widetilde{\gamma}}$ is $\langle\alpha_{i,Q+\mathbb{Z}\widetilde{\gamma}}\rangle$-periodic and
    $$\Omega\oplus T_{i,Q}+\beta_{Q,m}=\Omega\oplus T_{i,Q+m\widetilde{\gamma}}$$  for every $m\in \mathbb{Z}$,
    where $\beta_{Q,m}=w_0-v_0$ for any $w_0\in W_{i,Q+m\widetilde{\gamma}}$ and $v_0\in W_{i,Q}$ such that $(w_0-v_0)^\varphi=m\widetilde{\gamma}$.
\end{lemma}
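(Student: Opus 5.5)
The plan is to transfer the two rational properties of the decomposition, namely the $(k_i\wlambda+\ell_i\wgamma)$-periodicity of $T^\varphi_{i,Q}$ and the $m\wgamma$-shift relation (2) of Corollary~\ref{cor:MainRational}, back to $T$. The tools are the rigidity items (7), (9)--(13) of Theorem~\ref{thm:GoodFiniteLocalComplexity}. The key input from $T_{i,Q}\subseteq T_{ng}$ is that every $T_{i,Q+m\wgamma}$ has no gaps. So all the data (tiles and proper vertices) can be connected by chains of steps of length $<60\diam(\Omega)$. By \eqref{eq:LargeParameter} these lengths are far below $M/2$, so on such steps $\varphi$ is injective on differences: equal rational differences force equal real differences, by (13) together with (10) and (12).

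\emph{Step 1 (periodicity of each piece).} Enumerate $T^\varphi_{i,Q}=(t^\varphi_j)_{j\in\Z}$ in the canonical order on the coset $Q$. Since there are no gaps, consecutive differences $t^\varphi_{j+1}-t^\varphi_j$ are short; by (10) the real differences $t_{j+1}-t_j$ are $<M$. Periodicity of $T^\varphi_{i,Q}$ gives some $p$ with $t^\varphi_{j+p}-t^\varphi_{j+1+p}=t^\varphi_j-t^\varphi_{j+1}$ for all $j$. Applying (13) coordinatewise, the real consecutive differences are $p$-periodic in $j$ as well. Hence $T_{i,Q}$ is $\langle\alpha\rangle$-periodic with
$$\alpha=\sum_{j=0}^{p-1}(t_{j+1}-t_j).$$
By linearity, $\alpha^\varphi=k_i\wlambda+\ell_i\wgamma\neq 0$, hence $\alpha\neq 0$. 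Applied to $Q+m\wgamma$, the same argument gives a period $\alpha_m$ with $\alpha_m^\varphi=\alpha^\varphi$. I will show $\alpha_m=\alpha$ in Step 2, since the regions turn out to be translates of one another. This allows the notation $\alpha_{i,Q+\Z\wgamma}$.

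\emph{Step 2 (the shift $\beta_{Q,m}$).} By (7), the proper vertices of $\Omega\oplus T_{i,Q}$ correspond bijectively under $v\mapsto v^\varphi$ to those of $\Omega^\varphi\oplus T^\varphi_{i,Q}$. By (2), $W(\Omega^\varphi\oplus T^\varphi_{i,Q+m\wgamma})=W(\Omega^\varphi\oplus T^\varphi_{i,Q})+m\wgamma$. Hence $w_0$ as in the statement exists. Fix $v_0\in W_{i,Q}$ and set $\beta=w_0-v_0$. Connect any $v\in W_{i,Q}$ to $v_0$ by a chain $v_0,v_1,\dots,v_r=v$ in $W_{i,Q}$ with short steps; this is possible by the no-gaps assumption, since every tile carries a nearby proper vertex. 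The translated chain $v_k^\varphi+m\wgamma$ lies in $W(\Omega^\varphi\oplus T^\varphi_{i,Q+m\wgamma})$. I will lift it to $w_k\in W_{i,Q+m\wgamma}$ with $w_k^\varphi=v_k^\varphi+m\wgamma$. The rational steps $w^\varphi_{k+1}-w^\varphi_k$ and $v^\varphi_{k+1}-v^\varphi_k$ coincide and are short, so (13) gives $w_{k+1}-w_k=v_{k+1}-v_k$. By induction, $w_k=v_k+\beta$, so $W_{i,Q}+\beta\subseteq W_{i,Q+m\wgamma}$; the symmetric argument gives equality. The same induction shows that $\beta$ does not depend on the choice of $(v_0,w_0)$. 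By (7), the local pictures $\mathcal B(\cdot)$ at corresponding proper vertices agree, because they agree for the rational tiling, where the two regions are exact translates.

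\emph{Step 3 (the main obstacle).} It remains to deduce $\Omega\oplus T_{i,Q}+\beta=\Omega\oplus T_{i,Q+m\wgamma}$ from equality of proper-vertex sets and local pictures. For axis-parallel regions the boundary is the union of segments issuing from proper vertices in the directions recorded by the local pictures, each running to the next proper vertex on that line. I expect the difficulty to be verifying that the segments pair up identically in both regions. For this I would use (12), which says collinearity of nearby vertices is read off from $\varphi$, together with the fact that the rational regions are exact translates, which fixes the pairing. If some boundary line contains no proper vertex, the region would be a union of strips, contradicting $T_c=\emptyset$ and $W_{i,Q}\neq\emptyset$. Once the regions are translates, the period transfers: $\alpha_m$ is a period of $\Omega\oplus T_{i,Q+m\wgamma}=\Omega\oplus T_{i,Q}+\beta$. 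Lifting the chain of tiles as in Step 1 identifies $\alpha_m$ with $\alpha$, which completes the argument.
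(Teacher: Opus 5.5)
\textbf{There is a genuine gap in Step 2.} Your Step 1 and the chain-lifting in Step 2 are exactly the paper's mechanism (Claim~\ref{cl:Enumeration} with item (13), and Claim~\ref{cl:LinkedEquivalence}). The argument breaks at ``every tile carries a nearby proper vertex''.

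Item (13) only propagates $\beta$ within an $M/3$-link class of $W_{i,Q}$, and you assume $W_{i,Q}$ is a single class. This fails when $D_i$ is axis-parallel and $\Omega$ can tile a column. A vertical packing without gaps may contain long stretches stacked exactly as in a column. Then $\partial(\Omega\oplus T_{i,Q})$ has long vertical segments carrying no proper vertex, and the tiles in the middle of such a stretch have no proper vertex nearby. These lengths are bounded only through the period of $T^\varphi_{i,Q}$, which is produced after $\varphi$ (hence after $M$) is fixed. So no admissible $M$ makes your chain steps shorter than $M/2$, and (13) cannot carry $\beta$ from one cluster of proper vertices to the next.

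The paper bridges such stretches with a different rigidity (Proposition~\ref{pr:NoGapsNoColumnButColumn}):
\begin{itemize}
\item Proposition~\ref{pr:NotLinkedThree} places a proper vertex of every link class on the extreme left line $p_1(v)\le p_1(\Omega\oplus T_{i,Q})$.
\item Across a long left boundary segment $P_{a,v}$ one writes $\|v-a\|=k\,m_\Omega+\sum_{\omega\in A}|\omega|$ with $|A|$ bounded in terms of $\Omega$ only, and does the same for the partner pair $b,w$.
\item One uses $v^\varphi-a^\varphi=w^\varphi-b^\varphi$ to bound $|k-k'|$.
\item Item (14) then gives $\|v-a\|=\|w-b\|$, however long the segment is.
\end{itemize}
In the remaining cases linkedness is true but still needs an argument you do not give. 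For non-axis-parallel $D_i$ it comes from the extremal coset $L'$ (Proposition~\ref{pr:NotLinkedTwo}). When $\Omega$ cannot tile a column it comes from a compactness constant $\ell_\Omega$, which forces the extra requirement $M>3\ell_\Omega$ (Proposition~\ref{pr:NotLinkedOne}).

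\textbf{Step 3 also rests on a false claim.} A boundary line without proper vertices does not force a union of strips. Take the vertical stack
$\bigl([0,1]\times[0,1]\cup[1,2]\times[0,\tfrac12]\bigr)+\{0\}\times\mathbb{Z}$
of an $L$-shaped tile with a straight left side. Its boundary contains the infinite line $\{0\}\times\mathbb{R}$ with no proper vertex on it, yet $W\neq\emptyset$. The paper does not rule such lines out; it treats them separately in Claim~\ref{cl:Shifts}, showing that the $p_1$-projections shift by $p_1(\beta_{Q,m})$.

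Conversely, what you flag as the main obstacle is routine. Suppose $W_{i,Q}+\beta=W_{i,Q+m\widetilde{\gamma}}$, the local pictures match, and every maximal boundary segment contains a proper vertex. Then Claim~\ref{cl:NecessaryForRegions} gives the translation immediately.
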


The proof of this Lemma is concluded in Section \ref{subsubsec:ProofMainLemma}. In the proof, we distinguish two cases depending on whether $D_i\in \{\mathbb{R}(1,0),\mathbb{R}(0,1)\}$ or not.
We begin with a definition.

\begin{definition}[Links]
    Let $S\subseteq  T$.
    For $r>0$, we define the \emph{$r$-link equivalence relation on $W(\Omega\oplus S)$} to be the equivalence relation that is generated by the pairs $v,w\in W(\Omega\oplus S)$ such that $\|v-w\|<r$.
    We denote the $r$-link equivalence class of $v\in W(\Omega\oplus S)$ as $[v]_r$.
    We say that $S$ is \emph{$r$-linked} if the $r$-link equivalence relation on $W(\Omega\oplus S)$ consists of a single non-empty equivalence class.
\end{definition}

We extend the definitions of gaps and links to arbitrary nonempty packings $\Omega\oplus S$ in the same way.
Note that while the definitions of gaps and links have similar flavor, these notions are different.
For instance, if $\Omega$ can tile a column, then it is easy to produce an example of a vertical packing $S\subseteq  \mathbb{R}(0,1)$ such that $S$ has no gaps, but it is not $r$-linked for any $r>0$.

The following observation will be our main tool.

\begin{claim}\label{cl:LinkedEquivalence}
    Let  $1\le i\le n$, $Q\in L_i$, $m\in \mathbb{Z}$. Suppose that $w_0\in W_{i,Q+m\widetilde{\gamma}}$ and $v_0\in W_{i,Q}$ are such that $(w_0-v_0)^\varphi=m\widetilde{\gamma}$.
    Then we have that $u+\beta\in W_{i,Q+m\widetilde{\gamma}}$ and 
    \begin{equation}\label{eq:GoodPointsZero}
        \mathcal{B}(u,(\Omega\oplus T_{i,Q}))=\mathcal{B}(u+\beta,(\Omega\oplus T_{i,Q+m\widetilde{\gamma}}))
    \end{equation}
    for every $u\in [v_0]_{M/3}$, where $\beta=w_0-v_0$ and $M>0$ as in \eqref{eq:LargeParameter}. 
    Moreover, if $v\in [v_0]_{M/3}$, then the unique $w\in W_{i,Q+m\widetilde{\gamma}}$ that satisfies $(w-v)^\varphi=m\widetilde{\gamma}$ is equal to $v+\beta$.
\end{claim}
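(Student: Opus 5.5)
We argue by induction along chains of $M/3$-links starting at $v_0$. Since $[v_0]_{M/3}$ is generated by pairs at distance $<M/3$, it suffices to prove the following propagation statement. \emph{If $v\in W_{i,Q}$ satisfies $v+\beta\in W_{i,Q+m\widetilde{\gamma}}$, $(v+\beta-v)^\varphi=m\widetilde\gamma$ and the local pictures $\mathcal{B}(v,\Omega\oplus T_{i,Q})$ and $\mathcal{B}(v+\beta,\Omega\oplus T_{i,Q+m\widetilde\gamma})$ agree, and if $u\in W_{i,Q}$ has $\|u-v\|<M/3$, then the same three properties hold for $u$.} The base case $v=v_0$ is given except for the local-picture equality. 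For that, we use Theorem~\ref{thm:GoodFiniteLocalComplexity}~(7) to transfer the question to $\Omega^\varphi$. There, property (2) of the decomposition gives $\Omega^\varphi\oplus T^\varphi_{i,Q}+m\widetilde\gamma=\Omega^\varphi\oplus T^\varphi_{i,Q+m\widetilde\gamma}$, so the local pictures of $v_0^\varphi$ and $w_0^\varphi=v_0^\varphi+m\widetilde\gamma$ coincide.

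For the inductive step, we pass to the rational picture. The set $W(\Omega^\varphi\oplus T^\varphi_{i,Q})$ translated by $m\widetilde\gamma$ equals $W(\Omega^\varphi\oplus T^\varphi_{i,Q+m\widetilde\gamma})$, again by property (2). We need the fact that $\varphi$ maps $W_{i,Q}$ bijectively onto $W(\Omega^\varphi\oplus T^\varphi_{i,Q})$, which follows from (7) and (11). Hence there is a unique $w\in W_{i,Q+m\widetilde\gamma}$ with $w^\varphi=u^\varphi+m\widetilde\gamma$. Then $w^\varphi-v^\varphi-m\widetilde\gamma=u^\varphi-v^\varphi$, that is,
$$(w-(v+\beta))^\varphi=(u-v)^\varphi .$$
By (9), $\|(u-v)^\varphi\|<M/3+1/M<M/2$ (using \eqref{eq:LargeParameter}), so $\|w^\varphi-(v+\beta)^\varphi\|<M/2$. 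Applying (13) in both coordinates to the pairs $(v,u)$ and $(v+\beta,w)$ gives $p_k(w-(v+\beta))=p_k(u-v)$ for $k=1,2$. Hence $w=u+\beta$, which gives $u+\beta\in W_{i,Q+m\widetilde\gamma}$. The local-picture equality for $u$ and $u+\beta$ then follows exactly as in the base case: both sides are determined by the $\varphi$-images, which are translates by $m\widetilde\gamma$. This proves \eqref{eq:GoodPointsZero}.

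The "moreover" part is the uniqueness just established. If $v\in[v_0]_{M/3}$ and $w\in W_{i,Q+m\widetilde\gamma}$ satisfies $(w-v)^\varphi=m\widetilde\gamma$, then $w^\varphi=(v+\beta)^\varphi$. By (11) (injectivity of $\varphi$ on $V(\Omega)+T$), this gives $w=v+\beta$.

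The main obstacle is the correspondence between proper vertices and $\varphi$. We must check that $W(\Omega\oplus T_{i,Q})$ is mapped exactly onto $W(\Omega^\varphi\oplus T^\varphi_{i,Q})$, and that the vertices involved lie in $V(\Omega)+T$, so that (9)–(13) apply. We plan to deduce this from (7) applied with $S=T_{i,Q}$: proper-vertex status is read off from $\mathcal{B}(\cdot,\cdot)$. Every proper vertex of $\Omega\oplus S$ lies in $V(\Omega)+S$, since away from vertices the boundary is locally a segment. Care is also needed to apply (13) with the distance hypothesis on the $\varphi$-side, which is why we keep the link radius at $M/3$ rather than $M/2$.
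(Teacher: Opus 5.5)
Your proposal is correct and follows essentially the same route as the paper. You propagate along $M/3$-link chains from $v_0$, use (9) to get $\|u^\varphi-v^\varphi\|<M/2$, and apply (13) in each coordinate to obtain $w-(v+\beta)=u-v$. The local-picture equality comes from (7), and the ``moreover'' part from (11). The paper phrases the propagation as a contradiction argument and simply asserts that the partner vertex $w'$ with $(w')^\varphi=(v')^\varphi+m\widetilde{\gamma}$ exists. You justify that existence explicitly via property (2) of the rational decomposition together with (7) and (11), which is a useful addition.
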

\begin{proof}
    By Theorem~\ref{thm:GoodFiniteLocalComplexity}~(7), we have that
    $$\mathcal{B}(v_0,\Omega\oplus T_{i,Q})=\mathcal{B}(w_0,\Omega\oplus T_{i,Q+m\widetilde{\gamma}}).$$
    Write $X_0\subseteq  W_{i,Q}$ for the set of $v\in W_{i,Q}$ such that $v+\beta\in W_{i,Q+m\widetilde{\gamma}}$ and that satisfy \eqref{eq:GoodPointsZero}.
    We show that $[v_0]_{M/3}\subseteq  X_0$.

    Suppose for a contradiction that $[v_0]_{M/3}\setminus X_0\not=\emptyset$.
    As $v_0\in X_0$, we find $v\in X_0$ and $v'\in [v_0]_{M/3}\setminus X_0$ such that $\|v-v'\| <M/3$.
    Then, from Theorem~\ref{thm:GoodFiniteLocalComplexity}~(9) and \eqref{eq:LargeParameter} it follows that $\|v^\varphi-(v')^\varphi\| <M/2$.
    Write $w=v+\beta$ and observe that $w^\varphi=v^\varphi+m\widetilde{\gamma}$.
    Set also $w'\in W_{i,Q+m\widetilde{\gamma}}$ such that $(w')^\varphi=(v')^\varphi+m\widetilde{\gamma}$.
    Observe that by Theorem~\ref{thm:GoodFiniteLocalComplexity}~(7)
    $$\mathcal{B}(v',\Omega\oplus T_{i,Q})=\mathcal{B}(w',\Omega\oplus T_{i,Q+m\widetilde{\gamma}}).$$
    We claim that $w'=v'+\beta$.
    Indeed, by Theorem~\ref{thm:GoodFiniteLocalComplexity}~(13), we have that $v'-v=w'-w$.
    Hence $w'-v'=w-v=\beta$, so $v'\in X_0$, which contradicts out initial assumption.

    It remains to argue that $\beta$ does not depend on the choice of $v\in [v_0]_{M/3}$.
    Let $v\in [v_0]_{M/3}$ and $w\in W_{i,Q+m\widetilde{\gamma}}$ be such that $m\widetilde{\gamma}=w^\varphi-v^\varphi$.
    Then by the previous argument, we have that $v+\beta=w'\in W_{i,Q+m\widetilde{\gamma}}$ for some $w'\in W_{i,Q+m\widetilde{\gamma}}$, and
    $$(w')^\varphi=v^\varphi+(w^\varphi_0-v_0^\varphi)=v^\varphi+m\widetilde{\gamma}=w^\varphi.$$
    By Theorem~\ref{thm:GoodFiniteLocalComplexity}~(11), we conclude that $w'=w$.
    In particular, $w-v=\beta$ as desired.
\end{proof}

The following observation will be used in all the arguments.
We extend in the natural way the definition of local combinatorial structure, Definition~\ref{def:LocalCombStructure}, to all $x\in \partial F$, where $F=\Omega\oplus S$ for some $\emptyset\not =S\subseteq \R^2$.

\begin{claim}\label{cl:NecessaryForRegions}
    Let $F_0,F_1\subseteq  \mathbb{R}^2$ be non-empty regular open sets and $S_i\in\Tile(\Omega; F_i)$, $i=0,1$.
    Then $F_0+\beta=F_1$ for some $\beta\in \mathbb{R}^2$ if and only if $\partial(F_0)+\beta=\partial(F_1)$ and $\mathcal{B}(x,F_0)=\mathcal{B}(x+\beta,F_1)$ for every $x\in \partial(F_0)$.
\end{claim}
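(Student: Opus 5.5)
The forward direction is immediate, so most of the work is in the converse. Suppose $F_0+\beta=F_1$. Translation by $\beta$ is a homeomorphism of $\R^2$, so it carries $\partial(F_0)$ onto $\partial(F_1)$. For $x\in\partial(F_0)$ and every $r>0$ we also have
$$\left((\mathcal{B}_r+x+\beta)\cap F_1\right)-(x+\beta)=\left((\mathcal{B}_r+x)\cap F_0\right)-x .$$
The normalized local pictures defining $\mathcal{B}(x,F_0)$ and $\mathcal{B}(x+\beta,F_1)$ therefore coincide, with the same stabilizing radius. Here I am using the natural extension of Definition~\ref{def:LocalCombStructure}, with $F_i=\Omega\oplus S_i$. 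That extension makes sense because $S_i$ is uniformly discrete, which follows from the packing condition with separation $\delta(\Omega)$. Hence $\partial F_i$ is locally a finite union of segments, and the local cone at every boundary point stabilizes.

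For the converse, set $G=F_0+\beta$. It is enough to show $G=F_1$. By hypothesis $\partial G=\partial F_1$, and for each $y\in\partial G$ the local cones agree: $\mathcal{B}(y,G)=\mathcal{B}(y,F_1)$.

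The key step is to show that the two open sets $G$ and $F_1$ agree on each connected component $U$ of $\R^2\setminus\partial F_1$.
\begin{itemize}
\item Since $G$ and $F_1$ are both open with boundary contained in $\partial F_1$, each of them contains $U$ or is disjoint from it. Indeed, $U$ is connected, and $U\cap G$ and $U\setminus\overline{G}$ form an open partition of $U$ because $\partial G\cap U=\emptyset$.
\item Each component $U$ has a boundary point $y\in\partial F_1$, unless $\partial F_1=\emptyset$. In that case $F_1=\R^2$, and then $G=\R^2$ as well, since $G$ is open and nonempty with empty boundary.
\item Near $y$, the set $U$ contains a sector of the local cone $\mathcal{B}(y,\cdot)$ lying in $(\mathcal{B}_r+y)\setminus\partial F_1$. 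Because $\mathcal{B}(y,G)=\mathcal{B}(y,F_1)$, that sector lies in $G$ exactly when it lies in $F_1$.
\end{itemize}
Hence $U\subseteq G$ if and only if $U\subseteq F_1$. As a result, $G$ and $F_1$ agree off $\partial F_1$.

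It remains to handle the boundary. Both sets are regular open, so $G=\interior(\overline{G\setminus\partial F_1})=\interior(\overline{F_1\setminus\partial F_1})=F_1$.

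The main obstacle I expect is justifying the sector argument. One has to check that every component $U$ really does touch the boundary at a point where the local cone is well defined and where $U$ meets the ball $\mathcal{B}_r+y$ exactly in a union of cone sectors. I would do this using the local finiteness of $\partial F_1$, which follows from uniform discreteness of $S_1$ and finiteness of $E(\Omega)$: choose $y$ in the relative interior of an edge of $\partial F_1$ adjacent to $U$. The cone at such a point is a half-plane, or a full plane minus a line, and the sector on the $U$ side decides membership.
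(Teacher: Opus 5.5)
Your argument is correct and is essentially the paper's: the paper joins each $x\in F_0$ by a segment $[x,y)$ to a first boundary point $y$, propagates the local picture at $y+\beta$ along the translated segment by connectedness, and then argues symmetrically; you run the same connectedness-plus-local-agreement argument on the components of $\R^2\setminus\partial F_1$, which handles both inclusions at once. The obstacle you anticipate does not arise, since equal normalized pictures already give $G\cap(\mathcal{B}_r+y)=F_1\cap(\mathcal{B}_r+y)$ for $r$ below both stabilizing radii; likewise your final regular-openness step is redundant, because $G$ and $F_1$ are open with $\partial G=\partial F_1$ and so already lie in $\R^2\setminus\partial F_1$.
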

\begin{proof}
    The implication from left to right is clear.
    For the converse, suppose that $\beta\in \mathbb{R}^2$ is such that $\partial(F_0)+\beta=\partial(F_1)$ and $\mathcal{B}(x,F_0)=\mathcal{B}(x+\beta,F_1)$ for every $x\in \partial(F_0)$.
    We may assume that $F_0,F_1\not=\mathbb{R}^2$ as in the case the claim follows as well.
    Let $x\in F_0$.
    As $F_0=\Omega\oplus S_0\not=\mathbb{R}^2$ and $\Omega$ is a polygonal set, there is a line segment $L=[x,y)\subseteq  F_0$ that connects $x$ and some $y\in \partial(F_0)$.
    By the assumption, we have that $y+\beta\in\partial(F_1)$ and $(\overline{L}+\beta)\cap \partial(F_1)=\{y+\beta\}$.
    It follows from $\mathcal{B}(y,F_0)=\mathcal{B}(y+\beta,F_1)$ that $L+\beta\subseteq  F_1$, which implies that $x+\beta\in F_1$.
    Analogous argument shows that if $x\in F_1$, then $x-\beta\in F_0$ and the proof is finished.    
\end{proof}

\subsubsection{\bf The case $D_i\not\in \{\mathbb{R}(1,0),\mathbb{R}(0,1)\}$}

\begin{proposition}\label{pr:NotLinkedTwo}
    Suppose that $D_i\not\in \{\mathbb{R}(1,0),\mathbb{R}(0,1)\}$ and let $Q\in L_i$ be such that $T_{i,Q}\subseteq  T_{ng}$.
    Then $T_{i,Q+m\widetilde{\gamma}}$ is $M/3$-linked for every $m\in \mathbb{Z}$ for every $M$ as in \eqref{eq:LargeParameter}.
\end{proposition}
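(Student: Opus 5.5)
Fix $m\in\mathbb{Z}$ and write $Q'=Q+m\widetilde{\gamma}$. Since $T_{i,Q}\subseteq T_{ng}$, the definition of $T_g$ gives that $T_{i,Q'}$ has no gaps. The plan is to show that any two consecutive translates in $T_{i,Q'}$ produce proper vertices of $\Omega\oplus T_{i,Q'}$ that are close to each other, and then chain along the set.

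\emph{Step 1 (local geometry).} Let $(t^\varphi_j)_{j\in\mathbb{Z}}$ be the canonical enumeration of $T^\varphi_{i,Q'}\subseteq Q'$ along the line $Q'$. Because $T_{i,Q'}$ has no gaps, the argument in Proposition~\ref{pr:GapsOne} shows that $\|t^\varphi_j-t^\varphi_{j+1}\|<30\diam(\Omega)$ for every $j$. Theorem~\ref{thm:GoodFiniteLocalComplexity}~(9),(10) and \eqref{eq:LargeParameter} then give $\|t_j-t_{j+1}\|<31\diam(\Omega)$. Consequently every point of $\Omega\oplus T_{i,Q'}$ lies within about $32\diam(\Omega)$ of some $t_j$, and consecutive tiles $\Omega+t_j$ and $\Omega+t_{j+1}$ are at bounded distance.

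\emph{Step 2 (each tile carries a proper vertex nearby).} This is the main point. I claim that for every $j$ there is $w_j\in W_{i,Q'}$ with $\|w_j-t_j\|\le C\diam(\Omega)$, where $C$ is an absolute constant, for instance $C=60$. Suppose no proper vertex lies in the ball $\mathcal{B}_{C\diam(\Omega)}+t_j$. Then, as in the proof of Lemma~\ref{lem:TechnicalClaim}, the boundary of $\Omega\oplus T_{i,Q'}$ inside this ball consists of full chords parallel to one axis, say vertical. Transfer this configuration to $\Omega^\varphi\oplus T^\varphi_{i,Q'}$ using Theorem~\ref{thm:GoodFiniteLocalComplexity}~(5),(7),(12). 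The image set is $(k_i\widetilde{\lambda}+\ell_i\widetilde{\gamma})$-periodic along $Q'$, has no gaps, and $D_i$ is not axis-parallel. A chain of tiles along a line that is not axis-parallel, with consecutive gaps smaller than $30\diam(\Omega)$, must cross every horizontal and every vertical line within distance about $30\diam(\Omega)$ of $t^\varphi_j$. So the region $\Omega^\varphi\oplus T^\varphi_{i,Q'}$ cannot be bounded locally only by vertical segments along a long stretch: in the direction transverse to $D_i$ it has finite width, comparable to $\diam(\Omega)$, in both coordinate directions, and must therefore have turning points. To make this rigorous, take the extremal tile in direction $D_i^\perp$ near $t_j$; the extreme point of its closure on the boundary of the region is a corner of the union, and hence a proper vertex. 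This step is where the hypothesis $D_i\notin\{\mathbb{R}(1,0),\mathbb{R}(0,1)\}$ is essential. For axis-parallel $D_i$ a column-type strip gives the counterexample mentioned after the definition of links.

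\emph{Step 3 (chaining).} From Step 2, $\|w_j-w_{j+1}\|\le 2C\diam(\Omega)+31\diam(\Omega)$, which is below $M/3$ because $M>189\diam(\Omega)$ in \eqref{eq:LargeParameter}. This is presumably the origin of the constant $189$. Every proper vertex of $\Omega\oplus T_{i,Q'}$ lies on the boundary of some $\Omega+t_j$, and hence within $\diam(\Omega)$ of $t_j$ and so within $M/3$ of $w_j$. Therefore all of $W_{i,Q'}$ forms a single $M/3$-link class. This class is nonempty because Step 2 produces at least one proper vertex. Hence $T_{i,Q'}$ is $M/3$-linked.

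The main obstacle is Step 2: making rigorous the existence of a proper vertex near each translate, which requires that the union along a non-axis-parallel line cannot locally look like a union of parallel strips.
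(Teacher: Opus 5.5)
Your overall route (attach a proper vertex to every translate, then chain through the no-gaps relation) is the paper's, but as written the argument does not close.

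\textbf{Step 1 is unjustified.} The argument in Proposition~\ref{pr:GapsOne} proves the converse implication: gaps force some consecutive distance $\|t^\varphi_j-t^\varphi_{j+1}\|\ge 30\diam(\Omega)$. Absence of gaps only gives a bound of order $60\diam(\Omega)$ (cf.\ Claim~\ref{cl:Enumeration}).

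\textbf{The numbers in Step 3 fail.} With $C=60$ you need $151\diam(\Omega)<M/3$, while \eqref{eq:LargeParameter} only guarantees $M/3>63\diam(\Omega)$. A Step~2 that merely finds \emph{some} proper vertex within $C\diam(\Omega)$ of each translate, and then pays $2C$ at every link of the chain, is too lossy for the constant $189$.

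\textbf{Step 2 itself is left as a heuristic.} You give a local picture in a ball and the phrase ``must have turning points'', plus one sentence about an ``extremal tile near $t_j$''.

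The missing idea is that the extremal argument should be global and uniform. It has to be run in the rational picture, since $T_{i,Q'}$ itself need not lie on a coset of $D_i$. All points $t^\varphi$ with $t\in T_{i,Q'}$ lie on the single line $Q'$, so there is no extremal tile to choose: every tile is extremal.

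Let $L'$ be the coset of $D_i$ farthest from $Q'$ that meets $\overline{\Omega^\varphi\oplus T^\varphi_{i,Q'}}$. It supports the whole union and meets every tile $\overline{\Omega^\varphi}+t^\varphi$ at $v_0^\varphi+t^\varphi$, for one fixed $v_0\in V(\Omega)$. The union lies on one side of $L'$, its boundary consists of axis-parallel segments, and $L'$ is not axis-parallel. Hence no boundary segment passes straight through such a point, and each $v_0^\varphi+t^\varphi$ is a proper vertex. By Theorem~\ref{thm:GoodFiniteLocalComplexity}~(3) and~(7), $v_0+t\in W_{i,Q'}$ for \emph{every} $t\in T_{i,Q'}$.

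With this in hand you need no contradiction argument, no parts (5) and (12), and no Step~1. Chain directly along the no-gaps relation. For consecutive elements $t,t'$ of such a chain the offsets cancel, so $\|(v_0+t)-(v_0+t')\|=\|t-t'\|<60\diam(\Omega)$. Every other $v+t\in W_{i,Q'}\subseteq V(\Omega)+T_{i,Q'}$ attaches to $v_0+t$ at distance $\|v-v_0\|\le\diam(\Omega)$. Thus $T_{i,Q'}$ is $63\diam(\Omega)$-linked, hence $M/3$-linked, which is exactly the paper's proof.
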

\begin{proof}
    First, observe that the assumption $T_{i,Q}\subseteq  T_{ng}$ implies that $T_{i,Q}$ has no gaps.
    Let $L'$ be a coset of $D_i$ that maximizes the distance from $Q$ and has non-empty intersection with $\overline{\Omega^\varphi\oplus T^\varphi_{i,Q}}$.
    Note that $L'$ exists as $T^\varphi_{i,Q}\subseteq  Q$ and $\Omega$ is bounded.
    Since $D_i\not\in \{\mathbb{R}(1,0),\mathbb{R}(0,1)\}$, $L'$ has a maximal distance from $Q$, and $T^\varphi_{i,Q}\subseteq  Q$, it must be the case that there is $v_0\in V(\Omega)$ such that for every $t\in T_{i,Q}$ we have that $v_0^\varphi+t^\varphi\in L'$ and $v_0^\varphi+t^\varphi\in W(\Omega^\varphi\oplus T^{\varphi}_{i,Q})$.

    By Theorem~\ref{thm:GoodFiniteLocalComplexity} parts (3) and (7), we have that $v_0+t\in W_{i,Q}$ for every $t\in T_{i,Q}$.
    As $T_{i,Q}$ does not have gaps, we conclude that $v_0+t'\in [v_0+t]_{60\diam(\Omega)}$ for every $t,t'\in T_{i,Q}$.
    Using the fact that $W_{i,Q}\subseteq  V(\Omega)+T_{i,Q}$, we conclude that $T_{i,Q}$ is $63\diam(\Omega)$-linked, and hence it is $M/3$-linked.
    Finally, as $T_{i,Q}\subseteq  T_{ng}$ implies that $T_{i,Q+m\widetilde{\gamma}}\subseteq  T_{ng}$ for every $m\in \mathbb{Z}$, the claim follows.
\end{proof}

\begin{proposition}\label{pr:NoGapsTwo}
    Let $1\le i\le n$ and $Q\in L_i$ be such that $D_i\not\in \{\mathbb{R}(1,0),\mathbb{R}(0,1)\}$ and $T_{i,Q}\subseteq  T_{ng}$.
    Then
    $$\Omega\oplus T_{i,Q}+\beta_{Q,m}=\Omega\oplus T_{i,Q+m\widetilde{\gamma}}$$
    for every $m\in \mathbb{Z}$, where $\beta_{Q,m}=w_0-v_0$ for any $w_0\in W_{i,Q+m\widetilde{\gamma}}$ and $v_0\in W_{i,Q}$ such that $(w_0-v_0)^\varphi=m\widetilde{\gamma}$.
\end{proposition}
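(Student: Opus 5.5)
The plan is to combine Proposition~\ref{pr:NotLinkedTwo}, Claim~\ref{cl:LinkedEquivalence} and Claim~\ref{cl:NecessaryForRegions}.

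Fix $m\in\Z$. First check that the vector $\beta_{Q,m}$ exists and is well defined.

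\emph{Existence of a matching pair.} By Corollary~\ref{cor:MainRational}~(2),
$$\Omega^\varphi\oplus T^\varphi_{i,Q}+m\wgamma=\Omega^\varphi\oplus T^\varphi_{i,Q+m\wgamma}.$$
Hence the proper vertices of the two rational regions correspond under translation by $m\wgamma$. By Theorem~\ref{thm:GoodFiniteLocalComplexity}~(7) and (11), proper vertices of $\Omega\oplus T_{i,Q}$ correspond bijectively to proper vertices of $\Omega^\varphi\oplus T^\varphi_{i,Q}$ via $v\mapsto v^\varphi$. The same holds for $Q+m\wgamma$. So for each $v\in W_{i,Q}$ there is a unique $w\in W_{i,Q+m\wgamma}$ with $(w-v)^\varphi=m\wgamma$. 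The set $W_{i,Q}$ is nonempty: in the proof of Proposition~\ref{pr:NotLinkedTwo}, the vertices $v_0+t$ with $t\in T_{i,Q}$ lie in $W_{i,Q}$.

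\emph{Independence of the choice.} Proposition~\ref{pr:NotLinkedTwo} says $T_{i,Q}$ is $M/3$-linked, so $[v_0]_{M/3}=W_{i,Q}$ for any $v_0\in W_{i,Q}$. The ``moreover'' part of Claim~\ref{cl:LinkedEquivalence} then gives that every matching pair $(v,w)$ satisfies $w-v=\beta$, with $\beta=w_0-v_0$. Thus $\beta_{Q,m}$ does not depend on the pair chosen.

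Next, establish the two hypotheses of Claim~\ref{cl:NecessaryForRegions} with $F_0=\Omega\oplus T_{i,Q}$ and $F_1=\Omega\oplus T_{i,Q+m\wgamma}$.

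\emph{Proper vertices match.} Claim~\ref{cl:LinkedEquivalence} gives $W_{i,Q}+\beta\subseteq W_{i,Q+m\wgamma}$, with matching local structures $\mathcal B(u,F_0)=\mathcal B(u+\beta,F_1)$. For the reverse inclusion, take any $w\in W_{i,Q+m\wgamma}$. Its partner $v$ from the correspondence above lies in $W_{i,Q}$ and satisfies $v+\beta=w$. Hence $W(F_0)+\beta=W(F_1)$.

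\emph{Boundaries match.} This is the step needing the most care: we must pass from proper vertices to the whole boundary.
\begin{itemize}
\item Each boundary point $x\in\partial F_0$ that is not a proper vertex lies in the relative interior of a maximal boundary segment.
\item Such a segment is either bounded, with both endpoints in $W(F_0)$, or a full line or ray.
\item A full line is impossible. The regions are bounded in the direction transverse to $D_i$, and $D_i$ is not an axis direction, so the axis-parallel boundary $\partial F_0$ contains no full line.
\item A ray would force the segment to be eventually contained in finitely many tiles' edges. This is also impossible since $T_{i,Q}$ is spread along a non-axis direction. So every maximal boundary segment is bounded.
\item The local structures at its endpoints determine the segment's direction and which side $F_0$ lies on. Since $\mathcal B(v,F_0)=\mathcal B(v+\beta,F_1)$ at each endpoint $v$, the translated segment starts at a proper vertex of $F_1$ in the same direction. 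It also ends at the translated endpoint, because the next proper vertex of $F_1$ along that direction is the translate of the next one of $F_0$, by the bijection $W(F_0)+\beta=W(F_1)$.
\end{itemize}
It follows that $\partial F_0+\beta=\partial F_1$, and that the local structures agree at every boundary point: at non-vertex points the local structure is a half-plane determined by the segment and its side.

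\emph{Conclusion.} Claim~\ref{cl:NecessaryForRegions} now yields $F_0+\beta_{Q,m}=F_1$, which is the statement.
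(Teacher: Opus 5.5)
Your overall architecture is the paper's. Proposition~\ref{pr:NotLinkedTwo} gives $[v_0]_{M/3}=W_{i,Q}$. Claim~\ref{cl:LinkedEquivalence} gives well-definedness of $\beta_{Q,m}$ and the matching of proper vertices and local structures. Claim~\ref{cl:NecessaryForRegions} then concludes. Your reverse inclusion via the rational correspondence is fine, and so is your derivation of $\partial F_0+\beta=\partial F_1$ from the matched proper vertices; both are more explicit than the paper.

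\textbf{The gap.} The problem is the step showing that every maximal boundary segment of $F_0=\Omega\oplus T_{i,Q}$ is bounded. You argue that $F_0$ is bounded transverse to $D_i$ and that $T_{i,Q}$ is ``spread along a non-axis direction.'' Only the \emph{rational} plate $T^\varphi_{i,Q}$ is known to lie on the coset $Q$ of $D_i$. The real plate $T_{i,Q}$ need not lie in any coset of $D_i$, as the paper remarks. Since $\varphi$ is merely $\Q$-linear, it does not preserve directions. Even once periodicity of $T_{i,Q}$ is known (this is only derived later, in the proof of Lemma~\ref{lem:MainShifts}), the period $\alpha_{i,Q}$ only satisfies $\alpha_{i,Q}^\varphi=k_i\widetilde{\lambda}+\ell_i\widetilde{\gamma}$. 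It need not be parallel to $D_i$. So the real strip can drift arbitrarily far from every coset of $D_i$, and your ``full line'' argument rests on a claim that is false in general. The ``ray'' argument (``eventually contained in finitely many tiles' edges'') is not an argument at all: an infinite ray is covered by infinitely many edges.

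\textbf{The fix.} Argue in the rational picture, as the paper does by citing Theorem~\ref{thm:GoodFiniteLocalComplexity}~(1), (7), (11).
\begin{enumerate}
\item Suppose $\partial F_0$ contained an infinite horizontal segment (say). Then it would contain infinitely many distinct points $u_k\in V(\Omega)+T_{i,Q}$ with equal second coordinate.
\item By $\Q$-linearity of $\varphi$, the points $u_k^\varphi$ also share their second coordinate.
\item By (11) the $u_k^\varphi$ are pairwise distinct, and by (2) they lie in $\frac1d\Z^2$. Hence they are unbounded along a horizontal line.
\item But $u_k^\varphi\in V(\Omega^\varphi)+T^\varphi_{i,Q}$, so they lie within $\diam(\Omega^\varphi)$ of the non-axis-parallel line $Q$. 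This is a contradiction.
\end{enumerate}
The same argument applies to $F_1$. With this replacement, the rest of your proof goes through.
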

\begin{proof}
    Let $m\in \mathbb{Z}$. Observe that the assumption $D_i\not\in \{\mathbb{R}(1,0),\mathbb{R}(0,1)\}$ combined with Theorem~\ref{thm:GoodFiniteLocalComplexity}~(1), (7) and (11) imply that any maximal connected line segment $P\subseteq  \partial(\Omega\oplus T_{i,Q+m\widetilde{\gamma}})$ is finite and must be of the form $P_{a,b}$ for some $a,b\in W_{i,Q+m\widetilde{\gamma}}$.
    By Proposition~\ref{pr:NotLinkedTwo}, we have that $[v_0]_{M/3}=W_{i,Q}$ and $[w_0]_{M/3}=W_{i,Q+m\widetilde{\gamma}}$.
    By Claim \ref{cl:LinkedEquivalence}, we have that $u\in W_{i,Q}$ if and only if $u+\beta_{Q,m}\in W_{i,Q+m\widetilde{\gamma}}$, and
    \begin{equation}\label{eq:GoodPoints}
        \mathcal{B}(u,(\Omega\oplus T_{i,Q}))=\mathcal{B}(u+\beta_{Q,m},(\Omega\oplus T_{i,Q+m\widetilde{\gamma}}))
    \end{equation}
    for every $u\in W_{i,Q}$.
    Moreover, $\beta_{Q,m}$ does not depend on the choice of $v_0$ and $w_0$. 
    As every line segment in $\partial(\Omega\oplus T_{i,Q})$ and $\partial(\Omega\oplus T_{i,Q+m\widetilde{\gamma}})$ is finite, we conclude that $\Omega\oplus T_{i,Q}+\beta_{Q,m}=\Omega\oplus T_{i,Q+m\widetilde{\gamma}}$ by Claim~\ref{cl:NecessaryForRegions}.
\end{proof}

\newcommand{\cB}{\mathcal{B}}

\subsubsection{\bf The case $D_i\in \{\mathbb{R}(1,0),\mathbb{R}(0,1)\}$}
We assume, without loss of generality, that $D_i=\mathbb{R}(0,1)$.
The difficulty in proving the analogue of Proposition~\ref{pr:NoGapsTwo} is that $T_{i,Q}$ need not be $M/3$-linked, and $\partial(\Omega\oplus T_{i,Q})$ might contain arbitrarily long, even infinite, line segments.

We start with distinguishing two cases, depending on whether $\Omega$ can tile a column or not. 
We say that $S$ is a {\bf \emph{vertical packing}} if $S\subseteq  \mathbb{R}^2$ lies in a coset of $\mathbb{R}(0,1)$ and $\Omega\oplus S$ is a packing.

\begin{proposition}\label{pr:NotLinkedOne}
    Suppose that $\Omega$ cannot tile a column.
    Then there is $\ell_\Omega>0$, that only depends on $\Omega$, such that every vertical packing $S\subseteq  \mathbb{R}(0,1)$ that does not have gaps is $\ell_\Omega$-linked.
\end{proposition}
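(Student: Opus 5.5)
The plan is a compactness argument by contradiction, combined with a projection argument: a long vertical stretch without proper vertices should force $\Omega$ to tile a column.

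\emph{Step 1: reduce to vertical gaps.} Let $S\subseteq \{x\}\times\R$ be a vertical packing without gaps, so consecutive elements of $S$ are at distance less than $60\diam(\Omega)$. Every point of $W(\Omega\oplus S)$ lies in $V(\Omega)+S$, hence in the strip $(p_1(\Omega)+x)\times\R$ of width at most $\diam(\Omega)$. Two proper vertices whose heights differ by at most some $h$ are therefore at distance at most $h+\diam(\Omega)$.

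So if $S$ is not $\ell$-linked, one of the following holds.
\begin{itemize}
\item $W(\Omega\oplus S)=\emptyset$.
\item There is a height $y_0$ such that the horizontal strip $\R\times(y_0-H,y_0+H)$ contains no point of $W(\Omega\oplus S)$, while proper vertices exist on both sides, or on one side. Here $H\ge (\ell-\diam(\Omega))/2$.
\end{itemize}
Since $S$ has no gaps and the strip is tall, it still contains tiles $\Omega+s$, $s\in S$, near height $y_0$. It therefore suffices to show the following: there is $H_\Omega$, depending only on $\Omega$, such that no gapless vertical packing has a proper-vertex-free window of height $H_\Omega$ centered at the height of some $s\in S$. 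Then $\ell_\Omega$ can be taken to be about $2H_\Omega+\diam(\Omega)$.

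\emph{Step 2: compactness.} Suppose no such $H_\Omega$ exists. Then there are gapless vertical packings $S_n$ and points $s_n\in S_n$ such that $W(\Omega\oplus S_n)$ misses the strip of height $2n$ centered at the height of $s_n$. Translate so that $s_n=0$. All $S_n$ are $\delta(\Omega)$-separated, so we may pass to a weakly convergent subsequence $S_n\to S'$. Then:
\begin{itemize}
\item $0\in S'$;
\item $S'\subseteq \R(0,1)$;
\item $S'$ has no gaps, since the $60\diam(\Omega)$ chaining condition passes to the limit, up to the slack of a non-strict inequality;
\item $\Omega\oplus S'$ is a packing.
\end{itemize}
The delicate point, and the one I expect to be the main obstacle, is showing that $W(\Omega\oplus S')=\emptyset$.

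To prove this, I would argue locally, much as in Lemma~\ref{lm:HalfBalls}. Suppose $y\in W(\Omega\oplus S')$. Only finitely many tiles of $S'$ meet a small ball around $y$, and they are approximated by tiles of $S_n$. Since $\Omega$ is axis-parallel, $\partial(\overline{\Omega\oplus S_n})$ near $y$ is a union of axis-parallel segments, and it converges in the Hausdorff sense to $\partial(\overline{\Omega\oplus S'})$ near $y$, which has a corner at $y$. On the other hand, for large $n$ the set $\partial(\overline{\Omega\oplus S_n})$ has no proper vertex in the ball. Within the ball it is then a disjoint union of full lines, all of them vertical by Lemma~\ref{lem:TechnicalClaim} and the vertical structure. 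A limit of such unions cannot create a corner, unless a horizontal edge in the limit arises from coalescing vertical lines. I would rule that out via the packing condition and the finiteness of $E(\Omega)$, passing to subsequences so that each relevant tile uses a fixed edge.

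\emph{Step 3: conclusion.} By Lemma~\ref{lem:TechnicalClaim}, $\Omega\oplus S'=Y\times\R$ for some $Y$; it cannot be horizontal, since $S'$ lies in a vertical line and $\Omega$ is bounded. Now $S'\ne\emptyset$, and $\Omega+0\subseteq Y\times\R$ gives $p_1(\Omega)\subseteq Y$ up to null sets. Conversely, $Y\subseteq p_1(\Omega)+0$ because every tile of $S'$ lies in $p_1(\Omega)\times\R$. Hence $\Omega\oplus S'=p_1(\Omega)\times\R$ with $S'\subseteq\R(0,1)$, so $\Omega$ tiles a column. This contradicts the hypothesis, which gives $H_\Omega$ and hence $\ell_\Omega$.

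The remaining case, $W(\Omega\oplus S)=\emptyset$ for a single $S$, is handled the same way and leads directly to a column. This also shows that $W(\Omega\oplus S)$ is nonempty, as the definition of being linked requires.
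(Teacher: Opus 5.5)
Your proof has the same skeleton as the paper's: gapless vertical packings with ever longer proper-vertex-free windows, normalize, pass to a weak limit, obtain a column tiling. The difference is that you do the structural step \emph{after} the limit instead of before it, and that is exactly what creates the obstacle you flag.

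The paper works inside each finite packing $S_\ell\subseteq\{x\}\times\R$. Since $\Omega\oplus S_\ell$ lies in a strip of bounded width, every horizontal boundary segment of $\overline{\Omega\oplus S_\ell}$ is bounded, so its endpoints are proper vertices at its own height. Hence, in a horizontal strip $\R\times I$ containing no proper vertices, the boundary consists of full vertical segments and the region there is $Y\times I$. Gaplessness places a whole tile inside the strip, so $Y=p_1(\Omega)+x$. This gives $(p_1(\Omega)+x)\times I\subseteq\overline{\Omega\oplus S_\ell}$ with $|I|\to\infty$. Covering of bounded regions passes trivially to weak limits, so the limit is a column tiling. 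Neither $W$ of the limit nor Lemma~\ref{lem:TechnicalClaim} is needed.

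As written, your sketch that $W(\Omega\oplus S')=\emptyset$ contains a false step. The boundaries $\partial(\overline{\Omega\oplus S_n})$ need not converge in the Hausdorff sense to $\partial(\overline{\Omega\oplus S'})$: tiles that become adjacent only in the limit leave spurious boundary in the approximants. Only a one-sided inclusion holds. Every point of $\partial(\overline{\Omega\oplus S'})$ is a limit of points of $\partial(\overline{\Omega\oplus S_n})$: join a nearby point of $\Omega\oplus S'$ to a nearby point outside $\overline{\Omega\oplus S'}$; for large $n$ the first lies in $\Omega\oplus S_n$ and the second lies outside $\overline{\Omega\oplus S_n}$.

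This one-sided inclusion is enough. By the observation above, for large $n$ the boundary of $\overline{\Omega\oplus S_n}$ near $y$ lies in the fixed finite family of lines $p_1(V(\Omega))\times\R$, using your normalization $S_n\subseteq\{0\}\times\R$. So near $y$ the limit boundary lies in a single vertical line. Since the boundary of a regular closed set cannot terminate inside a small ball, it is the full vertical chord through $y$, i.e.\ $y\notin W(\Omega\oplus S')$. No ``coalescing'' or packing argument is involved.

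With this repair your proof is complete. It also has a small advantage: you treat the case $W(\Omega\oplus S)=\emptyset$ explicitly. The paper's proof passes over that case when it asserts the existence of $v,w$ with $w\notin[v]_\ell$.
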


\begin{proof}
    Let $\ell>100\diam(\Omega)$ and suppose that there is a vertical packing $S_\ell$ that does not have gaps and that is not $\ell$-linked.
    By the definition, there are $v,w\in W(\Omega\oplus S_\ell)$ such that $w\not\in [v]_\ell$.
    We may without loss of generality assume that $S_\ell\subseteq  \mathbb{R}(0,1)$, $p_2(w)=-\ell/2$, $p_2(v)-p_2(w)\ge \ell-2\diam(\Omega)$ and there is no $w'\in W(\Omega\oplus S_\ell)$ such that $p_2(w)<p_2(w')<p_2(v)$.
    Define
    $$X\coloneqq\left(p_1(\Omega\oplus S_\ell)\times \mathbb{R}\right)\cap \left(\mathbb{R}\times [p_2(w),p_2(v)]\right).$$
    Then we either have $X\cap (\Omega\oplus S_\ell)$ is a null set, or $X\subseteq  \Omega\oplus S_\ell$.
    The former case is impossible, as $S_\ell$ does not have gaps and $\ell>100\diam(\Omega)$.
    Now observe that the latter implies that $$p_1(\Omega)\times [-\ell/3,\ell/3]\subseteq  \Omega\oplus S_\ell,$$
    since $\ell/3\le \ell/2-2\diam(\Omega)$.
    By compactness, we see that if no $\ell>0$ satisfies the requirements, then any subsequential limit of $S_\ell$ witnesses that $\Omega$ can tile a column, a contradiction.
\end{proof}

In what follows, we will assume that if $\Omega$ cannot tile a column, then the parameter $M$ in \eqref{eq:LargeParameter} additionally satisfies $M>3\ell_\Omega$, where $\ell_\Omega$ is as in Proposition \ref{pr:NotLinkedOne}. 

\begin{proposition}\label{pr:NoGapsNoColumn}
    Suppose that $\Omega$ cannot 
    tile a column and that $D_i= \mathbb{R}(0,1)$ for some $1\leq i\leq n$. If  $Q\in L_i$ is such that $T_{i,Q}\subseteq  T_{ng}$, then
    $$\Omega\oplus T_{i,Q}+\beta_{Q,m}=\Omega\oplus T_{i,Q+m\widetilde{\gamma}}$$
    for every $m\in \mathbb{Z}$, where $\beta_{Q,m}=w_0-v_0$ for any $w_0\in W_{i,Q+m\widetilde{\gamma}}$ and $v_0\in W_{i,Q}$ such that $(w_0-v_0)^\varphi=m\widetilde{\gamma}$.
\end{proposition}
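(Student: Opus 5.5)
The plan mirrors the proof of Proposition~\ref{pr:NoGapsTwo}, with Proposition~\ref{pr:NotLinkedOne} substituting for the linkedness argument of Proposition~\ref{pr:NotLinkedTwo}.

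\emph{Step 1: every slice is linked.} Fix $m\in\Z$. Since $T_{i,Q}\subseteq T_{ng}$, every $T_{i,Q+m\wgamma}$ also lies in $T_{ng}$, so none of them has gaps. I would first check that $T_{i,Q+m\wgamma}$ is a vertical packing. The points $T^\varphi_{i,Q+m\wgamma}$ lie on a single coset of $D_i=\R(0,1)$. By Theorem~\ref{thm:GoodFiniteLocalComplexity}~(12), applied along chains of consecutive elements at distance less than $M/2$, the original translations $T_{i,Q+m\wgamma}$ then also lie on one vertical line. Such chains exist because there are no gaps, and the conversion between distances for $t$ and $t^\varphi$ uses (9) and (10). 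Proposition~\ref{pr:NotLinkedOne} then shows that $T_{i,Q+m\wgamma}$ is $\ell_\Omega$-linked. Since $M>3\ell_\Omega$, it is $M/3$-linked. Hence $W_{i,Q}=[v_0]_{M/3}$ and $W_{i,Q+m\wgamma}=[w_0]_{M/3}$, provided $W_{i,Q}\neq\emptyset$. If $W_{i,Q}$ is empty, then $\Omega\oplus T_{i,Q}$ is a vertical column-type region; this case is either vacuous for the statement (no $\beta_{Q,m}$ is defined) or is handled directly using Theorem~\ref{thm:GoodFiniteLocalComplexity}~(8).

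\emph{Step 2: transport vertices and local pictures.} Apply Claim~\ref{cl:LinkedEquivalence} with $\beta=\beta_{Q,m}=w_0-v_0$. This gives $u+\beta\in W_{i,Q+m\wgamma}$ and $\mathcal{B}(u,\Omega\oplus T_{i,Q})=\mathcal{B}(u+\beta,\Omega\oplus T_{i,Q+m\wgamma})$ for all $u\in W_{i,Q}$, and also shows that $\beta$ does not depend on the choice of $v_0,w_0$. Running the claim symmetrically with $-m$ shows that every proper vertex of $W_{i,Q+m\wgamma}$ comes from one in $W_{i,Q}$. Therefore $W_{i,Q}+\beta=W_{i,Q+m\wgamma}$.

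\emph{Step 3: conclude via Claim~\ref{cl:NecessaryForRegions}.} This is the step I expect to be the main obstacle. Unlike the case of Proposition~\ref{pr:NoGapsTwo}, the boundary $\partial(\Omega\oplus T_{i,Q})$ may contain long or infinite vertical segments, so it is not determined by its proper vertices alone. I would argue that, because the packing has no gaps and cannot contain a full column (as $\Omega$ cannot tile a column), the region $\Omega\oplus T_{i,Q}$ is contained in a vertical strip $p_1(\Omega)\times\R+x$ and its boundary is a union of finite segments between proper vertices plus possibly vertical half-lines or lines. Linkedness with bounded gaps $\ell_\Omega$ between consecutive proper vertices rules out infinite segments between vertices. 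Any remaining infinite vertical boundary lines are the lines $\{x+a\}\times\R$ with $a\in\partial p_1(\Omega)$ where no vertex occurs; these are mapped correctly by $\beta$, because the horizontal offset of $\beta$ agrees with the horizontal displacement of the strips. That agreement follows from Theorem~\ref{thm:GoodFiniteLocalComplexity}~(12),(13). With $\partial(\Omega\oplus T_{i,Q})+\beta=\partial(\Omega\oplus T_{i,Q+m\wgamma})$ and matching local structures at every boundary point (at non-vertex boundary points the local picture is a half-plane determined by the adjacent vertices), Claim~\ref{cl:NecessaryForRegions} yields $\Omega\oplus T_{i,Q}+\beta_{Q,m}=\Omega\oplus T_{i,Q+m\wgamma}$.
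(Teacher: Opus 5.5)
Your proposal is correct and has the same skeleton as the paper's proof. The steps are: vertical packing via items (9) and (12) of Theorem~\ref{thm:GoodFiniteLocalComplexity}; $\ell_\Omega$-linkedness from Proposition~\ref{pr:NotLinkedOne}; transport of proper vertices and local pictures by Claim~\ref{cl:LinkedEquivalence}, used with $\pm m$; and Claim~\ref{cl:NecessaryForRegions}, once full vertical boundary lines without proper vertices are handled.

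You genuinely differ in how you get the horizontal-offset identity $p_1(\beta_{Q,m})=x_{Q+m\widetilde{\gamma}}-x_Q$. Here $x_Q$ and $x_{Q+m\widetilde{\gamma}}$ are the abscissae of the vertical lines carrying $T_{i,Q}$ and $T_{i,Q+m\widetilde{\gamma}}$; the identity is exactly the paper's Claim~\ref{cl:Shifts}. The paper proves it by a case split. If the boundary contains no vertical line, it uses $p_1$-extremal proper vertices. Otherwise it runs a volume argument to locate proper vertices on the line $p_1(v_1+T_{i,Q})\times\R$ in both regions.

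Your route is shorter, but write it out rather than just citing (12),(13). Put $v_0=a+t$ and $w_0=b+s$ with $a,b\in V(\Omega)$, $t\in T_{i,Q}$, $s\in T_{i,Q+m\widetilde{\gamma}}$. Since $t^\varphi\in Q$ and $s^\varphi\in Q+m\widetilde{\gamma}$ lie on vertical lines, $p_1(s^\varphi-t^\varphi)=p_1(m\widetilde{\gamma})=p_1(w_0^\varphi-v_0^\varphi)$. Hence $p_1(a^\varphi)=p_1(b^\varphi)$, and goodness of $\varphi$ (or item (13)) gives $p_1(a)=p_1(b)$, i.e.\ $p_1(\beta_{Q,m})=p_1(s)-p_1(t)$. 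This needs only your Step~1 for every $m$, which you have, and it avoids the volume argument entirely.

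Two small repairs. First, the case $W_{i,Q}=\emptyset$ needs no hedging. By Lemma~\ref{lem:TechnicalClaim} it would make the vertical packing $T_{i,Q}$ a vertical column, which is impossible since $\Omega$ cannot tile a column.

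Second, linkedness does not bound the spacing of consecutive proper vertices along a single vertical line, and Step~3 does not need that. Along any line, boundary status between consecutive proper vertices (and beyond the extreme ones) is read off from the local picture at an adjacent proper vertex. So $W_{i,Q}+\beta_{Q,m}=W_{i,Q+m\widetilde{\gamma}}$ with matching pictures already transports all finite and half-infinite boundary segments.

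Only full vertical lines free of proper vertices need the offset identity. For such a line, note three things. The translated line carries no proper vertex of the image region. The side left uncovered along the original line stays uncovered along the translated line, because $p_1(\Omega)$ misses a one-sided neighbourhood of the corresponding endpoint. The other side meets the image region. Hence the translated line lies entirely in $\partial(\Omega\oplus T_{i,Q+m\widetilde{\gamma}})$ with the same half-plane picture.
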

\begin{proof}
    Observe first that $T_{i,Q}$ is a vertical packing.
    Indeed, if $s,t\in T_{i,Q}$ satisfy $\|s-t\|<60\diam(\Omega)$, then by Theorem~\ref{thm:GoodFiniteLocalComplexity}~(9) and the choice of $M$, we have $\|s^\varphi-t^\varphi\|<M/2$.
    Since $p_1(s^\varphi)=p_1(t^\varphi)$, Theorem~\ref{thm:GoodFiniteLocalComplexity}~(12) gives $p_1(s)=p_1(t)$.
    As $T_{i,Q}$ has no gaps, we conclude that $p_1$ is constant on $T_{i,Q}$.

    By Proposition~\ref{pr:NotLinkedOne}, we have that $T_{i,Q+m\widetilde{\gamma}}\subseteq  T_{ng}$ is $\ell_\Omega$-linked for every $m\in \mathbb{Z}$.
    Thus, we have that $[v_0]_{M/3}=W_{i,Q}$ and $[w_0]_{M/3}=W_{i,Q+m\widetilde{\gamma}}$.
    Additionally, recall that $W_{i,Q}\not=\emptyset$ as $\Omega$ cannot tile a column.

    By Claim \ref{cl:LinkedEquivalence}, we have that $\beta_{Q,m}$ is well defined, and has the property that $u\in W_{i,Q}$ if and only if $u+\beta_{Q,m}\in W_{i,Q+m\widetilde{\gamma}}$, and
    \begin{equation}\label{eq:GoodPointsTwo}
        \mathcal{B}(u,(\Omega\oplus T_{i,Q}))=\mathcal{B}(u+\beta_{Q,m},(\Omega\oplus T_{i,Q+m\widetilde{\gamma}}))
    \end{equation}
    for every $u\in W_{i,Q}$.    

    Unlike in the proof of Proposition~\ref{pr:NoGapsTwo}, we cannot directly apply Claim~\ref{cl:NecessaryForRegions} as $\partial(\Omega\oplus T_{i,Q})$ might contain infinite line segments disjoint from $W_{i,Q}$.
    Observe that any maximal infinite line segment contained in $\partial(\Omega\oplus T_{i,Q})$ is a coset of $\mathbb{R}(0,1)$.
    This follows from the fact that $T_{i,Q}\subseteq  T_{ng}$, Theorem~\ref{thm:GoodFiniteLocalComplexity}, and the fact that every maximal infinite line segment contained in $\partial(\Omega^\varphi\oplus T^\varphi_{i,Q})$ is a coset of $\mathbb{R}(0,1)$ due to the periodicity of $T^\varphi_{i,Q}$.
    
    We handle this issue by the following auxiliary claim, whose proof is postponed after this proof.
   
    \begin{claim}\label{cl:Shifts}
        We have $p_1(\Omega\oplus T_{i,Q})+p_1(\beta_{Q,m})=p_1(\Omega\oplus T_{i,Q+m\widetilde{\gamma}})$ for every $m\in \mathbb{Z}$.
    \end{claim}

    By \eqref{eq:GoodPointsTwo}, the boundary segments of $\Omega\oplus T_{i,Q}$ that intersect $W_{i,Q}$ correspond under translation by $\beta_{Q,m}$ with the boundary segments of $\Omega\oplus T_{i,Q+m\widetilde{\gamma}}$ that intersect $W_{i,Q+m\widetilde{\gamma}}$.
    Hence, it remains to consider a coset $P\subseteq\partial(\Omega\oplus T_{i,Q})$ of $\mathbb R(0,1)$ such that $W_{i,Q}\cap P=\emptyset$.
    Observe that $p_1(\Omega\oplus T_{i,Q})=p_1(\Omega)+p_1(T_{i,Q})$ is a finite union of finite intervals.
    Suppose that $P\subseteq  \partial(\Omega\oplus T_{i,Q})$ is a coset of $\mathbb{R}(0,1)$ such that $W_{i,Q}\cap P=\emptyset$.
    It follows that $p_1(P)$ is an end point of one of the finite intervals of $p_1(\Omega\oplus T_{i,Q})$.
    This implies, by Claim~\ref{cl:Shifts} and \eqref{eq:GoodPointsTwo}, that $P+\beta_{Q,m}\subseteq  \partial(\Omega\oplus T_{i,Q+m\widetilde{\gamma}})$ and $(P+\beta_{Q,m})\cap W_{i,Q+m\widetilde{\gamma}}=\emptyset$.
    Additionally, we have that
    $$\mathcal{B}(x,\Omega\oplus T_{i,Q})=\mathcal{B}(x+\beta_{Q,m},\Omega\oplus T_{i,Q+m\widetilde{\gamma}})$$
    for every $x\in P$.
    Combined with \eqref{eq:GoodPointsTwo}, we conclude by Claim~\ref{cl:NecessaryForRegions} that $\Omega\oplus T_{i,Q}+\beta_{Q,m}=\Omega\oplus T_{i,Q+m\widetilde{\gamma}}$ as desired.
\end{proof}

\begin{proof}[Proof of Claim~\ref{cl:Shifts}:]
    Suppose first that $\partial(\Omega\oplus T_{i,Q})$ does not contain a coset of $\mathbb{R}(0,1)$.
    Note that in this case there are $v,w\in W_{i,Q}$ such that $p_1(v)\le p_1(\Omega\oplus T_{i,Q})\le p_1(w)$. 
    By \eqref{eq:GoodPointsTwo}, we have that $v+\beta_{Q,m},w+\beta_{Q,m}\in W_{i,Q+m\widetilde{\gamma}}$, which implies the claim as $p_1(v+\beta_{Q,m})\le p_1(\Omega\oplus T_{i,Q+m\widetilde{\gamma}})\le p_1(w+\beta_{Q,m})$.
    
    Next, suppose that $\partial(\Omega\oplus T_{i,Q})$ contains a coset of $\mathbb{R}(0,1)$, say $P$.
    Let $v_0\in V(\Omega)$ and $t\in T_{i,Q}$ be such that $v_0+t\in P$, and define $m_\Omega=m_1(p_2(\overline{\Omega}\cap (\{p_1(v_0)\}\times \mathbb{R})))$.
    As $T_{i,Q}$ is not a column, there is $v'\in V(\Omega)$ that admits a sequence $\{s_n\}_{n\in \N}\subseteq \R$ that converges to $p_1(v')$ such that
    $$0<\lim_{n\to \infty} m_1(p_2(\overline{\Omega}\cap (\{s_n\}\times \mathbb{R})))<m_\Omega.$$
    Let $v_1\in V(\Omega)$ have this property and be additionally such that $p_1(v_1)$ is minimal. 
    By a simple volume argument, we have that 
    $$(p_1(v_1+T_{i,Q})\times \mathbb{R})\cap W_{i,Q}\not=\emptyset$$
    and $p_1(v_1+T_{i,Q})\le p_1(W_{i,Q})$.
    
    By Theorem~\ref{thm:GoodFiniteLocalComplexity}~(7), we have that $\partial(\Omega\oplus T_{i,Q+m\widetilde{\gamma}})$ contains a coset of $\mathbb{R}(0,1)$ as well.
    Another application of the volume argument together with Theorem~\ref{thm:GoodFiniteLocalComplexity}~(7) and the fact that $\Omega^\varphi\oplus T^\varphi_{i,Q}+m\widetilde{\gamma}=\Omega^\varphi\oplus T^\varphi_{i,Q+m\widetilde{\gamma}}$ shows that
    $$(p_1(v_1+T_{i,Q+m\widetilde{\gamma}})\times \mathbb{R})\cap W_{i,Q+m\widetilde{\gamma}}\not=\emptyset$$
    and $p_1(v_1+T_{i,Q+m\widetilde{\gamma}})\le p_1(W_{i,Q+m\widetilde{\gamma}})$.
    This gives $p_1(v_1+T_{i,Q})+p_1(\beta_{Q,m})=p_1(v_1+T_{i,Q+m\widetilde{\gamma}})$ by \eqref{eq:GoodPointsTwo}, which implies that
    $$p_1(T_{i,Q})+p_1(\beta_{Q,m})=p_1(T_{i,Q+m\widetilde{\gamma}})$$
    and the claim follows.
\end{proof}

We are left with the case when $\Omega$ can tile a column.
Recall that we assume that $T_{i,Q}$ is not a column, therefore by the periodicity of $T^\varphi_{i,Q}$ and Theorem~\ref{thm:GoodFiniteLocalComplexity}~(7), we have that all maximal connected line segments of $\partial(\Omega\oplus T_{i,Q})$ are finite.
However, a priori, there is no bound that depends only on $\Omega$, on the maximal length of such line segments, so we cannot guarantee that $W_{i,Q}$ is $M'$-linked for any $M'>0$ fixed at the beginning of the construction.
On a high level, this is remedied by the elementary observation that any tiling of a sufficiently large interval in 
$\R$ by translates of a finite union of intervals must exhibit a nontrivial period.

We start with an auxiliary claim.

\begin{proposition}\label{pr:NotLinkedThree}
    Suppose that $\Omega$ can tile a column.
    Then there is $r_\Omega>0$, that only depends on $\Omega$, such that if $S$ is a vertical packing that does not have gaps and is not $r_\Omega$-linked, then for every $x\in W(\Omega\oplus S)$ there is $v\in W(\Omega\oplus S)\cap [x]_{r_\Omega}$ such that $p_1(v)\le p_1(\Omega\oplus S)$. 
\end{proposition}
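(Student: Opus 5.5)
Place $S\subseteq\mathbb{R}(0,1)$ after a translation, and write $I=p_1(\Omega)$ and $I^-=\min \overline{I}$. The plan is to analyse the \emph{left boundary} of the vertical strip $\Omega\oplus S\subseteq I\times\mathbb{R}$. Let $\Omega^-$ denote the part of $\Omega$ near its leftmost vertical edges, meaning the vertical slice of $\Omega$ at $p_1=I^-+\delta$ for small $\delta>0$, so that $\Omega^-$ is a finite union of intervals in $\mathbb{R}$. Because $\Omega$ can tile a column, the slices of $\Omega$ have constant length $\ell_0$ in the sense of Lemma~\ref{lem:ColumnOrUnbounded}: a column tiling $S'\subseteq\mathbb{R}(0,1)$ forces every slice $p_2(\Omega\cap(\{w\}\times\mathbb{R}))$ to tile $\mathbb{R}$ by the same set $p_2(S')$. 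The key observation is a dichotomy for $\Omega\oplus S$ restricted to a horizontal window $\mathbb{R}\times[a,b]$. Either the left boundary line $\{I^-\}\times[a,b]$ is entirely covered by leftmost edges with no proper vertex on it, or there is a proper vertex $v$ with $p_1(v)=I^-$, which gives $p_1(v)\le p_1(\Omega\oplus S)$ directly.

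\textbf{Step 1.} Given $x\in W(\Omega\oplus S)$, show that within vertical distance $C\diam(\Omega)$ of $x$ there is a point where the left boundary of $\Omega\oplus S$ is not a straight unbroken vertical line. The no-gaps hypothesis is the input here. If the left boundary line were unbroken on a long window around $p_2(x)$, then the left-edge slices $p_2(S)+\Omega^-$ would pack an interval of length $\gg\diam(\Omega)$ with full coverage. By the one-dimensional elementary observation quoted before the proposition (a tiling of a long interval by a finite union of intervals exhibits a period, via Newman/Lagarias--Wang \cite{LW,Newman}), $p_2(S)$ would then be locally periodic with period bounded in terms of $\Omega$. Feeding this into the column structure should force all slices to be covered, so that the window is a piece of a column.

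\textbf{Step 2.} Use the hypothesis that $S$ is \emph{not} $r_\Omega$-linked. The idea is that columnar stretches of $\Omega\oplus S$ contain no proper vertices, which is why linkage fails. So if the neighbourhood of $x$ sat inside a long columnar stretch, $x$ could not be a proper vertex there. Hence near $x$ (within distance bounded by the period bound from Step 1 plus $O(\diam(\Omega))$) the region is non-columnar. Non-columnarity means some horizontal slice of $\Omega\oplus S$ is not all of $I$, so the left boundary has a break. At the lowest or highest point of that break there is a proper vertex $v$ on $\{I^-\}\times\mathbb{R}$ (or at the left extreme of $p_1(\Omega\oplus S)$). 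The chain of proper vertices between $x$ and $v$ consists of vertices of translates $\Omega+s$ with consecutive $s$ at distance $<60\diam(\Omega)$ by the no-gaps hypothesis. This gives $v\in[x]_{r_\Omega}$ once $r_\Omega\ge C'\diam(\Omega)$ plus the period bound.

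\textbf{Main obstacle.} The hard step is the uniform bound in Step 1: showing that a sufficiently long proper-vertex-free stretch must be a genuine column piece, with length threshold depending only on $\Omega$. I would attempt a compactness argument as in Proposition~\ref{pr:NotLinkedOne}. Suppose no $r$ works; take vertical packings $S_r$ violating the statement, translate so the bad $x$ sits at the origin, and pass to a weak limit (Lemma~\ref{lem:BasicWeak}-style compactness for $\delta(\Omega)$-separated sets). The limit packing has no gaps and a proper vertex at the origin, but no proper vertex on the left boundary anywhere. Its left boundary is then an infinite unbroken vertical line. A Lemma~\ref{lm:HalfBalls}/Lemma~\ref{lem:TechnicalClaim}-type argument together with the constant-slice property should force it to be a full column, contradicting the proper vertex at the origin. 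Making the limit retain the property ``$x$ is a proper vertex'' (rather than degenerating) is the delicate point. I would control it by choosing $x$ to be a vertex of a fixed $\Omega+s$ and passing to subsequences where the local configuration $\mathcal{B}(x,\cdot)$ stabilizes, which is possible since there are finitely many such configurations.
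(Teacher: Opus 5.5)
Your outline is a legitimate route, different from the paper's: look at a two-sided window of the left line $\{\min p_1(\Omega\oplus S)\}\times\R$ around $x$, and show that if the window carries no proper vertex then its middle is a piece of a column, contradicting $x\in W(\Omega\oplus S)$. But the step you flag yourself is not closed, and it is the whole content of the proposition.

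\textbf{The gap.} In your compactness argument the proper vertex at the origin need not survive in the limit, and stabilizing $\mathcal{B}(x,\Omega\oplus S_r)$ does not help.
\begin{itemize}
\item $\mathcal{B}$ only records translates whose closures contain $x$. The vertex is destroyed by a translate $s_r\to s$ with $s_r\neq s$, which sits at a small positive distance from $x$ at every finite stage. This is exactly the mechanism of Lemma~\ref{lm:HalfBalls}.
\item The limit is then a full column: its left slice tiles $\R$, so it is periodic by Lagarias--Wang, and by equality of slice measures all slices are tiled. So no contradiction arises.
\end{itemize}
What is missing is a rigidity fact. On a stretch of the left line that is covered and free of proper vertices, the left endpoints of the intervals of $p_2(S)+\Omega^-$ form a chain whose consecutive gaps lie in the finite set of interval lengths of $\Omega^-$. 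Hence the $p_2$-differences of translates within $\diam(\Omega)$ of each other take only finitely many values. This excludes a sliding translate near $x$. It is the paper's Case~1: infinitely many offsets $p_2(w+\hat s)-p_2(w+s_n)$ force a left-line proper vertex within $5\diam(\Omega)$ of $y$.

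Likewise, ``feeding this into the column structure'' needs an argument you do not give. A periodic packing of each vertical slice $\Omega_w$ by $p_2(S)$ has density $m_1(\Omega_w)/\ell_0=1$, and is therefore a tiling.

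\textbf{Completing your route without compactness.} With this finiteness in hand, compactness is unnecessary.
\begin{itemize}
\item The left-slice tiling is generated from below by a deterministic rule: the next translate starts at the lowest uncovered point. The rule has finitely many states, by the rigidity fact.
\item So, after a transient, the left-slice tiling is periodic, and both the transient and the period are bounded in terms of $\Omega$. The density argument then makes the middle of any window of length $R_0(\Omega)$ a column piece.
\item This yields a left-line proper vertex $v$ with $\|v-x\|\le R_0$. Taking $r_\Omega>R_0$ puts $v\in[x]_{r_\Omega}$ by a single link.
\end{itemize}
Consequently your chain argument in Step~2 is unnecessary. It is also inaccurate, since not every translate carries a proper vertex. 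The non-linkedness hypothesis is never used.

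\textbf{Comparison with the paper.} The paper uses non-linkedness essentially. It takes the topmost $y_n\in[x_n]_n$ and obtains a column piece of height about $n$ above it. It passes to a limit containing a half-infinite column, then either handles a sliding translate (Case~1), or runs Newman's pigeonhole on the half-infinite column. In the latter case it shows the periodicity starts at the first translate and finds a left-line proper vertex within $2\diam(\Omega)$ of $y$. Completed as above, your route is more elementary and proves a stronger statement.
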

\begin{proof}
    Suppose for a contradiction that there is no $r>0$ with this property.
    In particular, for every sufficiently large $n\in \mathbb{N}$, we can find a vertical packing $S_n\subseteq  \mathbb{R}(0,1)$ that does not have gaps and is not $n$-linked, and $x_n\in W(\Omega\oplus S_n)$ such that $p_1(v)\not\leq p_1(\Omega\oplus S_n)$ for every $v\in W(\Omega\oplus S_n)\cap [x_n]_{n}$.
    Observe that, as $S_n$ is not $n$-linked, there is $y_n\in [x_n]_{n}$ that either maximizes, or minimizes $p_2$, i.e., $$p_2(y_n)\in \{\max Y_n, \min Y_n\},$$ where $Y_n=\{p_2(y)\colon y \in W(\Omega\oplus S_n)\cap [x_n]_{n}\}$.
    Let us assume without loss of generality that $p_2(y_n)=\max Y_n$.
    After possibly shifting $S_n$, we may assume that $p_2(y_n)=0$.
    Additionally, since $V(\Omega)$ is finite, after possibly passing to a subsequence, we may assume that $y_n=y=\hat{v}+\hat{s}$ for some $y\in \mathbb{R}^2$, $\hat{v}\in V(\Omega)$ and $\hat{s}\in S_n$ for every $n\in \mathbb{N}$.
    As $S_n$ does not have gaps, it follows that $$p_1(\overline{\Omega}\oplus S_n)\times [0,n-\diam(\Omega)]\subseteq  \overline{\Omega}\oplus S_n $$ for every large enough $n\in \mathbb{N}$.  
    Let $S$ be a subsequential weak limit of $(S_n)_{n\in \mathbb{N}}$.
    Observe that
    $$p_1(\overline{\Omega}\oplus S)\times [0,\infty)\subseteq  \overline{\Omega}\oplus S$$
    and that $\hat{s}\in S$.
    
    Suppose first that $y\not\in W(\Omega\oplus S)$.
    This can only happen if there are $s\in S$ and $s_n\in S_n$ for every $n\in \mathbb{N}$ such that $\|s_n-s\|\to 0$, $y\in V(\Omega)+s$ and $y\not\in V(\Omega)+s_n$.
    Observe that $s\not=\hat{s}$ and $\|s-\hat{s}\|\le 2\diam(\Omega)$.
    Let $w\in V(\Omega)$ be such that $p_1(w)\le p_1(\Omega)$.
    As $s_n\not=s$ for every $n\in \mathbb{N}$, we get that the set
    $$\{p_2(w+\hat{s})-p_2(w+s_n):n\in \mathbb{N}\}$$
    is infinite.
    In particular as $E(\Omega)$ is finite, we find arbitrarily large $n\in \mathbb{N}$ such that $\|s-s_n\|\le \diam(\Omega)$ and $W(\Omega\oplus S_n)\cap (P_{\hat{s},s_n}+w)\not=\emptyset$.
    Let $v\in W(\Omega\oplus S_n)\cap (P_{\hat{s},s_n}+w)$.
    Then we have that
    \begin{equation*}
        \begin{split}
            \|v-y\|\le & \ \|v-(w+s_n)\|+\|(w+s_n)-(w+s)\|+\|(w+s)-(w+\hat{s})\|+\|(w+\hat{s})-y\|\\
            \le & \ 5\diam(\Omega).
        \end{split}
    \end{equation*}
    In particular, $v\in [y]_{n}$ in $S_n$ for sufficiently large $n\in\N$, and that is a contradiction.

    Suppose, then, that $y\in W(\Omega\oplus S)$. To complete the proof, it then suffices to show that in this case there is $v\in W(\Omega\oplus S)$ such that $p_1(v)\le p_1(\Omega\oplus S)$ and $\|v-y\|\le 2\diam(\Omega)$, as this would imply that the sets $S_n$ don't converge weakly to $S$.
    Indeed, for such a vertex $v$ we find a sequence of vertices $v_n\in W(\Omega\oplus S_n)$ for sufficiently large $n\in\N$ such that $p_1(v_n)=p_1(v)$ and $v_n\to v$, contradicting the choice of $y$.

    Write $(t_k)_{k\in \mathbb{N}}$ for the increasing enumeration (with respect to the second coordinate) of those $t\in S$ such that $\Omega+t$ intersects $p_1(\Omega\oplus S)\times (0,\infty)$ and set $U_k=\{t_{k'}\}_{k'\ge k}$ for every $k\in \mathbb{N}$.
    As $\Omega$ is an axis-parallel polygonal set, by the pigeonhole principle (see, e.g., \cite{Newman}) there exist $k_0< k_1\in \N$ such that 
    $t_{k_0+j}+\alpha=t_{k_1+j}$ for every $j\in \mathbb{N}$, where $\alpha=t_{k_1}-t_{k_0}$; let $k_0$ be the minimal index for which there exists $k_1$ satisfying this property.
    We claim that $k_0=0$.

    Suppose for a contradiction that $k_0>0$.
    By periodicity, we have that 
    \begin{equation}\label{eq:peralpha}
        \Omega\oplus (U_{k_0}+\alpha)=\Omega\oplus U_{k_1}.
    \end{equation}
    For $i=0,1$, let $\kappa_i$ be a maximal non-trivial horizontal line segment that is contained in $\partial(\Omega\oplus U_{k_i})$ with the property that for every other non-trivial horizontal line segment $\kappa'\subseteq  \partial(\Omega\oplus U_{k_i})$ we have that $p_2(\kappa')\le p_2(\kappa_i)$.
    Observe that the existence of such $\kappa_1$ follows from the assumption that there is $a\ge 0$ such that $$p_1(\Omega\oplus S)\times (a,\infty)\subseteq  \overline{\Omega}\oplus U_{k_1}$$ and $$p_1(\Omega\oplus S)\times (-\infty,-a)\cap(\overline{\Omega}\oplus U_{k_1})=\emptyset.$$
    By periodicity, we have that $p_2(\kappa_0)=p_2(\kappa_1-\alpha)$, $\kappa_1-\alpha\subseteq  \partial(\Omega\oplus U_{k_0})$ and 
    \begin{equation}\label{eq:kappaminusalphaslice}
        p_1(\Omega\oplus S)\times (p_2(\kappa_1-\alpha),\infty)\subseteq  \overline{\Omega}\oplus U_{k_0}.
    \end{equation}
    Now, as $p_2(\alpha)>0$, there is $k_0\le l<k_1$ such that $\overline{\Omega}+t_{l}$ intersects $\kappa_1$ in a non-trivial line segment.
    By construction, we have $p_2(t_k)\le p_2(t_{k+1})$ for every $k\in \mathbb{N}$ and $$p_1(\kappa_1)\times [p_2(\kappa_1),\infty)\subseteq  \overline{\Omega}\oplus U_{k_1},$$ so it must be the case that $l=k_1-1$ as otherwise $\Omega\oplus (U_{k_1}\cup\{t_{k_1-1}\})$ is not a packing.
    Moreover,  observe that since $$(\Omega+ t_{k_1-1})\cap (\Omega\oplus U_{k_1})=\emptyset,$$  by \eqref{eq:peralpha}, we have that $\Omega+ (t_{k_1-1}-\alpha)$ does not intersect $\Omega\oplus U_{k_0}$.
    Thus, by \eqref{eq:kappaminusalphaslice}, we obtain that $p_2(t_{k_0-1})\le p_2(t_{k_1-1}-\alpha)$.
    Finally, observe that for every $s\in S\setminus U_{k_0}$
    $$\max_{z\in\overline{\Omega}} p_2(z+s)\le \max_{z\in\overline{\Omega}} p_2(z+t_{k_0-1}),$$
    which implies that $\Omega+ s$ does not intersect the rectangle
    $$p_1(\kappa_1)\times (p_2(\kappa_1-t_{k_1-1}+t_{k_0-1}),p_2(\kappa_0)).$$
    This can only happen if $p_2(\kappa_0)\leq 0$ or $t_{k_1-1}-t_{k_0-1}=\alpha$.
    Note that by the minimality of $k_0$ the latter is impossible, we then must have $p_2(\kappa_0)\leq 0$, which, by the maximality of $p_2(\kappa_0)$, implies that
    $$p_1(\Omega\oplus S)\times (0,\infty)\subseteq \Omega\oplus U_{k_0}.$$
    And so, by the construction of $\{t_k\}_{k\in \mathbb{N}}$, we must have that $U_{k_0}=U_0=\{t_k\}_{k\in \mathbb{N}}$ and thus $k_0=0$.

    Let $v\in W(\Omega\oplus U_0)$ be such that $p_1(v)\le p_1(\Omega\oplus S)$ and
    $$p_2(v)=\max\{p_2(v'):v'\in W(\Omega\oplus U_0) \text{ and } p_1(v')\le p_1(\Omega\oplus S)\}.$$
    Such $v$ is well defined as $p_1(\Omega\oplus S)\times (0,\infty)\subseteq  \overline{\Omega}\oplus U_0$ and
    $$p_1(\Omega\oplus S)\times (-\infty,-\diam(\Omega))\cap(\overline{\Omega}\oplus U_0)=\emptyset.$$
    Observe that $\|v-y\|\le 2\diam(\Omega)$.
    So it remains to show that $v\in W(\Omega\oplus S)$.
    Note that this is clearly true when $U_0=S$, so we may assume that $S\setminus U_0\not=\emptyset$.
    
    From periodicity, we have that $p_1(\Omega\oplus S)\times (p_2(\alpha),\infty)\subseteq  \overline{\Omega}\oplus U_{k_1}$, and $v+\alpha\in W(\Omega\oplus U_{k_1})$.
    In particular, as $y+\alpha\not\in W(\Omega\oplus U_0)$ and $v+\alpha\not\in W(\Omega\oplus U_0)$, we must have that $y+\alpha\not\in W(\Omega\oplus (U_{k_1}\cup\{t_{k_1-1}\}))$ and $v+\alpha\not\in W(\Omega\oplus (U_{k_1}\cup\{t_{k_1-1}\}))$.
    Let $s\in S$ be such that $$p_2(s)=\max\{p_2(s'):s'\in S\setminus U_0\}.$$
    Then it follows from periodicity that $p_2(s)\le p_2(t_{k_1-1}-\alpha)$.
    As $y\in W(\Omega\oplus S)$, we have that $p_2(s)<p_2(t_{k_1-1}-\alpha)$.
    Finally, since $\Omega\oplus (U_0\cup\{t_{k_1-1}-\alpha\})$ is a packing and $v\not\in W(\Omega\oplus (U_0\cup\{t_{k_1-1}-\alpha\}))$, we obtain that $v\in W(\Omega\oplus (U_0\cup\{s\}))$.
    By the definition of $s$, we conclude that $v\in W(\Omega\oplus S)$, as needed.
\end{proof}

In what follows, we will assume that if $\Omega$ can tile a column, then the parameter $M$ in \eqref{eq:LargeParameter} additionally satisfies $M>3r_\Omega$, where $r_\Omega$ is as in Proposition \ref{pr:NotLinkedThree}.

\begin{proposition}\label{pr:NoGapsNoColumnButColumn}
    Suppose $\Omega$ can tile a column, and let $1\le i\le n$, $Q\in L_i$ be such that $D_i=\mathbb{R}(0,1)$ and $T_{i,Q}\subseteq  T_{ng}$ is not a column.
    Then for every $m\in \mathbb{Z}$
    $$\Omega\oplus T_{i,Q}+\beta_{Q,m}=\Omega\oplus T_{i,Q+m\widetilde{\gamma}},$$
     where $\beta_{Q,m}=w_0-v_0$ for any $w_0\in W_{i,Q+m\widetilde{\gamma}}$, $v_0\in W_{i,Q}$ such that $(w_0-v_0)^\varphi=m\widetilde{\gamma}$.
\end{proposition}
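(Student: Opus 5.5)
The plan is to follow the template of Proposition~\ref{pr:NoGapsNoColumn}, with Proposition~\ref{pr:NotLinkedThree} replacing Proposition~\ref{pr:NotLinkedOne}. First I will check, exactly as in that proof, that $T_{i,Q}$ is a vertical packing with no gaps. Two points of $T_{i,Q}$ at distance $<60\diam(\Omega)$ have $\varphi$-images on the same vertical line. By Theorem~\ref{thm:GoodFiniteLocalComplexity}~(9) and (12) they then have equal first coordinates, and the absence of gaps propagates this to all of $T_{i,Q}$. The same holds for every $T_{i,Q+m\widetilde{\gamma}}$.

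If $T_{i,Q}$ and $T_{i,Q+m\widetilde{\gamma}}$ are $r_\Omega$-linked (hence $M/3$-linked), I argue verbatim as in Proposition~\ref{pr:NoGapsNoColumn}. Claim~\ref{cl:LinkedEquivalence} gives that $\beta_{Q,m}$ is well defined and matches $W_{i,Q}$ with $W_{i,Q+m\widetilde{\gamma}}$ together with their local structures. Since $T_{i,Q}$ is not a column, all maximal boundary segments are finite, as noted before Proposition~\ref{pr:NotLinkedThree}. Hence each boundary segment has endpoints in $W_{i,Q}$, and Claim~\ref{cl:NecessaryForRegions} gives the translation identity.

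The real case is when $T_{i,Q}$ is not $r_\Omega$-linked. By Proposition~\ref{pr:NotLinkedThree}, every $M/3$-class $[x]_{M/3}$ of $W_{i,Q}$ contains a vertex $v$ with $p_1(v)\le p_1(\Omega\oplus T_{i,Q})$, i.e.\ a vertex on the leftmost vertical line. I will apply Claim~\ref{cl:LinkedEquivalence} to each class separately, obtaining a shift $\beta^{(x)}$ that matches the class $[x]_{M/3}$ into $W_{i,Q+m\widetilde{\gamma}}$. The main obstacle is to show that all these shifts coincide with a single $\beta_{Q,m}$.

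For the first coordinate, I use the left-extremal vertex in each class. Both regions are vertical strips of the same width over the same column $p_1(T_{i,Q})$ (a single point), and the leftmost boundary is mapped to the leftmost boundary by Theorem~\ref{thm:GoodFiniteLocalComplexity}~(7). It follows that $p_1(\beta^{(x)})=p_1(t')-p_1(t)$ for $t\in T_{i,Q}$, $t'\in T_{i,Q+m\widetilde{\gamma}}$, which is independent of the class.

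For the second coordinate, I order the classes by height. Consecutive classes are separated by a long stretch of the form $p_1(\Omega\oplus T_{i,Q})\times I$ that is fully covered (no gaps, as in Proposition~\ref{pr:NotLinkedOne}); the images are separated by the corresponding stretch. The lengths of these stretches are sums of at most boundedly many edge lengths mod the local pieces, so I expect to equate the separations via the rational periodicity $\Omega^\varphi\oplus T^\varphi_{i,Q}+m\widetilde{\gamma}=\Omega^\varphi\oplus T^\varphi_{i,Q+m\widetilde{\gamma}}$. One caution: Theorem~\ref{thm:GoodFiniteLocalComplexity}~(13),(14) only controls sums of fewer than $M$ edges, while the number of tiles in a gap stretch can be large. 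I would first try to show that a covered stretch of a full column is determined by its two boundary configurations up to length, via the one-dimensional tiling $p_2$-slice and \cite[Theorem~1]{LW}. I would then use that the difference in the images is $0$ exactly to force equality of lengths, using that these lengths lie in $\mathbb{Z}_{\ge0}$-combinations of the finitely many column periods and $\varphi$ is injective on the relevant span.

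Once a single $\beta_{Q,m}$ works for all of $W_{i,Q}$ with matching local structure, Claim~\ref{cl:NecessaryForRegions} applies as before, since all boundary segments are finite with endpoints in $W$. This gives $\Omega\oplus T_{i,Q}+\beta_{Q,m}=\Omega\oplus T_{i,Q+m\widetilde{\gamma}}$.
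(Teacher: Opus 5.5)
Your setup is sound: the vertical-packing and no-gaps check, the linked case, the per-class shifts from Claim~\ref{cl:LinkedEquivalence}, and the first-coordinate agreement are essentially fine. The gap is in the step that should make the second coordinates of the per-class shifts agree, and that step is the real content of the proposition. For $a$ in an already matched class and $v$ in the next one, you must deduce $p_2(v-a)=p_2(w-b)$ from $\varphi(p_2(v-a))=\varphi(p_2(w-b))$, where $\|v-a\|$ is unbounded. Your mechanism does not deliver this, for two reasons.

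\begin{itemize}
\item The length of a fully covered stretch $p_1(\Omega\oplus T_{i,Q})\times I$ is not a difference of vertex coordinates, so $\varphi$ cannot be applied to it. The one-dimensional Lagarias--Wang theorem concerns tilings of all of $\mathbb{R}$, so it does not show that a long finite stretch is determined by its two ends.
\item More decisively, ``injectivity of $\varphi$ on the relevant span'' fails. Every column period does lie in $\mathbb{Z}m_\Omega$. But any vertex-to-vertex distance across the stretch has the form $km_\Omega+\sum_{\omega\in A}|\omega|$, where the remainder consists of edge lengths from the tiles straddling the endpoints. A $\mathbb{Q}$-linear map $K_{(\Omega,T)}\to\mathbb{Q}$ has kernel of codimension at most one, so it is not injective on $\mathbb{Q}m_\Omega+\sum_\omega\mathbb{Q}|\omega|$ unless that space is one-dimensional.
\end{itemize}

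The missing idea, which is how the paper argues, is to measure along the leftmost boundary line and cancel a bounded multiple of $m_\Omega$.
\begin{enumerate}
\item Using Proposition~\ref{pr:NotLinkedThree} and the height-convexity of $M/3$-classes \eqref{eq:prelim}, take the lowest unmatched vertex $v$ above $v_0$ with $p_1(v)\le p_1(\Omega\oplus T_{i,Q})$. Take the maximal left-boundary segment $P_{a,v}$ below it; one checks that $a\neq v$ and that $a$ is matched.
\item Only tiles meeting the ends of $P_{a,v}$ contribute partial leftmost slices. Hence $\|v-a\|=km_\Omega+\sum_{\omega\in A}|\omega|$ and $\|w-b\|=k'm_\Omega+\sum_{\omega'\in B}|\omega'|$, with $|A|,|B|\le 2\diam(\Omega)/\min_\omega|\omega|+2$.
\item Equality of the $\varphi$-images, together with $m_{\Omega^\varphi}>0$, bounds $|k-k'|$.
\item After cancelling $\min(k,k')\,m_{\Omega^\varphi}$, both sides are sums of fewer than $M$ vertical edge lengths; this is exactly what the last term in \eqref{eq:LargeParameter} guarantees.
\item Theorem~\ref{thm:GoodFiniteLocalComplexity}~(14) then lifts the identity to $\Omega$. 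Adding back $\min(k,k')\,m_\Omega$ gives $v-a=w-b$.
\end{enumerate}
Without this bounded-difference cancellation, your argument for the unlinked case does not close.
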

\begin{proof}
    Note that as $T_{i,Q}$ is not a column, we may define $\beta_{Q,m}=w_0-v_0$ as in the statement since $W_{i,Q}\not=\emptyset$.
    By the periodicity of $T^\varphi_{i,Q}$, Theorem~\ref{thm:GoodFiniteLocalComplexity}~(7) and the assumption that $T_{i,Q}$ is not a column, we have that all maximal connected line segments of $\partial(\Omega\oplus T_{i,Q})$ are bounded. Thus,  by Claim~\ref{cl:NecessaryForRegions}, it suffices to show that $u\in W_{i,Q}$ if and only if $u+\beta_{Q,m}\in W_{i,Q+m\widetilde{\gamma}}$, and 
    \begin{equation}\label{eq:GoodPointsThree}
        \mathcal{B}(u,(\Omega\oplus T_{i,Q}))=\mathcal{B}(u+\beta_{Q,m},(\Omega\oplus T_{i,Q+m\widetilde{\gamma}}))
    \end{equation}
    for every $u\in W_{i,Q}$.

    Let $X\subseteq  W_{i,Q}$ be the set of those $u\in W_{i,Q}$ such that $u+\beta_{Q,m}\in W_{i,Q+m\widetilde{\gamma}}$ and that satisfy \eqref{eq:GoodPointsThree}.
    Then $X$ is non-empty as $v_0\in W_{i,Q}$.
    Moreover, by Claim~\ref{cl:LinkedEquivalence}, we have that if $u\in X$, then $[u]_{M/3}\subseteq  X$.

     Observe that for every $a,b,c\in W_{i,Q}$ such that $p_2(a)\le p_2(b)\le p_2(c)$ we have
    \begin{equation}\label{eq:prelim}
         a\in [c]_{M/3} \Rightarrow b\in [c]_{M/3}.
    \end{equation}
    Indeed, let $a=z_0,\dots,z_m=c\subseteq  W_{i,Q}$ be a sequence that witnesses that $a\in [c]_{M/3}$.
    By our assumption that $p_2(a)\le p_2(b)\le p_2(c)$, there must be an index $0\le j\le m-1$ such that $p_2(z_j)\le p_2(b)\le p_2(z_{j+1})$.
    We thus have
    $$\min\{\|z_j-b\|,\|z_{j+1}-b\|\}\le \diam(\Omega)+M/6< M/3$$
    by \eqref{eq:LargeParameter}, which shows that $b\in [c]_{M/3}$.

    Now, suppose for a contradiction that there is $v\in W_{i,Q}\setminus X$.
    By Proposition~\ref{pr:NotLinkedThree}, we may assume that $p_1(v)\le p_1(\Omega\oplus T_{i,Q})$.
    Additionally, we may without loss of generality assume that $p_2(v)>p_2(v_0)$, and that $p_2(v)$ is minimal with these properties.
    Consider now the maximal connected line segment $I$ that is a subset of
    $$\partial(\Omega\oplus T_{i,Q})\cap \{y:p_2(y)\le p_2(v) \text{ and } p_1(y)=p_1(v)\}$$
    and that contains $v$.
    As $I$ is bounded, it must be of the form
    $$P_{a,v}=\{y:p_2(a)\le p_2(y)\le p_2(v) \text{ and } p_1(y)=p_1(v)=p_1(a)\}.$$
    Suppose first that $a=v$, then as $T_{i,Q}$ does not contain gaps, we find $v'\in [v]_{M/3}$, where $v'\in W_{i,Q}$ is such that $p_1(v')=p_1(v)$ and $p_2(v')<p_2(v)$.
    This implies that either $p_2(v')\le p_2(v_0)\le p_2(v)$, or $p_2(v_0)\le p_2(v')\le p_2(v)$.
    The former case implies that $v_0\in [v]_{M/3}$ by \eqref{eq:prelim}, and the latter contradicts the minimality of $p_2(v)$. Therefore, we must have that $a\not=v$, which implies that $P_{a,v}\subseteq  \partial(\Omega\oplus T_{i,Q})$ and $a\in W_{i,Q}$.

    We claim that $a\in X$.
    Indeed, if $a\not\in X$, then by the minimality of $p_2(v)$, we deduce that $p_2(a)\le p_2(v_0)$.
    Proposition~\ref{pr:NotLinkedThree} gives the existence of $z\in [v_0]_{M/3}$ such that $p_1(z)\le p_1(\Omega\oplus T_{i,Q})$.
    As $W(\Omega\oplus T_{i,Q})\cap P_{a,v}=\{a,v\}$, we have that either $p_2(z)\le p_2(a)\le p_2(v_0)$ or $p_2(v_0)\le p_2(v)\le p_2(z)$. By \eqref{eq:prelim}, the former case implies that $a\in X$ and the latter implies that $v\in X$, both contradicting our assumptions. 

    Observe that the following claim gives the desired contradiction.

    \begin{claim*}
        Let $a,v\in W_{i,Q}$ be such that $a\in X$, $p_1(a)=p_1(v)\le p_1(\Omega\oplus T_{i,Q})$ and $P_{a,v}\subseteq  \partial(\Omega\oplus T_{i,Q})$. Then $v\in X$.     
    \end{claim*}

    Indeed, this claim implies that $W_{i,Q}+\beta_{Q,m}\subseteq  W_{i,Q+m\widetilde{\gamma}}$ and \eqref{eq:GoodPointsThree} holds for every $u\in W_{i,Q}$.
    Repeating the same argument with swapping the roles of $T_{i,Q}$ and $T_{i,Q+m\widetilde{\gamma}}$ implies that $u\in W_{i,Q}$ if and only if $u+\beta_{Q,m}\in W_{i,Q+m\widetilde{\gamma}}$.
    In particular $\beta_{Q,m}$ is well-defined.
    As every line segment in $\partial(\Omega\oplus T_{i,Q})$ and $\partial(\Omega\oplus T_{i,Q+m\widetilde{\gamma}})$ is finite, we conclude that $\Omega\oplus T_{i,Q}+\beta_{Q,m}=\Omega\oplus T_{i,Q+m\widetilde{\gamma}}$ by Claim~\ref{cl:NecessaryForRegions}.

    \bigskip
    \emph{Proof of the Claim:}
        Let $b,w\in W_{i,Q+m\widetilde{\gamma}}$ be such that $a^\varphi+m\widetilde{\gamma}=b^\varphi$ and $v^\varphi+m\widetilde{\gamma}=w^\varphi$.
        Observe that $v^\varphi-a^\varphi=w^\varphi-b^\varphi$, and that $p_1(w)=p_1(b)$ by Theorem~\ref{thm:GoodFiniteLocalComplexity}~(12) as $T_{i,Q+m\widetilde{\gamma}}\subseteq  T_{ng}$.
        By the assumption that $a\in X$ and Theorem~\ref{thm:GoodFiniteLocalComplexity}~(11), we have that $a+\beta_{Q,m}=b$.
        Observe that by Theorem~\ref{thm:GoodFiniteLocalComplexity}~(7), to complete the proof of the claim it suffices to show that $v+\beta_{Q,m}=w$.
        
        Recall that
        $$m_\Omega=m_1(p_2((\{p_1(x)\}\times \mathbb{R})\cap\overline{\Omega})),$$
        where $x\in V(\Omega)$ such that $p_1(x)\le p_1(\Omega)$.
        Observe that we can write
        $$\|v-a\|=km_\Omega+\sum_{\omega\in A} |\omega|,$$
        for some $k\in \mathbb{N}$ and a multiset $A$ of vertical edges on the extreme left boundary of $\Omega$ that satisfies
        $$|A|\le 2\diam(\Omega)/\min\{|\omega|:\omega\in E(\Omega)\}+2.$$
        To see this, note that if $t\in T_{i,Q}$ is such that $((\{p_1(x)\}\times \mathbb{R})\cap\overline{\Omega})+t$ does not contribute by a multiple of $m_\Omega$ then $((\{p_1(x)\}\times \mathbb{R})\cap\overline{\Omega})+t$ intersects either the line segment from $a$ to $a+(0,\diam(\Omega))$ or the line segment from $v-(0,\diam(\Omega))$ to $v$.
        Similarly, we can write
        $$\|w-b\|=k'm_\Omega+\sum_{\omega'\in B} |\omega'|,$$
        where $k'\in \mathbb{N}$ and $|B|\le 2\diam(\Omega)/\min\{|\omega|:\omega\in E(\Omega)\}+2$. 
        
        As $(w)^\varphi-(b)^\varphi=(v)^\varphi-(a)^\varphi$, we have by linearity of $\varphi$ that
        \begin{equation*}
            k\cdot m_{\Omega^\varphi}+\sum_{\omega\in A}|\omega^\varphi|=k'\cdot m_{\Omega^\varphi}+\sum_{\omega'\in B}|(\omega')^\varphi|.
        \end{equation*}
        We may assume, without loss of generality, that $k\ge k'$, and write
        \begin{equation}\label{eq:NumberSummands}
            (k-k')\cdot m_{\Omega^\varphi}+\sum_{\omega\in A}|\omega^\varphi|=\sum_{\omega'\in B}|(\omega')^\varphi|.
        \end{equation}
        Note that
        $$\sum_{\omega'\in B}|(\omega')^\varphi|\le m_{\Omega^\varphi} |B|\le 2m_{\Omega^\varphi}\left(\diam(\Omega)/\min\{|\omega|:\omega\in E(\Omega)\}+1\right).$$
        This implies that
        $$k-k'\le 2\left(\diam(\Omega)/\min\{|\omega|:\omega\in E(\Omega)\}+1\right).$$
        Note that if we write $m_\Omega=\sum_{i=1}^q |\omega_i|$ as a sum of distinct vertical edges of $\Omega$, then $q\le |E(\Omega)|$.
        We conclude that each side of \eqref{eq:NumberSummands} can be expressed as a sum of vertical edge lengths over an index set that is bounded in size by
        $$4|E(\Omega)|\left(\diam(\Omega)/\min\{|\omega|:\omega\in E(\Omega)\}+1\right).$$
        By \eqref{eq:LargeParameter} and Theorem~\ref{thm:GoodFiniteLocalComplexity}~(14), we conclude that
        $$(k-k')\cdot m_{\Omega}+\sum_{\omega\in A}|\omega|=\sum_{\omega'\in B}|\omega'|.$$
        Adding $k'm_\Omega$ to both sides, we get
        $$\|v-a\|=\|w-b\|,$$
        which implies that $v-a=w-b$ as  $p_1(v)=p_1(a)$ and $p_1(w)=p_1(b)$.
        By the assumption that $a+\beta_{Q,m}=b$, we have $v+\beta_{Q,m}=w$, as claimed.
        $\qquad \blacksquare$

        \bigskip
This completes the proof of Proposition \ref{pr:NoGapsNoColumnButColumn}. 
\end{proof}

\subsubsection{\bf Proof of Lemma~\ref{lem:MainShifts}}\label{subsubsec:ProofMainLemma}
    Let us first discuss the assumptions on the parameter $M>0$.    
    Recall that we assume in \eqref{eq:LargeParameter} that 
    \begin{equation}
        M>\max\left\{189\diam(\Omega),\frac{1}{\diam(\Omega)},\sqrt{6},4|E(\Omega)|\left(\frac{\diam(\Omega)}{\min\{|\omega|:\omega\in E(\Omega)\}}+1\right)\right\}.
    \end{equation}
    In case that $\Omega$ cannot tile a column, we assume additionally that $M>3\ell_\Omega$, where $\ell_\Omega$, that only depends on $\Omega$, is defined in Proposition~\ref{pr:NotLinkedOne}.
    If $\Omega$ can tile a column, then we assume additionally that $M>3r_\Omega$, where $r_\Omega$, that only depends on $\Omega$, is defined in Proposition~\ref{pr:NotLinkedThree}.
    As Propositions~\ref{pr:NoGapsNoColumn} and~\ref{pr:NoGapsNoColumnButColumn} are stated only for vertical axis $D_i=\mathbb{R}(0,1)$, we need to consider the analogous parameters $\ell_{\Omega^o}$ and $r_{\Omega^o}$ for $\Omega^o$ which is defined as the rotation of $\Omega$ by $90$ degrees.
    Finally, we require that $M>3\ell_{\Omega^o}$ in case that $\Omega^o$ cannot tile a column, and $M>3r_{\Omega^o}$ in case $\Omega^o$ can tile a column.
    Note that such $M>0$ clearly only depends on $\Omega$ and not on $T$.

    By Propositions~\ref{pr:NoGapsTwo},~\ref{pr:NoGapsNoColumn} and~\ref{pr:NoGapsNoColumnButColumn} (applied to the rotated version $\Omega^o$ in case $D_i=\mathbb{R}(1,0)$), it is enough to argue that there is a universal period $\alpha_{i,Q+\mathbb{Z}\widetilde{\gamma}}$ for $T_{i,Q+m\widetilde{\gamma}}$ that is independent of $m\in \mathbb{Z}$.
    We start by finding a period $\alpha_{i,Q+m\widetilde{\gamma}}\in \mathbb{R}^2\setminus \{0\}$ for each fixed $m\in \mathbb{Z}$.

    Without loss of generality, we may assume that $m=0$.
    Let $(t_j)_{j\in \mathbb{Z}}$ be an enumeration of $T_{i,Q}$ such that $(t^\varphi_j)_{j\in \mathbb{Z}}$ is the canonical enumeration of $T^\varphi_{i,Q}$.

    \begin{claim}\label{cl:Enumeration}
        We have $\|t^\varphi_j-t^\varphi_{j+1}\| <90\diam(\Omega)$ for every $j\in \mathbb{Z}$.
    \end{claim}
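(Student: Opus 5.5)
The plan is to argue by contradiction, using that $T_{i,Q}\subseteq T_{ng}$, so $T_{i,Q}$ has no gaps. This mirrors the first paragraph of the proof of Proposition~\ref{pr:GapsOne}, with the roles of the two tilings reversed.

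Suppose there is $j_0\in\mathbb{Z}$ with $\|t^\varphi_{j_0}-t^\varphi_{j_0+1}\|\ge 90\diam(\Omega)$. Since $T^\varphi_{i,Q}$ lies in the coset $Q$ of the line $D_i$, and $(t^\varphi_j)_{j\in\mathbb{Z}}$ is its canonical (monotone along $Q$) enumeration, we split
$$T_{i,Q}=\{t_j:j\le j_0\}\sqcup\{t_j:j>j_0\}.$$
Both parts are nonempty; they are in fact infinite, since $T^\varphi_{i,Q}$ is $(k_i\widetilde\lambda+\ell_i\widetilde\gamma)$-periodic. The split is well defined because, by Theorem~\ref{thm:GoodFiniteLocalComplexity}~(11), $t\mapsto t^\varphi$ is injective. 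For $s=t_j$ with $j\le j_0$ and $t=t_{j'}$ with $j'>j_0$, the points $s^\varphi$ and $t^\varphi$ are collinear on $Q$ and lie on opposite sides of the segment $[t^\varphi_{j_0},t^\varphi_{j_0+1}]$. Hence
$$\|s^\varphi-t^\varphi\|\ge\|t^\varphi_{j_0}-t^\varphi_{j_0+1}\|\ge 90\diam(\Omega).$$

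Next we show that no such pair is $60\diam(\Omega)$-close in the original tiling. Recall the normalization that $(0,0)=v_0\in V(\Omega)$, so each translation $t\in T$ is also a vertex $v_0+t\in V(\Omega)+T$, and $(v_0+t)^\varphi=t^\varphi$. If $\|s-t\|<60\diam(\Omega)$, then $\|s-t\|<M$ by \eqref{eq:LargeParameter}. Theorem~\ref{thm:GoodFiniteLocalComplexity}~(9) then gives
$$\|s^\varphi-t^\varphi\|<\|s-t\|+1/M<60\diam(\Omega)+1/M<90\diam(\Omega),$$
where the last inequality uses $1/M<\diam(\Omega)$, again from \eqref{eq:LargeParameter}. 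This contradicts the previous paragraph.

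Consequently, no pair in the generating set $\{(t,t')\in T_{i,Q}\times T_{i,Q}:\|t-t'\|<60\diam(\Omega)\}$ crosses between the two parts. The transitive closure of this relation therefore has at least two classes, so $T_{i,Q}$ has gaps. This contradicts $T_{i,Q}\subseteq T_{ng}$.

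The only delicate point is applying item (9) to translations rather than vertices, which the normalization $v_0=t_0=(0,0)$ handles. I do not expect any step to be a real obstacle.
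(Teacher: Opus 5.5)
Your proof is correct and takes the same approach as the paper: assume a jump of length at least $90\diam(\Omega)$, split $T_{i,Q}$ at it, and combine the absence of gaps with Theorem~\ref{thm:GoodFiniteLocalComplexity}~(9) and collinearity on $Q$ to reach a contradiction. Your explicit remark that item (9) applies to translations through the normalization $v_0=t_0=(0,0)$ spells out a detail the paper leaves implicit.
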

    \begin{proof}
        Suppose for a contradiction that there is $j_0\in \mathbb{Z}$ such that $\|t^\varphi_{j_0}-t^\varphi_{j_0+1}\| \ge 90\diam(\Omega)$.
        Define $F_0=\{t_j:j\le j_0\}$ and $F_1=\{t_j:j> j_0\}$.
        By the assumption $T_{i,Q}\subseteq  T_{ng}$ we have that $T_{i,Q}$ does not have gaps.
        It follows that there must be $i_0\le j_0$ and $i_1>j_0$ such that $\|t_{i_1}-t_{i_0}\|\le 60 \diam(\Omega)$.
        By \eqref{eq:LargeParameter} and Theorem~\ref{thm:GoodFiniteLocalComplexity}~(9), we conclude that 
        $$\|t^\varphi_{i_1}-t^\varphi_{i_0}\|\le 60\diam(\Omega)+1/M\le 61\diam(\Omega).$$
        Finally, we obtain that
        $$\|t^\varphi_{j_0+1}-t^\varphi_{j_0}\|\le \|t^\varphi_{i_1}-t^\varphi_{i_0}\|< 90\diam(\Omega),$$
        as claimed.
    \end{proof}

    Write $j_1\in \mathbb{Z}$ for the unique non-zero index such that
    \begin{equation}\label{eq:Periodic}
        t_{\ell}^\varphi+(k_i\widetilde{\lambda}+\ell_i\widetilde{\gamma})=t_{j_1+\ell}^\varphi
    \end{equation}
    for every $\ell\in \mathbb{Z}$.
    Set $\alpha_{i,Q}=t_{j_1}-t_{0}$ and observe that $\alpha_{i,Q}$ is a non-zero vector by Theorem~\ref{thm:GoodFiniteLocalComplexity}~(11) and \eqref{eq:Periodic}.
    Fix $\ell\in \mathbb{Z}$ and observe that by Claim~\ref{cl:Enumeration}, Theorem~\ref{thm:GoodFiniteLocalComplexity}~(13), \eqref{eq:LargeParameter} and \eqref{eq:Periodic}, we have that
    $$t_{j_1+\ell}-t_{j_1+(\ell-1)}=t_{\ell}-t_{\ell-1}.$$
    By induction, we obtain that $t_{\ell}+\alpha_{i,Q}=t_{j_1+\ell}$ holds for every $\ell\in \mathbb{Z}$.

    Fix $m\in \mathbb{Z}$.
    The same argument shows the existence of a non-zero vector $\alpha_{i,Q+m\widetilde{\gamma}}$ such that $T_{i,Q+m\widetilde{\gamma}}+\alpha_{i,Q+m\widetilde{\gamma}}=T_{i,Q+m\widetilde{\gamma}}$.
    Write $(s_j)_{j\in \mathbb{Z}}$ for the enumeration of $T_{i,Q+m\widetilde{\gamma}}$ such that $(s^\varphi_j)_{j\in \mathbb{Z}}$ is the canonical enumeration of $T^\varphi_{i,Q+m\widetilde{\gamma}}$, and $j_2\in \mathbb{Z}$ for the non-zero index as in\eqref{eq:Periodic}.
    It remains to show that $\alpha_{i,Q}=\alpha_{i,Q+m\widetilde{\gamma}}$.

    Without loss of generality, we may assume that there are $v,w\in V(\Omega)$ such that $v+t_0\in W_{i,Q}$ and $w+s_0\in W_{i,Q+m\widetilde{\gamma}}$ are such that $\beta_{Q,m}=(w+s_0)-(v+t_0)$.
    By Proposition~\ref{pr:NoGapsTwo},~\ref{pr:NoGapsNoColumn} and~\ref{pr:NoGapsNoColumnButColumn} and Theorem~\ref{thm:GoodFiniteLocalComplexity}~(7), there is $u\in W_{i,Q+m\widetilde{\gamma}}$ such that
    $$u=v+t_0+\alpha_{i,Q}+\beta_{Q,m}.$$
    We have
    \begin{equation*}
        \begin{split}
            u^\varphi= & \ (v+t_0)^\varphi+(k_i\widetilde{\lambda}+\ell_i\widetilde{\gamma})+(w+s_0)^\varphi-(v+t_0)^\varphi\\
            = & \ (k_i\widetilde{\lambda}+\ell_i\widetilde{\gamma})+(w+s_0)^\varphi\\
            = & \ (w+s_{j_2})^\varphi.
        \end{split}
    \end{equation*}
    By Theorem~\ref{thm:GoodFiniteLocalComplexity}~(11), we have that
    $$w+s_0+\alpha_{i,Q+m\widetilde{\gamma}}=w+s_{j_2}=u=v+t_0+\alpha_{i,Q}+\beta_{Q,m}=w+s_0+\alpha_{i,Q}$$
    and the proof is completed by setting $\alpha_{i,Q+\mathbb{Z}\wgamma}=\alpha_{i,Q}$.

\subsection{Proof of Theorem~\ref{thm:MainLocallyFin}}\label{subsec:ProofOfThmLocallyFin}
Let $X\not=\emptyset$ be a regular open periodic subset of $\R$, $\Omega$ be an axis-parallel tile, $E=X\times \R$, and suppose $T\in \Tile(\Omega; E)$ has finite local complexity.
The proof is split into three parts: In the first {\bf (I)}, we focus on the case when $T=T_c$; in the second {\bf (II)}, we focus on the case when the earthquake decomposition of $T$ is trivial; and finally, in the third part {\bf (III)}, we conclude the proof from parts {\bf (I)} and {\bf (II)}.

{\bf (I)}.
Suppose that, after possibly swapping the vertical and horizontal axes, $T=T_c$.
By Proposition~\ref{pr:WLOGcolumns}, we have that $p_1(T)$ is $\langle \beta\rangle$-periodic for some $\beta>0$, and $T$ is $(0,\alpha)$-periodic for some $\alpha>0$.
This proves Theorem~\ref{thm:MainLocallyFin}~(i) for the case $T=T_c$.
Moreover, if we set $m_\Omega=m_1(p_2(\overline{\Omega}\cap (\{p_1(x)\}\times\mathbb{R})))$, where $x\in V(\Omega)$ is such that $p_1(x)\le p_1(\Omega)$, then $\alpha\in \mathbb{Q}m_\Omega$.

We prove a strengthening of Theorem~\ref{thm:MainLocallyFin}~(ii) for the case $T=T_c$ that will be useful later.
Let $z>0$.
Observe that $p_1(T)$ is a union of finitely many cosets of $\langle \beta\rangle$, and let $\{x_j\}_{j=1}^\ell$ be an enumeration of their representatives.
For each $1\le j\le \ell$, set $T_j=T_c \cap(\{x_j\}\times \mathbb{R})$ and define
$$S^{z,j}_c=\bigsqcup_{m\in \mathbb{Z}} (T_j+m(\beta,z)).$$
Then we have that $S^z_c=\bigcup_{j=1}^\ell S^{z,j}_c$ is $\langle(0,\alpha),(\beta,z)\rangle$-periodic and $\Omega\oplus T=\Omega\oplus S^z_c$.

{\bf (II)}.
Suppose that the vertical earthquake decomposition of $T$ is trivial and, if $X=\mathbb R$, that its horizontal earthquake decomposition is also trivial.
We distinguish two subcases based on whether $T_{ng}$ is empty or not.

{\bf (IIa)}.
Suppose that $T_{ng}$ is nonempty, i.e., $T\neq T_g$.
Observe that by Lemma \ref{lem:TechnicalClaim} and Theorem~\ref{thm:GoodFiniteLocalComplexity}~(8), we have that $W_{i,Q}$ is nonempty for every $1\le i\le n$ and $Q\in L_i$.
Take any $1\le i_0\le n$, $Q_0\in L_{i_0}$ such that $T_{i_0,Q_0}\subseteq T_{ng}$, and use Lemma~\ref{lem:MainShifts} to define $\gamma=\beta_{Q_0,N}$.
That is, $\gamma=w_0-v_0$, where $v_0\in W_{i_0,Q_0}$, $w_0\in W_{i_0,Q_0+N\wgamma}$ and $(w_0-v_0)^\varphi=N\wgamma$.
Observe that by Lemma~\ref{lem:MainShifts}, we have
$$\gamma+\left(\Omega\oplus T_{i_0,Q_0}\right)=\Omega\oplus T_{i_0,Q_0+N\wgamma},$$
and $\gamma\not=0$ by Theorem~\ref{thm:GoodFiniteLocalComplexity}~(11) as $(v_0)^\varphi\not=(w_0)^\varphi$.

\begin{claim}\label{cl:OneClaim}
    We have that
    \begin{enumerate}
    \item $T_g$ is $\langle\gamma\rangle$-periodic,
    \item $\beta_{N,Q}=\gamma$ for every $1\le i\le n$ and $Q\in L_i$ such that $T_{i,Q}\subseteq T_{ng}$.
    In particular, $\gamma+(\Omega\oplus T_{i,Q})=\Omega\oplus T_{i,Q+N\wgamma}$.
\end{enumerate}
\end{claim}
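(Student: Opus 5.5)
The plan is to show that translation by $\gamma$ is the ``lift'' of translation by $N\wgamma$ at every proper vertex of the piece decomposition, not only at $v_0$, and then to read off (1) and (2) from this.

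\emph{Step 1: the rationalized piece decomposition is $N\wgamma$-invariant.} Since $N$ is a multiple of every $|k_i|$, the set $T^\varphi_g$ is $\langle N\wgamma\rangle$-invariant by Lemma~\ref{lem:GapsPeriodicity}. For the pieces with $T_{i,Q}\subseteq T_{ng}$, property (2) of the good parametrization gives $\Omega^\varphi\oplus T^\varphi_{i,Q}+N\wgamma=\Omega^\varphi\oplus T^\varphi_{i,Q+N\wgamma}$. Hence the partition of $E^\varphi$ into the regions $\Omega^\varphi\oplus T^\varphi_{i,Q}$ is mapped to itself by translation by $N\wgamma$. In particular, for every proper vertex $x$ of this partition, $x+N\wgamma$ is again a proper vertex with the same local structure $\mathcal{B}$.

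\emph{Step 2: a local shift and its propagation.} Let $\mathcal{W}$ be the set of points of $V(\Omega)+T$ that are proper vertices of some $\Omega\oplus T_{i,Q}$, or of $\Omega\oplus T_g$. For $v\in\mathcal{W}$, let $v'\in V(\Omega)+T$ be the unique vertex with $(v')^\varphi=v^\varphi+N\wgamma$; it exists by Step~1 and Theorem~\ref{thm:GoodFiniteLocalComplexity}~(7), and is unique by (11). Set $\delta(v)=v'-v$. By construction $\delta(v_0)=\gamma$. If $v,w\in\mathcal{W}$ satisfy $\|v-w\|<M/3$, then $\|v^\varphi-w^\varphi\|<M/2$ by (9). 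Since $(v')^\varphi-(w')^\varphi=v^\varphi-w^\varphi$, Theorem~\ref{thm:GoodFiniteLocalComplexity}~(13) in both coordinates gives $v'-w'=v-w$, that is, $\delta(v)=\delta(w)$. This is exactly the mechanism of Claim~\ref{cl:LinkedEquivalence}, now applied to the whole arrangement rather than to a single piece. Hence $\delta$ is constant on each $M/3$-link class of $\mathcal{W}$.

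\emph{Step 3: global connectivity of $\mathcal{W}$ (expected main obstacle).} I expect the hardest step to be showing that $\mathcal{W}$ forms a single $M/3$-link class. My first attempt would argue by contradiction. Suppose $\mathcal{W}$ splits into two classes. Then there is a strip of width at least about $M/3-2\diam(\Omega)$ that separates them and contains no proper vertex of the piece decomposition. Inside such a strip, the boundaries of all pieces are straight segments in a single coordinate direction. By finite local complexity and the choice of $M$ in \eqref{eq:LargeParameter} (in particular $M>189\diam(\Omega)$ and the bounds from Propositions~\ref{pr:NotLinkedOne} and~\ref{pr:NotLinkedThree}), this should force a full line in $\partial(\overline{\Omega\oplus S})$ for some $S\subseteq T$. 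That would give a nontrivial vertical or horizontal earthquake decomposition, contradicting the assumption of part (II). The delicate point is excluding long but finite straight boundaries of a single piece; here I would reuse the ``no gaps'' property and the vertical-packing analysis from Propositions~\ref{pr:NoGapsNoColumn} and~\ref{pr:NoGapsNoColumnButColumn}.

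\emph{Step 4: conclusion.} By Steps 2 and 3, $\delta\equiv\gamma$ on $\mathcal{W}$.

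For (2), take $T_{i,Q}\subseteq T_{ng}$. For $v\in W_{i,Q}$ we have $v+\gamma=v'\in W_{i,Q+N\wgamma}$, so $\beta_{Q,N}=\gamma$ by the definition in Lemma~\ref{lem:MainShifts}. That lemma then yields $\gamma+(\Omega\oplus T_{i,Q})=\Omega\oplus T_{i,Q+N\wgamma}$.

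For (1), let $t\in T_g$ and $v\in V(\Omega)$. The vertex $v+t$ lies in $\mathcal{W}$ or is $M/3$-linked to it within its gap component. By (11), the translate $t'\in T_g$ with $(t')^\varphi=t^\varphi+N\wgamma$ satisfies $v+t'=(v+t)'$, hence $t'=t+\gamma$. This shows $T_g+\gamma\subseteq T_g$. The reverse inclusion follows symmetrically using $-N\wgamma$, so $T_g$ is $\langle\gamma\rangle$-periodic.
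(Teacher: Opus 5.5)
Your Steps 1--2 are sound. Everything then rests on Step 3, and that step is a genuine gap, not just an unfinished detail. First, two distinct $M/3$-link classes of a planar point set need not be separated by a straight strip: one class can surround the other, or the two can interlock along a staircase. So the ``vertex-free strip'' does not follow.

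More seriously, suppose you do find a large region containing no proper vertex of the piece decomposition. Nothing forces a full line in $\partial(\overline{\Omega\oplus S})$, because such a region can be bounded by long but finite straight segments. The paper stresses exactly this before Proposition~\ref{pr:NotLinkedThree}. When $\Omega$ can tile a column, the straight boundary segments of $\Omega\oplus T_{i,Q}$ have no a priori bound in terms of $\Omega$, while $M$ depends only on $\Omega$. That is why Proposition~\ref{pr:NoGapsNoColumnButColumn} needs the extreme-left-boundary argument and item (14) of Theorem~\ref{thm:GoodFiniteLocalComplexity}, rather than plain linking. Adjacent column-like pieces can therefore share a long straight boundary with no proper vertex nearby, and your $\mathcal{W}$ need not be $M/3$-linked at all. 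Your proof of (1) in Step 4 inherits this gap. It also does not treat the $T_g$ tiles whose vertices are not in $\mathcal{W}$.

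The missing idea is to replace metric linking across pieces by a topological closure argument. It uses two facts you already have:
\begin{enumerate}
\item exact coincidence at a single shared vertex, from Theorem~\ref{thm:GoodFiniteLocalComplexity}~(11);
\item the fact that $\beta_{Q,N}$ works uniformly on all of $W_{i,Q}$, from Lemma~\ref{lem:MainShifts}, which already absorbs the hard within-piece propagation.
\end{enumerate}

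Let $S_0$ be the union of the $t\in T_g$ with $t+\gamma\in T_g$ and of the pieces $T_{i,Q}\subseteq T_{ng}$ with $\beta_{Q,N}=\gamma$. Then $T_{i_0,Q_0}\subseteq S_0$, and $\gamma+(\Omega\oplus S_0)=\Omega\oplus S_0'$ for some $S_0'\subseteq T$.

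Suppose $w\in W(\Omega\oplus S_0)$. Then $\gamma+w$ is a proper vertex of $\Omega\oplus S_0'$, hence $\gamma+w\in V(\Omega)+T$. Moreover, $w$ is either a vertex $v+t$ of some $t\in T_g\setminus S_0$, or a proper vertex of some piece $T_{i,Q}\not\subseteq S_0$.
\begin{itemize}
\item In the first case, Lemma~\ref{lem:GapsPeriodicity} gives $t'\in T_g$ with $(v+t')^\varphi=w^\varphi+N\wgamma=(\gamma+w)^\varphi$. Item (11) then gives $t'=t+\gamma$, contradicting $t\notin S_0$.
\item In the second case, $(\beta_{Q,N}+w)^\varphi=w^\varphi+N\wgamma=(\gamma+w)^\varphi$. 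Item (11) then gives $\beta_{Q,N}=\gamma$, contradicting $T_{i,Q}\not\subseteq S_0$.
\end{itemize}
Hence $W(\Omega\oplus S_0)=\emptyset$. By Lemma~\ref{lem:TechnicalClaim}, $\Omega\oplus S_0$ is vertically or horizontally invariant. The triviality of the earthquake decompositions assumed in case (II) then forces $S_0=T$. This gives (2) and $\gamma+T_g\subseteq T_g$. Running the same argument with $-\gamma$ gives the reverse inclusion in (1).
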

\begin{proof}
Let $S_0$ be the union of all elements $t\in T_g$ such $\gamma+t\in T_g$, and $t\in T_{i,Q}$ that satisfies (2).
Observe that $S_0\not=\emptyset$ as $T_{i_0,Q_0}\subseteq  S_0$, and that, by Lemma~\ref{lem:MainShifts}, there is $S'_0\subseteq  T$ such that $\Omega\oplus S'_0=\gamma+\Omega\oplus S_0$.
We show that $T=S_0$.

By the assumptions in (II), combined with Lemma~\ref{lem:TechnicalClaim} and the fact that $S_0$ is nonempty, it is enough to show that $W(\Omega\oplus S_0)$ is empty.
Suppose for a contradiction that $w\in W(\Omega\oplus S_0)$.
Then
$$\gamma+w\in W(\gamma+\Omega\oplus S_0)\cap (V(\Omega)+T)=W(\Omega\oplus S'_0)\cap (V(\Omega)+T),$$
and $(\gamma+w)^\varphi-w^\varphi=N\wgamma$.
As $\Omega$ is an axis-parallel polygonal set,  either there are $t\in T_g\setminus S_0$ and $v\in V(\Omega)$ such that $w=v+t$, or there is $Q\in L_i$ for some $1\le i\le n$ such that $w\in W_{i,Q}$ and $T_{i,Q}\subseteq  T\setminus S_0$.

Consider first the former case.
By Lemma~\ref{lem:GapsPeriodicity}, there is $t'\in T_g$ such that $(v+t')^\varphi-(v+t)^\varphi=N\wgamma$.
This implies that
$$(v+t')^\varphi=N\wgamma+(v+t)^\varphi=N\wgamma+w^\varphi=(\gamma+w)^\varphi=(\gamma+v+t)^\varphi.$$
By Theorem~\ref{thm:GoodFiniteLocalComplexity}~(11), we conclude that $v+t'=\gamma+v+t$ as $\gamma+w\in V(\Omega)+T$.
This shows that $\gamma+t=t'\in T_g$, contradicting the choice of $t\in T\setminus S_0$.

For the latter case, note that by the definition we have $\beta_{N,Q}+w\in W_{i,Q+N\wgamma}$ and $(\beta_{N,Q}+w)^\varphi=N\wgamma+w^\varphi$.
This implies that $(\beta_{N,Q}+w)^\varphi=(\gamma+w)^\varphi$, and consequently $\beta_{N,Q}+w=\gamma+w$ by Theorem~\ref{thm:GoodFiniteLocalComplexity}~(11) as $\gamma+w\in V(\Omega)+T$.
This contradicts the choice of $i$ and $Q$, hence $W(\Omega\oplus S_0)$ is empty and so $S_0=T$ as desired.

The fact that $T=S_0$ implies (2) and $\gamma+T_g\subseteq T_g$.
The exact same argument for $-\gamma=\beta_{Q+N\wgamma,-N}=v_0-w_0$ shows that $T_g$ is $\langle \gamma\rangle$-periodic.
\end{proof}

Recall that in the proof of Lemma~\ref{lem:MainShifts}, we define $\alpha_{i_0,Q_0+\mathbb{Z}\wgamma}$ such that $(\alpha_{i_0,Q_0+\mathbb{Z}\wgamma})^\varphi=k_{i_0}\wlambda+\ell_{i_0}\wgamma$.
This motivates the following definition
$$\lambda=\frac{N}{k_{i_0}}\left(N\alpha_{i_0,Q_0+\mathbb{Z}\wgamma}-\ell_{i_0}\gamma\right).$$

\begin{claim}\label{cl:TwoClaim}
    We have
    \begin{enumerate}
        \item $\lambda+T_g=T_g$,
        \item $\lambda=\frac{N}{k_i}\left(N\alpha_{i,Q+\mathbb{Z}\wgamma}-\ell_i \gamma\right)$ for every $1\le i\le n$ and $Q\in L_i$ such that $T_{i,Q}\subseteq T_{ng}$.
    \end{enumerate}
\end{claim}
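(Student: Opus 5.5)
The plan is to mirror the proof of Claim~\ref{cl:OneClaim}, with $\lambda$ in place of $\gamma$. First I record what $\lambda$ is in the rational picture. By construction $(\alpha_{i_0,Q_0+\mathbb{Z}\wgamma})^\varphi=k_{i_0}\wlambda+\ell_{i_0}\wgamma$ and $\gamma^\varphi=N\wgamma$. Since $\varphi$ is $\Q$-linear,
$$\lambda^\varphi=\frac{N}{k_{i_0}}\bigl(N(k_{i_0}\wlambda+\ell_{i_0}\wgamma)-\ell_{i_0}N\wgamma\bigr)=N^2\wlambda .$$
The same computation for any admissible $(i,Q)$ gives $\lambda_{i,Q}^\varphi=N^2\wlambda$, where
$$\lambda_{i,Q}=\frac{N}{k_i}\bigl(N\alpha_{i,Q+\mathbb{Z}\wgamma}-\ell_i\gamma\bigr).$$
Since $N/k_i\in\Z$, $\lambda_{i,Q}$ lies in $\langle\alpha_{i,Q+\mathbb{Z}\wgamma},\gamma\rangle$. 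Lemma~\ref{lem:MainShifts} and Claim~\ref{cl:OneClaim}(2) then give
$$\Omega\oplus\Bigl(\bigsqcup_{m}T_{i,Q+mN\wgamma}\Bigr)+\lambda_{i,Q}=\Omega\oplus\bigsqcup_{m}T_{i,Q+mN\wgamma}.$$
Indeed, the $\alpha$-part preserves each $T_{i,Q+m\wgamma}$, and the $\gamma$-part shifts the index $Q$ by $N\wgamma$. Moreover $\lambda_{i,Q}\neq0$, because $\lambda_{i,Q}^\varphi\neq0$.

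Next I define $S_0\subseteq T$ as the union of all $t\in T_g$ with $t+\lambda\in T_g$, together with all plates $T_{i,Q}\subseteq T_{ng}$ for which $\lambda_{i,Q}=\lambda$. This set is nonempty since it contains $T_{i_0,Q_0}$. I then show $S_0=T$. By the assumptions of (II) and Lemma~\ref{lem:TechnicalClaim}, it suffices to show $W(\Omega\oplus S_0)=\emptyset$. Suppose $w\in W(\Omega\oplus S_0)$. The translate $\Omega\oplus S_0+\lambda$ is again a union of regions tiled by subsets of $T$, so $w+\lambda\in V(\Omega)+T$, and $(w+\lambda)^\varphi=w^\varphi+N^2\wlambda$. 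There are two cases, as in the proof of Claim~\ref{cl:OneClaim}.

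\emph{First case: $w=v+t$ with $t\in T_g\setminus S_0$.} Lemma~\ref{lem:GapsPeriodicity} gives $t'\in T_g$ with $(t')^\varphi=t^\varphi+N^2\wlambda$. Theorem~\ref{thm:GoodFiniteLocalComplexity}~(11) then forces $v+t'=w+\lambda$, so $t+\lambda=t'\in T_g$, contradicting $t\notin S_0$.

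\emph{Second case: $w\in W_{i,Q}$ with $T_{i,Q}\subseteq T\setminus S_0$.} The displayed invariance produces a proper vertex $w+\lambda_{i,Q}$ with $\varphi$-image $w^\varphi+N^2\wlambda=(w+\lambda)^\varphi$. Part (11) then gives $\lambda_{i,Q}=\lambda$, contradicting the choice of $(i,Q)$.

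Once $S_0=T$, statement (2) follows immediately, and so does $\lambda+T_g\subseteq T_g$. Running the same argument with $-\lambda$ gives the reverse inclusion, hence $\lambda+T_g=T_g$.

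The main obstacle is the second case. There I must make sure that the vertex $w+\lambda_{i,Q}$ really is the $\varphi$-preimage matching $w+\lambda$. This needs the full translation invariance of $\Omega\oplus\bigsqcup_m T_{i,Q+mN\wgamma}$ under $\lambda_{i,Q}$. That invariance combines the $\langle\alpha\rangle$-periodicity of each $T_{i,Q+m\wgamma}$ from Lemma~\ref{lem:MainShifts} with the uniform shift $\gamma$ from Claim~\ref{cl:OneClaim}(2). I would verify it first, before running the $W$-emptiness argument.
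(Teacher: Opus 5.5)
Your proposal is correct and follows the paper's proof essentially step for step. It uses the same auxiliary set (the paper's $S_1$), the same reduction via Lemma~\ref{lem:TechnicalClaim} to showing $W(\Omega\oplus S_1)=\emptyset$, the same two-case contradiction via Lemma~\ref{lem:GapsPeriodicity} and Theorem~\ref{thm:GoodFiniteLocalComplexity}~(11), and the same symmetric argument with $-\lambda$. Your invariance check that $\lambda_{i,Q}$ maps $\Omega\oplus T_{i,Q}$ onto $\Omega\oplus T_{i,Q-(N^2\ell_i/k_i)\wgamma}$ is exactly the identity the paper uses in its second case.
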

\begin{proof}
Define $S_1$ to be the union of those $t\in T_g$ such that $\lambda+t\in T_g$, and $t\in T_{i,Q}$ for which (2) holds.
Then $S_1\not=\emptyset$ as $T_{i_0,Q_0}\subseteq  S_1$.
Observe that if $T_{i,Q}\subseteq T_{ng}\cap S_1$, then
$$T_{i,Q+\mathbb{Z}\wgamma}=\bigsqcup_{m\in \mathbb{Z}} T_{i,Q+m\wgamma}\subseteq  S_1$$
by Lemma~\ref{lem:MainShifts}.
In particular, as $\Omega\oplus T_{i,Q+\mathbb{Z}\wgamma}$ is $\langle \alpha_{i,Q+\mathbb{Z}\wgamma},\gamma\rangle$-periodic by Claim~\ref{cl:OneClaim} and Lemma~\ref{lem:MainShifts}, we have that $\Omega\oplus T_{i,Q+\mathbb{Z}\wgamma}$ is $\langle \lambda\rangle$-periodic, whenever $T_{i,Q}\subseteq  S_1$.
Together with the definition of $S_1\cap T_g$, we conclude that
\begin{equation}\label{eq:lambda}
    \lambda+\Omega\oplus S_1=\Omega\oplus S'_1
\end{equation}
for some $S'_1\subseteq  T$.
We show that $T=S_1$.
By the assumptions in (II), by Lemma~\ref{lem:TechnicalClaim} and since $S_1\not=\emptyset$, it is enough to show that $W(\Omega\oplus S_1)=\emptyset$.
Suppose for a contradiction that $w\in W(\Omega\oplus S_1)$.
Then we have
$$\lambda+w\in W(\lambda+\Omega\oplus S_1)\cap (V(\Omega)+T)=W(\Omega\oplus S'_1)\cap (V(\Omega)+T).$$
Observe that by the linearity of $\varphi$
\begin{equation}\label{eq:Somesing}
    ((\lambda+w)-w)^\varphi=N^2\wlambda.
\end{equation}
As $\Omega$ is an axis-parallel polygonal set, there is either $t\in T_g\setminus S_1$ and $v\in V(\Omega)$ such that $w=v+t$, or $1\le i\le n$ and $Q\in L_i$ such that $w\in W_{i,Q}$ and $T_{i,Q}\subseteq  T\setminus S_1$.

In the former case, there is $t'\in T_g$ such that $((v+t')-(v+t))^\varphi=N^2\wlambda$ by Lemma~\ref{lem:GapsPeriodicity}.
This implies that
$$(v+t')^\varphi=N^2\wlambda+(v+t)^\varphi=N^2\wlambda+w^\varphi=(\lambda+w)^\varphi=(\lambda+v+t)^\varphi.$$
By Theorem~\ref{thm:GoodFiniteLocalComplexity}~(11), we conclude that $v+t'=\lambda+v+t$ as $\lambda+w\in V(\Omega)+T$.
This shows that $\lambda+t=t'\in T_g$, contradicting the choice of $t\in T\setminus S_1$.

For the latter case, note that
$$\frac{N}{k_i}(N\alpha_{i,Q+\mathbb{Z}\wgamma}-\ell_i \gamma)+\Omega\oplus T_{i,Q}=\Omega\oplus T_{i,Q-(N^2\ell_i/k_i)\wgamma},$$
which implies that $\frac{N}{k_i}(N\alpha_{i,Q+\mathbb{Z}\wgamma}-\ell_i \gamma)+w\in W_{i,Q-(N^2\ell_i/k_i)\wgamma}$.
We write
\begin{equation*}
    \begin{split}
        \left(\left(\frac{N}{k_i}(N\alpha_{i,Q+\mathbb{Z}\wgamma}-\ell_i \gamma)+w\right)-w\right)^\varphi= & \ \frac{N}{k_i}(N(k_i\wlambda+\ell_i\wgamma)-N\ell_i\wgamma)\\
        = & \ N^2\wlambda \\
        = & \ \left((\lambda+w)-w\right)^\varphi
    \end{split}
\end{equation*}
by \eqref{eq:Somesing}.
By Theorem~\ref{thm:GoodFiniteLocalComplexity}~(11), we conclude that $\lambda=\frac{N}{k_i}(N\alpha_{i,Q+\mathbb{Z}\wgamma}-\ell_i \gamma)$ as desired.

This shows (2) and that $\lambda+T_g\subseteq T_g$.
The exact same argument for $-\lambda$ shows that $T_g$ is $\langle \lambda\rangle$-periodic.
\end{proof}

Now we are ready to finish the proof of Theorem~\ref{thm:MainLocallyFin}.
We start by showing that $\langle \lambda,\gamma\rangle$ is a lattice.
Note that, by the definition, we have $\gamma^\varphi=N\wgamma$, and $\lambda^\varphi=N^2\wlambda$.
It follows that $\lambda,\gamma\not= (0,0)$, and $\lambda\not\in \mathbb{Q}\gamma$ as $\langle \widetilde{\lambda}, \widetilde{\gamma}\rangle$ is a lattice and $\varphi$ is a $\mathbb{Q}$-homomorphism.
Let $v_0\in W_{i_0,Q_0}$ be the vertex from the definition of $\gamma$ before Claim~\ref{cl:OneClaim}.
Then it follows from Claim~\ref{cl:OneClaim}, and Lemma~\ref{lem:MainShifts} and Claim~\ref{cl:TwoClaim} that
\begin{equation}\label{eq:PeriodicPoint}
    \langle\lambda,\gamma\rangle+v_0\subseteq  V(\Omega)+T.
\end{equation}
As a consequence, we get that $\lambda\not\in (\mathbb{R}\setminus \mathbb{Q})\gamma$ as this would imply by ergodicity that there are sequences of integers $k_n,\ell_n$ such that
$$0<\|(k_n\lambda+v_0)-(\ell_n\gamma+v_0)\| \to 0,$$
which contradicts the uniform discreteness of $V(\Omega)+T$, which follows from the finite local complexity of $T$.
Hence $\langle\lambda,\gamma\rangle$ is a lattice.

For $1\le i\le n$ and $Q\in L_i$, define
$$T_{i,Q+\mathbb{Z}\widetilde{\gamma}}=\bigsqcup_{m\in \mathbb{Z}} T_{i,Q+m\widetilde{\gamma}}.$$
By Claim~\ref{cl:OneClaim}, and Lemma~\ref{lem:MainShifts} and Claim~\ref{cl:TwoClaim}, each of the sets $\Omega\oplus T_{i,Q+\mathbb{Z}\widetilde{\gamma}}$ is $\langle \lambda,\gamma\rangle$-periodic, and for every $Q,Q'\in L_i$ the sets $\Omega\oplus T_{i,Q+\mathbb{Z}\widetilde{\gamma}}$ and $\Omega\oplus T_{i,Q'+\mathbb{Z}\widetilde{\gamma}}$ are disjoint (up to null sets) unless $Q+\mathbb{Z}\widetilde{\gamma}= Q'+\mathbb{Z}\widetilde{\gamma}$.
Thus, for each $1\le i\le n$ there are only finitely many sets of the form $T_{i,Q+\mathbb{Z}\widetilde{\gamma}}$.
By Claims~\ref{cl:OneClaim} and~\ref{cl:TwoClaim}, $T_g$ is $\langle \lambda,\gamma\rangle$-periodic.
Combined with Lemma~\ref{lem:MainShifts}, this shows Theorem~\ref{thm:MainLocallyFin}~(i).

Finally, we modify $T$ to a $\langle\lambda,\gamma\rangle$-periodic tiling $S$.
For every $1\le i\le n$ and each orbit $Q+\mathbb Z\widetilde\gamma$ such that $T_{i,Q+\mathbb Z\widetilde\gamma}\subseteq T_{ng}$,
fix a representative $Q$.
Define $S_{i,Q+m\widetilde{\gamma}}=T_{i,Q}+\beta_{Q,m}$, where $\beta_{Q,m}$ is as in Lemma~\ref{lem:MainShifts}, and $S_{i,Q+\mathbb{Z}\widetilde{\gamma}}=\bigsqcup_{m\in \mathbb{Z}} S_{i,Q+m\widetilde{\gamma}}$.
Set $S$ to be the union of $T_g$ and $S_{i,Q+\mathbb{Z}\widetilde{\gamma}}$, where $T_{i,Q+\mathbb{Z}\widetilde{\gamma}}\subseteq  T_{ng}$.
It follows from Lemma~\ref{lem:MainShifts} that $S$ is a tiling of $E$ by $\Omega$, and from Claim~\ref{cl:OneClaim}, Lemma~\ref{lem:MainShifts} and Claim~\ref{cl:TwoClaim} that $S$ is $\langle\lambda,\gamma\rangle$-periodic.

{\bf (IIb)}.
Suppose that $T_{ng}=\emptyset$, that is, $T=T_g$.
Then $T^\varphi$ is $\langle N\wlambda,N\wgamma\rangle$-periodic by Lemma~\ref{lem:GapsPeriodicity}.
Let $t_0\in T$.
By the definition, there are $s_0,s_1\in T$ such that $(s_0-t_0)^\varphi=N\wlambda$ and $(s_1-t_0)^\varphi=N\wgamma$.
As the elements of $\{s_0,s_1,t_0\}$ are pairwise distinct by Theorem~\ref{thm:GoodFiniteLocalComplexity}~(11), we conclude that $\{\lambda,\gamma,(0,0)\}$ are pairwise distinct, where $\lambda=s_0-t_0$ and $\gamma=s_1-t_0$.
We claim $\langle\lambda,\gamma\rangle$ is a lattice and $T$ is $\langle\lambda,\gamma\rangle$-periodic, which will prove Theorem~\ref{thm:MainLocallyFin}.

Define
$$S=\{t\in T:\lambda+t,\gamma+t\in T\},$$
and observe that $S$ is non-empty.
We show that $S=T$.
By the assumptions in (II), it is enough to show that $W(\Omega\oplus S)=\emptyset$.
The argument to see that is the same as the corresponding argument in {\bf (IIa)}.
Similarly, if we define
$$S'=\{t\in T:-\lambda+t,-\gamma+t\in T\},$$
then we have $S'=T$.
This shows that $T$ is $\langle\lambda,\gamma\rangle$-periodic.

The fact that $\langle\lambda,\gamma\rangle$ is a lattice follows as in {\bf (IIa)} by noting that
\begin{equation}\label{eq:PeriodicPointTwo}
    \langle\lambda,\gamma\rangle+v_0\subseteq  V(\Omega)+T
\end{equation}
holds for any $v_0\in V(\Omega)+T$.

{\bf (III)}.
For the general case, we may without loss of generality assume that $T$ admits a non-trivial (vertical) earthquake decomposition
$$T=T_c\sqcup \left(\bigsqcup_{P\in \quake{T}{\Omega}} P\right).$$
By Lemma~\ref{lem:CharInfComplex} and the assumption that $T$ has finite local complexity, we conclude that $T$ has finite horizontal complexity.
By Theorem~\ref{thm:PeriodicityMain} and Remark~\ref{rem:PeriodicityMain}, we find $\beta>0$ such that $\Omega\oplus T_c$ and $\Omega\oplus P$ for every $P\in\quake{T}{\Omega}$ are $(\beta,0)$-periodic.
As the set $\quake{T}{\Omega}$ is necessarily finite, we immediately get by {\bf (I)} and {\bf (II)} that Theorem~\ref{thm:MainLocallyFin}~(i) holds.

Observe that since $T$ has finite local complexity, $T_c$ and each $P\in\quake{T}{\Omega}$ has finite local complexity as well.
Hence, we may apply {\bf (I)} to $T_c$ and {\bf (II)} to each $P\in\quake{T}{\Omega}$.

In the former case, we get for each $z>0$ a $\langle(0,\alpha_c),(\beta_c,z)\rangle$-periodic set $S^z_c$ such that $\Omega\oplus T_c=\Omega\oplus S^z_c$.
Moreover, we have that $\alpha_c\in \mathbb{Q}m_\Omega$, as defined in {\bf (I)}, and it is easy to see that $\beta_c\in \mathbb{Q}\beta$.

In the latter case, we find for each $P\in \quake{T}{\Omega}$ a lattice $\langle\lambda_P,\gamma_P\rangle$, a $\langle\lambda_P,\gamma_P\rangle$-periodic set $S_P$ such that $\Omega\oplus P=\Omega\oplus S_P$, and $v_P\in V(\Omega)+P$ such that $\langle\lambda_P,\gamma_P\rangle+v_P\subseteq  V(\Omega)+P$.
The last claim follows from \eqref{eq:PeriodicPoint} and \eqref{eq:PeriodicPointTwo}.

\begin{claim}\label{cl:Lattice}
    We have that $\bigcap_{P\in \quake{T}{\Omega}} \langle\lambda_P,\gamma_P\rangle$ is a lattice.
\end{claim}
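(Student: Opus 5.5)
The intersection of finitely many lattices in $\R^2$ is a lattice exactly when the lattices are pairwise commensurable. Since $\quake{T}{\Omega}$ is finite (Corollary~\ref{cor:FinNumberPlates}), I would show that any two lattices $\Lambda_P=\langle\lambda_P,\gamma_P\rangle$ and $\Lambda_{P'}$ are commensurable, i.e.\ that $\Lambda_P\cap\Lambda_{P'}$ has rank $2$. The tool is the anchoring property $\Lambda_P+v_P\subseteq V(\Omega)+P$, combined with the finite local complexity of $T$. The latter makes $V(\Omega)+T$ a Delone-type set with finite local complexity, so the difference set $\Delta=(V(\Omega)+T)-(V(\Omega)+T)$ meets every ball in a finite set and $\Delta$ is uniformly discrete locally.

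First I would show that $\Lambda_P\subseteq\Delta$ and that $\Lambda_P+\Lambda_{P'}+(v_{P'}-v_P)\subseteq$ a finite union of translates of $\Delta$. More concretely, for $\lambda\in\Lambda_P$ and $\mu\in\Lambda_{P'}$, the points $v_P+\lambda$ and $v_{P'}+\mu$ both lie in $V(\Omega)+T$. Hence $(v_{P'}-v_P)+\mu-\lambda\in\Delta$.

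Next I would restrict to a ball. For any $R$, the set $\{\mu-\lambda:\lambda\in\Lambda_P,\ \mu\in\Lambda_{P'}\}\cap\mathcal{B}_R$ is contained in $\bigl(\Delta-(v_{P'}-v_P)\bigr)\cap\mathcal{B}_R$. That set is finite by finite local complexity, since $\Delta\cap\mathcal{B}_{R'}$ is finite for each $R'$. So the group $\Lambda_P+\Lambda_{P'}$ is discrete in $\R^2$.

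Then I would apply elementary structure theory. A discrete subgroup of $\R^2$ containing the full-rank lattice $\Lambda_P$ is itself a lattice $\Lambda'$, and $\Lambda_P$ and $\Lambda_{P'}$ are both finite-index subgroups of $\Lambda'$. Their intersection therefore has finite index in $\Lambda'$ and is a lattice. Iterating over the finitely many plates, with the sum of all the $\Lambda_P$ being discrete by the same argument applied to sums of several lattices anchored at the various $v_P$, gives that $\bigcap_P\Lambda_P$ has finite index in the lattice $\sum_P\Lambda_P$. Hence it is a lattice.

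The main obstacle is justifying that $\Delta$ is locally finite. Finite local complexity as defined only controls patterns $((\mathcal{B}_R+a)\cap A)-a$ around points $a\in A$, where $A=V(\Omega)+T$. Differences of nearby points are then drawn from finitely many patterns, so $\Delta\cap\mathcal{B}_R$ is finite. This is a standard consequence of the definition, but it must be applied to $V(\Omega)+T$ rather than $T$. That transfer is immediate because $V(\Omega)$ is finite, so $\Delta\subseteq (T-T)+(V(\Omega)-V(\Omega))$. The rest of the argument is routine.
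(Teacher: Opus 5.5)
Your argument is correct, and it takes a different, more elementary route than the paper. The paper picks a maximal subfamily $\mathbb{P}\subseteq\quake{T}{\Omega}$ whose lattices intersect in a lattice $\Lambda$. It uses $\Lambda+v_{P'}\subseteq V(\Omega)+T$ for $P'\in\mathbb{P}$, together with finite local complexity, to show that $X=\bigcup_P(\Lambda_P+v_P)$ has finite image in the torus $\R^2/\Lambda$. It then appeals to Weyl equidistribution to get that $\Lambda_P\cap\Lambda$ is a lattice for any remaining $P$, which contradicts maximality.

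You use the same two inputs, namely the anchoring $\Lambda_P+v_P\subseteq V(\Omega)+T$ and finite local complexity. From them you show directly that $(\Lambda_P+\Lambda_{P'})\cap\mathcal{B}_R$ is finite, so $\Lambda_P+\Lambda_{P'}$ is discrete, and commensurability then follows from the structure of discrete subgroups of $\R^2$. This pairwise argument needs neither the maximal-family induction nor any equidistribution. Your check that $(A-A)\cap\mathcal{B}_R$ is finite for $A=V(\Omega)+T$, via $(V(\Omega)-V(\Omega))+(T-T)$, is exactly what is needed.

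One step should be tightened. The claim that $\sum_P\Lambda_P$ is discrete ``by the same argument'' does not follow as stated: an element $\lambda_1+\lambda_2+\lambda_3$ of a sum of three anchored lattices is not a single difference of two points of $V(\Omega)+T$, so local finiteness of the difference set does not bound it. The step is also unnecessary. Fix one plate $P_0$ and note that $\bigcap_P\Lambda_P=\bigcap_P(\Lambda_{P_0}\cap\Lambda_P)$. This is a finite intersection of subgroups each of finite index in $\Lambda_{P_0}$, by your pairwise result. It therefore has finite index in $\Lambda_{P_0}$ and is a lattice.
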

\begin{proof}
    Note that
    $$X=\bigcup_{P\in \quake{T}{\Omega}} \langle\lambda_P,\gamma_P\rangle+v_P\subseteq  V(\Omega)+T.$$
    As $T$ has finite local complexity and $V(\Omega)$ is finite, it follows that $X$ has finite local complexity.
    
    Let $\mathbb{P}\subseteq  \quake{T}{\Omega}$ be a maximal subset such that $\langle\lambda,\gamma\rangle=\bigcap_{P\in \mathbb{P}} \langle\lambda_P,\gamma_P\rangle$ is a lattice.
    Suppose for a contradiction that there is $P\in \quake{T}{\Omega}\setminus \mathbb{P}$.
    As $X$ has finite local complexity, we must have that $p(X)$, the projection of $X$ to the torus $\mathbb{R}^2/\langle\lambda,\gamma\rangle$, is finite.
    This is because $\langle\lambda,\gamma\rangle+v_{P'}\subseteq  V(\Omega)+P'$ for any $P'\in \mathbb{P}$.
    In particular, the projection $p(\langle\lambda_P,\gamma_P\rangle+v_P)$ is finite, which implies by the Weyl equidistribution theorem that $\langle\lambda_P,\gamma_P\rangle\cap \langle\lambda,\gamma\rangle$ is a lattice and that is contradiction.
\end{proof}

By Claim~\ref{cl:Lattice}, we may assume that there is a lattice $\langle\lambda,\gamma\rangle$ such that $\langle\lambda,\gamma\rangle=\langle\lambda_P,\gamma_P\rangle$ for every $P\in \quake{T}{\Omega}$.
This proves Theorem~\ref{thm:MainLocallyFin}~(ii) in the case $T_c=\emptyset$.

Finally, suppose that $T_c\not=\emptyset$.
As each of the sets $\Omega\oplus P$ is of the form $X_P\times \mathbb{R}$, where $X_P\not=\mathbb{R}$ is an open $\langle\beta\rangle$-periodic set, we may additionally assume that $\lambda=(0,\alpha)$, and $\gamma=(\beta',z')$, where $\beta'\in \mathbb{Q}\beta\setminus \{0\}$ and $z'>0$.
Moreover, as $\Omega$ can tile a column, it must be the case that $\alpha\in \mathbb{Q}m_\Omega$.
This is because $m_\Omega=m_1(p_2(\overline{\Omega}\cap(\{x\}\times \mathbb{R})))$ for every $x\in p_1(\Omega)\setminus p_1(V(\Omega))$.
Choose $z=(\beta_c/\beta')z'$ and take $S_c^z$ as in (I).
Since $\beta_c/\beta'\in\mathbb Q$ and $\alpha_c/\alpha\in\mathbb Q$, the intersection
$$\langle(0,\alpha_c),(\beta_c,z)\rangle\cap \langle(0,\alpha),(\beta',z')\rangle$$
is a lattice.
The union of $S_c^z$ and the sets $S_P$ is therefore a periodic tiling of $E$, completing the proof.

\section{Proofs of the main results}\label{sec:app}

\subsection{Periodicity} 

Theorem~\ref{thm:IntroQuasiPTC} follows immediately from the next two results.

\begin{theorem}\label{thm:MainWeakPeriodic}
    Let $T$ be a tiling of $\mathbb{R}^2$ by an axis-parallel polygonal set $\Omega$ that is topologically minimal. Then $T$ is weakly periodic.
\end{theorem}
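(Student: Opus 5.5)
We start from the structure theorem for minimal tilings, Theorem~\ref{thm:IntroMainTectonicPlatesFinLocComp}. After possibly swapping the axes, it gives a finite decomposition
$$T=T_c\sqcup T_1\sqcup\dots\sqcup T_m,$$
where each $T_j$ has finite local complexity and $\Omega\oplus T_j$ is periodic and $\R(0,1)$-invariant. Weak periodicity is closed under finite disjoint unions, so it suffices to show two things. First, each $T_j$ is weakly periodic. Second, $T_c$ splits into finitely many pieces with nonzero periods.

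For the plates $T_j$, we apply Theorem~\ref{thm:MainLocallyFin}~(i) in its general form, with $E=X\times\R$. Write $\Omega\oplus T_j=X_j\times\R$. By Theorem~\ref{thm:IntroMainTectonicPlatesFinLocComp}, $X_j$ is a regular open periodic subset of $\R$. Then $T_j\in\Tile(\Omega;X_j\times\R)$, and $T_j$ has finite local complexity. Theorem~\ref{thm:MainLocallyFin}~(i) therefore shows that $T_j$ is weakly periodic. One point must be checked: $X_j$ may be all of $\R$ only when $m=1$ and $T_c=\emptyset$, and in that case the hypotheses are still satisfied. The case $m=0$ is trivial.

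For $T_c$, each vertical column $C=T\cap(\{x\}\times\R)\subseteq T_c$ is a tiling of the line $\{x\}\times\R$. It is tiled by the finite union of intervals $\Omega_y:=p_2(\Omega\cap(\{y\}\times\R))$, for any fixed generic $y\in p_1(\Omega)\setminus p_1(V(\Omega))$. By \cite[Theorem~1]{LW}, each such column is periodic. The key point is that the period can be chosen uniformly. Tilings of $\R$ by a fixed finite union of intervals $\Omega_y$ are all periodic with periods from a finite set; this follows from Newman's pigeonhole argument \cite{Newman}, because the local configurations of such a tiling are finite. Taking $\alpha>0$ to be a common multiple of these periods, every column is $(0,\alpha)$-periodic. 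Hence $T_c$ itself is $\langle(0,\alpha)\rangle$-periodic, which settles the column part without any control of the horizontal structure of $T_c$. This is exactly the argument in the second half of Proposition~\ref{pr:WLOGcolumns}, which did not use finite local complexity.

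The main obstacle I expect is in this uniformity step. The paper elsewhere invokes \cite[Theorem~1]{LW} only in the form "each tiling is periodic". What is needed here is a bound on the period depending only on $\Omega_y$. I would justify it with the pigeonhole argument: a tiling of $\R$ by $\Omega_y$ is determined by a finite "state" of bounded size, the boundary configuration in a window of length $\diam(\Omega_y)$. Hence the period divides a bounded quantity that is determined by finitely many states. Combining the pieces then gives a decomposition of $T$ into finitely many sets, each with a nonzero period, so $T$ is weakly periodic.
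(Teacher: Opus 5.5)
Your proposal is correct and follows the paper's proof: apply Theorem~\ref{thm:IntroMainTectonicPlatesFinLocComp}, then apply Theorem~\ref{thm:MainLocallyFin}~(i) to each $T_j$ as a tiling of $X_j\times\R$, and use \cite{LW} for the columns. The one step to tighten is the ``common multiple'' of real periods, which exists only if the periods are commensurable; this holds because every period $p$ of a tiling of $\R$ by $\Omega_y$ equals $J\cdot m_1(\Omega_y)$, where $J$ is the number of tiles per period (this is the paper's $\alpha\in\Q m_\Omega$). Alternatively, you can partition $T_c$ according to which of the finitely many possible periods each column has, which already gives finitely many periodic pieces.
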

\begin{proof}
    By Theorem~\ref{thm:IntroMainTectonicPlatesFinLocComp}, we may assume without loss of generality that $T=T_c\sqcup T_1\sqcup\dots\sqcup T_m$, where $T_j$ has finite local complexity, and $\Omega\oplus T_j$ is periodic and $\mathbb{R}(0,1)$-invariant for every $1\le j\le m$.
    It follows that $\Omega\oplus T_c$ is periodic as well.
    The claim follows by Theorem~\ref{thm:MainLocallyFin}~(i) applied to $T_j$ for every $1\le j\le m$, and by the fact that the set $T_c$ is always $\Z(0,\alpha)$-periodic for some $\alpha\in \Q m_\Omega$ by the one-dimensional result \cite{LW}.
\end{proof}

\begin{theorem}\label{thm:MostPeriodic}
    Let $\Omega$ be an axis-parallel polygonal set.
If $\Tile(\Omega,\R^2)$ is nonempty, then there is a tiling $S\in \Tile(\Omega,\R^2)$ that is quasi periodic and $\langle \vec v\rangle$-periodic for some $\vec v\in \mathbb{R}^2\setminus \{(0,0)\}$.
\end{theorem}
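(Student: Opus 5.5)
Starting from any $T\in\Tile(\Omega;\R^2)$, Birkhoff's theorem provides a topologically minimal $T'\in\orbit{T}{\R^2}{\Omega}$. Apply Theorem~\ref{thm:IntroMainTectonicPlatesFinLocComp} to $T'$. After possibly swapping the axes, this gives
$$T'=T'_c\sqcup T_1\sqcup\dots\sqcup T_m.$$
Here each $T_j$ has finite local complexity, and each $\Omega\oplus T_j=X_j\times\R$ is $\R(0,1)$-invariant and periodic. Hence $X_j$ is a regular open periodic subset of $\R$. There are two cases.

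\emph{Case $m=0$ and $T'=T'_c$.} Proposition~\ref{pr:WLOGcolumns}, or directly part {\bf (I)} of the proof of Theorem~\ref{thm:MainLocallyFin}, shows that $T'$ is $(0,\alpha)\Z$-periodic and that $p_1(T')$ is $\beta\Z$-periodic. The finite local complexity needed there holds automatically, since columns are one-dimensional tilings periodic by \cite{LW}. The construction $S^z_c$ in {\bf (I)} then produces a $\langle(0,\alpha),(\beta,z)\rangle$-periodic tiling, which is periodic and hence both quasi-periodic and $\langle(0,\alpha)\rangle$-periodic.

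\emph{Case $m\ge1$.} Let $\beta>0$ be a common horizontal period of $X_1,\dots,X_m$ and of $p_1(\Omega\oplus T'_c)$; the latter set is the complement of the union of the $X_j$, so it is periodic too. For each $j$, apply Theorem~\ref{thm:MainLocallyFin}~(ii) with $E=X_j\times\R$ to the finite-local-complexity tiling $T_j$. This gives a periodic tiling $S_j$ of $X_j\times\R$, invariant under a lattice $\Lambda_j$. For the columns, replace $T'_c$ by the set $S_c$ from part {\bf (I)}: it tiles $\Omega\oplus T'_c$ and is $(0,\alpha_c)\Z$-periodic, with $\alpha_c\in\Q m_\Omega$, and is periodic. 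Set
$$S=S_c\sqcup S_1\sqcup\dots\sqcup S_m.$$
Since the regions $\Omega\oplus T'_c$ and $X_j\times\R$ partition $\R^2$, $S$ is a tiling of $\R^2$. It is quasi-periodic because each piece is invariant under a lattice.

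The remaining task, and the main obstacle, is a single nonzero common period $\vec v$. Different $\Lambda_j$ may meet trivially; the example in \cite[Section~3.2]{Bolzano} shows a full common lattice cannot be expected. I would aim for a vertical common period, because every region is vertically invariant. Each $S_j$ is $\langle(0,a_j)\rangle$-periodic for some $a_j>0$: a lattice in $\R^2$ meets the line $\R(0,1)$ nontrivially only if it contains a vertical vector, which need not happen. So first I would go back into the construction in Theorem~\ref{thm:MainLocallyFin} (the end of part {\bf (III)}), which, when columns are present, makes one generator $\lambda=(0,\alpha)$ with $\alpha\in\Q m_\Omega$. The plan is to show that, because each $X_j\ne\R$ is periodic and $\Omega$ tiles the vertical slices, every $\Lambda_j$ can be chosen to contain a vector $(0,\alpha_j)$ with $\alpha_j\in\Q m_\Omega$. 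The vertical slice of $\Omega$ at a generic $x$ has length $m_\Omega$ when columns exist; without columns I would use the analogous slice measure. Then the rational multiples $\alpha_c,\alpha_1,\dots,\alpha_m$ of $m_\Omega$ have a common multiple $a$, and $\vec v=(0,a)$ is a period of $S$. If the vertical vectors in the $\Lambda_j$ cannot be guaranteed, the fallback is to use the freedom to slide each plate vertically, as in the earthquake sliding of Section~\ref{sec:discrete}, to align the periods. This is the step I expect to require the most care.
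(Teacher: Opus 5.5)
Your construction of $S$ and the quasi-periodicity argument match the paper's. Your vertical-slice count for the common period is also fine whenever all generic vertical slices of $\Omega$ have the same length $m_\Omega$, in particular whenever $T'_c\neq\emptyset$. In that case every vertical period of $S_c$ and of each $S_j$ lies in $\N m_\Omega$. A vertical vector exists in each $\Lambda_j$ simply because $p_1(\Lambda_j)\subseteq\Gamma(X_j)$ is discrete for $X_j\neq\R$.

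\textbf{The gap.} The remaining case is $T'_c=\emptyset$, $m\ge 2$, with $\Omega$ not tiling a column. There the ``analogous slice measure'' does not exist. Generic slices of $\Omega$ take several lengths $c_1,\dots,c_r$, and counting tiles on a vertical line only gives $\alpha_j\in \N c_1+\dots+\N c_r$. That says nothing about whether $\alpha_j/\alpha_{j'}$ is rational. Your fallback cannot work either, because translating $S_j$ vertically does not change $\Lambda_j$. Counting the $\ell_j$ tiles in a fundamental domain of $\langle(0,\alpha_j),(\beta,z_j)\rangle$ gives $\ell_j\,m_2(\Omega)=\alpha_j\,m_1(X_j\cap[0,\beta])$. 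Hence the vertical periods of any periodic tilings of $X_j\times\R$ and $X_{j'}\times\R$ are commensurable exactly when $m_1(X_j\cap[0,\beta])/m_1(X_{j'}\cap[0,\beta])\in\Q$. This is an intrinsic property of the regions, and it has to be proved.

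\textbf{The missing idea.} The paper proves it in Claim~\ref{cl:CommonPeriod} by applying a good approximation $\varphi$ to the first coordinates only. Then $S^1=\bigsqcup_j S^1_j$ tiles $\R^2$ by $\Omega^1$. Each $S^1_j$ tiles $X^1_j\times\R$ and is $\langle(0,\alpha_j),(\varphi(\beta),z_j)\rangle$-periodic, with the same $\alpha_j$ and the same $\ell_j$. Now $X^1_j$ has rational endpoints and rational period $\varphi(\beta)$. The volume identity becomes $\ell_j\,m_2(\Omega^1)=\alpha_j\,m_1(X^1_j\cap[0,\varphi(\beta)])$ with a rational coefficient, so $\alpha_j\in\Q\,m_2(\Omega^1)$ for every $j$, and a common vertical period exists.

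\textbf{A minor point.} Your remark that $T'_c$ automatically has finite local complexity is false: adjacent columns can have arbitrary relative vertical offsets, and Proposition~\ref{pr:WLOGcolumns} assumes finite local complexity. Your use of the $S^z_c$ construction sidesteps this, since it keeps one representative column per coset of $\beta\Z$. This is exactly the paper's replacement of the columns. The periodicity of $p_1(T'_c)$ then comes from the periodicity of $\Omega\oplus T'_c$ and the one-dimensional theory.
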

\begin{proof}
    After passing to a topologically minimal tiling and applying Theorem~\ref{thm:IntroMainTectonicPlatesFinLocComp}, we may assume that $T=T_c\sqcup T_1\sqcup\dots\sqcup T_m$, where $T_j$ has finite local complexity, $\Omega\oplus T_j$ is periodic and $\mathbb{R}(0,1)$-invariant for every $1\le j\le m$, and $\Omega\oplus T_c$ is periodic and $\mathbb{R}(0,1)$-invariant.
    As $p_1(T_c)$ is periodic, we may additionally assume, after possibly replacing the columns by a horizontal translates of a fixed single column, that $T_c$ has finite local complexity.
    Applying Theorem~\ref{thm:MainLocallyFin}~(ii) separately to $T_c$ and $T_j$ for every $1\le j\le m$, there are periodic $S_c$ and $S_j$ for $1\le j\le m$ such that $\Omega\oplus T_c=\Omega\oplus S_c$ and $\Omega\oplus T_j=\Omega\oplus S_j$ for every $1\le j \le m$.
    Define $S=S_c\sqcup S_1\sqcup\dots \sqcup S_m$, and observe that $S$ is quasi periodic.

    It remains to find $\vec v$ for which is $S$ $\langle \vec v\rangle$-periodic.
    Let $\langle \lambda_c,\gamma_c\rangle$ and $\langle \lambda_j,\gamma_j\rangle$ be the lattices that makes $S_c$ and $S_j$ periodic for every $1\le j\le m$.
    Our goal is to show that the intersection of these lattices is non-trivial.
    This is clearly true when either $T_c=T$ or $T_c=\emptyset$ and $m=1$.
    Hence, we may assume that either $m\ge 2$ and $T_c=\emptyset$, or $m\ge 1$ and $T_c\not=\emptyset$.
    
    We discuss the former case, as the latter case can be proved in the same way, and is, in fact, simpler.
    As each of the sets $\Omega\oplus T_j$ is $\mathbb{R}(0,1)$-invariant and periodic for $1\le j\le m$, where $m\ge 2$, we conclude by the ergodic theorem and after possible passing to sublattices of finite index that $\lambda_j=(0,\alpha_j)$ and $\gamma_j=(\beta,z_j)$ for some $\alpha_j,\beta>0$ and $z_j\ge 0$ for every $1\le j\le m$.

    \begin{claim}\label{cl:CommonPeriod}
        We have $\alpha_j\in \mathbb{Q}\alpha_1$ for every $1\le j\le m$.
    \end{claim}
    \begin{proof}[\emph{Proof of Claim \ref{cl:CommonPeriod}:}]
        Let $\varphi$ be a good approximation of $S$.
        Following the notation after Theorem~\ref{thm:GoodApproximation} and the beginning of the proof of Theorem~\ref{thm:GoodApproximation}, we have that $S^1\in \Tile(\Omega^1;\R^2)$, where we recall that $\Omega^1$ and $S^1$ is the modification of $\Omega$ and $S$ given by applying $\varphi$ only to the first coordinates of $V(\Omega)$ and $S$.
        It can be easily checked that $S^1=S^1_1\sqcup \dots\sqcup S_m^1$, $S^1_j$ is $\langle(0,\alpha_j),(\varphi(\beta),z_j)\rangle$-periodic and $\Omega^1\oplus S^1_j$ is $\mathbb{R}(0,1)$-invariant.
        
        Let $X^1_j\subseteq  \mathbb{R}$ be such that $\Omega^1\oplus S^1_j=X^1_j\times \mathbb{R}$, and observe that $X^1_j$ is $\varphi(\beta)$-periodic, and $m_1(X^1_j\cap [0,\varphi(\beta)])\in \mathbb{Q}$ for every $1\le j\le m$.
        Additionally, there is a positive integer $\ell_j$ such that
        $$\ell_j\cdot m_2(\Omega^1)=m_1(X^1_j\cap [0,\varphi(\beta)])\cdot \alpha_j$$
        for every $1\le j\le m$.
        In particular, we have
        \begin{equation}\label{eq:Ratios}
            \alpha_j= \frac{\ell_j\cdot m_2(\Omega^1)}{m_1(X^1_j\cap [0,\varphi(\beta)])}=\frac{\ell_j\cdot m_1(X^1_1\cap [0,\varphi(\beta)])}{\ell_1\cdot m_1(X^1_j\cap [0,\varphi(\beta)])}\alpha_1\in \mathbb{Q}\alpha_1
        \end{equation}
        for every $1\le j\le m$, which finishes the proof of the claim.
    \end{proof}
    
    By Claim~\ref{cl:CommonPeriod}, we have that $\bigcap_{j=1}^m \langle \alpha_j\rangle\not =\{0\}$.
    Any $\vec v=(0,\alpha)$, where $\alpha\in \bigcap_{j=1}^m \langle \alpha_j\rangle\setminus \{0\}$, then works as required.  
\end{proof}

\subsection{Decidability}\label{subsec:Decidability}

In this section, we prove Theorem\ref{thm:decidability}.
In fact, our periodicity results allow us to deduce that the translational monotiling problem for axis-parallel polygonal sets is decidable relative to a comparison oracle.
More precisely, we consider the following problem.

\begin{tcolorbox}[
    title={Tiling with a comparison oracle},
    fonttitle=\bfseries,
    coltitle=black,
    colbacktitle=black!6,
    colback=white,
    colframe=black!65,
    boxrule=0.5pt,
    sharp corners,
    breakable,
    left=3mm,
    right=3mm,
    top=2mm,
    bottom=2mm,
    before skip=1em,
    after skip=1em
]
\textbf{Input.}
An axis-parallel polygonal set $\Omega$, specified by its
vertex set $V(\Omega)$ and edge set $E(\Omega)$.

\medskip
\textbf{Oracle access.}
Given $i\in\{1,2\}$ and finite sequences
$v_1,\dots,v_m$ and $w_1,\dots,w_n$ of vertices of $\Omega$,
the comparison oracle determines whether
\[
    \sum_{j=1}^{m} p_i(v_j)
    \le
    \sum_{k=1}^{n} p_i(w_k).
\]
Repetitions are allowed, and empty sums are interpreted as zero.

\medskip
\textbf{Required output.}
\textsc{Yes} if $\Omega$ tiles $\mathbb{R}^2$ by translations,
and \textsc{No} otherwise.

\medskip
The algorithm must halt on every input after finitely many
computational steps and oracle queries.
\end{tcolorbox}
\vspace{+0.1cm}

\begin{theorem}\label{thm:DecidabilityComparison}
    The problem whether a given axis-parallel polygonal set $\Omega$ tiles $\mathbb{R}^2$ by translations is decidable relative to the comparison oracle described above.
\end{theorem}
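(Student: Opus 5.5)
The plan is to run two semi-decision procedures in parallel: one that halts with \textsc{Yes} when a tiling exists, and one that halts with \textsc{No} when none exists. For the \textsc{No} side I would use compactness. $\Omega$ fails to tile $\R^2$ if and only if some finite square $[-R,R]^2$ cannot be covered by a packing of translates of $\Omega$, by a standard compactness argument in the weak topology (Lemma~\ref{lem:BasicWeak}). The task is then to check, for each $R$, whether such a local covering exists, using only the oracle. The relative positions of translates in a locally finite covering pattern are determined combinatorially: each translate is glued to its neighbours along properly intersecting edges. So every candidate configuration amounts to a finite system of linear equalities and strict inequalities in the coordinates $p_i(v)$, $v\in V(\Omega)$, together with finitely many free translation parameters. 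The difficulty is that the free parameters are real. I would eliminate them by Fourier--Motzkin, since each step preserves the class of linear constraints with integer coefficients in the vertex coordinates. Every residual constraint then has the form $\sum p_i(v_j)\le\sum p_i(w_k)$, which one oracle call (or two, for strictness or equality) decides. Enumerating configurations for $R=1,2,\dots$ gives a procedure that halts exactly when some finite region is not coverable.

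For the \textsc{Yes} side I would use Theorem~\ref{thm:MostPeriodic}. If $\Omega$ tiles, it admits a quasi-periodic tiling $S=S_c\sqcup S_1\sqcup\dots\sqcup S_m$. In fact the proof gives more: each $\Omega\oplus S_j$ is an $\R(0,1)$-invariant, horizontally periodic slab region tiled periodically by $S_j$ (after possibly swapping axes). Such a tiling is described by finite data: a horizontal period $\beta$, finitely many slab intervals in $[0,\beta)$, and for each slab a lattice together with finitely many translates in a fundamental domain. Every coordinate lies in the $\Q$-span of the vertex coordinates, because the region boundaries are lines $\{x\}\times\R$ whose $x$ is a sum of differences of vertex coordinates, as in \eqref{eq:seqvertex}, and the lattice vectors are linear combinations of the tiling differences. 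I would therefore enumerate candidate finite descriptions, with every coordinate written as an integer combination of vertex coordinates divided by an integer, and verify each one. Verification means checking two things: that the translates in a fundamental domain form a packing, and that the measure they cover equals the area of the domain. Both reduce to finitely many sign comparisons of integer linear forms in the vertex coordinates, which the oracle answers. Area checks are not linear, so I would replace the area comparison with a local covering check: every boundary edge of every translate is matched by neighbouring edges, as in Step~2 of the proof of Theorem~\ref{thm:GoodApproximation}. This needs only linear comparisons.

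The main obstacle I expect is justifying that the \textsc{Yes} search is complete. Theorem~\ref{thm:MostPeriodic} only gives quasi-periodicity, so I must argue that the periods and slab boundaries can be chosen inside the $\Q$-span of the vertex coordinates. The intended argument is that all differences of tiling positions lie in $K_{(\Omega,T)}$, and that we may first replace $T$ by a tiling whose free parameters, such as the shifts $z$ in part~(I) of the proof of Theorem~\ref{thm:MainLocallyFin} and the vertical slidings of plates, are set to rational combinations of vertex coordinates. Earthquake sliding preserves the tiling property, so this is allowed. For Theorem~\ref{thm:decidability}, the comparison oracle is implementable when the coordinates are real algebraic: one decides the sign of an algebraic number by exact arithmetic on minimal polynomials and isolating intervals.
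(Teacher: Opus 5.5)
Your \textsc{No} procedure is sound, and it takes a different route from the paper. Proposition~\ref{pr:CertificateNonTile} slides $\simeq$-classes so that all translates meeting $D_N$ land in the finite set $(16N)^2\star V(\Omega)$. You instead quantify over real translations and eliminate them. Fixing an order type of the vertex coordinates of the translates gives difference constraints, so Fourier--Motzkin (equivalently, a negative-cycle test) leaves only comparisons of integer combinations of vertex coordinates. Two small repairs are needed to make each stage finite and oracle-checkable:
\begin{enumerate}
\item The box must be measured in vertex units, e.g.\ $D_N=[0,NA_1]\times[0,NA_2]$, rather than by an integer $R$, because the oracle cannot compare $R$ with coordinates.
\item You need an explicit bound on the number of translates meeting the box, otherwise the search for a fixed box is not finite. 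One such bound is $O(N^2)$, from $m_2(\Omega)\ge A_1A_2$ once $B_i\le NA_i$.
\end{enumerate}

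The genuine gap is completeness of the \textsc{Yes} search, exactly where you flagged it. Enumerating descriptions with coordinates in the $\Q$-span of the vertex coordinates halts only if some slab-periodic tiling has all its translates and periods in that span, and your argument does not establish this. The observation that differences lie in $K_{(\Omega,T)}$ is vacuous here: $K_{(\Omega,T)}$ is generated by the translations of $T$ itself, so it absorbs every free offset of $T$. You also have not shown that the vertical slidings and the shifts $z$ exhaust the freedom. For example, the lattice vectors of the periodic replacements in Theorem~\ref{thm:MainLocallyFin}~(ii) are rational combinations of differences of positions in $T$, and nothing in your sketch shows they land in the vertex span once those parameters are reset. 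What actually pins coordinates down is connectivity of the vertex-coincidence relation $\simeq$: inside one class, every difference of translations lies in $k\star V(\Omega)$. The paper's Proposition~\ref{pr:CertificateTile} supplies exactly this. It produces $U=U_1\sqcup\dots\sqcup U_m$ with lattices $\langle(0,\alpha),(\beta,\gamma_i)\rangle$ and $\simeq_U$ a single class, by refining so that each plate is periodic and a single class, absorbing column families into adjacent plates, and shifting vertically. The nicely periodic case, including $T=T_c$, is handled separately. Alternatively, your own elimination device closes the gap. Treat $\alpha,\beta,\gamma_i$ (or $a,b$) and the fundamental-domain translates as real unknowns, and enumerate only combinatorial types of a bounded window, as in the paper's finite tests. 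Packing and covering are then order conditions on linear forms with integer coefficients in the unknowns and constants from vertex coordinates, separately in $x$ and $y$, so realizability is oracle-decidable. Completeness then needs only the periodic structure produced in the proof of Theorem~\ref{thm:MostPeriodic} (common $\alpha$ and $\beta$), and no connectivity.
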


Since the comparison oracle is computable for real algebraic coordinates given by exact representations,
Theorem~\ref{thm:DecidabilityComparison} has the following immediate corollary.

\begin{tcolorbox}[
    title={Tiling with algebraic vertex coordinates},
    fonttitle=\bfseries,
    coltitle=black,
    colbacktitle=black!6,
    colback=white,
    colframe=black!65,
    boxrule=0.5pt,
    sharp corners,
    breakable,
    left=3mm,
    right=3mm,
    top=2mm,
    bottom=2mm,
    before skip=1em,
    after skip=1em
]
\textbf{Exact representation.}
A real algebraic number $\alpha$ is represented by a nonzero
polynomial $f\in\mathbb{Z}[x]$ and rational numbers $a<b$
such that $\alpha$ is the unique real root of $f$ in $(a,b)$.

\medskip
\textbf{Input.}
An axis-parallel polygonal set $\Omega$ with real algebraic
vertex coordinates, specified by its vertex set $V(\Omega)$
and edge set $E(\Omega)$.
Each vertex coordinate is given by an exact representation
as above.

\medskip
\textbf{Required output.}
\textsc{Yes} if $\Omega$ tiles $\mathbb{R}^2$ by translations,
and \textsc{No} otherwise.

\medskip
The algorithm must halt on every input after finitely many
computational steps.
\end{tcolorbox}
\vspace{+0.1cm}

As a corollary, we obtain Theorem \ref{thm:decidability} that we restate here for readers convenience.

\mainDecidabilitytheorem*

\begin{proof}
    For real algebraic coordinates given by defining polynomials and rational isolating intervals, exact arithmetic and order comparisons are computable; see \cite[Chapter~10]{BPR}.
    Hence, the claim follows from Theorem~\ref{thm:DecidabilityComparison}.
\end{proof}

\begin{remark}
    LLMs pointed out that the comparison oracle is computable for real algebraic coordinates given by exact representations.
\end{remark}

In the remainder of this section, we prove Theorem~\ref{thm:DecidabilityComparison} by constructing oracle-verifiable finite certificates for both tileability and non-tileability, together with effective searches for these certificates.
We start with a couple of definitions and observations.

Let $\Omega$ be an axis-parallel polygonal set.
We may assume that $(0,0)\in V(\Omega)$, as any query for the comparison oracle for $\Omega-v_0$ for any fixed $v_0\in V(\Omega)$ can be simulated by the comparison oracle for $\Omega$.

For each $k\in\mathbb{N}$, define
$$k\star V(\Omega)=\underbrace{(V(\Omega)-V(\Omega))+\cdots+(V(\Omega)-V(\Omega))}_{k\text{ summands}},$$
with the convention that $0\star V(\Omega)=\{0\}$.
Let $A_i=\min\{p_i(w-v):v,w\in V(\Omega) \ p_i(v)<p_i(w)\}$ and $B_i=\max\{p_i(w-v):v,w\in V(\Omega) \ p_i(v)<p_i(w)\}$ for $i\in \{1,2\}$.
Note that $A_i>0$ and $B_i>0$ can be determined by querying the comparison oracle.

\subsubsection{\bf Certificate for non-tileability.}
For each $N\in \mathbb{N}$, we define $D_N=[0,NA_1]\times [0,NA_2]$.

\begin{proposition}\label{pr:CertificateNonTile}
    Suppose that $\Tile(\Omega,\R^2)\not=\emptyset$.
    Then for every $N\in \N$ such that $B_i\le NA_i$ for $i\in \{1,2\}$ there is
    $$T_N\subseteq (16N)^2\star V(\Omega)$$
    such that $D_N\subseteq \overline{\Omega}\oplus T_N$. 
\end{proposition}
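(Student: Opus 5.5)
Take any tiling $T\in\Tile(\Omega,\R^2)$ and translate it so that some translate $\Omega+t_0$ has the vertex $(0,0)\in V(\Omega)$ placed at a convenient corner point near $D_N$; concretely, pick $t_0\in T$ with $t_0\in V(\Omega)$-position such that $(0,0)+t_0$ lies in $\overline{\Omega}+t_0$ and shift $T$ by $-t_0$, so that $0\in T$. Then let $T_N$ be the set of $t\in T$ with $(\overline{\Omega}+t)\cap D_N\neq\emptyset$. Since $T$ tiles, $D_N\subseteq\overline{\Omega}\oplus T_N$ automatically (after possibly first translating $T$ so that $D_N$ is positioned appropriately relative to a vertex of the tiling, i.e., we replace $D_N$ by $D_N$ and $T$ by $T-t_0+$ something; since the statement only asks for existence we may choose the translation freely as long as $0\in V(\Omega)+T$). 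The entire content is the bound $T_N\subseteq (16N)^2\star V(\Omega)$, i.e., every $t\in T_N$ can be written as a sum of at most $(16N)^2$ differences of vertices.

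The key step is a chaining argument on coordinates, in the spirit of \eqref{eq:seqvertex} and the proof of Lemma~\ref{lem:step0}. For $t\in T_N$ I will connect $0$ to $t$ by a path of tiles: first move horizontally, then vertically. Along a horizontal line through the region, consecutive vertical edges of the tiling crossed are at distance a difference $p_1(w-v)$ of vertices (each tile between two consecutive vertical edge lines contributes such a difference), and each such step has length at least $A_1$. Since the relevant region has horizontal extent at most $NA_1+2\diam(\Omega)$, and $\diam(\Omega)\le B_1+B_2\le N(A_1+A_2)$ using the hypothesis $B_i\le NA_i$, the number of steps is bounded by a constant multiple of $N$ (roughly $4N$ per coordinate, with slack). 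More precisely, I express $t-0$ as a sum of vertex differences: pick a vertex $v$ of $\Omega+t$ and the vertex $0$ of $\Omega+0$; the point $v$ is reached from $0$ by a staircase path consisting of at most $O(N)$ horizontal moves, each equal to $p_1(w-w')$ for vertices $w,w'$ of a single tile, and $O(N)$ vertical moves similarly. Crucially, each elementary move between two vertices of tiles requires writing the jump from a vertex of one tile to a vertex of the adjacent tile; along a common edge line the coordinate differences are again vertex differences of the adjacent tiles, so each move costs a bounded number of elements of $V(\Omega)-V(\Omega)$. Collecting, $t$ is a sum of at most $C N^2$ (crudely $(16N)^2$) vertex differences — the square arises because I will allow, for safety, a product bound: at most $16N$ columns-steps each possibly requiring a detour of $16N$ steps to reach a grid vertex on the relevant line.

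The main obstacle is making the chaining rigorous: a horizontal line might run along tile edges, and consecutive vertical edge lines crossed need not share a tile vertex with the current point, so one must carefully pick generic lines (avoiding edges) and justify that each gap between consecutive vertical boundary lines equals a difference $p_1(w-v)$ within one tile, and then transfer between the $x$-chain and the $y$-chain via a tile containing the turning point. I would handle this by working with the tile containing a generic point of each segment and using that its left and right vertical boundaries along the line are edges of that tile, giving the $A_1$ lower bound on each step and the vertex-difference form; the counting then just needs the generous constant $16$.
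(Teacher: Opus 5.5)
There is a genuine gap at the first step. You keep the given tiling $T$, up to one global translation, take $T_N$ to be its tiles meeting $D_N$, and then try to show $T_N\subseteq(16N)^2\star V(\Omega)$. This is false in general.

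Take $\Omega=(0,1)^2$. Then $A_i=B_i=1$ and $k\star V(\Omega)\subseteq\Z^2$ for every $k$. The lattice $T=\langle(1,0),(\sqrt2,1)\rangle$ is a tiling containing $0$, and its tile $\Omega+(\sqrt2-1,1)$ has closure meeting $D_1=[0,1]^2$. No global translation helps either: $D_N$ always meets two consecutive rows, and their horizontal offset is $\sqrt2$ modulo $1$.

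The failing step is your claim that ``along a common edge line the coordinate differences are again vertex differences of the adjacent tiles''. Across a line made entirely of tile edges (a fault line), tiles on the two sides are related only in the coordinate transverse to the line. Their offset along the line is a free real parameter; this is exactly the sliding of tectonic plates from Section~\ref{sec:earthquakesperiod}. Chaining as in Lemma~\ref{lem:step0} or \eqref{eq:seqvertex} only relates coordinates transverse to a line met by the edges involved, so it can never carry the $p_1$-coordinate across a horizontal fault. The same problem breaks your ``turning point'' transfer: the tile containing the turning point is not related horizontally to your starting chain unless vertices actually coincide.

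The missing idea is that you must modify the tiling, not read off a patch of the given one. The paper proceeds as follows.
\begin{enumerate}
\item It takes the tiles meeting an enlarged window $U\supseteq W\supseteq D_N$, with margins $B_1,B_2$ on each side. A volume count bounds their number by $256N^2$.
\item It splits these tiles into classes of the equivalence relation generated by sharing a vertex point.
\item It observes that, after possibly swapping axes, the class boundaries inside $U$ consist of vertical segments.
\item It slides the classes vertically by amounts in $[0,B_2]$ so that classes adjacent along a fault acquire a common vertex. The margin ensures the region covered inside $W$, and hence the covering of $D_N$, is unchanged.
\end{enumerate}
After this merging, every retained translation is joined to $0$ by a chain of at most $256N^2=(16N)^2$ shared-vertex steps, each contributing one element of $V(\Omega)-V(\Omega)$.

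So the $N^2$ in the bound comes from counting tiles in the window. It does not come from a product of $O(N)$ horizontal moves and $O(N)$ vertical detours, as in your heuristic.
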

\begin{proof}
    Let $T$ be a tiling of $\R^2$ by $\Omega$ such that $(0,0)\in T$ and $N\in \N$ be such that $B_i\le NA_i$ for $i\in \{1,2\}$.

    Define $W=D_N+[-B_1,B_1]\times[-B_2,B_2]$ and $U=W+[-B_1,B_1]\times[-B_2,B_2]$, and let
    $$T'=\{t\in T:(\Omega+t)\cap U\not=\emptyset\}.$$
    Observe that $U\subseteq\overline{\Omega}\oplus T'$ and that each $\Omega+t$, for $t\in T'$, contributes by volume at least $A_1A_2$ to $U+[-A_1,A_1]\times[-A_2,A_2]$.
    Hence, as
    $$U+[-A_1,A_1]\times[-A_2,A_2]\subseteq [-4NA_1,4NA_1]\times [-4NA_2,4NA_2],$$
    we conclude that $|T'|\le 256N^2$.
    
    Define $\simeq_{T'}$ to be the equivalence relation on $T'$ generated by the pairs $t,s\in T'$ such that
    $$(0,0)\in V(\Omega)-V(\Omega)+t-s.$$
    Write $T'=T'_1\sqcup\dots\sqcup T'_m$ for the decomposition of $T'$ into $\simeq_{T'}$-equivalence classes.
    Observe that, after possibly swapping the vertical and horizontal axes, for each $1\le i\le m$ we have that $\partial(\overline{\Omega\oplus T'_i})\cap \interior{U}$ consists of vertical line segments.    
    After possibly permuting the indices, we may additionally assume that
    \begin{equation}\label{eq:SharingBoundary}
        (\partial(\overline{\Omega\oplus T'_{i+1}})\cap \interior{U})\cap \left(\partial\left(\overline{\Omega\oplus \bigsqcup_{j=1}^i T'_j}\right)\cap \interior{U}\right)\not=\emptyset \text{ for every } 1\le i\le m-1,
    \end{equation}
    and that $(0,0)\in T'_1$.
    
    By the definition of $B_2$, of $U$ and $W$, and by the structure of the boundary of $\overline{\Omega\oplus T'_i}$ within $U$, we have that for any sequence $\alpha=(\alpha_i)_{i=1}^m$ such that $0\le \alpha_i\le B_2$ we have 
    $$\bigl((\Omega+T'_i)+(0,\alpha_i)\bigr)\cap W=(\Omega+T'_i)\cap W\text{ up to null sets}.$$
    Hence setting
    $$S_{\alpha}=\bigcup_{i=1}^m\left(T'_i+(0,\alpha_i)\right),$$
    that need not be a packing in general, and
    $$T_{\alpha}=\{s\in S_\alpha:(\Omega+s)\cap D_N\not=\emptyset\},$$
    we have $D_N\subseteq \overline{\Omega}\oplus T_{\alpha}$.
    In particular, $\Omega\oplus T_\alpha$ is a packing.

    Recursively, starting with $\alpha_1=0$ and $S_1=T'$, define $\alpha_i$ and $S_i$ such that $0\le \alpha_i\le B_2$, 
    $$S_{i}=\bigcup_{j=1}^{i} (T'_j+(0,\alpha_j))\sqcup \bigcup_{j=i+1}^m T'_j$$
    such that $\bigsqcup_{j=1}^{i} (T'_j+(0,\alpha_j))$ is contained in a single $\simeq_{S_{i}}$-equivalence class.
    This is possible by \eqref{eq:SharingBoundary} and our choice of $B_2$.
    Set $T_N=\{s\in S_m:(\Omega+s)\cap D_N\not=\emptyset\}$.
    As $B_2\le NA_2$, we have that $D_N\subseteq \overline{\Omega}\oplus T_N$.
    Observe that $(0,0)\in S_m$ as $(0,0)\in T'_1$.
    
    It remains to show that $T_N\subseteq (16N)^2\star V(\Omega)$.
    Since $\alpha_1=0$ and $(0,0)\in T'_1$, we have $(0,0)\in S_m$.
    Moreover, the $\simeq_{S_m}$-equivalence relation consists of a single equivalence class $S_m$.
    Thus every $s\in S_m$ can be joined to $(0,0)$ by a path of at most $|S_m|-1$ elements of the form $V(\Omega)-V(\Omega)$ by the definition of $\simeq_{S_m}$.
    Since
    $$|S_m|\le |T'|\le 256N^2,$$
    we obtain
    $$T_N\subseteq S_m\subseteq (16N)^2\star V(\Omega),$$
    as required.
\end{proof}

\begin{tcolorbox}[
    title={Certificate for non-tileability},
    fonttitle=\bfseries,
    coltitle=black,
    colbacktitle=black!6,
    colback=white,
    colframe=black!65,
    boxrule=0.5pt,
    sharp corners,
    breakable,
    left=3mm,
    right=3mm,
    top=2mm,
    bottom=2mm,
    before skip=1em,
    after skip=1em
]
\textbf{Search.}
Consider, in increasing order, all positive integers $N$ such
that $B_i\le NA_i$ for $i\in\{1,2\}$, and set $D_N=[0,NA_1]\times[0,NA_2]$.

\medskip
\textbf{Finite test.}
Using the comparison oracle, check whether there is a subset
$S\subseteq (16N)^2\star V(\Omega)$ such that $\Omega+S$ is a packing and
$$D_N\subseteq\overline{\Omega}+S.$$
Since $(16N)^2\star V(\Omega)$ is finite, this test requires only finitely many
computational steps and oracle queries.

\medskip
\textbf{Output.}
If no such subset exists, output \textsc{No} and halt.
Otherwise, proceed to the next $N$.

\medskip
\textbf{Guarantee.}
By Proposition~\ref{pr:CertificateNonTile}, the algorithm
never outputs \textsc{No} when $\Omega$ tiles $\mathbb{R}^2$.
Conversely, compactness guarantees that, if $\Omega$ does
not tile $\mathbb{R}^2$, some $D_N$ cannot be covered by
any packing of translates of $\Omega$.
Thus the algorithm halts exactly when $\Omega$ does not tile.
\end{tcolorbox}

\subsubsection{\bf Certificate for tileability.}
Assume that $\Tile(\Omega,\R^2)\not=\emptyset$.
Recall that, given a packing $\Omega\oplus T$, we denote by $\simeq_T$ the equivalence relation on $T$ generated by the pairs
$t,s\in T$ such that $(0,0)\in V(\Omega)-V(\Omega)+t-s$.
Observe that if the earthquake decomposition of $T\in\Tile(\Omega,\R^2)$ with respect to $\Omega$ is non-trivial
and $P\in\quake{T}{\Omega}$, then $P$ is contained in a $\simeq_T$-equivalence class.
This follows from the minimality of tectonic plates, since the region covered by each $\simeq_T$-equivalence class is either $\R(0,1)$-invariant or $\R(1,0)$-invariant.

To describe the algorithm for certifying tileability, we distinguish two cases and run the corresponding searches in
parallel.
The first case corresponds to full periodicity.
It is not hard to check that if the PTC holds for this class of tiles, then the first case would suffice.

We call $\Omega$ {\bf \emph{nicely periodic}} if there is $T\in\Tile(\Omega,\R^2)$ such that $\simeq_T$ consists of a single equivalence class and $T$ is $\Gamma=\langle a,b\rangle$-periodic for some linearly independent vectors $a,b\in\R^2$.

For such a tiling, we may assume that $(0,0)\in T$, and after passing to a finite-index sublattice, we may choose the generators $(a_1,a_2)$ and $(b_1,b_2)$ satisfying
\begin{equation}\label{eq:CertificateBasis}
    a_1,b_2>0,\qquad 4|b_1|\le a_1,\qquad 4|a_2|\le b_2,\qquad \text{and}\qquad\overline{\Omega}\subseteq [-a_1,a_1]\times[-b_2,b_2].
\end{equation}
Set
$$W_{a,b}:=[-3a_1,3a_1]\times[-3b_2,3b_2].$$
Observe that the fundamental parallelogram of $\langle a,b\rangle$ is contained in $W_{a,b}$ by \eqref{eq:CertificateBasis}.
Consequently, there is a finite set
$T'\subseteq T\cap W_{a,b}$ such that
$$T=T'+\langle a,b\rangle.$$
Since $\simeq_T$ consists of a single equivalence class, there is $k\in\mathbb N$ such that
$$a,b\in k\star V(\Omega), \qquad T'\subseteq k\star V(\Omega).$$

\begin{tcolorbox}[
    title={Certificate for tileability: nicely periodic case},
    fonttitle=\bfseries,
    coltitle=black,
    colbacktitle=black!6,
    colback=white,
    colframe=black!65,
    boxrule=0.5pt,
    sharp corners,
    breakable,
    left=3mm,
    right=3mm,
    top=2mm,
    bottom=2mm,
    before skip=1em,
    after skip=1em
]
\textbf{Search.}
For $k\ge 1$, enumerate $a,b\in k\star V(\Omega)$ satisfying
\eqref{eq:CertificateBasis} and all nonempty $Q\subseteq k\star V(\Omega)\cap W_{a,b}$.

\medskip
\textbf{Finite test.}
Set
\[
    F=Q+\{ra+tb:r,t\in\mathbb Z,\ |r|,|t|\le20\}.
\]
Using the comparison oracle, check whether $\Omega+F$
is a packing and $W_{a,b}\subseteq\overline{\Omega}+F$.

\medskip
\textbf{Output.}
If both conditions hold, output \textsc{Yes} and halt.
Otherwise, continue the search.

\medskip
\textbf{Guarantee.}
If the test passes, set $S=Q+\langle a,b\rangle$.
\begin{itemize}
    \item  Since $W_{a,b}$ contains the fundamental parallelogram and $W_{a,b}\subseteq\overline{\Omega}+F$, we have that $\Omega+S=\R^2$ up to a null set.
    \item If $m_2((\Omega+s)\cap (\Omega+s'))>0$ for some $s,s'\in S$, then we may assume after translating by a lattice vector that $s=q$ and $s'=q'+ra+tb$, where $q,q'\in Q$ and $r,t\in\mathbb Z$.
    Thus $ra+tb\in q-q'+\Omega-\Omega$, and by the conditions above, we get that
    $$|ra_1+tb_1|\le8a_1, \qquad |ra_2+tb_2|\le8b_2.$$
    By \eqref{eq:CertificateBasis},
    $$|r|\le8+\frac{|t|}{4}, \qquad |t|\le8+\frac{|r|}{4},$$
    whence $\max\{|r|,|t|\}\le32/3<20$.
    Consequently, $s=s'$ by the finite test above.
\end{itemize}
\end{tcolorbox}

Note that if there is $T\in\Tile(\Omega;\mathbb R^2)$ such that $T=T_c$, then $\Omega$ is nicely periodic.
Indeed, $T=T_c$ implies that $p_1(T)$ is a tiling of $\R$ by $p_1(\Omega)$.
By \cite{LW}, $p_1(T)$ is $\beta\Z$-periodic for some $\beta>0$.
After replacing $\beta$ by a multiple and translating $\Omega$ and $T$, we may assume that
$\beta\ge 2\diam(p_1(\Omega))$, $0\le p_1(V(\Omega))$, and $0\in p_1(T)$.

Fix an increasing enumeration $x_1,\dots,x_k$ of $p_1(T)\cap[0,\beta)$ and a vertical column
$C\subseteq\{0\}\times\R$ for $\Omega$.
By \cite{LW}, $C$ is $(0,\alpha)\Z$-periodic for some $\alpha>0$.
Set $y_1=0$.
Observe that there are $y_2,\dots,y_k\in\R$ such that, setting
$$S'=\bigsqcup_{i=1}^k \bigl(C+(x_i,y_i)\bigr),$$
the relation $\simeq_{S'}$ consists of a single equivalence class.

Finally, by the choice of $\beta$, there is $\gamma\in\mathbb R$ such that
$$(V(\Omega)+S')\cap(V(\Omega)+S'+(\beta,\gamma))\not=\emptyset.$$
Then
$$S=\bigsqcup_{n\in\mathbb Z}\bigl(S'+n(\beta,\gamma)\bigr)$$
is a tiling of $\mathbb R^2$ by $\Omega$, is $\langle(0,\alpha),(\beta,\gamma)\rangle$-periodic, and $\simeq_S$ consists of a single equivalence class.

Next, we describe the algorithm for the case when $\Omega$ is not nicely periodic.
We start with a strengthening of Theorem~\ref{thm:MostPeriodic}.

\begin{proposition}\label{pr:CertificateTile}
    Suppose that $\Tile(\Omega,\R^2)\not=\emptyset$ and $\Omega$ is not nicely periodic.
    Then, after possibly swapping vertical and horizontal axes, there is $U\in \Tile(\Omega,\R^2)$ and $k\in \N$ such that $U$ can be written as $U=U_1\sqcup  \dots \sqcup U_m$ for some $m\in \N$ together with $\alpha,\beta>0$ and $\gamma_i\ge0$ for $1\le i\le m$ such that
    \begin{enumerate}
        \item $(0,\alpha),(\beta,\gamma_i)\in k\star V(\Omega)$ for every $1\le i\le m$,
        \item if we set $\Gamma_i=\langle (0,\alpha),(\beta,\gamma_i)\rangle$, then $U_i$ is $\Gamma_i$-periodic and $\Omega\oplus U_i$ is $\mathbb{R}(0,1)$-invariant for every $1\le i\le m$,
        \item we have
            $$\{u\in U:(\overline{\Omega}+u)\cap [0,\beta]\times[0,\alpha]\not=\emptyset\}\subseteq k\star V(\Omega).$$
    \end{enumerate}
\end{proposition}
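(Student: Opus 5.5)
We start from the proof of Theorem~\ref{thm:MostPeriodic}. We pass to a topologically minimal tiling and apply Theorem~\ref{thm:IntroMainTectonicPlatesFinLocComp}. After possibly swapping axes, this gives $T=T_c\sqcup T_1\sqcup\dots\sqcup T_m$ with each $\Omega\oplus T_j$ and $\Omega\oplus T_c$ periodic and $\R(0,1)$-invariant, and each $T_j$ of finite local complexity. Since $\Omega$ is not nicely periodic, $T\neq T_c$, because otherwise the construction preceding the proposition would produce a nicely periodic tiling. The earthquake decomposition is also non-trivial in the relevant sense. As in the proof of Theorem~\ref{thm:MostPeriodic}, we replace $T_c$ by translates of a single column to obtain finite local complexity. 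We then apply Theorem~\ref{thm:MainLocallyFin}~(ii) to each piece, getting periodic $S_c,S_1,\dots,S_m$. Using the ergodic-theorem normalization and Claim~\ref{cl:CommonPeriod}, we pass to finite-index sublattices so that every piece is periodic under $\langle(0,\alpha),(\beta,\gamma_i)\rangle$ with a common $\alpha,\beta>0$ and $\gamma_i\ge 0$. The column part is handled as in part (I) of the proof of Theorem~\ref{thm:MainLocallyFin}, choosing its $z$ compatibly. We relabel the pieces as $U_1,\dots,U_m$, which gives (2).

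The main work is (1) and (3), that is, expressing these vectors and the local translations inside $k\star V(\Omega)$. We use the relation $\simeq_U$. Each tectonic plate, and each column, lies inside a single $\simeq_U$-class, so within a piece, differences of translations are finite sums of elements of $V(\Omega)-V(\Omega)$. For each piece we pick $u\in U_i$. Then $u+(0,\alpha)$ and $u+(\beta,\gamma_i)$ lie in the same plate, because the plate is invariant under its own periods; for columns we replace $(0,\alpha)$ by a multiple. Hence these period vectors are finite $V(\Omega)-V(\Omega)$ chains, giving membership in some $k_i\star V(\Omega)$.

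For (3), the difficulty is that different plates need not be $\simeq_U$-connected, so their relative vertical offsets are arbitrary. We exploit the sliding freedom: each $U_i$ may be shifted vertically by an arbitrary real amount without changing $\Omega\oplus U_i$ or the tiling property. We then proceed recursively, exactly as in the proof of Proposition~\ref{pr:CertificateNonTile}. We order the finitely many pieces meeting a neighbourhood of $[0,\beta]\times[0,\alpha]$ so that consecutive ones share a vertical boundary segment, which is possible since the plates' slabs tile $\R\times\R$ horizontally and are $\beta$-periodic. We then slide each $U_{i+1}$ vertically by an amount in $[0,B_2]$ until some vertex of $\Omega+U_{i+1}$ coincides with a vertex of a previously placed piece along the shared vertical line. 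This makes everything one $\simeq_U$-class locally. We also translate so that $(0,0)\in U$.

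Finally, because the pieces meeting $[0,\beta]\times[0,\alpha]$ are finitely many, all of them are then reached from $(0,0)$ by bounded chains, and taking $k$ to be the maximum of these chain lengths and the $k_i$ gives (1) and (3).

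The step I expect to be the main obstacle is verifying that the sliding does not break periodicity. Vertical sliding of a whole $\Gamma_i$-periodic piece preserves $\Gamma_i$-periodicity, since it is a translation, so this should be fine. The delicate part is the choice of the slide amounts: the connecting vertex coincidences should be achievable in a bounded window. This is the same argument as in Proposition~\ref{pr:CertificateNonTile}, using that the boundary between adjacent slabs is an entire vertical line.
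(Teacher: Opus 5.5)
There is a genuine gap in how you obtain (1) and (3). Your argument needs each piece $U_i$ to be internally $\simeq_U$-connected, and the pieces you build are not.

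The column piece $S^z_c=\bigcup_j\bigsqcup_{n}(T_j+n(\beta_c,z))$ from part (I) of the proof of Theorem~\ref{thm:MainLocallyFin} consists of many columns that can be slid independently. Distinct columns, even adjacent ones, are therefore in general not vertex-connected. Moreover $u+(\beta,\gamma_c)$ lies in a \emph{different} column from $u$, so your claim that $u$ and $u+(\beta,\gamma_i)$ lie in the same plate fails for this piece. Nothing in your argument shows that $(\beta,\gamma_c)\in k\star V(\Omega)$, or that the column translates meeting $[0,\beta]\times[0,\alpha]$ lie in $k\star V(\Omega)$.

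Similarly, a set $S_j$ produced by Theorem~\ref{thm:MainLocallyFin}~(ii) only satisfies $\Omega\oplus S_j=\Omega\oplus T_j$. It need not be a single plate, and a plate inside a $\Gamma_j$-periodic piece is a priori only permuted by $\Gamma_j$, not fixed by it.

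Sliding a whole $U_{i+1}$ by one vertical amount keeps it periodic, but it cannot repair disconnections \emph{inside} that piece. So the conclusion that everything is one $\simeq_U$-class locally does not follow. The analogy with Proposition~\ref{pr:CertificateNonTile} is where this slips: there a finite patch is slid with no periodicity to preserve, whereas here the whole difficulty is making the connecting slides compatible with the lattices.

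The paper resolves this in several steps:
\begin{enumerate}
\item It refines the pieces so that $\{S_1,\dots,S_m\}=\quake{S}{\Omega}$.
\item It passes to a finite-index sublattice so that each plate is $\Gamma_i$-invariant (Corollary~\ref{cor:FinNumberPlates}).
\item It shifts each plate vertically so that it is exactly one $\simeq_S$-class.
\item It groups the columns by $x$-coordinate modulo $\beta$ into sets $S^j_c$, and re-slides the columns of each group individually. Each group becomes $\Gamma_i$-periodic for an adjacent plate $S_i$ and vertex-connected to it, so it is absorbed into $U_i$.
\item Only then are the $U_i$ shifted vertically to make $\simeq_U$ a single class.
\end{enumerate}
Your final step matches this last step.

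Your phrase ``non-trivial in the relevant sense'' also passes over a case. Suppose Theorem~\ref{thm:IntroMainTectonicPlatesFinLocComp} returns $T=T_1$ with trivial earthquake decomposition, i.e.\ $m=1$ and $T_c=\emptyset$. Then Theorem~\ref{thm:MainLocallyFin}~(ii) gives only some periodic $S$, whose period lattice need not contain a vertical vector. The normalization to $\langle(0,\alpha),(\beta,\gamma)\rangle$ is then unavailable, since in the proof of Theorem~\ref{thm:MostPeriodic} it comes from $m\ge 2$ or $T_c\neq\emptyset$.

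The paper rules this case out. Since $\Omega$ is not nicely periodic, $\simeq_S$ has more than one class, so after possibly swapping axes $S$ has a non-trivial earthquake decomposition. If there are columns, one is in the main case. Otherwise the finitely many plates are periodic under a finite-index sublattice and can be slid into a single $\simeq$-class. This produces a nicely periodic tiling, a contradiction.
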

\begin{proof}
    Let $S\in \Tile(\Omega,\R^2)$ be as in the proof of Theorem~\ref{thm:MostPeriodic}.
    That is, after possibly swapping the vertical and horizontal axes, we have a decomposition $S=S_c\sqcup S_1\sqcup\dots\sqcup S_m$ for some $m\in\N$, where the fact that $m\ge 1$ follows from the assumption that $\Omega$ is not nicely periodic together with the observation before this proposition.
    Additionally, there are lattices $\Gamma_c$ and $\Gamma_i$ for $1\le i\le m$ such that $S_c$ is $\Gamma_c$-periodic and $S_i$ is $\Gamma_i$-periodic for every $1\le i\le m$, and $\Omega\oplus S_c$ and $\Omega\oplus S_i$ are $\mathbb{R}(0,1)$-invariant for every $1\le i\le m$.

    Suppose that $m=1$ and $S_c=\emptyset$, so that $S=S_1$ is periodic.
    As $\Omega$ is not nicely periodic, it must be the case that $\simeq_S$ consists of more than one equivalence class.
    After possibly swapping the axes, the earthquake decomposition of $S$ is therefore non-trivial.
    The case, when after this swapping we have $S_c\not=\emptyset$ is handled in the next paragraph.
    Thus $S_c=\emptyset$ implies that, after possibly swapping the vertical and horizontal axes, $|\quake{S}{\Omega}|>1$.
    Let $\Gamma_1=\langle a,b\rangle$ be such that $S=S_1$ is $\Gamma_1$-periodic.
    By Corollary~\ref{cor:FinNumberPlates}, the period lattice permutes finitely many noncolumn plates.
    Hence, after passing to a finite-index sublattice of $\Gamma_1$, every $P\in\quake{S}{\Omega}$ is $\Gamma_1$-periodic.
    Thus, by shifting the pieces independently by a vector $(0,\alpha_P)$ for some $\alpha_P\ge 0$ we can get that $\simeq_{S'}$ consists of a single equivalence class, where
    $$S'=\bigsqcup_{P\in \quake{S}{\Omega}} P+(0,\alpha_P).$$
    Note that $S'$ is $\Gamma_1$-periodic as so is $P$ for every $P\in \quake{S}{\Omega}$.
    This contradicts the assumption that $\Omega$ is not nicely periodic.

    Next we suppose that $m\ge 2$ or $S_c\not=\emptyset$.
    As in the proof of Theorem~\ref{thm:MostPeriodic}, we deduce that there are $\alpha,\beta>0$ and $\gamma_i\ge 0$ such that $\Gamma_c=\langle(0,\alpha),(\beta,0)\rangle$ and $\Gamma_i=\langle(0,\alpha),(\beta,\gamma_i)\rangle$.
    By periodicity of any of the tectonic plates, we conclude that $S$ has finite horizontal complexity.
    Indeed, it is easy to check that Lemma~\ref{lem:CharInfComplex}~(4) is not satisfied.
    By Theorem~\ref{thm:PeriodicityMain}, after possibly enlarging $\beta$ by a positive integer multiple and refining and relabelling the pieces, we may assume without loss of generality that $\{S_1,\dots,S_m\}=\quake{S}{\Omega}$, and that after possibly shifting each $S_i$ vertically, that $S_i$ is equal to a $\simeq_S$-equivalence class for every $1\le i\le m$.

    Assume without loss of generality that $\beta>2\diam(p_1(\Omega))$ and $0\in p_1(S_c)$ in case $S_c\not=\emptyset$.
    Enumerate as $\{x_1,\dots,x_\ell\}$ the set $p_1(S_c)\cap [0,\beta)$, and define $S^j_c=\{s\in S_c:p_1(s)\in x_j+\beta\Z\}$.
    As a final step, observe that if there is $1\le i\le m$ and $1\le j\le \ell$ such that $\overline{\Omega\oplus S_i}$ and $\overline{\Omega\oplus S^j_c}$ share boundary, then we can shift the columns in $S^j_c$ to produce a packing $S^j$ that is $\Gamma_i$-periodic, $\Omega\oplus S^j=\Omega\oplus S^j_c$, and $\simeq_{S_i\cup S^j}$ consists of a single equivalence class.
    Using this observation and recursively merging packings together, we produce $U\in \Tile(\Omega,\R^2)$ that admits a decomposition $U=U_1\sqcup\dots\sqcup U_m$ such that $S_i\subseteq U_i$, $U_i$ is $\Gamma_i$-periodic, $\Omega\oplus U_i$ is $\R(0,1)$-invariant and $\simeq_{U_i}$ consists of a single equivalence class for every $1\le i\le m$.
    The claim follows by vertically shifting $U_i$'s to guarantee that $\simeq_U$ consists of a single equivalence class, and then translating $U$ such that $(0,0)\in U$.
\end{proof}

\begin{tcolorbox}[
    title={Certificate for tileability: quasi-periodic case},
    fonttitle=\bfseries,
    coltitle=black,
    colbacktitle=black!6,
    colback=white,
    colframe=black!65,
    boxrule=0.5pt,
    sharp corners,
    breakable,
    left=3mm,
    right=3mm,
    top=2mm,
    bottom=2mm,
    before skip=1em,
    after skip=1em
]
\textbf{Search.}
Run the following search in both coordinate orientations.
For $k=1,2,\dots$, enumerate $1\le m\le k$ and vectors
$$(0,\alpha),(\beta,\gamma_1),\dots,(\beta,\gamma_m)\in k\star V(\Omega)$$
satisfying
\begin{equation}\label{eq:BoundedBox}
    \alpha,\beta>0,\qquad 0\le\gamma_i<\alpha,\qquad \overline{\Omega}\subseteq [-\beta,\beta]\times[-\alpha,\alpha],
\end{equation}
together with pairwise disjoint nonempty sets
$$Q_1,\dots,Q_m\subseteq k\star V(\Omega)\cap D,\qquad D=[0,\beta)\times[0,\alpha).$$

\medskip
\textbf{Finite test.}
For $1\le i\le m$, set
$$F_i=Q_i+\{r(\beta,\gamma_i)+t(0,\alpha):r,t\in\mathbb Z,\ |r|,|t|\le20\}$$
and
$$Y_i=p_1\bigl((\Omega+F_i)\cap\interior(D)\bigr).$$
Using the comparison oracle, check that:
\begin{enumerate}
    \item $\Omega+F_i$ is a packing for every $i$;
    \item for every $i$, up to a null set,
    $$(\Omega+F_i)\cap\interior(D)=Y_i\times(0,\alpha);$$
    \item up to a null set,
    $$(0,\beta)=Y_1\sqcup\dots\sqcup Y_m.$$
\end{enumerate}
These are finite checks involving only comparisons of
tile-edge coordinates.

\medskip
\textbf{Output.}
If all conditions hold, output \textsc{Yes} and halt.
Otherwise, continue the search.

\medskip
\textbf{Guarantee.}
If the test passes, set
$$S_i=Q_i+\langle(0,\alpha),(\beta,\gamma_i)\rangle,\qquad S=\bigcup_{i=1}^m S_i.$$
\begin{itemize}
    \item Every tile of $\Omega+S_i$ meeting $D$ has lattice
    indices $|r|\le2$ and $|t|\le4$, by \eqref{eq:BoundedBox}.
    Hence
    $$(\Omega+S_i)\cap\interior(D)=(\Omega+F_i)\cap\interior(D).$$
    By periodicity and the second test,
    $$\Omega+S_i=(Y_i+\beta\mathbb Z)\times\mathbb R \quad\text{up to a null set}.$$
    The third test therefore implies that $\Omega+S$
    covers $\mathbb R^2$ up to a null set.

    \item \item If $m_2((\Omega+s)\cap(\Omega+s'))>0$ for some $s,s'\in S$, then the displayed identity and the third test imply that $s,s'\in S_i$ for some $1\le i\le m$.
    After translating both by the same element of $\langle(0,\alpha),(\beta,\gamma_i)\rangle$, we may assume that
    $$s=q,\qquad s'=q'+r(\beta,\gamma_i)+t(0,\alpha),$$
    where $q,q'\in Q_i$ and $r,t\in\mathbb Z$.
    Thus,
    $$r(\beta,\gamma_i)+t(0,\alpha) \in q-q'+\Omega-\Omega.$$
    By \eqref{eq:BoundedBox}, we have
    $$|r|\le3,\qquad |r\gamma_i+t\alpha|\le3\alpha,$$
    hence $|t|\le6<20$.
    Therefore $s,s'\in F_i$, and the first test implies that $s=s'$.
\end{itemize}
By Proposition~\ref{pr:CertificateTile}, after enlarging the periods and $k$ if necessary, this search, run in
both coordinate orientations, halts whenever $\Omega$ is tileable and not nicely periodic.
\end{tcolorbox}

Running the non-tileability search and the two tileability
searches in parallel proves
Theorem~\ref{thm:DecidabilityComparison}.

\end{document}